\documentclass[10pt]{article}
\usepackage{schola-article}

\title{Proof Theory for Bimodal Provability Logics}
\author{Borja Sierra Miranda\and Thomas Studer}

\date{\small Logic and Theory Group\\
University of Bern\\
Bern, Switzerland\\
\texttt{\{borja.sierra,thomas.studer\}@unibe.ch}}

\usepackage{modalops}
\usepackage{quiver}
\usepackage{multicol}

\newcommand\union{\cup}
\newcommand\Union{\bigcup}
\newcommand\inter{\cap}

\newcommand\set[1]{\left\{#1\right\}}

\DeclareMathOperator\domain{Dom}

\newcommand\function[1][]{\xlongrightarrow{#1}}

\newcommand\blan{\mathcal{L}_{\nec_{\leq 1}}}
\newcommand\var{\mathrm{Var}}
\newcommand\wff{\mathrm{Fm}}

\newcommand\K[1][]{\mathsf{K}_{#1}}
\newcommand\GL{\mathsf{GL}}
\newcommand\GLS{\mathsf{GLS}}
\newcommand\PA{\mathsf{PA}}
\newcommand\Grz{\mathsf{Grz}}
\newcommand\wGrz{\mathsf{wGrz}}
\newcommand\KT{\mathsf{K4}}

\newcommand\g[1]{\mathcal{G}#1}
\newcommand\n[1]{\mathcal{G}^\infty#1}

\newcommand\CS{\mathsf{CS}}
\newcommand\CSM{\mathsf{CSM}}
\newcommand\ER{\mathsf{ER}}
\newcommand\KTCS{\mathsf{K4CS}}
\newcommand\KTCSM{\mathsf{K4CSM}}
\newcommand\KTER{\mathsf{K4ER}}

\newcommand\Kax[1][]{\mathrm{K}_{#1}}
\newcommand\Lax[1][]{\mathrm{L}_{#1}}
\newcommand\Cax[1][]{\mathrm{C}_{#1}}
\newcommand\Max[1][]{\mathrm{M}_{#1}}
\newcommand\ERax[1][]{\mathrm{ER}_{#1}}

\newcommand\MP{\mathrm{MP}}
\newcommand\NEC[1][]{\mathrm{Nec}_{#1}}

\newcommand\ax{\mathrm{ax}}
\newcommand\Ax{\mathrm{Ax}}
\newcommand\botL{{\bot}\mathrm{L}}
\newcommand\botR{{\bot}\mathrm{R}}
\newcommand\toL{{\to}\mathrm{L}}
\newcommand\toR{{\to}\mathrm{R}}
\newcommand\negL{{\neg}\mathrm{L}}
\newcommand\negR{{\neg}\mathrm{R}}
\newcommand\veeL{{\vee}\mathrm{L}}
\newcommand\veeR{{\vee}\mathrm{R}}
\newcommand\wedgeL{{\wedge}\mathrm{L}}
\newcommand\wedgeR{{\wedge}\mathrm{R}}

\newcommand\modal[2][]{\nec^{#2}_{#1}}

\newcommand\lob[2][]{\mathrm{L}\ddot{\mathrm{o}}\mathrm{b}^{#2}_{#1}}

\newcommand\wk{\mathrm{Wk}}
\newcommand\inv[1]{\mathrm{Inv}_{#1}}
\newcommand\ctr{\mathrm{Ctr}}
\newcommand\cut{\mathrm{Cut}}

\newcommand\rep{\mathrm{Rep}}
\newcommand\emp{\mathrm{Emp}}

\newcommand\voc{\mathrm{Voc}}

\newcommand\lhg{\mathrm{lhg}}
\newcommand\lrul{\mathrm{lRul}}
\newcommand\hg{\mathrm{hg}}
\newcommand\sub{\mathrm{Sub}}

\newcommand\satc[1]{|#1|_{\mathrm{Sat}}}
\newcommand\lgth{\ell}

\makeatletter
\newcommand{\todo}[1]{\marginpar{\textbf{TODO\footnotemark}}\@latex@warning{TODO: #1}\footnotetext{ #1}}
\makeatother
\begin{document}

\maketitle

\begin{abstract}
  \noindent
  We provide the first (non-labelled) sequent calculi for bimodal provability logics with 'usual' provability predicates. In particular, we introduce calculi for the logics \(\CS\), \(\CSM\) and \(\ER\).
  Additionally, we present non-wellfounded versions of our calculi, and use them to establish a cut-elimination procedure.
  Finally, we prove the first interpolation results for these logics showing that they  all  enjoy the uniform Lyndon interpolation property.
\end{abstract}

\section{Introduction}

Provability logic is the area of modal logic that studies the notion of proof through the lense of modal logic, by interpreting the modal operators as \emph{provable in some mathematical theory} (usually arithmetical theories).
One of the most important results in the area is Solovay's characterization  \cite{solovay} of the provability logic of \(\PA\) (Peano Arithmetic) with respect to \(\PA\), being the logic \(\GL\), and with respect to the standard model~\(\mathbb{N}\), being the logic \(\GLS\).

A natural generalization is to consider a modal language with two modalities, \(\nec_0\) and \(\nec_1\), which will lead to bimodal provability logics.
The modality \(\nec_0\) is interpreted as provability in some theory \(T\) while \(\nec_1\) is interpreted as provability in another theory \(S\).
The bimodal language is thus capable of expressing  relationships between provability in \(T\) and provability in \(S\). 
For instance,  \(S\) is an extension of \(T\)  is expressed by the schema \(\nec_0 \phi \to \nec_1 \phi\) called monotonicity, the statement that the theory \(S\) proves every instance of soundness in \(T\) is expressed by the schema \(\nec_1(\nec_0 \phi \to \phi)\), and that \(S\) decides provability in \(T\) is expressed by \(\nec_1 \nec_0 \phi \vee \nec_1 \neg \nec_0 \phi\).
%, and hence able to formulate properties such as \(S\) is an extension of \(T\) by the schema \(\nec_0 \phi \to \nec_1 \phi\), \(S\) proves every instance of soundness in \(T\) as the schema \(\nec_1(\nec_0 \phi \to \phi)\) or \(S\) decides provability in \(T\) as \(\nec_1 \nec_0 \phi \vee \nec_1 \neg \nec_0 \phi\).

In this paper, we will develop the proof theory of three major bimodal provability logics: \(\CS\), \(\CSM\), and \(\ER\).
The logic \(\CS\) is the minimal bimodal provability logic~\cite{smorynski} (see also the discussion in~\cite{Beklemishev1994-BEKOBL}).  % \cite{Carlson1986ModalLW},
The logic \(\CSM\) is the extension of \(\CS\) with the monotoncity schema \(\nec_0 \phi \to \nec_1 \phi\) mentioned before.
%stating that everything that is \(T\)-provable is \(S\)-provable and 
The logic \(\ER\), discoverd by Carlson~\cite{Carlson1986ModalLW} and indepently by Montagna~\cite{Montagna},  is the provability logic of
sound theories  \(T\) and~\(S\)  where  \(S\) contains the local reflection principle for   \(T\). Thus  \(\ER\) is the extension of  \(\CS\) with the principle of essential reflexivity $\Box_1(\Box_0 \phi \to \phi)$.
Note that in \(\CSM\) and \(\ER\), we have that the theory \(S\) is an extension of the theory~\(T\).
The provability logic of extensions is one of the main topics in bimodal provability logic \cite{Beklemishev1994-BEKOBL, Beklemishev1996-BEKBLF}.

The essential technical tool  for our proof-theoretic study of  \(\CS\), \(\CSM\), and \(\ER\) are non-wellfounded proofs.
Proof systems of this kind have succesfully been developed to a plethora of logics with explicit fixpoints,  see, e.g.,  \cite{Brotherston, ill-founded-intui-linear-time, guillermo-CTL, thomasCK, KokkinisStuder+2016+171+192, saurin, das, masterModality} among many others.
Shamkanov~\cite{shamkanovGl} has observed that there is also a close connection between non-wellfounded proofs and the provability logic~\(\GL\),  although the fixpoints of this logic are not explicit.  Later,  he and Savateev have extended this approach to capture the logics  \(\Grz\)~\cite{shamkanovGrz} and \(\wGrz\)~\cite{shamkanovwGrz}.  
Despite these results,  almost no proof-theoretic work has been carried out for bimodal provability logics with two `usual'~\cite{JAPARIDZE1998475} provability predicates.\footnote{There are also bi- and polymodal logics, e.g., $\mathsf{GLB}$ and $\mathsf{GLP}$,  modeling stronger notions of provability like $\omega$-provability.  For some of these logics,  there are sequent systems  available,  e.g.,~\cite{ShamkanovNested}.  Note,  however,  that these polymodal logics are not extensions of $\CS$,  but in some sense orthogonal to it.} The only exception is~\cite{justus}, which introduces a non-wellfounded  labelled sequent calculus for \(\CS\).

One important application of non-wellfounded proofs is to establish interpolation properties, see, e.g., \cite{interpolation-guillermo, pdl-interpolation, converse-pdl-interpolation, uip-interpretability, gls-ulip}.  We, too,  will use our systems to show interpolation results for   \(\CS\), \(\CSM\), and \(\ER\).
Our main contributions are as follows:
\begin{enumerate}
\item We provide the first sequent calculi for 'usual' bi-modal provability logics.  Note than in the case of \(\CS\), there is a labelled calculus available. Our approach, however, does not require the use of labels and is thus syntactically pure.  Moreover, it does not only work for $\CS$,  but can be generalized to $\CSM$ and~$\ER$.
\item We provide syntactic cut-elimination results for our calculi.  Hence we obtain completeness of the cut-free systems.
\item We provide the first interpolation results for  \(\CS\), \(\CSM\), and \(\ER\).  Actually, we directly prove uniform Lyndon interpolation (see \cite{Kurahashi}) for these three logics, which is the strongest form of interpolation. 
\item Last but not least, we make the technical observation that the proof theory of \(\ER\) is rather simple compared to its semantics.
\end{enumerate}
%
%
%The connection of non-wellfounded proofs and provability logics was established by Savateev and Shamkanov that it is also an useful tool for the study of provability logics, particularly for the logics \(\GL\) \cite{shamkanovGl}, \(\Grz\) \cite{shamkanovGrz} and \(\wGrz\) \cite{shamkanovwGrz}; even if the fixpoints of these logics are not explicit.
%A
%
%
Let us elaborate on the last point.
\(\ER\) is not Kripke complete, and we have to enrich  Kripke models with a topology (thus working in the general Kripke semantics setting) to obtain a completeness result for~\(\ER\)~\cite{Visser1995-VISACO-3}.
On the proof-theoretic side, the picture is simpler: we only have to work with two types of sequents.
Sequents of the shape \(\Gamma \Rightarrow_0 \Delta\) will be interpreted as usual, i.e., \(\bigwedge \Gamma \to \bigvee \Delta\), while sequents of the shape \(\Gamma \Rightarrow_1 \Delta\) will be interpreted as \(\nec_1(\bigwedge \Gamma \to \bigvee \Delta)\).
This kind of methodology has been previously seen in sequent calculi for the provability logic \(\GLS\) \cite{boolos-GLS, lev-GLS, kushida-gls} (the provability logic of Peano arithmetic according to true arithmetic).
However, in the systems for \(\GLS\), it is impossible to go from \( \Rightarrow_1\)-sequents to \( \Rightarrow_0\)-sequents, while in our system,  both direction (from \( \Rightarrow_1\) to \( \Rightarrow_0\) and vice versa) are possible.

\paragraph{Structure of the paper.}
In Section~\ref{sec:preliminaries}, we will introduce the logics and concepts needed for the rest of the paper,  in  particular  the bimodal logics   \(\CS\), \(\CSM\), and \(\ER\),  (modal) equational systems, uniform Lyndon interpolation, and the basics of non-wellfounded (local progress) proof theory.
In Section~\ref{sec:wellfounded}, we will introduce  wellfounded (finite) sequent calculi for \(\CS\), \(\CSM\) and \(\ER\).  We will establish the correspondence of these sequent calculi with cut and the Hilbert-style axiomatizations of the logics.
In Section~\ref{sec:cut-elim}, we will introduce non-wellfounded sequent calculi for \(\CS\), \(\CSM\) and \(\ER\),  establish translations with the wellfounded calculi, and prove cut elimination for all of them.
Finally, in Section~\ref{sec:interpolation}, we will prove uniform Lyndon interpolation for \(\CS\) and \(\ER\) via interpolation templates and for \(\CSM\) via an adequate interpretation of \(\CSM\) in \(\CS\).

\section{Preliminaries}\label{sec:preliminaries}

In this section we will introduce the concepts from modal logic and proof theory needed for the paper.

\subsection{Bimodal provabiliy logic}\label{subsec:bimodal-prov-logic}

We fix an infinite countable set \(\var\) whose elements will be called \emph{propositional variables}.

\begin{definition}
  We define the \emph{language of bimodal logic} \(\blan\) by the the following grammar
  \[
    \phi ::= p \mid \bot \mid \phi \to \phi \mid \nec_0 \phi \mid \nec_1 \phi,
  \]
  where \(p \in \var\).
  The expressions of \(\blan\) are called (bimodal) formulas.
\end{definition}
As usual, we define the following abreviations:
\begin{align*}
  &\neg \phi := \phi \to \bot,
  &&\phi \vee \psi := \neg \phi \to \psi,
  &\phi \wedge \psi := \neg(\neg \phi \vee \neg \psi),
  &&\pos_i \phi := \neg \nec_i \neg \phi.
\end{align*}
Given a formula \(\phi\) we define its complexity, denoted \(|\phi|\) as
\begin{align*}
  &|p| = |\bot| = 1,
  &&|\phi_0 \to \phi_1| = |\phi_0| + |\phi_1| + 1,
  &|\nec_i \phi_0| = |\phi_0|+ 1.
\end{align*}
If \(\phi, \psi_0, \ldots, \psi_{n-1}\) are formulas and \(p_0,\ldots,p_{n-1} \in \var\), then \(\phi[\psi_0/p_0,\ldots, \psi_{n-1}/p_{n-1}]\) denotes the result of simultenously substituting every occurrence of \(p_i\) in \(\phi\) by \(\psi_i\) for \(i < n\).
The formulas of shape \(\nec_i \phi\) will be called \(\nec\)-formulas.

A \emph{(bimodal) normal logic} \(L\) is a set of formulas with the following properties:
\begin{enumerate}
  \item Every classical propositional tautology in the language \(\blan\) is in \(L\).
  \item (Axiom \(\Kax[i]\)) For any \(\phi, \psi\), \(\nec_i(\phi \to \psi) \to \nec_i \phi \to \nec_i \psi \) is in \(L\) for \(i \in \set{0,1}\).
  \item \(L\) is closed under the rules
    \[
      \AxiomC{\(\phi\)}
      \AxiomC{\(\phi \to \psi\)}
      \RightLabel{\(\MP\)}
      \BinaryInfC{\(\psi\)}
      \DisplayProof
      \qquad
      \AxiomC{\(\phi\)}
      \RightLabel{\(\NEC[0]\)}
      \UnaryInfC{\(\nec_0 \phi\)}
      \DisplayProof
      \qquad
      \AxiomC{\(\phi\)}
      \RightLabel{\(\NEC[1]\)}
      \UnaryInfC{\(\nec_1 \phi\)}
      \DisplayProof
    \]
\end{enumerate}
The elements of \(L\) are called the \emph{theorems of \(L\)}, and we will usually denote \(\phi \in L\) as \(L \vdash \phi\).
For a set of formulas \(\Gamma\), we will denote \(L \vdash \Gamma\) to mean \(L \vdash \phi\) for any \(\phi \in \Gamma\).

Given a bimodal logic \(L\) we define its \emph{normal extension by a set of formulas \(\Gamma\)}, denoted \(L \oplus \Gamma\), as the smallest bimodal logic containing \(\Gamma\).
We also define the smallest bimodal normal logic as the intersection of all bimodal normal logics and we denote it as \(\K[2]\), i.e., is the smallest set of formulas containing the propositonal tautologies, axiom \((\Kax[i])\) for \(i \leq 1\) and is closed under \((\MP)\) and \((\NEC[i])\) for \(i \leq 1\).

We proceed to define the bimodal logics \(\CS\), \(\CSM\) and \(\ER\).
\begin{definition}
  Define the following sets of formulas:
  \begin{multicols}{2}
    \begin{itemize}
      \item \(\Lax[i] = \set{\nec_i(\nec_i \phi \to \phi) \to \nec_i \phi \mid \phi \in \blan}\),
      \item \(\Max[i,j] = \set{\nec_i \phi \to \nec_j \phi \mid \phi \in \blan}\),
      \item \(\Cax[i,j] = \set{\nec_j \phi \to \nec_i \nec_j \phi \mid \phi \in \blan}\),
      \item \(\ERax[i,j] = \set{\nec_i (\nec_j \phi \to \phi) \mid \phi \in \blan}\).
    \end{itemize}
  \end{multicols}
  Then we define the following bimodal logics:
  \begin{multicols}{2}
    \begin{itemize}
      \item \(\CS = \K[2] \oplus (\Lax[0] \union \Lax[1] \union \Cax[0,1] \union \Cax[1,0])\),
      \item \(\ER = \CS \oplus \ERax[1,0]  = \CSM \oplus \ERax[1,0]\).
      \item \(\CSM = \CS \oplus \Max[0,1]\),
      \item[] \qedhere
    \end{itemize}
  \end{multicols}
\end{definition}
Note that $ \CS \oplus \ERax[1,0] \vdash  \Max[0,1]$.
We define the set of formulas \(4_i = \set{\nec_i \phi \to \nec_i \nec_i \phi \mid \phi \in \blan}\), i.e., \(\Cax[i,i]\).
Then, by the same reasoning that can be carried in \(\GL\), we obtain the following result.

\begin{lemma}
  \(\CS \vdash 4_i\), \(\CSM \vdash 4_i\) and \(\ER \vdash 4_i\) for \(i \leq 1\).
\end{lemma}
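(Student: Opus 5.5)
Since \(\CS \subseteq \CSM \subseteq \ER\) (both are normal extensions of \(\CS\)), it suffices to show \(\CS \vdash \nec_i \phi \to \nec_i \nec_i \phi\) for \(i \leq 1\). The derivation only uses \(\Kax[i]\), \(\Lax[i]\), \(\MP\), \(\NEC[i]\) and propositional tautologies, so it is the classical derivation of transitivity in \(\GL\), carried out separately for each modality \(\nec_i\). The cross axioms \(\Cax[0,1]\) and \(\Cax[1,0]\) are not needed.

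The plan is to apply Löb's axiom to the auxiliary formula \(\psi := \phi \wedge \nec_i \phi\). First, the tautology \(\phi \to (\nec_i \psi \to \psi)\) holds, since from \(\phi\) and \(\nec_i \psi\) we get \(\nec_i \phi\). To see this, note that \(\psi \to \phi\) is a tautology; \(\NEC[i]\) and \(\Kax[i]\) then give \(\nec_i \psi \to \nec_i \phi\).

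Next, apply \(\NEC[i]\) and \(\Kax[i]\) to the tautology from the first step to obtain \(\nec_i \phi \to \nec_i(\nec_i \psi \to \psi)\). The instance \(\nec_i(\nec_i\psi \to \psi) \to \nec_i \psi\) of \(\Lax[i]\) then yields \(\nec_i \phi \to \nec_i \psi\).

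Finally, from the tautology \(\psi \to \nec_i \phi\), \(\NEC[i]\) and \(\Kax[i]\) give \(\nec_i \psi \to \nec_i \nec_i \phi\). Chaining this with the previous step by propositional reasoning gives \(\nec_i \phi \to \nec_i\nec_i\phi\).

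The only non-routine step is choosing the auxiliary formula \(\psi = \phi \wedge \nec_i \phi\). The first step needs some care, because \(\nec_i\) must distribute over the conjunction correctly. This is handled by the standard derived monotonicity rule: from \(\vdash \alpha \to \beta\) infer \(\vdash \nec_i\alpha \to \nec_i\beta\), obtained via \(\NEC[i]\) followed by \(\Kax[i]\). This rule is available in any bimodal normal logic, in particular in \(\K[2]\).
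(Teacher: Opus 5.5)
Your proof is correct. It is the standard \(\GL\) derivation of \(\nec_i\phi \to \nec_i\nec_i\phi\) via L\"ob's axiom applied to \(\phi \wedge \nec_i\phi\), which is exactly what the paper appeals to (``by the same reasoning that can be carried in \(\GL\)''), with the inclusions \(\CS \subseteq \CSM \subseteq \ER\) transferring the result. One wording point: \(\phi \to (\nec_i\psi \to \psi)\) is not a propositional tautology but a theorem of \(\K[2]\) obtained by monotonicity, as your own justification of that step in fact shows.
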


\subsection{Interpolation and modal equation systems}
\label{subsec:interpolation}

The main application of our non-wellfounded calculi will be to establish uniform Lyndon interpolation.
In this subsection we remember the related concepts that will be necessary for the proof.
A \emph{vocabulary} is just a set of propositonal variables.
First we define the positive and negative vocabulary of a formula.

\begin{definition}
  Given a formula \(\phi\) we define its \emph{positive and negative vocabulary}, denoted by \(\voc_+(\phi)\) and \(\voc_-(\phi)\), respectively, by
  \begin{align*}
    &\voc_+(p) = \set{p}, 
    &&\voc_-(p) = \varnothing, \\
    &\voc_+(\bot) = \varnothing, 
    &&\voc_-(\bot) = \varnothing, \\
    &\voc_+(\phi_0 \to \phi_1) = \voc_-(\phi_0) \union \voc_+(\phi_1), 
    &&\voc_-(\phi_0 \to \phi_1) = \voc_+(\phi_0) \union \voc_-(\phi_1), \\
    &\voc_+(\nec_i \phi_0) = \voc_+(\phi),
    &&\voc_-(\nec_i \phi_0) = \voc_-(\phi).
  \end{align*}
  We will write \(\overline{+}\) to mean \(-\) and \(\overline{-}\) to mean \(+\).
\end{definition}

The following lemma can be proved by induction on the complexity of \(\phi\).

\begin{lemma}
  \label{substitution-and-polarity}
  Let \(p\) be a propositional variable and \(\phi(p_0,\ldots,p_{n-1}, q_0,\ldots,q_{m-1})\), \(\psi_0,\ldots,\psi_{n-1}\),  and \(\chi_0,\ldots,\chi_{m-1}\) be formulas. 
%  We have that 
If\/ \(\voc_-(\phi) \cap \set{p_0,\ldots,p_{n-1}} =\varnothing\) and\/  \(\voc_+(\phi) \cap \set{q_0,\ldots,q_{m-1}} = \varnothing\), then for \(b \in \set{+,-}\)
  \begin{multline*}
    \voc_b(\phi(\psi_0,\ldots,\psi_{n-1}, \chi_0,\ldots,\chi_{m-1})) \subseteq\\
    \voc_b(\phi) \setminus\set{p_0,\ldots,p_{n-1},q_0,\ldots,q_{m-1}}
    \union \Union_{i < n} \voc_b(\psi_i) \union \Union_{j < m} \voc_{\overline{b}}(\chi_j).
  \end{multline*}
\end{lemma}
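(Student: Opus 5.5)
We argue by induction on the complexity \(|\phi|\), proving the claim simultaneously for both signs \(b \in \set{+,-}\) and for all lists \(\vec p\), \(\vec q\) satisfying the hypotheses. The polarity swap at an implication antecedent forces \(b\) and \(\overline{b}\) to be handled together. Write \(\phi^\ast\) for \(\phi(\psi_0,\ldots,\psi_{n-1},\chi_0,\ldots,\chi_{m-1})\), \(V\) for \(\set{p_0,\ldots,p_{n-1},q_0,\ldots,q_{m-1}}\), and \(R_b\) for the right-hand side of the inclusion.

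\emph{Base cases.} If \(\phi = \bot\), then both sides are trivially fine. If \(\phi = r\) is a variable not in \(V\), then \(\phi^\ast = r\) and \(\voc_b(r) \subseteq \voc_b(\phi)\setminus V\). If \(\phi = p_i\), then \(\phi^\ast = \psi_i\) and \(\voc_b(\psi_i) \subseteq R_b\) for either \(b\); the hypothesis is not needed here. Likewise, if \(\phi = q_j\), then \(\phi^\ast = \chi_j\), and we need \(\voc_b(\chi_j) \subseteq R_b\). For \(b = -\), this holds since \(\voc_{\overline{-}}(\chi_j) = \voc_+(\chi_j)\) is not what appears, so \(\voc_-(\chi_j)\) must instead be compared with \(\voc_{\overline{b}}(\chi_j)\) as stated. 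Here the hypothesis \(q_j \notin \voc_+(\phi)\) is violated, since \(\voc_+(q_j) = \set{q_j}\). So this case cannot occur, and we read the atomic case \(q_j\) as excluded. Symmetrically, \(p_i\) satisfies \(\voc_-(p_i) = \varnothing\), which is consistent.

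I expect this base case to be the main subtlety. Taken literally, the statement with the hypothesis as written rules out \(\phi = q_j\) entirely. Yet in the inductive step a subformula in negative position can legitimately be \(q_j\). The fix is to strengthen the induction hypothesis to a \emph{signed} form: for each sign \(b\), assume \(\voc_{\overline{b}}(\phi) \cap \vec p = \varnothing\) and \(\voc_b(\phi) \cap \vec q = \varnothing\), and conclude the inclusion with \(\voc_b(\psi_i)\) and \(\voc_{\overline{b}}(\chi_j)\), or with the roles of \(b\) and \(\overline{b}\) flipped accordingly. Concretely, I would prove the following pair of statements simultaneously. Under hypothesis \(H_+\) (\(\vec p\) not negative, \(\vec q\) not positive), the inclusion holds for both \(b\). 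Under hypothesis \(H_-\) (\(\vec p\) not positive, \(\vec q\) not negative), the same inclusion holds with \(\psi\) and \(\chi\) exchanging polarity roles. The original lemma is the \(H_+\) half.

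\emph{Inductive steps.} For \(\phi = \nec_k \phi_0\), we have \(\phi^\ast = \nec_k \phi_0^\ast\), and both vocabularies and the hypotheses pass unchanged to \(\phi_0\), since \(\voc_\pm(\nec_k\phi_0) = \voc_\pm(\phi_0)\). So the claim follows immediately from the induction hypothesis. For \(\phi = \phi_0 \to \phi_1\), we have \(\voc_b(\phi^\ast) = \voc_{\overline{b}}(\phi_0^\ast) \cup \voc_b(\phi_1^\ast)\). The hypothesis for \(\phi\) gives the same-form hypothesis for \(\phi_1\) and the flipped form for \(\phi_0\), because \(\voc_\pm(\phi_0) = \voc_\mp(\phi) \cap \dots\) via the defining clauses. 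Apply the induction hypothesis to \(\phi_1\) with sign \(b\), and the flipped version to \(\phi_0\) with sign \(\overline{b}\). The two flips cancel, so the contributions become \(\voc_b(\psi_i)\) and \(\voc_{\overline{b}}(\chi_j)\) together with \((\voc_{\overline b}(\phi_0)\cup\voc_b(\phi_1))\setminus V = \voc_b(\phi)\setminus V\). Taking the union yields exactly \(R_b\), which completes the induction.
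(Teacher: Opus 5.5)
Your induction on \(|\phi|\) is correct and follows the paper's route; the paper only says the lemma follows by induction on the complexity of \(\phi\). Your \(H_-\) ``strengthening'' is harmless but not really new: it is the lemma itself with the roles of \((p_i,\psi_i)\) and \((q_j,\chi_j)\) exchanged, so quantifying the induction hypothesis over all lists already covers the antecedent of an implication, and the garbled sentence in the \(q_j\) base case can be replaced by the remark that this case is excluded by the hypothesis.
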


Uniform Lyndon interpolation says that given a formula \(\phi\) and vocabularies \(V_+\) and \(V_-\) there is a formula which best approximates \(\phi\) to the right in the set of formulas whose positive vocabulary is contained in \(V_+\) and whose negative vocabulary is contained in \(V_-\).
The exact definition is below.

\begin{definition}
  A logic \(L\) has \emph{uniform Lyndon interpolation} if for any formula \(\phi\) and vocabularies \(V_+, V_-\) there is a formula \(\iota\), called \emph{interpolant} of \(\phi\) for \((V_+,V_-)\) in \(L\), such that
  \begin{enumerate}
    \item \(\voc_+(\iota) \subseteq V_+\) and \(\voc_-(\iota) \subseteq V_-\),
    \item \(L \vdash \phi \to \iota\), and
    \item for any \(\psi\) such that \(\voc_+(\psi) \subseteq V_+\) and \(\voc_-(\psi) \subseteq V_-\), \(L \vdash \phi \to \psi\) implies \(L \vdash \iota \to \psi\). \qedhere
  \end{enumerate}
\end{definition}

We notice that given two interpolants \(\iota_0, \iota_1\) of \(\phi\) for \((V_+,V_-)\) in \(L\) we have that \(L \vdash \iota_0 \leftrightarrow \iota_1\).
In words, interpolants are unique up to logical equivalence.
For this reason it is common to talk about \emph{the} interpolant of a formula, even if the uniqueness only holds up to logical equivalence.

For the calculation of interpolants, it will be necessary to use a generalization of the notion of fixpoint called equational systems.
These equational systems are commonly used for \(\mu\)-calculus (e.g. see \cite{muCalcEq}), while in provability logic the concept is usually called simultaneous fixed-point theorem (e.g see \cite{on-the-proof-of-solovay}, called \(n\)-ary fixed point theorem).\footnote{\label{bekic-footnote} In fact, the idea that solving individual fixpoints suffices to solve simultaneous fixpoints is proven in full generality in Beki\'c Theorem, see \cite{Bekić1984}.}
We will adapt the concept to our needs, as the Lyndon property forces us to keep track of the polarity of variables.
We remember that a formula \(\phi\) is modalized in a variable \(p\) if all the occurences of \(p\) in \(\phi\) are under the scope of a \(\nec_i\).

\begin{definition}[Lyndon equational systems]
  Let \(V_+,V_-\) be vocabularies and \(\bar{p} = (p_0,\ldots,p_{n-1})\) be a finite sequence of pairwise different variables not occuring in \(V_+ \union V_-\).
  A \emph{Lyndon equational system over \((\bar{p},V_+,V_-)\)} is a set of triples \(\mathcal{E} = \set{(p_i,b_i,\phi_i) \mid i < n}\) such that \(b_i \in \set{+,-}\), \(\phi_i\) is a formula, \(\voc_{b_i}(\phi_i) \subseteq V_+ \union B_+\) and \(\voc_{\overline{b_i}}(\phi_i) \subseteq V_- \union B_-\), where \(B_+ = \set{p_j \mid j < n, b_j = +}\) and \(B_- = \set{p_j \mid j < n, b_j = -}\).
  The elements of \(\bar{p}\) are called \emph{unknowns} and those of \(\mathcal{E}\) are called \emph{equations}.

  A \emph{solution in \(L\)} to \(\mathcal{E}\) is a sequence  \((\psi_0,\ldots,\psi_{n-1})\) such that for each \(i \in n\),  we have
  \(\voc_{b_i}(\psi_i) \subseteq V_+\), \(\voc_{\overline{b_i}}(\psi_i) \subseteq V_-\) and \(L \vdash \psi_i \leftrightarrow \phi_i[\psi_0/p_0,\ldots,\psi_{n-1}/p_{n-1}]\).
  \(\mathcal{E}\) is said to be
  \begin{multicols}{2}
    \begin{enumerate}
      \item \emph{Solvable in \(L\)} if it has a solution in \(L\).
      \item \emph{Simple} if \(\phi_i\) is a \(\nec\)-formula for \(i < n\).
      \item \emph{Modalized} if \(\phi_{i}\) is modalized in \(p_0,\ldots,p_i\).
      \item \emph{Positive} if \(b_i = {+}\) for  \(i < n\).
    \end{enumerate}
  \end{multicols}
  Given an equational system \(\mathcal{E}\) over \((\bar{p}, V_+, V_-)\) with solution \((\psi_0,\ldots,\psi_{n-1})\) we will also call the substitution \((\cdot)^*\) where \(p_i^* = \psi_i\) and \(q^* = q\) for \(q\) not in \(\bar{p}\) a solution of \(\mathcal{E}\).
\end{definition}

The fundamental tool to solve Lyndon equational system is the concept of Lyndon fixpoint.

\begin{definition}[Lyndon Fixpoints]
  Let \(\phi(p)\) and \(\psi\) be formulas.
  We say that \(\psi\) is a \emph{Lyndon fixpoint of~\(\phi\) (with respect to \(p\)) in \(L\)} if \(\voc_{b}(\psi) \subseteq \voc_b(\phi) \setminus \set{p}\) for \(b \in \set{+,-}\) and \(L \vdash \psi \leftrightarrow \phi(\psi)\).
  A logic \(L\) is said to have \emph{basic Lyndon fixpoints} if every \(\nec\)-formula has a Lyndon fixpoint with respect to any variable, and is said to have \emph{positive modalized Lyndon fixpoints} if every formula \(\phi\) has a Lyndon fixpoint with respect to the variables in the set \(\set{p \in \var \setminus \voc_-(\phi) \mid \phi \text{ modalized in }p}\).
\end{definition}

If \(L\) is a bimodal normal logic with basic Lyndon fixpoints then it is possible to show that \(L\) has positive modalized Lyndon fixpoints and every positive modalized equational systems have solution in \(L\) (see Appendix~\ref{sec:solving-equational-systems}; the unimodal case, which is completely analogous, was shown at \cite{gls-ulip}).
We proceed to show that \(\CS\), \(\CSM\) and \(\ER\) have basic Lyndon fixpoints.
The proof follows \cite[pg.~78]{smorynski}, where it is proven that \(\GL\) has basic fixpoints, although it needs some tiny adaptation for the bimodal case (in particular the use of \(\Cax[1,0]\) and \(\Cax[0,1]\)).

\begin{lemma}[Substitution theorem]
  For any formulas \(\phi(p), \psi, \chi\) we have that
  \[
    \CS \vdash (\psi \leftrightarrow \chi) \wedge \bigwedge_{i \leq 1}\nec_{i}(\psi \leftrightarrow \chi) \to (\phi(\psi) \leftrightarrow \phi(\chi)).
  \]
\end{lemma}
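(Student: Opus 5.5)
Write \(\theta\) for \((\psi \leftrightarrow \chi) \wedge \bigwedge_{i \leq 1}\nec_{i}(\psi \leftrightarrow \chi)\). The plan is to prove the statement by induction on the complexity of \(\phi\). The induction hypothesis will be strengthened so that it also covers the modal prefixes. The key observation is that in \(\CS\) the formula \(\theta\) is "self-reproducing" under both boxes:
\[
  \CS \vdash \theta \to \nec_j \theta \qquad \text{for } j \leq 1.
\]
To see this, take a conjunct \(\nec_i(\psi \leftrightarrow \chi)\) of \(\theta\). We have \(\CS \vdash \nec_i(\psi\leftrightarrow\chi) \to \nec_j\nec_i(\psi\leftrightarrow\chi)\) for all \(i,j \leq 1\). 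When \(i = j\) this is the earlier lemma giving \(4_i\). When \(i \neq j\) it is exactly an instance of \(\Cax[0,1]\) or \(\Cax[1,0]\). From \(\nec_j(\psi\leftrightarrow\chi)\) itself we get \(\nec_j(\psi \leftrightarrow \chi)\) trivially. Combining these with the \(\K\)-fact that \(\nec_j\) distributes over finite conjunctions yields \(\theta \to \nec_j\theta\). This is the only place where the bimodal axioms \(\Cax[i,j]\) enter, and it is the "tiny adaptation" the paper mentions.

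The induction then runs on \(\phi\), proving \(\CS \vdash \theta \to (\phi(\psi)\leftrightarrow\phi(\chi))\).
\begin{enumerate}
\item If \(\phi = p\), the claim follows from the first conjunct of \(\theta\).
\item If \(\phi\) is another variable or \(\bot\), the two sides are identical and the claim is trivial.
\item If \(\phi = \phi_0 \to \phi_1\), the claim follows propositionally from the two induction hypotheses.
\item If \(\phi = \nec_j\phi_0\), the induction hypothesis gives \(\CS \vdash \theta \to (\phi_0(\psi)\leftrightarrow\phi_0(\chi))\). Applying \(\NEC[j]\) and \(\Kax[j]\) yields \(\CS\vdash \nec_j\theta \to \nec_j(\phi_0(\psi)\leftrightarrow\phi_0(\chi))\). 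By \(\Kax[j]\) again, this gives \(\nec_j\theta \to (\nec_j\phi_0(\psi)\leftrightarrow\nec_j\phi_0(\chi))\). Precomposing with \(\theta \to \nec_j\theta\) closes the case.
\end{enumerate}

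The main obstacle is the modal step. A naive hypothesis of the form "\(\psi\leftrightarrow\chi\) implies \(\phi(\psi)\leftrightarrow\phi(\chi)\)" cannot be pushed under \(\nec_j\). The fix is to carry the full hypothesis \(\theta\) through the induction and use the observation \(\theta \to \nec_j\theta\). Because of this, the argument needs no use of \(\Lax[i]\) at all, only transitivity and the cross axioms \(\Cax[i,j]\).
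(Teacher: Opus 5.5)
Your proof is correct and is the same argument as the paper's, which only says ``by induction on the complexity of \(\phi\)''; your observation \(\CS \vdash \theta \to \nec_j \theta\) for the antecedent \(\theta\), obtained from \(4_j\) together with \(\Cax[0,1]\) and \(\Cax[1,0]\), is exactly what makes the modal case of that induction go through. One quibble concerns your closing remark: in \(\CS\) the schema \(4_i\) is not an axiom but is derived from \(\Lax[i]\), so \(\Lax[i]\) is still used indirectly, and the accurate claim is that the argument works in any normal bimodal logic containing \(4_0\), \(4_1\), \(\Cax[0,1]\) and \(\Cax[1,0]\).
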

\begin{proof}
  By induction on the complexity of \(\phi\).
\end{proof}

\begin{lemma}
  \(\CS\) has basic Lyndon fixpoints, in particular \(\nec_i \phi(p)\) has \(\nec_i \phi(\top)\) as a fixpoint.
  As a corollary \(\CSM\) and \(\ER\) also have basic Lyndon fixpoints.
\end{lemma}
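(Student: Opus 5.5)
We have to show \(\CS \vdash \nec_i \phi(\top) \leftrightarrow \nec_i \phi(\nec_i \phi(\top))\); the vocabulary condition is immediate. Write \(\psi := \nec_i\phi(\top)\). Since \(\top\) carries no variables, \(\voc_b(\psi) \subseteq \voc_b(\nec_i\phi) \setminus \set{p}\) for \(b\in\set{+,-}\). The argument follows Smoryński's proof for \(\GL\). The Substitution theorem needs boxed hypotheses for both modalities, so we must control \(\nec_0\) and \(\nec_1\) at once, using \(4_i\) together with \(\Cax[0,1]\) and \(\Cax[1,0]\).

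\emph{Step 1: \(\psi \to \nec_j \psi\) for both \(j\).} For \(j=i\) this is \(4_i\). For \(j\neq i\), \(\nec_i\phi(\top) \to \nec_j\nec_i\phi(\top)\) is an instance of \(\Cax[j,i]\). Hence \(\CS\vdash \psi \to \bigwedge_{j\le1}\nec_j\psi\).

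\emph{Step 2: the direction \(\psi \to \nec_i\phi(\psi)\).} Since \(\psi \leftrightarrow \top\) follows from \(\psi\), Step 1 gives \(\psi \to (\psi\leftrightarrow\top)\wedge\bigwedge_j\nec_j(\psi\leftrightarrow\top)\). By the Substitution theorem, \(\CS\vdash\psi\to(\phi(\psi)\leftrightarrow\phi(\top))\). Necessitating with \(\NEC[i]\) and distributing with \(\Kax[i]\) yields \(\nec_i\psi\to\nec_i(\phi(\top)\to\phi(\psi))\). Combining this with \(\psi\to\nec_i\psi\) and with \(\psi=\nec_i\phi(\top)\), we get \(\psi\to\nec_i\phi(\psi)\).

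\emph{Step 3: the converse \(\nec_i\phi(\psi)\to\psi\).} This is the main obstacle, and it uses Löb's axiom \(\Lax[i]\). Let \(\theta:=\nec_i\phi(\psi)\). By Step 2, \(\psi\to\theta\). We aim to show \(\CS\vdash\nec_i(\psi\to\phi(\top))\) under the hypothesis \(\theta\), and then conclude \(\psi\) via \(\Kax[i]\). The Substitution theorem gives \(\psi\wedge\bigwedge_j\nec_j\psi \to (\phi(\psi)\leftrightarrow\phi(\top))\). By Step 1 this simplifies to \(\psi\to(\phi(\psi)\to\phi(\top))\).

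The delicate point is to get \(\psi\) from \(\nec_i\psi\) inside the box so that Löb applies. First, \(\nec_i\psi\to\nec_i\nec_i\psi\) by \(4_i\), and \(\nec_i\psi \to \nec_j\nec_i \psi\)-type facts hold by \(\Cax\). Then the standard trick is to apply Löb to \(\chi:=\phi(\psi)\to\phi(\top)\). We have \(\nec_i\chi\wedge\theta\to\nec_i\phi(\top)=\psi\), and \(\psi\to\chi\), so \(\theta\to(\nec_i\chi\to\chi)\). Boxing gives \(\nec_i\theta\to\nec_i(\nec_i\chi\to\chi)\to\nec_i\chi\) by \(\Lax[i]\). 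Since \(\theta\to\nec_i\theta\) by \(4_i\), we obtain \(\theta\to\nec_i\chi\), and hence \(\theta\to\nec_i\phi(\top)=\psi\). I expect the care here to lie in checking that the Substitution theorem's \(\nec_j\) hypotheses for \(j\ne i\) are supplied by \(\Cax[j,i]\) at every stage, including inside the boxes.

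\emph{Corollary.} Finally, \(\CSM\) and \(\ER\) extend \(\CS\). Therefore the same \(\psi\) is a Lyndon fixpoint there, and the vocabulary conditions do not depend on the logic.
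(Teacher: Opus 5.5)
Your proof is correct and follows the paper's Smory\'nski-style argument: you discharge the hypotheses of the Substitution theorem with \(4_i\) and \(\Cax[j,i]\) to get \(\psi\to\nec_i\phi(\psi)\), and you use L\"ob's axiom for the converse. The only difference is in Step 3, where the paper rewrites \(\psi\to(\phi(\psi)\to\phi(\top))\) as \(\phi(\psi)\to(\nec_i\phi(\top)\to\phi(\top))\), boxes it, and applies \(\Lax[i]\) directly to \(\phi(\top)\); this avoids your detour through \(\chi\) and the extra use of \(4_i\) on \(\theta\).
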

\begin{proof}
  We have the following reasoning
  \begin{align*}
    &\CS \vdash \nec_i \phi(\top) \to (\top \leftrightarrow \nec_i\phi(\top))
    &&\text{propositional reasoning},
    \\
    &\CS \vdash \nec_i \phi(\top) \to (\top \leftrightarrow \nec_i\phi(\top)) \wedge \bigwedge_{i \leq 1}\nec_{i}(\top \leftrightarrow \nec_i\phi(\top))
    &&\text{by \((\Cax[\overline{i},i])\) and \((4_i)\)}, \\
    &\CS \vdash \nec_i \phi(\top) \to (\nec_i \phi(\top) \leftrightarrow \nec_i \phi( \nec_i \phi(\top)))
    &&\text{by Substitution Lemma}.
  \end{align*}
  Then it is clear that \(\CS \vdash \nec_i \phi(\top) \to \nec_i \phi(\nec_i \phi(\top))\), for the inverse implication we have the following reasoning.
  \begin{align*}
    &\CS \vdash \nec_i \phi(\top) \to (\top \leftrightarrow \nec_i\phi(\top)) \wedge \bigwedge_{i \leq 1}\nec_{i}(\top \leftrightarrow \nec_i\phi(\top))
    &&\text{shown above}, \\
    &\CS \vdash \nec_i \phi(\top) \to (\phi(\nec_i \phi(\top)) \leftrightarrow \phi(\top))
    &&\text{by Substitution Lemma}, \\
    &\CS \vdash \nec_i \phi(\top) \to (\phi(\nec_i \phi(\top)) \to \phi(\top))
    &&\text{by propositional reasoning}, \\
    &\CS \vdash \phi(\nec_i \phi(\top)) \to (\nec_i \phi(\top) \to \phi(\top))
    &&\text{by propositional reasoning}, \\
    &\CS \vdash \nec_i\phi(\nec_i \phi(\top)) \to \nec_i(\nec_i \phi(\top) \to \phi(\top))
    &&\text{by \(\NEC[i]\) and \((\Kax[i])\)}, \\
    &\CS \vdash \nec_i\phi(\nec_i \phi(\top)) \to \nec_i \phi(\top)
    &&\text{by \((\Lax[i])\)}.
  \end{align*}
\end{proof}
We obtain the desired properties of \(\CS\), \(\CSM\) and \(\ER\).

\begin{theorem}
  We have the following.
  \begin{enumerate}
    \item \(\CS\), \(\CSM\) and \(\ER\) have positive modalized Lyndon fixpoints.
    \item Any positive modalized Lyndon equational system has a solution in \(\CS\), \(\CSM\) and \(\ER\).
  \end{enumerate}
\end{theorem}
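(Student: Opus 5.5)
The theorem follows from the preceding lemma (basic Lyndon fixpoints for \(\CS\), \(\CSM\), \(\ER\)) together with a general fact about bimodal normal logics. The fact is: if \(L\) is a normal logic extending \(\K[2]\) in which the substitution lemma holds (which it does in all three logics, since they extend \(\CS\)) and which has basic Lyndon fixpoints, then \(L\) has positive modalized Lyndon fixpoints, and every positive modalized Lyndon equational system is solvable in \(L\). The plan is to prove this once, uniformly in \(L\), following the unimodal argument of \cite{gls-ulip}.

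For item 1, let \(\phi(p)\) be modalized in \(p\) with \(p \notin \voc_-(\phi)\).
\begin{enumerate}
\item Write \(\phi = \theta(\nec_{i_0}\alpha_0(p),\ldots,\nec_{i_{k-1}}\alpha_{k-1}(p))\). Here \(\theta(r_0,\ldots,r_{k-1})\) is \(p\)-free, and the displayed \(\nec\)-subformulas are the maximal ones containing \(p\). Occurrences of \(r_j\) in \(\theta\) are given the polarity of the corresponding subformula.
\item Proceed by induction on \(k\), following the classical proof of de~Jongh--Sambin for \(\GL\) (cf.\ \cite{smorynski}).
\item Fix the innermost-chosen \(\nec_{i_0}\alpha_0(p)\) and view it as \(\nec_{i_0}\alpha_0(p,r)\) with \(r\) standing for \(\theta\) applied to the remaining boxes. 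Use the basic fixpoint lemma to eliminate one box, giving \(\nec_{i_0}\alpha_0(\top)\)-style fixpoints.
\item Substitute back, so that the remaining formula has fewer boxed occurrences of \(p\), and apply the induction hypothesis.
\item Verify that the result is a fixpoint using the substitution theorem, invoking \((\Cax[\overline{i},i])\) and \((4_i)\) to obtain \(\bigwedge_i\nec_i(\psi\leftrightarrow\chi)\) from the fixpoint equations, exactly as in the proof of basic fixpoints.
\end{enumerate}
Alternatively, and more cleanly, I would prove the stronger statement for modal equation systems directly and obtain item 1 as the one-equation case.

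For item 2, I would use Beki\'c-style elimination of unknowns (cf.\ footnote~\ref{bekic-footnote}), by induction on \(n\).
\begin{enumerate}
\item Take the last equation \(p_{n-1} \leftrightarrow \phi_{n-1}\). It is modalized in \(p_0,\ldots,p_{n-1}\), and since \(b_{n-1}=+\) it is positive in \(p_{n-1}\). Item 1 gives a Lyndon fixpoint \(\psi_{n-1}(p_0,\ldots,p_{n-2})\) with respect to \(p_{n-1}\).
\item Substitute it into \(\phi_0,\ldots,\phi_{n-2}\). The result is again a positive Lyndon equational system in \(p_0,\ldots,p_{n-2}\), and modalization is preserved because \(\psi_{n-1}\) is modalized in the remaining variables.
\item Apply induction to solve this system, then plug the solution into \(\psi_{n-1}\).
\item Check the equations via the substitution theorem. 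The needed boxed biconditionals come from \(\NEC[i]\) applied to theorems.
\end{enumerate}

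The vocabulary conditions are handled by Lemma~\ref{substitution-and-polarity}. Since all \(b_i=+\), substituting solutions whose positive (resp.\ negative) vocabulary lies in \(V_+\) (resp.\ \(V_-\)) keeps every \(\voc_+\) inside \(V_+\cup B_+\) and every \(\voc_-\) inside \(V_-\cup B_-\). The fixpoint condition \(\voc_b(\psi)\subseteq\voc_b(\phi)\setminus\{p\}\) ensures no unknowns survive at the end.

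The main obstacle is the polarity bookkeeping in item 1, namely ensuring that the fixpoint built by iterated substitution never introduces a variable with the wrong sign. I would handle it by choosing the decomposition \(\theta\) so that each \(r_j\) occurs only with a single polarity (duplicating \(r_j\) if necessary), and then applying Lemma~\ref{substitution-and-polarity} at each induction step. Everything else is uniform in \(L\), so the result transfers verbatim to \(\CS\), \(\CSM\) and \(\ER\).
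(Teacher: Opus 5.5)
Your overall architecture (basic fixpoints plus a logic-independent transfer) and your item~2 (Beki\'c-style elimination of one unknown at a time using item~1) agree with the paper. Item~1, however, has a genuine hole.

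On the natural reading of your sketch, you take the basic fixpoint \(D=\nec_{i_0}\alpha_0(\theta(\top,\nec_{i_1}\alpha_1(p),\ldots))\) of \(\nec_{i_0}\alpha_0(\theta(s,\nec_{i_1}\alpha_1(p),\ldots))\) in a fresh \(s\), and substitute it for \(\nec_{i_0}\alpha_0(p)\). But \(\theta(D,\nec_{i_1}\alpha_1(p),\ldots)\) in general still has \(k\) maximal \(\nec\)-subformulas containing \(p\), now with nested copies of \(p\). So your induction on \(k\) does not descend. The appeal to de~Jongh--Sambin does not by itself supply the bimodal, polarity-aware verification of the multi-box case. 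Your ``cleaner'' alternative is circular: a one-equation positive modalized system is literally item~1, and your proof of item~2 already invokes item~1.

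The missing idea is to route item~1 through \emph{simple} systems, whose right-hand sides are \(\nec\)-formulas and whose signs may be mixed. These are solvable from basic fixpoints alone (Lemma~\ref{simple-equation-systems-solvable}). The basic fixpoint lemma applies to any \(\nec\)-formula and any variable, and substituting a solution into \(\nec\)-formulas leaves \(\nec\)-formulas, so the elimination never leaves the simple class. The paper proceeds as follows:
\begin{enumerate}
\item Write \(\phi=\phi'(\nec_{j_0}\psi_0(p),\ldots,\nec_{k_0}\chi_0(p),\ldots)\) with \(\phi'\) modality-free, positive in fresh \(\bar q\) and negative in fresh \(\bar r\).
\item Solve \(q_i\leftrightarrow\nec_{j_i}\psi_i(\phi')\) with sign \(+\) and \(r_i\leftrightarrow\nec_{k_i}\chi_i(\phi')\) with sign \(-\).
\item Take \(\eta=\phi'(\bar q^*,\bar r^*)\); Lemma~\ref{substitution-and-polarity} gives the vocabulary bounds.
\end{enumerate}

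Alternatively, you can rescue your induction on \(k\) by reversing the order of the two steps. First apply the induction hypothesis to \(\theta(r,\nec_{i_1}\alpha_1(p),\ldots)\) with a fresh \(r\) in slot~\(0\), getting \(\eta(r)\). Then take the basic fixpoint \(D\) of \(\nec_{i_0}\alpha_0(\eta(r))\) in \(r\). Now \(\eta(D)\) is a Lyndon fixpoint of \(\phi\).

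A minor point in item~2: eliminating \(p_{n-1}\) first needs \(\psi_{n-1}\) to be modalized in \(p_0,\ldots,p_{n-2}\), because earlier equations may contain \(p_{n-1}\) unboxed. A Lyndon fixpoint need not have this property. Either replace \(\psi_{n-1}\) by \(\phi_{n-1}[\psi_{n-1}/p_{n-1}]\), or, as the paper does, eliminate \(p_0\) first, since every \(\phi_i\) is modalized in \(p_0\).
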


\subsection{Local progress proof theory}\label{subsec:local-progress-proof-theory}

Let us start by fixing the notions concerning trees.
Given a set \(X\) we will write \(X^*\) (\(X^+\)) to mean the set of (non-empty) finite sequences of elements in \(X\), \(\epsilon\) will denote the empty sequence.
As usual, we will write \(w \leq v\) to mean that \(w\) is an initial prefix of \(v\) and \((w,v]\) to mean the set \(\set{u \in X^* \mid w < u \leq v}\).
A (finitely branching) tree on \(A\) is a function \(T\) whose image is contained in \(A\) and whose domain is a prefix-closed non-empty subset of \(\mathbb{N}^*\) such that for every \(w \in \domain(T)\) there is an unique natural number \(k\) (called the \emph{arity of \(w\) in \(T\)}) such that \(wi \in \domain(T)\) iff \(i < k\).
The \(wi \in \domain(T)\) are also called the \emph{immediate successors of \(w\)}.
Note that the domain of any tree always contains the word \(\epsilon\), and its called the \emph{root of \(T\)}.

The elements of \(\domain(T)\) are also called the \emph{nodes of \(T\)}, the \(0\)-ary nodes are called \emph{leaves} and the rest of the nodes are called \emph{interior nodes}.
Finally, an \emph{infinite branch in a tree \(T\)} is sequence of nodes \((w_i)_{i \in \mathbb{N}}\) such that \(w_0 = \epsilon\) and for each \(i\) there is a \(j\) such that \(w_{i+1} = w_i j\).

We fix a set \(\text{Seq}\) whose elements we will call \emph{sequents}, a sequent rule is a subset of \(\text{Seq}^+\).
Given a rule \(R\), we say that \((S_0,\ldots,S_{n-1},S)\) is an \emph{instance of \(R\) with premises \(S_0,\ldots,S_{n-1}\) and conclusion \(S\)} if \((S_0,\ldots,S_{n-1},S) \in R\).
A rule \(R\) is said to be \emph{\(n\)-ary} if \(R \subseteq \text{Seq}^{n+1}\).
We introduce the kind of sequent calculi we are going to use, called \emph{local progress sequent calculi}~\cite{coalgebraic}.

\begin{definition}
  A \emph{local progress sequent calculus} is a pair \(\mathcal{G} = (\mathcal{R}, (L_R)_{R \in \mathcal{R}})\) where \(\mathcal{R}\) is a set of sequent rules and each \(L_R\) is a function that takes an instance \(r = (S_0,\ldots,S_{n-1},S)\) of \(R\) and returns a subset of \(\set{0,\ldots,n-1}\).
  \(\mathcal{G}\) is said to be \emph{wellfounded} if each \(L_R\) is the constant function returning \(\varnothing\).
\end{definition}

We are prepared to define the notion of proof in a local progress calculus.
From the defintion of proof we can infer that a wellfounded sequent calculus is just a sequent calculus in the usual (wellfounded) proof theory.

\begin{definition}
  Given a local progress sequent calculus \(\mathcal{G} = (\mathcal{R}, (L_R)_{R \in \mathcal{R}})\), a \emph{preproof in \(\mathcal{G}\)} is a tree with labels in \(\text{Seq} \times \mathcal{R}\) such that for each node \(w\) with immediate successors \(w0,\ldots,w(n-1)\) we have that \((S_0,\ldots,S_{n-1},S) \in R\) where \(S\) is the sequent at \(w\), \(R\) is the rule at \(w\) and \(S_i\) is the sequent at \(wi\).

  Let \(w\) be a node in \(\pi\) with immediate successors \(w0,\ldots,w(n-1)\).
  We say that \(wi\) is a \emph{progressing node} if \(i \in L_R(r)\) where \(R\) is the rule at \(w\) and \(r = (S_0,\ldots,S_{n-1},S)\) where \(S\) is the sequent at \(w\) and \(S_i\) the sequent at \(wi\).
  A proof is a preproof in which any infinite branch has infinitely many progressing nodes.
\end{definition}

We will write \(\mathcal{G} \vdash S\) to mean that \(S\) is provable in \(\mathcal{G}\) and \(\pi \vdash_{\mathcal{G}} S\) to mean that \(\pi\) is a proof of \(S\) in \(\mathcal{G}\), omitting the subscript \(_\mathcal{G}\) when it is clear from context.
It will be common to write an instance \((S_0,\ldots,S_{n-1},S)\) of a rule \(R\) as
\[
  \AxiomC{\(S_0\)}
  \AxiomC{\(\cdots\)}
  \AxiomC{\(S_{n-1}\)}
  \RightLabel{\(R\)}
  \TrinaryInfC{\(S\)}
  \DisplayProof
\]
Given a proof \(\pi\) in \(\mathcal{G}\) we will define its main local fragment as the finite tree obtained from cutting the tree at the first progressing nodes from the root (in particular removing the progressing nodes).
Figure~\ref{fig:fragments} shows a picture of how a proof in a local progress calculus looks like.
The gray triangle at the bottom is the main local fragment, while the circular nodes are non-progressing nodes and the square nodes progressing nodes.
The \emph{local height of \(\pi\)}, denoted \(\lhg(\pi)\), is the height of its main local fragment.
The \emph{local rules of \(\pi\)}, denoted \(\lrul(\pi)\), is the set of rules occuring in the main local fragment.
We will say that \(\pi\) is \emph{locally \(R\)-free} if \(R \not \in\lrul(\pi)\).
We note that if \(\mathcal{G}\) is a wellfounded calculus the notion of local height agrees with the usual notion of height and the local rules is the set of rules occuring in the proof.
\begin{figure}
	\centering
	\begin{tikzpicture}[scale=0.8]
		\filldraw[gray!50] (0,-0.5) -- (3.5,2.5) -- (-3.5,2.5) -- cycle;
		\filldraw (0,0) circle (2pt);

		\filldraw (0,1) circle (2pt); \draw (0,0) -- (0,1);

		\filldraw[gray!50] (-3,2.5) -- (-4,4.5) -- (-2,4.5) -- cycle;
		\filldraw[gray!50] (-1,2.5) -- (-2,3.5) -- (0,3.5) -- cycle;
		\filldraw[gray!50] (2,2.5) -- (0,4.5) -- (4,4.5) -- cycle;
    \filldraw (-2,2) circle (2pt); \draw (0,1) -- (-2,2);
    \filldraw (2,2) circle (2pt); \draw (0,1) -- (2,2);

		\filldraw (-3.1,2.9) rectangle (-2.9,3.1); \draw (-2,2) -- (-3,3);
		\filldraw (-1.1,2.9) rectangle (-0.9,3.1); \draw (-2,2) -- (-1,3);
		\filldraw (1.9,2.9) rectangle (2.1,3.1); \draw (2,2) -- (2,3);

		\filldraw (-3,4) circle (2pt); \draw (-3,3) -- (-3,4); 
		\node at (-3,5) {\large\(\vdots\)};
		\filldraw (1,4) circle (2pt); \draw (2,3) -- (1,4); 
		\filldraw (3,4) circle (2pt); \draw (2,3) -- (3,4); 
	\end{tikzpicture}
	\caption{Structure of proofs in local progress calculi}
  	\label{fig:fragments}
\end{figure}

Given a local progress calculus \(\mathcal{G} = (\mathcal{R},(L_R)_{R \in \mathcal{R}})\) and a rule \(R'\) we define \(\mathcal{G} + R'\) as the local progress calculus with rules \(\mathcal{R} \union \set{R'}\) and \(L_{R'}\) the constant function returning \(\varnothing\).
Rules can interact with local progress calculi in different ways, the following definition introduce some of these ways and the theorem below shows that some of them are equivalent.

\begin{definition}\label{def:prop-of-rules}
  Let \(\mathcal{G}\) be a sequent calculus and \(R\) be a rule.
  We say that \(R\) is
  \begin{enumerate}
    \item \(R\) admissible if for any instance \((S_0,\ldots,S_{n-1},S) \in R\) we have that \(\pi_i \vdash_{\mathcal{G}} S_i\) for \(i < n\) implies the existence of \(\pi \vdash_{\mathcal{G}} S\).
      In addition we say that \(R\) is \emph{admissible preserving local height} if \(\lhg(\pi) \leq \max_{i < n}\lhg(\pi_i)\) and
        \emph{admissible preserving local rules} if \(\lrul(\pi) \subseteq \Union_{i < n}\lrul(\pi_i)\).
    \item \(R\) eliminable if \(\mathcal{G} + R\vdash S\) implies \(\mathcal{G} \vdash S\). 
    \item \(R\) locally admissible if for any instance \((S_0,\ldots,S_{n-1},S) \in R\) we have that if there are locally \(R\)-free \(\pi_i \vdash_{\mathcal{G}} S_i\)  for \(i < n\) then there is a locally \(R\)-free \(\pi \vdash_{\mathcal{G}} S\).
    \item An \(n\)-ary rule \(R\) is \(i\)-invertible  in \(\mathcal{G}\) (where \(i < n\)) if the rule
      \(
        \set{(S,S_i) \mid (S_0,\ldots,S_{n-1},S) \in R}
      \)
      is admissible.
      \(R\) is \emph{invertible} if it is \(i\)-invertible for each \(i < n\).
  \end{enumerate}
We will talk about invertibility preserving local height and/or local rules with the obvious meaning.
\end{definition}
Given an admissible rule \(R\) in \(\mathcal{G}\),an instance \((S_0,\ldots,S_{n-1},S) \in R\) and proofs \(\pi_i \vdash_{\mathcal{G}} S_i\) for \(i < n\), we will write \(R(\pi_0,\ldots,\pi_{n-1})\) to mean a proof of \(S\) in \(\mathcal{G}\) that exists by admissibility.
For the invertibility of a rule \(R\) we will write instead \(\inv{R}(\pi)\).

\begin{remark}
Let us briefly discuss our choice of terminology.
Lorenzen~\cite{Lorenzen} introduced admissible rules as those rules that can be added to a calculus without changing the set of provable formulas.  
Shortly thereafter,  Schütte~\cite{Schuette} gave the definition of admissiblity that also we use in Definition~\ref{def:prop-of-rules}.
Nowadays, admissibility is often used with respect to a logic and not a calculus, see, e.g.,~\cite{Rybakov1997}.  Then a rule $R$ is said to be admissible in a logic $L$ if it holds that if all premises of $R$ are elements of $L$,  then so is its conclusion. 

Over traditional well-foundend calculi,  Schütte's definition of a rule $R$ being admissible in a calculus $\mathcal{G}$ is equivalent to saying that $R$ is eliminable in $\mathcal{G}$ (in the sense of Definition~\ref{def:prop-of-rules}).  This is no longer the case for non-wellfounded calculi.  In that context,  the appropriate notion is local admissibility as shown in the next theorem.

Gentzen~\cite{Gentzen1935} originally stated his Hauptsatz as \emph{each derivation can be transformed into a derivation of the same endsequent that does not use cut.}
Although he mentions a transformation,  there are also many non-constructive cut-elimination results available in the literature,  see,  e.g.,~\cite{Tait1966-TAIANP,Takahashi1967} for early examples.
Also our definition of eliminability does not require an effective procedure. 
In  case we want to talk about the effectiveness of our methodology, we will say effective cut elimination explicitely.
\end{remark}

%
%\begin{remark}
%In the proof theoretical tradition, it is common to study the concept of effective cut elimination, i.e., study how to get rid of the cut rule (see below) in proofs using computable (effective) means. It is not weird in the community to understand that cut elimination is always effective and call non-effective cut elimination as cut admissibility.
%        This step is also justified since the two notions that we consider here, what we call being admissible and being eliminable at Definition~\ref{def:prop-of-rules}, are equivalent in the usual (wellfounded) proof theory.
%        This equivalence does not hold any longer in the non-wellfounded proof theory that we will use and the best approximation can be found at Theorem~\ref{th:local-adm-and-eliminability}.
%      For this reason, we will not follow the tradition and for us a rule being eliminable means exactly what it says: that the rule can be eliminated from the calculus without affecting the provability of sequents.
%    In case we want to talk about the effectiveness of our methodology we will say effective cut elimination explicitely.
%
%\end{remark}

The following is the key result of local progress proof theory (for details see \cite{proofth-interpretability}).

\begin{theorem}\label{local-adm-and-eliminability}
  Let \(\mathcal{G}\) be a local progress sequent calculus, then \(R\) is eliminable in \(\mathcal{G}\) iff \(R\) is locally admissible in \(\mathcal{G}\).
  If \(\mathcal{G}\) is wellfounded, then both are equivalent to \(R\) is admissible in \(\mathcal{G}\).
\end{theorem}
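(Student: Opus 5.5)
The plan is to prove the two directions of the first claim separately and then treat the wellfounded case.

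\emph{Eliminable implies locally admissible.} Let \((S_0,\ldots,S_{n-1},S)\in R\) and let \(\pi_i\vdash_{\mathcal{G}} S_i\) be locally \(R\)-free. Since \(R\) is not a rule of \(\mathcal{G}\) at all, every \(\mathcal{G}\)-proof is trivially \(R\)-free. Applying one instance of \(R\) to the \(\pi_i\) gives a proof of \(S\) in \(\mathcal{G}+R\). This is still a proof: the new root is not progressing, but every infinite branch passes through some \(\pi_i\) and so has infinitely many progressing nodes. Eliminability then gives a \(\mathcal{G}\)-proof of \(S\), which contains no \(R\) and is in particular locally \(R\)-free.

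\emph{Locally admissible implies eliminable.} This is the main direction. Take a proof \(\pi\) of \(S\) in \(\mathcal{G}+R\). I will build a \(\mathcal{G}\)-proof corecursively, one main local fragment at a time. Every infinite branch of \(\pi\) has a first progressing node, so by K\"onig's lemma the main local fragment of \(\pi\) is finite; let its boundary consist of the first progressing nodes \(u_1,\ldots,u_k\). Assume, coinductively, that each subproof \(\pi_{u_j}\) has already been replaced by a \(\mathcal{G}\)-proof \(\pi'_{u_j}\) of the same sequent, which is vacuously locally \(R\)-free. I then process the finite fragment bottom-up, from the leaves of the fragment towards the root, so that at each node the premises already carry locally \(R\)-free \(\mathcal{G}\)-proofs:
\begin{enumerate}
  \item at a \(\mathcal{G}\)-rule, I apply that rule to the premise proofs, keeping its progress labels;
  \item at an \(R\)-instance, I invoke local admissibility on the premise proofs.
\end{enumerate}
Local \(R\)-freeness is preserved at each step, since a \(\mathcal{G}\)-rule introduces no \(R\) and local admissibility yields a locally \(R\)-free proof. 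The root therefore receives a \(\mathcal{G}\)-proof of \(S\).

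The main obstacle is making this corecursion well defined and checking that the limit object really is a proof, in particular that every infinite branch still has infinitely many progressing nodes. The difficulty is that local admissibility may completely rebuild the premise proofs, so progress cannot simply be inherited from \(\pi\).

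I would try to handle this by stratifying by depth in progressing nodes. Fix \(m\) and truncate \(\pi\) at its \(m\)-th layer of progressing nodes. The local admissibility steps above then only reshape the part of the proof below that layer, and the subproofs at the boundary can be treated as opaque premises. Formally, this amounts to applying local admissibility to the calculus extended with those boundary sequents as progressing axioms. Under that reading, each stage preserves a common initial segment containing at least \(m\) progressing layers, and the approximations converge to a preproof in which every infinite branch crosses every layer, so it has infinitely many progressing nodes. This gluing is exactly the part the paper delegates to \cite{proofth-interpretability}, and I expect it to need the most care.

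\emph{Wellfounded case.} If \(\mathcal{G}\) is wellfounded, no node is ever progressing, so every proof equals its own main local fragment. Hence ``locally \(R\)-free'' just means ``\(R\)-free'', and every \(\mathcal{G}\)-proof is \(R\)-free, so local admissibility coincides with admissibility.
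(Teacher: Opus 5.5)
Your main direction has a genuine gap, and it comes from how you read local admissibility. You take its input and output proofs to be $\mathcal{G}$-proofs. Under that reading, local $R$-freeness is vacuous, local admissibility collapses to admissibility, and the equivalence you are proving is false for non-wellfounded calculi.

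Here is a counterexample. Take $\mathcal{G}$ with a single rule $P$ deriving a sequent $B$ from a sequent $A$, progressing at the premise, and let $R$ derive $A$ from $B$. Nothing in $\mathcal{G}$ concludes $A$, so neither sequent is $\mathcal{G}$-provable and $R$ is vacuously admissible. Yet the infinite alternation $B, A, B, A, \ldots$ is a $\mathcal{G}+R$-proof of $B$, since every $A$-node is progressing.

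Your argument would nonetheless conclude $\mathcal{G}\vdash B$. The coinductive hypothesis, that the boundary subproofs have already been replaced by $\mathcal{G}$-proofs, is exactly the statement being proved. Feeding those replacements into admissibility is not a guarded corecursion, because admissibility may need its whole input before emitting anything. The stratification repair does not help either. Local admissibility in $\mathcal{G}$ says nothing about $\mathcal{G}$ extended by extra axioms (cut is the usual example of a rule that stops being admissible once axioms are added). Nothing makes the stage-$m$ outputs extend one another.

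The definition must be read with $\vdash_{\mathcal{G}+R}$. Local admissibility consumes and produces $\mathcal{G}+R$-proofs whose main local fragment avoids $R$, but which may use $R$ above their first progressing nodes. This is how the paper actually uses it: the ``locally cut-free'' proofs in the cut-elimination theorems contain cuts above the modal rules. Under this reading, $R$ in the example above is not locally admissible, because every $\mathcal{G}+R$-proof of $A$ has $R$ at its root, inside the main local fragment.

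With this reading the argument splits into an inductive part and a guarded corecursive part.
\begin{enumerate}
\item By induction on $\lhg(\pi)$, make any $\mathcal{G}+R$-proof locally $R$-free. At a $\mathcal{G}$-rule, transform only the non-progressing premises and keep the progressing subproofs untouched, whatever they contain. At an $R$-node all premises are non-progressing, since $L_R$ is constantly $\varnothing$, so transform them and apply local admissibility.
\item Corecursively, emit the now $R$-free main local fragment of the transformed proof and recurse on the subproofs at its boundary. Every recursive call sits behind a progressing node, so the construction is productive, uses only $\mathcal{G}$-rules, and each infinite branch crosses infinitely many boundaries.
\end{enumerate}
Progress is thus taken from the \emph{transformed} proof at each stage rather than inherited from $\pi$, which resolves the obstacle you identified. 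Your first direction and the wellfounded case go through essentially unchanged under this reading. In the wellfounded case proofs are finite, so a locally $R$-free $\mathcal{G}+R$-proof is just a $\mathcal{G}$-proof.
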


Finally, sometimes we will need to work with non-wellfounded proofs that have a particular finite representation.
To define this notion we introduce the notion of trees with backedges.
A \emph{tree with backedges} is an ordered pair \(\tau = (T,(\cdot)^\circ)\) such that \(T\) is a finite tree and \((\cdot)^\circ\) is a function from a subset of the leafs of~\(\tau\) to the nodes of \(\tau\) such that if \(w\) is in the domain then \(w^\circ < w\), the nodes in the domain of \((\cdot)^\circ\) are called \emph{repeat nodes}.

\begin{definition}
  Given a local progress calculus \(\mathcal{G} = (\mathcal{R}, (L_R)_{R \in \mathcal{R}})\), a \emph{cyclic preproof in \(\mathcal{G}\)} is a tree with backedges \(\pi = (T, (\cdot)^\circ)\) on \(\text{Seq} \times \mathcal{R}\) such that for any non-repeat node \(w\) with immediate successors \(w0\), \ldots, \(w(n-1)\), we have that \((S_0,\ldots,S_{n-1},S) \in R\), where \(S\) is the sequent at \(w\),  \(R\) is the rule at \(w\) and \(S_i\) is the sequent at \(wi\),
    and for any repeat node \(w\), \(w\) and \(w^\circ\) have the same sequent and rule.
    A \emph{cyclic proof in \(\mathcal{G}\)} is a cyclic preproof such that for any repeat node \(w\) there is a progressing node in \((w^\circ,w]\).
\end{definition}
In cyclic preproofs we will annotate repeat nodes with the word \(\rep\) at the right, instead of the rule at the node.

\section{Wellfounded proof theory of bimodal provability}
\label{sec:wellfounded}

In this section, we define the wellfounded sequent calculi \(\g{\CS}\), \(\g{\CSM}\) and \(\g{\ER}\).
In addition, we show that \(\g{\CS}\) corresponds to \(\CS\), \(\g{\CSM}\) to \(\CSM\) and \(\g{\ER}\) to \(\ER\).

\subsection{CS and CSM}

A \emph{sequent} is a pair \((\Gamma, \Delta)\) of finite multisets of formulas, usually denoted by \(\Gamma \Rightarrow \Delta\). \(\Gamma\) is called the \emph{left side} of the sequent while \(\Delta\) is called the \emph{right side}.
Given a multisets of formulas \(\Gamma\), we write \(\Gamma^s\) to mean the set obtained by deleting repetitions from \(\Gamma\).
If $\Sigma$ is a multiset $\{\phi_1,\ldots,\phi_n\}$,  then $\nec_i\Sigma$ stands for $\{\nec_i\phi_1,\ldots,\nec_i\phi_n\}$ where $i \in \{0,1\}$. Moreover, we use $\overline{i}$ for $1-i$, i.e.~$\overline{0}=1$ and vice versa.
Further,  $\necd_i \phi$ stands for $\nec_i \phi, \phi$ and we use this similarly for multisets of formulas.

\begin{figure}
  \[
    \AxiomC{}
    \RightLabel{\(\ax\)}
    \UnaryInfC{\(p, \Gamma \Rightarrow p, \Delta\)}
    \DisplayProof
    \qquad
    \AxiomC{}
    \RightLabel{\(\botL\)}
    \UnaryInfC{\(\bot, \Gamma \Rightarrow \Delta\)}
    \DisplayProof
    \qquad
    \AxiomC{\(\Gamma \Rightarrow \Delta\)}
    \RightLabel{\(\botR\)}
    \UnaryInfC{\(\Gamma \Rightarrow \bot, \Delta\)}
    \DisplayProof
  \]

  \[
    \AxiomC{\(\Gamma \Rightarrow \phi, \Delta\)}
    \AxiomC{\(\psi, \Gamma \Rightarrow \Delta\)}
    \RightLabel{\(\toL\)}
    \BinaryInfC{\(\phi \to \psi, \Gamma \Rightarrow \Delta\)}
    \DisplayProof
    \qquad
    \AxiomC{\(\phi, \Gamma \Rightarrow \psi, \Delta\)}
    \RightLabel{\(\toR\)}
    \UnaryInfC{\(\Gamma \Rightarrow \phi \to \psi, \Delta\)}
    \DisplayProof
  \]
  \caption{Propositional rules}
  \label{fig:prop-rules}
\end{figure}

\begin{figure}
  \[
    \AxiomC{\(\necd_i \Sigma_i, \nec_{\overline{i}} \Sigma_{\overline{i}}, \nec_i \phi \Rightarrow \phi\)}
    \RightLabel{\(\modal[i]{\CS}\)}
    \UnaryInfC{\(\nec_i \Sigma_i, \nec_{\overline{i}} \Sigma_{\overline{i}}, \Gamma \Rightarrow \nec_i \phi, \Delta\)}
    \DisplayProof
  \]
  \[
    \AxiomC{\(\necd_0 \Sigma_0, \nec_{1} \Sigma_{1}, \nec_0 \phi \Rightarrow \phi\)}
    \RightLabel{\(\modal[0]{\CSM}\)}
    \UnaryInfC{\(\nec_0 \Sigma_0, \nec_{1} \Sigma_{1}, \Gamma \Rightarrow \nec_i \phi, \Delta\)}
    \DisplayProof
    \quad
    \AxiomC{\(\necd_0 \Sigma_0, \necd_{1} \Sigma_{1}, \nec_1 \phi \Rightarrow \phi\)}
    \RightLabel{\(\modal[1]{\CSM}\)}
    \UnaryInfC{\(\nec_0 \Sigma_0, \nec_{1} \Sigma_{1}, \Gamma \Rightarrow \nec_i \phi, \Delta\)}
    \DisplayProof
  \]
  \caption{Modal rules for \(\g{\CS}\) and \(\g{\CSM}\)}
  \label{fig:modal-rules-CS-CSM}
\end{figure}

The rules needed to define calculi for \(\CS\) and \(\CSM\) are given in  Figures~\ref{fig:prop-rules} and \ref{fig:modal-rules-CS-CSM}.
In the rules \((\toL)\), \((\toR)\), \((\modal[i]{\CS})\) and \((\modal[i]{\CSM})\) the formula displayed at the conclusion is called the \emph{principal formula} (of the rule instance) and the displayed \(\nec\)-formula in the left side of the premise of the rules \((\modal[i]{\CS})\) and \((\modal[i]{\CSM})\) is called \emph{diagonal formula}.
In \((\modal[i]{\CS})\) and \((\modal[i]{\CSM})\) the formulas belonging to \(\nec_0 \Sigma_0, \nec_1 \Sigma_1\) are called \emph{auxiliary formulas}.
In \((\ax)\), \((\botL)\), \((\modal[i]{\CS})\) and \((\modal[i]{\CSM})\) the multisets \(\Gamma\) and \(\Delta\) are called the \emph{weakening formulas} of the rule instance.
Notice that the weakening formulas of a rule instance can be changed arbitrarily and will still be a rule instance of the same rule.

In the propositional rules of Figure~\ref{fig:prop-rules},  there are obvious omissions: namely, the usual rules \((\negL)\), \((\negR)\), \((\wedgeL)\), \((\wedgeR)\), \((\veeL)\) and \((\veeR)\).
This omission is not problematic, as these rules can be simulated with the propositional rules of the system.
Let us show as an example how to simulate \((\negR)\) and \((\veeL)\), remembering that \(\phi \vee \psi = \neg \phi \to \psi\).
\begin{align*}
  \AxiomC{\(\phi, \Gamma \Rightarrow \Delta\)}
  \RightLabel{\(\botR\)}
  \UnaryInfC{\(\phi, \Gamma \Rightarrow \bot, \Delta\)}
  \RightLabel{\(\toR\)}
  \UnaryInfC{\(\Gamma \Rightarrow \neg \phi, \Delta\)}
  \DisplayProof
  &&
  \AxiomC{\(\phi, \Gamma \Rightarrow \Delta\)}
  \RightLabel{\(\negR\)}
  \UnaryInfC{\(\Gamma \Rightarrow \neg \phi, \Delta\)}
  \AxiomC{\(\psi, \Gamma \Rightarrow \Delta\)}
  \RightLabel{\(\toL\)}
  \BinaryInfC{\(\phi \vee \psi, \Gamma \Rightarrow \Delta\)}
  \DisplayProof
\end{align*}
From now own we will use the rules \((\negL)\), \((\negR)\), \((\wedgeL)\), \((\wedgeR)\), \((\veeL)\), \((\veeR)\) inside proofs as abbreviations for the corresponding application of rules.

\begin{definition}
  We define the following wellfounded sequent calculi:
  \begin{enumerate}
    \item The calculus \(\g{\CS}\) is defined as having the rules of Figure~\ref{fig:prop-rules} together with the rules \((\modal[0]{\CS})\) and \((\modal[1]{\CS})\) from Figure~\ref{fig:modal-rules-CS-CSM}.
    \item The calculus \(\g{\CSM}\) is defined as having the rules of Figure~\ref{fig:prop-rules} together with the rules \((\modal[0]{\CSM})\) and \((\modal[1]{\CSM})\) from Figure~\ref{fig:modal-rules-CS-CSM}.
      \qedhere
  \end{enumerate}
\end{definition}

Then, it is easy to show the following lemma by induction on the complexity of \(\phi\).

\begin{lemma}
  We have that \(\g{\CS} \vdash \phi, \Gamma \Rightarrow \phi, \Delta\)\ and\ \(\g{\CSM} \vdash \phi, \Gamma \Rightarrow \phi, \Delta\).
\end{lemma}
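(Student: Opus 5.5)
We argue by induction on the complexity \(|\phi|\), proving the claim simultaneously for all multisets \(\Gamma, \Delta\). The same argument works for \(\g{\CS}\) and \(\g{\CSM}\); only the modal case refers to the specific modal rule of each calculus.

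\emph{Base cases.} If \(\phi = p\), the sequent \(p, \Gamma \Rightarrow p, \Delta\) is an instance of \((\ax)\). If \(\phi = \bot\), the sequent \(\bot, \Gamma \Rightarrow \bot, \Delta\) is an instance of \((\botL)\).

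\emph{Implication.} Suppose \(\phi = \phi_0 \to \phi_1\). By the induction hypothesis we have
\[
\phi_0, \Gamma \Rightarrow \phi_0, \phi_1, \Delta
\qquad\text{and}\qquad
\phi_1, \phi_0, \Gamma \Rightarrow \phi_1, \Delta,
\]
where the extra formulas are absorbed into the arbitrary side multisets. Applying \((\toL)\) to these two sequents yields \(\phi_0 \to \phi_1, \phi_0, \Gamma \Rightarrow \phi_1, \Delta\). One application of \((\toR)\) then gives \(\phi_0 \to \phi_1, \Gamma \Rightarrow \phi_0 \to \phi_1, \Delta\).

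\emph{Modal case.} Suppose \(\phi = \nec_i \psi\); this is the only step that needs care. In \(\g{\CS}\) we apply \((\modal[i]{\CS})\) with principal formula \(\nec_i \psi\), taking \(\Sigma_i = \{\psi\}\) and \(\Sigma_{\overline{i}} = \varnothing\), with \(\Gamma\) and \(\Delta\) as weakening formulas. The required premise is
\[
\nec_i \psi, \psi, \nec_i \psi \Rightarrow \psi .
\]
It is an instance of the induction hypothesis for \(\psi\), with \(\nec_i\psi, \nec_i\psi\) as the extra left context and empty right context. Note that the diagonal formula \(\nec_i\psi\) is redundant here, but it occurs in the premise only as a side formula, so this is harmless.

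In \(\g{\CSM}\) the same choice works. For \(i = 0\) we use \((\modal[0]{\CSM})\) with \(\Sigma_0 = \{\psi\}\). For \(i = 1\) we use \((\modal[1]{\CSM})\) with \(\Sigma_1 = \{\psi\}\). In both cases the premise \(\necd_i \psi, \nec_i \psi \Rightarrow \psi\) again follows from the induction hypothesis.

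The only delicate point is reading the rules with the correct instantiation. The unboxed copy \(\psi\) of the auxiliary formula \(\nec_i\psi\) must be available in the premise, and it is, because of \(\necd_i\). This is exactly why the rules put \(\necd_i\Sigma_i\), and not merely \(\nec_i\Sigma_i\), on the left of the premise.
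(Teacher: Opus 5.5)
Your proof is correct and takes the same approach as the paper, which only remarks that the lemma follows by induction on the complexity of \(\phi\). Your cases supply exactly the details the paper omits: \((\ax)\) and \((\botL)\) for the atoms, \((\toL)\) then \((\toR)\) for implications, and the modal rule with \(\Sigma_i = \{\psi\}\), whose premise \(\necd_i\psi, \nec_i\psi \Rightarrow \psi\) is an instance of the induction hypothesis (reading \((\modal[0]{\CSM})\) and \((\modal[1]{\CSM})\) as introducing \(\nec_0\psi\) and \(\nec_1\psi\) respectively, as intended).
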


When we use this lemma inside  proofs in \(\g{\CS}\) or \(\g{\CSM}\),  we will annotate the sequent with the rule~\((\Ax)\).

\begin{figure}
  \[
    \AxiomC{\(\Gamma \Rightarrow \Delta\)}
    \RightLabel{\(\wk\)}
    \UnaryInfC{\(\Gamma, \Gamma' \Rightarrow \Delta, \Delta'\)}
    \DisplayProof
    \qquad
    \AxiomC{\(\Gamma, \Gamma' \Rightarrow \Delta, \Delta'\)}
    \RightLabel{\(\ctr\)}
    \UnaryInfC{\(\Gamma, (\Gamma')^s \Rightarrow \Delta, (\Delta')^s\)}
    \DisplayProof
  \]
  where we remember that for a multiset \(\Theta\), \(\Theta^s\) is \(\Theta\) without repeptitions.
  \[
    \AxiomC{\(\Gamma \Rightarrow \Delta, \chi\)}
    \AxiomC{\(\chi, \Gamma \Rightarrow \Delta\)}
    \RightLabel{\(\cut\)}
    \BinaryInfC{\(\Gamma \Rightarrow \Delta\)}
    \DisplayProof
  \]
  \caption{Structural rules}
  \label{fig:structural-rules}
\end{figure}

As usual, even if they do not form part of the sequent calculus explicitely, there are some structural rules (see Figure~\ref{fig:structural-rules}) which we expect to behave well with the defined calculi.
In the following lemma we put together the structural properties of \(\g{\CS}\) and \(\g{\CSM}\) which are easy to show, all of them by induction on the height of the proof (using Theorem~\ref{local-adm-and-eliminability} when necessary).

\begin{lemma}\label{lm:structural-rules-CS-CSM}
  In \(\g{\CS}\), \(\g{\CSM}\), \(\g{\CS} + \cut\) and \(\g{\CSM} + \cut\) we have the following.
  \begin{enumerate}
    \item \((\wk)\) is eliminable and admissible preserving height and rules.
    \item \((\botR)\), \((\toL)\) and \((\toR)\) are invertible preserving height and rules.
    \item \((\ctr)\) is eliminable and admissible preserving height and rules.
  \end{enumerate}
\end{lemma}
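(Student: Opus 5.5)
We argue in the order of the statement, since each item uses the previous ones. The systems are wellfounded, so by Theorem~\ref{local-adm-and-eliminability} eliminability and admissibility coincide, and it suffices to prove admissibility preserving height and rules. For \(\g{\CS}+\cut\) and \(\g{\CSM}+\cut\) the same inductions go through, with \((\cut)\) treated as one more rule.

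For \((\wk)\) we argue by induction on the height of \(\pi \vdash \Gamma \Rightarrow \Delta\). If the last rule is \((\ax)\), \((\botL)\) or a modal rule, we absorb \(\Gamma',\Delta'\) into the weakening formulas of the conclusion, which remains an instance of the same rule with the same premise. This is why no context is carried into the premises of the modal rules. For \((\botR)\), \((\toL)\), \((\toR)\) and \((\cut)\), we apply the induction hypothesis to the premises and reapply the rule. Height and the set of rules are unchanged.

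For invertibility we again induct on height. Take \((\toR)\) as the model case: given \(\pi \vdash \Gamma \Rightarrow \phi\to\psi,\Delta\), we produce a proof of \(\phi,\Gamma\Rightarrow\psi,\Delta\) of no greater height.
\begin{itemize}
\item If \(\phi\to\psi\) is principal, we take the premise.
\item If the last rule is \((\ax)\) or \((\botL)\), or a modal rule in which \(\phi\to\psi\) is a weakening formula, we simply change the weakening formulas. The principal formula of a modal rule is a \(\nec\)-formula, so \(\phi\to\psi\) cannot be principal there.
\item Otherwise \(\phi\to\psi\) sits in the context of a propositional rule or a cut. We apply the induction hypothesis to the premises and reapply the rule.
\end{itemize}
The other rules differ only in the principal case. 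For \((\toL)\), a principal \(\phi\to\psi\) on the left directly yields both premises. For \((\botR)\), the occurrence of \(\bot\) on the right is never principal except in \((\botR)\) itself.

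Contraction is the main obstacle. We show that \(\Gamma,\phi,\phi\Rightarrow\Delta\) implies \(\Gamma,\phi\Rightarrow\Delta\), and symmetrically on the right, by induction on height, with the height- and rule-preservation strengthening built into the hypothesis. The case analysis runs as follows.
\begin{itemize}
\item If neither copy is principal, we push the contraction up by the induction hypothesis.
\item If one copy is principal in a propositional rule, we apply the height-preserving inversion from the previous step to the other copy in the premise(s). Then we contract the resulting duplicated subformulas by the induction hypothesis, which is legitimate because the premises are strictly lower.
\item For the modal rules, suppose the duplicated formula is \(\nec_j\chi\) on the left. If a copy is auxiliary, the premise contains \(\necd_j\chi,\necd_j\chi\) or \(\nec_j\chi,\nec_j\chi\); we contract in the premise by the induction hypothesis, and the result is again an instance of the modal rule. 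This holds also for \(\modal[i]{\CSM}\), where \(\Sigma_1\) may carry boxed or undecorated copies. If both copies are weakening formulas, or one is weakening and one auxiliary, we drop the weakening copy.
\item On the right, a duplicated principal \(\nec_i\phi\) has its second copy in \(\Delta\), so it can just be dropped.
\end{itemize}
With \((\cut)\) present, contraction commutes with the cut by the induction hypothesis on both premises. Multiset contraction to \((\Gamma')^s\) then follows by iterating single contractions.
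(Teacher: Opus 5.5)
Your proof is correct and follows the paper's approach. The paper only says that all three items are easy by induction on the height of proofs, using Theorem~\ref{local-adm-and-eliminability} to identify admissibility with eliminability in these wellfounded calculi. Your write-up supplies the case analysis the paper leaves implicit: context is absorbed into the weakening formulas of \((\ax)\), \((\botL)\) and the modal rules, and height- and rule-preserving inversion is used inside the contraction induction.
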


Showing cut elimination will require a bit more work and will be done in Section~\ref{sec:cut-elim}.
However, using the previous lemma, we can establish the connection between \(\CS\) and \(\g{\CS} + \cut\) and between \(\CSM\) and \(\g{\CS} + \cut\).
The first correspondence is in the following lemma.

\begin{lemma}\label{lm:correspondence-to-hilbert-CS-CSM}
  For any formula \(\phi\) we have the following.
  \begin{enumerate}
    \item \(\CS \vdash \phi\) implies \(\g{\CS} + \cut \vdash { \Rightarrow \phi}\).
    \item \(\CSM \vdash \phi\) implies \(\g{\CSM} + \cut \vdash { \Rightarrow \phi}\).
  \end{enumerate}
\end{lemma}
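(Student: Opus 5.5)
The proof goes by induction on the length of a Hilbert-style derivation of \(\phi\) in \(\CS\) (resp.\ \(\CSM\)). We show that every axiom is derivable as \({\Rightarrow \phi}\) in \(\g{\CS}+\cut\) (resp.\ \(\g{\CSM}+\cut\)) and that the rules \(\MP\), \(\NEC[0]\), \(\NEC[1]\) preserve derivability. Lemma~\ref{lm:structural-rules-CS-CSM} (admissibility of \(\wk\) and \(\ctr\), invertibility of \(\toR\)) and the generalized axiom \((\Ax)\) will be used freely.

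\textbf{Propositional part and \(\MP\).} Classical tautologies are derivable cut-free. We apply \(\toR\), \(\toL\), \(\botR\) backwards until the sequents contain only variables, \(\bot\) and \(\nec\)-formulas as atoms, and close them with \((\Ax)\) or \(\botL\). This is the usual completeness of the propositional fragment: we treat \(\nec\)-formulas as atoms, and validity guarantees that every leaf shares a formula on both sides or has \(\bot\) on the left. For \(\MP\), we have \({\Rightarrow \phi}\) and \({\Rightarrow \phi\to\psi}\). Inverting \(\toR\) gives \(\phi\Rightarrow\psi\), weakening gives \({\Rightarrow\phi,\psi}\), and a cut on \(\phi\) yields \({\Rightarrow\psi}\).

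\textbf{Necessitation.} From \({\Rightarrow\phi}\), weakening gives \(\nec_i\phi\Rightarrow\phi\). One application of \(\modal[i]{\CS}\) with empty \(\Sigma_0,\Sigma_1\) then yields \({\Rightarrow\nec_i\phi}\). For \(\CSM\) the same works with \(\modal[i]{\CSM}\); the conclusion of \(\modal[0]{\CSM}\) has \(\nec_0\phi\) (the subscript in the figure is presumably meant that way).

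\textbf{Modal axioms.} For \(\Kax[i]\), after \(\toR\) twice we need \(\nec_i(\phi\to\psi),\nec_i\phi\Rightarrow\nec_i\psi\). We apply \(\modal[i]{\CS}\) with \(\Sigma_i=\{\phi\to\psi,\phi\}\). The premise contains \(\phi\to\psi,\phi\) on the left and \(\psi\) on the right, so \(\toL\) and \((\Ax)\) close it. For \(\Cax[\overline{i},i]\), namely \(\nec_i\phi\to\nec_{\overline i}\nec_i\phi\), we apply \(\modal[\overline i]{\CS}\) with \(\Sigma_i=\{\phi\}\) as the \(\nec_i\)-side context. The premise is \(\nec_i\phi,\nec_{\overline i}\nec_i\phi\Rightarrow\nec_i\phi\), which is closed by \((\Ax)\). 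For \(\Lax[i]\), we must show \(\nec_i(\nec_i\phi\to\phi)\Rightarrow\nec_i\phi\). We apply \(\modal[i]{\CS}\) with \(\Sigma_i=\{\nec_i\phi\to\phi\}\) and get the premise \(\nec_i(\nec_i\phi\to\phi),\nec_i\phi\to\phi,\nec_i\phi\Rightarrow\phi\). Then \(\toL\) produces one premise with \(\nec_i\phi\) on both sides and one with \(\phi\) on both sides, and both close by \((\Ax)\). Here the diagonal formula does the work of Löb's axiom.

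For \(\CSM\) we redo these cases with the \(\CSM\)-rules. \(\Kax[1]\), \(\Lax[1]\) and \(\Cax[1,0]\) go through using \(\modal[1]{\CSM}\), since it keeps both \(\necd_0\Sigma_0\) and \(\necd_1\Sigma_1\). \(\Kax[0]\) and \(\Lax[0]\) are handled with \(\modal[0]{\CSM}\) exactly as before. \(\Cax[0,1]\) follows as for \(\CS\), because \(\modal[0]{\CSM}\) keeps \(\nec_1\Sigma_1\) in its premise. The extra axiom \(\Max[0,1]\), that is \(\nec_0\phi\Rightarrow\nec_1\phi\), uses \(\modal[1]{\CSM}\) with \(\Sigma_0=\{\phi\}\), leaving \(\nec_0\phi,\phi,\nec_1\phi\Rightarrow\phi\), which is closed by \((\Ax)\).

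\textbf{Expected main obstacle.} The only delicate step is the propositional completeness claim. It must be arranged so that \((\Ax)\) (proved by induction on complexity) covers nonatomic identical formulas. The individual axiom checks above are short.
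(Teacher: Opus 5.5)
Your proposal is correct and follows essentially the same route as the paper. You derive each axiom scheme with the same one-step applications of the modal rules for $\Kax[i]$, $\Lax[i]$, $\Cax[\overline{i},i]$ and $\Max[0,1]$, close under $\MP$ via inversion of $\toR$, weakening and a cut, and close under $\NEC[i]$ via weakening plus the modal rule; your decomposition argument for propositional tautologies (leaves closed by $(\Ax)$ on shared $\nec$-formulas) just spells out what the paper calls clear.
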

\begin{proof}
  Proof of 1.
  It suffices to show that \(\set{\phi \mid \g{\CS} \vdash { \Rightarrow \phi}}\) contains the axioms of \(\CS\) and is closed under modus ponens, \((\NEC[0])\) and \((\NEC[1])\).
  That it contains all propositional tautologies is clear thanks to having rules \((\ax)\), \((\botL)\), \((\toL)\) and \((\toR)\).
  The proofs of \((\Kax[i])\), \((\Lax[i])\) and \((\Cax[i,\overline{i}])\) are as follows.
  \[
    \AxiomC{}
    \RightLabel{\(\Ax\)}
    \UnaryInfC{\(\nec_i(\phi \to \psi), \necd_i \phi, \nec_i \psi \Rightarrow  \phi, \psi\)}
    \AxiomC{}
    \RightLabel{\(\Ax\)}
    \UnaryInfC{\(\psi, \nec_i(\phi \to \psi), \necd_i \phi, \nec_i \psi \Rightarrow  \psi\)}
    \RightLabel{\(\toL\)}
    \BinaryInfC{\(\necd_i(\phi \to \psi), \necd_i \phi, \nec_i \psi \Rightarrow  \psi\)}
    \RightLabel{\(\modal[i]{\CS}\)}
    \UnaryInfC{\(\nec_i(\phi \to \psi), \nec_i \phi \Rightarrow  \nec_i \psi\)}
    \doubleLine
    \RightLabel{\(\toR\)}
    \UnaryInfC{\(\Rightarrow \nec_i(\phi \to \psi) \to \nec_i \phi \to \nec_i \psi\)}
    \DisplayProof
  \]
  \vspace{0.3cm}
  \[
    \AxiomC{}
    \RightLabel{\(\Ax\)}
    \UnaryInfC{\(\nec_i(\nec_i \phi \to \phi), \nec_i \phi \Rightarrow \nec_i \phi, \phi \)}
    \AxiomC{}
    \RightLabel{\(\Ax\)}
    \UnaryInfC{\(\phi, \nec_i(\nec_i \phi \to \phi), \nec_i \phi \Rightarrow \phi \)}
    \RightLabel{\(\toL\)}
    \BinaryInfC{\( \necd_i(\nec_i \phi \to \phi), \nec_i \phi \Rightarrow \phi \)}
    \RightLabel{\(\modal[i]{\CS}\)}
    \UnaryInfC{\( \nec_i(\nec_i \phi \to \phi) \Rightarrow \nec_i \phi \)}
    \RightLabel{\(\toR\)}
    \UnaryInfC{\(\Rightarrow \nec_i(\nec_i \phi \to \phi) \to \nec_i \phi \)}
    \DisplayProof
  \]
  \vspace{0.3cm}
  \[
    \AxiomC{}
    \RightLabel{\(\Ax\)}
    \UnaryInfC{\( \nec_i \phi, \nec_{\overline{i}}\nec_i \phi \Rightarrow \nec_i \phi\)}
    \RightLabel{\(\modal[\overline{i}]{\CS}\)}
    \UnaryInfC{\( \nec_i \phi \Rightarrow \nec_{\overline{i}}\nec_i \phi\)}
    \RightLabel{\(\toR\)}
    \UnaryInfC{\( \Rightarrow \nec_i \phi \to \nec_{\overline{i}}\nec_i \phi\)}
    \DisplayProof
  \]

  All left to show are the closure properties. We start with Modus Ponens.
  Assume \(\pi \vdash { \Rightarrow \phi \to \psi}\) and \(\tau \vdash { \Rightarrow\phi}\) in \(\g{\CS} + \cut\), then we have the proof
  \[
    \AxiomC{\(\wk(\tau)\)}
    \noLine
    \UnaryInfC{\(\Rightarrow \psi, \phi\)}
    \AxiomC{\(\inv{\toR}(\pi)\)}
    \noLine
    \UnaryInfC{\(\phi \Rightarrow \psi\)}
    \RightLabel{\(\cut\)}
    \BinaryInfC{\( \Rightarrow \psi\)}
    \DisplayProof
  \]
  Finally,  for Necessitation,  assume that \(\pi \vdash { \Rightarrow \phi}\). Then the desired proof in \(\g{\CS} + \cut\) is
  \[
    \AxiomC{\(\wk(\pi)\)}
    \noLine
    \UnaryInfC{\( \nec_i \phi \Rightarrow \phi\)}
    \RightLabel{\(\modal[i]{\CS}\)}
    \UnaryInfC{\( \Rightarrow \nec_i \phi\)}
    \DisplayProof
  \]

  Proof of 2.
  The proof is largely the same.  We just have to replace the rule \((\modal[i]{\CS})\) in the previous proofs with \((\modal[i]{\CSM})\).
  We only show that \((\Max[0,1])\) is contained in \(\set{\phi \mid \g{\CSM} + \cut \vdash { \Rightarrow \phi}}\).
  \[
    \AxiomC{}
    \RightLabel{\(\Ax\)}
    \UnaryInfC{\(  \necd_0 \phi, \nec_1 \phi \Rightarrow \phi\)}
    \RightLabel{\(\modal[i]{\CSM}\)}
    \UnaryInfC{\(  \nec_0 \phi \Rightarrow \nec_1 \phi\)}
    \RightLabel{\(\toR\)}
    \UnaryInfC{\( \Rightarrow \nec_0 \phi \to \nec_1 \phi\)}
    \DisplayProof
  \]
\end{proof}

For the full correspondence, we need to interpret sequents as formulas.
Given a sequent \(S = \Gamma \Rightarrow \Delta\) we define its \emph{formula interpretation} as
\[
  S^\flat = \bigwedge \Gamma \to \bigvee \Delta.
\]

\begin{definition}
  Let \(R\) be a sequent rule and \(L\) be a logic.
  We say that \(R\) is \emph{sound in \(L\)} if for any instance \((S_0,\ldots,S_{n-1},S) \in R\) we have that \(L \vdash S^\flat_i\) for all \(i < n\) implies \(L \vdash S^\flat\).
\end{definition}

\begin{lemma}\label{lm:soundness-of-rules-CS-CSM}
  The rules of \(\g{\CS} + \cut\) are sound in \(\CS\) and the rules of \(\g{\CSM} + \cut\) are sound in \(\CSM\).
\end{lemma}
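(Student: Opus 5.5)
We check each rule separately. Since \(\CS \subseteq \CSM\), the propositional rules and \((\cut)\) are handled uniformly. For each such rule the formula \(S_0^\flat \wedge \cdots \wedge S_{n-1}^\flat \to S^\flat\) is a classical propositional tautology in the displayed formulas and in \(\bigwedge\Gamma, \bigvee\Delta\). For example, for \((\toL)\) it is \((\bigwedge\Gamma \to \phi \vee \bigvee\Delta) \wedge (\psi \wedge \bigwedge\Gamma \to \bigvee\Delta) \to ((\phi\to\psi)\wedge\bigwedge\Gamma \to \bigvee\Delta)\). For \((\cut)\) it is the analogous tautology with \(\chi\). Closure under \(\MP\) then gives soundness. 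Note that soundness is about theoremhood (if each \(L \vdash S_i^\flat\) then \(L \vdash S^\flat\)), not about a single implication. Hence for the modal rules we may use necessitation.

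The main work is \((\modal[i]{\CS})\). Assume \(\CS \vdash \bigwedge \necd_i\Sigma_i \wedge \bigwedge \nec_{\overline{i}}\Sigma_{\overline{i}} \wedge \nec_i\phi \to \phi\). Write \(A := \bigwedge \nec_i\Sigma_i \wedge \bigwedge \nec_{\overline{i}}\Sigma_{\overline{i}}\) and \(B := \bigwedge \Sigma_i\). The assumption reads \(\CS \vdash A \wedge B \to (\nec_i\phi \to \phi)\). Apply \((\NEC[i])\) and the distribution of \(\nec_i\) over conjunctions and implications, which follows from \((\Kax[i])\). This yields \(\CS \vdash \nec_i A \wedge \nec_i B \to \nec_i(\nec_i\phi\to\phi)\). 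By \((\Lax[i])\) we get \(\CS \vdash \nec_i A \wedge \nec_i B \to \nec_i\phi\).

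It remains to derive \(\nec_i A \wedge \nec_i B\) from \(A\). First, \(\nec_i B\) is equivalent in \(\K[2]\) to \(\bigwedge \nec_i\Sigma_i\), which is a conjunct of \(A\). Second, for \(\nec_i A\) we need \(\nec_i\psi \to \nec_i\nec_i\psi\) for \(\psi\in\Sigma_i\), which is \(4_i\), available by the earlier lemma. We also need \(\nec_{\overline{i}}\psi \to \nec_i\nec_{\overline{i}}\psi\), which is exactly \((\Cax[i,\overline{i}])\). So \(\CS \vdash A \to \nec_i\phi\), and propositional weakening by \(\Gamma\) and \(\Delta\) gives the conclusion. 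Empty \(\Sigma\)'s cause no problem, since the empty conjunction is \(\top\) and \(\nec_i\top\) is a theorem.

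For \(\CSM\) we repeat the argument inside \(\CSM\) (which contains \(\CS\), so \(4_i\), \((\Lax[i])\) and \((\Cax[i,j])\) are available). Rule \((\modal[0]{\CSM})\) has the same premise as \((\modal[0]{\CS})\), so it is sound by the argument above. The paper prints \(\nec_i\phi\) in the conclusion, presumably as a typo for \(\nec_0\phi\). I would treat it as \(\nec_0\phi\); if \(i=1\) is intended, we would additionally apply \((\Max[0,1])\).

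The step I expect to be the only real obstacle is \((\modal[1]{\CSM})\). Here the premise contains \(\necd_0\Sigma_0\), that is, also the bare formulas \(\Sigma_0\), under a \(\nec_1\)-step. Running the same argument with \(i=1\), we must derive from \(A = \bigwedge\nec_0\Sigma_0 \wedge \bigwedge\nec_1\Sigma_1\) the formula \(\nec_1\) of \(A \wedge \bigwedge\Sigma_0\wedge\bigwedge\Sigma_1\). The new ingredient is \(\nec_0\psi \to \nec_1\psi\) for \(\psi \in \Sigma_0\), which is \((\Max[0,1])\). The remaining pieces \(\nec_1\nec_0\psi\) and \(\nec_1\nec_1\psi\) come from \((\Cax[1,0])\) and \(4_1\). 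Löb's axiom \((\Lax[1])\) then concludes as before.
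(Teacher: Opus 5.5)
Your proposal is correct and follows the paper's proof. Propositional rules and $(\cut)$ go through tautologies. Each modal rule is handled by applying $\NEC[i]$ and $\Kax[i]$ to the premise, collapsing the boxed auxiliary formulas with $4_i$ and $\Cax[i,\overline{i}]$ (for $\modal[1]{\CSM}$, with $4_1$, $\Cax[1,0]$ and $\Max[0,1]$), and finishing with $\Lax[i]$. Your labels are the correct ones under the definition $\Cax[i,j]=\set{\nec_j\phi\to\nec_i\nec_j\phi}$; the paper's write-up cites $\Cax[0,1]$ at the point where $\Cax[1,0]$ is meant.
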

\begin{proof}
  Proof for \(\g{\CS}\).
  First, let \(R\) be one of \((\ax)\), \((\botL)\), \((\botR)\), \((\toL)\), \((\toR)\) or \((\cut)\).
  Then for any instance \((S_0,\ldots,S_{n-1},S) \in R\) we have that \(\bigwedge_{i < n} S_i^\flat \to S^\flat\) is a propositional tautology,  and hence \(\CS \vdash \bigwedge_{i < n} S_i^\flat \to S^\flat\).  Soundness of the rule \(R\) follows trivially from this.

  Finally, let \(R = (\modal[i]{\CS})\) and assume we have an instance
  \[
    \AxiomC{\(\necd_i \Sigma_i, \nec_{\overline{i}} \Sigma_{\overline{i}}, \nec_i \phi \Rightarrow \phi\)}
    \RightLabel{\(\modal[i]{\CS}\)}
    \UnaryInfC{\(\nec_i \Sigma_i, \nec_{\overline{i}} \Sigma_{\overline{i}}, \Gamma \Rightarrow \nec_i \phi, \Delta\)}
    \DisplayProof
  \]
  Then we have the following reasoning
  \begin{align*}
    &\CS \vdash \bigwedge \necd_i \Sigma_i \wedge \bigwedge \nec_{\overline{i}} \Sigma_{\overline{i}} \wedge \nec_i \phi \to \phi,
    &&\text{by assumption}, \\
    &\CS \vdash \bigwedge \necd_i \Sigma_i \wedge \bigwedge \nec_{\overline{i}} \Sigma_{\overline{i}} \to (\nec_i \phi \to \phi),
    &&\text{by propositional reasoning}, \\
    &\CS \vdash \bigwedge \nec_i\necd_i \Sigma_i \wedge \bigwedge \nec_i \nec_{\overline{i}} \Sigma_{\overline{i}} \to \nec_i (\nec_i \phi \to \phi),
    &&\text{by (\(\NEC[i]\)) and \((\Kax[i])\)}, \\
    &\CS \vdash \bigwedge \nec_i \Sigma_i \wedge \bigwedge \nec_i \nec_{\overline{i}} \Sigma_{\overline{i}} \to \nec_i (\nec_i \phi \to \phi),
    &&\text{by transitivity}, \\
    &\CS \vdash \bigwedge \nec_i \Sigma_i \wedge \bigwedge \nec_{\overline{i}} \Sigma_{\overline{i}} \to \nec_i (\nec_i \phi \to \phi),
    &&\text{by \((\Cax[i,\overline{i}])\)}, \\
    &\CS \vdash \bigwedge \nec_i \Sigma_i \wedge \bigwedge \nec_{\overline{i}} \Sigma_{\overline{i}} \to \nec_i \phi,
    &&\text{by \((\Lax[i])\)}, \\
    &\CS \vdash \bigwedge \nec_i \Sigma_i \wedge \bigwedge \nec_{\overline{i}} \Sigma_{\overline{i}} \wedge \bigwedge \Gamma \to \nec_i \phi \vee \bigvee \Delta,
    &&\text{by propositional reasoning},
  \end{align*}
  as desired.

  Proof for \(\g{\CSM}\).
  In case \(R\) is one of \((\ax)\), \((\botL)\), \((\botR)\), \((\toL)\), \((\toR)\), \((\cut)\) or \((\modal[0]{\CSM})\) the reasoning is the same as for \(\g{\CS}\).
  So assume \(R = (\modal[1]{\CSM})\) and that we have an instance
  \[
    \AxiomC{\(\necd_0 \Sigma_0, \necd_1 \Sigma, \nec_1 \phi \Rightarrow \phi\)}
    \RightLabel{\(\modal[1]{\CSM}\)}
    \UnaryInfC{\(\nec_0 \Sigma_0, \nec_1 \Sigma, \Gamma \Rightarrow \nec_1 \phi, \Delta\)}
    \DisplayProof
  \]
  Then we have the following reasoning
  \begin{align*}
    &\CSM \vdash \bigwedge \necd_0 \Sigma_0 \wedge \bigwedge \necd_{1} \Sigma_{1} \wedge \nec_1 \phi \to \phi,
    &&\text{by assumption}, \\
    &\CSM \vdash \bigwedge \necd_0 \Sigma_0 \wedge \bigwedge \necd_{1} \Sigma_{1} \to (\nec_1 \phi \to \phi),
    &&\text{by propositional reasoning}, \\
    &\CSM \vdash \bigwedge \nec_1\necd_0 \Sigma_0 \wedge \bigwedge \nec_1\necd_{1} \Sigma_{1} \to \nec_1 (\nec_1 \phi \to \phi),
    &&\text{by (\(\NEC[1]\)) and \((\Kax[1])\)}, \\
    &\CSM \vdash \bigwedge \nec_1\necd_0 \Sigma_0 \wedge \bigwedge \nec_1 \Sigma_{1} \to \nec_1 (\nec_1 \phi \to \phi),
    &&\text{by transitivity}, \\
    &\CSM \vdash \bigwedge \nec_0 \Sigma_0 \wedge \bigwedge \nec_1 \Sigma_0 \wedge \bigwedge \nec_1 \Sigma_{1} \to \nec_1 (\nec_1 \phi \to \phi),
    &&\text{by \((\Cax[0,1])\)}, \\
    &\CSM \vdash \bigwedge \nec_0 \Sigma_0 \wedge \bigwedge \nec_1 \Sigma_{1} \to \nec_1 (\nec_1 \phi \to \phi),
    &&\text{by \((\Max[0,1])\)}, \\
    &\CSM \vdash \bigwedge \nec_0 \Sigma_0 \wedge \bigwedge \nec_1 \Sigma_{1} \to \nec_1 \phi,
    &&\text{by \((\Lax[i])\)}, \\
    &\CSM \vdash \bigwedge \nec_0 \Sigma_0 \wedge \bigwedge \nec_1 \Sigma_{1} \wedge \bigwedge \Gamma \to \nec_1 \phi \vee \bigvee \Delta,
    &&\text{by propositional reasoning},
  \end{align*}
  as desired.
\end{proof}

The full correspondence is stated in the following theorem.
\begin{theorem}\label{th:correspondence-to-hilbert-CS-CSM}
  Let \(S\) be a sequent.
  We have that:
  \begin{enumerate}
    \item \(\CS \vdash S^\flat\) iff \(\g{\CS} + \cut \vdash S\).
    \item \(\CSM \vdash S^\flat\) iff \(\g{\CSM} + \cut \vdash S\).
  \end{enumerate}
\end{theorem}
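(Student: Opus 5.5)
We prove each of the two equivalences by splitting it into its two directions, and we treat both items simultaneously, since the arguments only differ in which lemmas they cite.

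For the right-to-left direction, assume \(\g{\CS} + \cut \vdash S\) and argue by induction on the height of the (wellfounded) proof. At each node we apply Lemma~\ref{lm:soundness-of-rules-CS-CSM}: since every rule of \(\g{\CS} + \cut\) is sound in \(\CS\), and by the induction hypothesis \(\CS \vdash S_i^\flat\) for all premises, we obtain \(\CS \vdash S^\flat\). The leaves (\(\ax\), \(\botL\)) are the base case, because they are sound zero-premise rules. The \(\CSM\) case is identical, using the second half of Lemma~\ref{lm:soundness-of-rules-CS-CSM}.

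For the left-to-right direction, assume \(\CS \vdash S^\flat\), where \(S = \Gamma \Rightarrow \Delta\). By Lemma~\ref{lm:correspondence-to-hilbert-CS-CSM} we get \(\g{\CS} + \cut \vdash {\Rightarrow \bigwedge \Gamma \to \bigvee \Delta}\). We then recover \(S\) from this sequent in two stages. First, by invertibility of \((\toR)\) (Lemma~\ref{lm:structural-rules-CS-CSM}) we obtain \(\bigwedge\Gamma \Rightarrow \bigvee\Delta\).

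Second, we decompose the conjunction and disjunction. The cleanest way is to use cut. We use \((\Ax)\), together with the derived rules \((\wedgeR)\), \((\veeR)\) and weakening, to build proofs of \(\Gamma \Rightarrow \bigwedge \Gamma\) and \(\bigvee \Delta \Rightarrow \Delta\). We then weaken the three sequents appropriately and apply \((\cut)\) twice: first on \(\bigwedge\Gamma\), which yields \(\Gamma \Rightarrow \bigvee\Delta\), and then on \(\bigvee\Delta\), which yields \(\Gamma \Rightarrow \Delta\). Cut is available in \(\g{\CS}+\cut\), so this is legitimate. Alternatively, one could invert \((\toL)\)/\((\toR)\) along the definitions \(\phi\wedge\psi = \neg(\neg\phi\vee\neg\psi)\) and \(\phi \vee \psi = \neg \phi \to \psi\), but the cut route avoids unpacking these abbreviations.

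The only mildly delicate point is the construction of \(\Gamma \Rightarrow \bigwedge\Gamma\) and \(\bigvee\Delta \Rightarrow \Delta\), since \(\wedge\) and \(\vee\) are abbreviations. Both are routine inductions on \(|\Gamma|\) and \(|\Delta|\) using the simulated rules. We must also fix a convention for the empty conjunction and disjunction, namely \(\top := \bot \to \bot\) and \(\bot\). With this convention, \(\Rightarrow \top\) is provable by \((\botL)\) and \((\toR)\), and \(\bot \Rightarrow\) is an instance of \((\botL)\), so the degenerate cases go through. The \(\CSM\) case is verbatim the same, using the second item of Lemma~\ref{lm:correspondence-to-hilbert-CS-CSM}.
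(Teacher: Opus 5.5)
Your proposal is correct and follows the paper's route: soundness by induction on proof height via Lemma~\ref{lm:soundness-of-rules-CS-CSM}, and completeness via Lemma~\ref{lm:correspondence-to-hilbert-CS-CSM} followed by recovering \(\Gamma \Rightarrow \Delta\) from \(\Rightarrow \bigwedge\Gamma \to \bigvee\Delta\). The only difference is in that last step, where you cut against \(\Gamma \Rightarrow \bigwedge\Gamma\) and \(\bigvee\Delta \Rightarrow \Delta\) while the paper inverts the propositional rules; both work since \(\cut\) is available, though building \(\bigvee\Delta \Rightarrow \Delta\) needs the derived left rule \((\veeL)\), not \((\veeR)\) as you wrote.
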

\begin{proof}

  Proof of 1.
  Let \(S = \Gamma \Rightarrow \Delta\).
  First assume that \(\CS \vdash \bigwedge \Gamma \to \bigvee \Delta \), then by Lemma~\ref{lm:correspondence-to-hilbert-CS-CSM} using invertibility of propositional rules we get that \(\g{\CS} + \cut \vdash { \Rightarrow \bigwedge \Gamma \to \bigvee \Delta}\).
  By invertibility of the propositional rules we obtain the desired \(\g{\CS} + \cut \vdash \Gamma \Rightarrow \Delta\).
  For the opposite direction it suffices to use Lemma~\ref{lm:soundness-of-rules-CS-CSM} with an induction on the height of the sequent proof.

  Proof of 2. Analogous to the proof of 1.
\end{proof}

\subsection{ER}

In order to acommodate the axiom \((\ERax[1,0])\) we need to make \(\nec_0\) reflexive, but only when it is directly under a \(\nec_1\) modality. 
Hence we need a mechanism to control when we are directly under a \(\nec_1\) modality in a proof and  allow the use of the reflexivity rule only in those situations.
For that purpose we will define a calculus with two different kinds of sequents, as  shown below.

An \(\ER\)-\emph{sequent} is a triple \((\Gamma, \Delta, i)\) where \(\Gamma, \Delta\) are multisets of formulas and \(i \in \set{0,1}\).
We will denote \((\Gamma, \Delta, 0)\) by \(\Gamma \Rightarrow_0 \Delta\) and \((\Gamma, \Delta, 1)\) by \(\Gamma \Rightarrow_1 \Delta\). \(\Gamma\) is called the \emph{left side} of the sequent while \(\Delta\) is called the \emph{right side}.
We will use \(\gg\) to mean either \( \Rightarrow_0 \) or \(\Rightarrow_1\) (or also \( \Rightarrow \) if we want to talk about non-\(\ER\)-sequents and \(\ER\)-sequents at the same time).

\begin{figure}
  \[
    \AxiomC{}
    \RightLabel{\(\ax\)}
    \UnaryInfC{\(p, \Gamma \Rightarrow_i p, \Delta\)}
    \DisplayProof
    \qquad
    \AxiomC{}
    \RightLabel{\(\botL\)}
    \UnaryInfC{\(\bot, \Gamma \Rightarrow_i \Delta\)}
    \DisplayProof
    \qquad
    \AxiomC{\(\Gamma \Rightarrow_i \Delta\)}
    \RightLabel{\(\botR\)}
    \UnaryInfC{\(\Gamma \Rightarrow_i \bot, \Delta\)}
    \DisplayProof
  \]

  \[
    \AxiomC{\(\Gamma \Rightarrow_i \phi, \Delta\)}
    \AxiomC{\(\psi, \Gamma \Rightarrow_i \Delta\)}
    \RightLabel{\(\toL\)}
    \BinaryInfC{\(\phi \to \psi, \Gamma \Rightarrow_i \Delta\)}
    \DisplayProof
    \qquad
    \AxiomC{\(\phi, \Gamma \Rightarrow_i \psi, \Delta\)}
    \RightLabel{\(\toR\)}
    \UnaryInfC{\(\Gamma \Rightarrow_i \phi \to \psi, \Delta\)}
    \DisplayProof
  \]
  \caption{Propositional rules for \(\ER\)}
  \label{fig:prop-rules-ER}
\end{figure}

\begin{figure}
  \[
    \AxiomC{\(\necd_i \Sigma_i, \nec_{\overline{i}} \Sigma_{\overline{i}}, \nec_i \phi \Rightarrow_i \phi\)}
    \RightLabel{\(\modal[i]{\ER}\)}
    \UnaryInfC{\(\nec_i \Sigma_i, \nec_{\overline{i}} \Sigma_{\overline{i}}, \Gamma \gg \nec_i \phi, \Delta\)}
    \DisplayProof
    \qquad
    \AxiomC{\(\necd_0 \phi, \Gamma \Rightarrow_1 \Delta\)}
    \RightLabel{\(\ERax[1,0]\)}
    \UnaryInfC{\(\nec_0 \phi, \Gamma \Rightarrow_1 \Delta\)}
    \DisplayProof
  \]
  \caption{Modal rules for \(\g{\ER}\)}
  \label{fig:modal-rules-ER}
\end{figure}

The rules that we are going to need to define calculi for \(\ER\) can be found at Figures~\ref{fig:prop-rules-ER} and \ref{fig:modal-rules-ER}.
In the rules \((\toL)\), \((\toR)\), \((\modal[i]{\ER})\), \((\ERax[1,0])\) the formula displayed at the conclusion is called the \emph{principal formula} (of the rule instance) and the displayed \(\nec\)-formula in the left side of the premise of the rules \((\modal[i]{\ER})\) is called \emph{diagonal formula}.
In \((\modal[i]{\ER})\) the formulas belonging to \(\nec_0 \Sigma_0, \nec_1 \Sigma_1\) are called \emph{auxiliary formulas}.
In \((\ax)\), \((\botL)\), \((\modal[i]{\ER})\) the multisets \(\Gamma\) and \(\Delta\) are called the \emph{weakening formulas} of the rule instance.
Notice that the weakening formulas of a rule instance can be changed arbitrarily and will still be a rule instance of the same rule.

\begin{definition}
  We define the wellfounded sequent calculi \(\g{\ER}\) as having the rules of Figure~\ref{fig:prop-rules-ER} together with the rules of Figure~\ref{fig:modal-rules-ER}.
\end{definition}

Again, we have  the following lemma.

\begin{lemma}
  We have that \(\g{\ER} \vdash \phi, \Gamma \gg \phi, \Delta\).
\end{lemma}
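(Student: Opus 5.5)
We prove the statement by induction on the complexity \(|\phi|\), simultaneously for both sequent types \(\Rightarrow_0\) and \(\Rightarrow_1\) and for all multisets \(\Gamma,\Delta\). The induction hypothesis is therefore: for every \(\psi\) with \(|\psi|<|\phi|\), every \(i\in\set{0,1}\) and all \(\Gamma',\Delta'\), we have \(\g{\ER}\vdash \psi,\Gamma'\Rightarrow_i \psi,\Delta'\). Keeping the sequent index and the context arbitrary is essential, because the modal case changes both.

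The base cases are immediate. For \(\phi=p\) we use \((\ax)\), which is available for either index \(i\). For \(\phi=\bot\) we use \((\botL)\).

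For \(\phi=\phi_0\to\phi_1\), we apply \((\toR)\) and then \((\toL)\) and reduce to the two sequents
\[
\phi_0,\Gamma\Rightarrow_i \phi_0,\phi_1,\Delta
\qquad\text{and}\qquad
\phi_1,\phi_0,\Gamma\Rightarrow_i \phi_1,\Delta .
\]
Both are instances of the induction hypothesis with suitably enlarged contexts, still at the same index \(i\).

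For \(\phi=\nec_j\psi\) we apply \((\modal[j]{\ER})\) with principal formula \(\nec_j\psi\) on the right. We choose \(\Sigma_j=\set{\psi}\), \(\Sigma_{\overline j}=\varnothing\), and let all remaining formulas be weakening formulas. This rule may conclude a sequent of either type \(\gg\), so its conclusion \(\nec_j\psi,\Gamma\gg\nec_j\psi,\Delta\) covers both \(i=0\) and \(i=1\). Its premise is
\[
\nec_j\psi,\psi,\nec_j\psi\Rightarrow_j \psi,
\]
which is an instance of the induction hypothesis for \(\psi\), now at index \(j\) and with context \(\set{\nec_j\psi,\nec_j\psi}\) on the left.

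The only point requiring care is this modal case. Its premise is forced to have type \(\Rightarrow_j\), which may differ from the index \(i\) of the sequent we started with. This is exactly why the induction must be stated uniformly over both sequent types and arbitrary contexts; once it is, the case goes through directly, and no use of the rule \((\ERax[1,0])\) is needed anywhere.
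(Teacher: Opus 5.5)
Your proof is correct and takes the paper's route: induction on the complexity of \(\phi\), as for the corresponding \(\CS\) lemma. The paper gives no further detail, and your version supplies the missing point correctly. The induction hypothesis must be uniform in the sequent index and the context, because the modal case is closed by \((\modal[j]{\ER})\) with \(\Sigma_j=\set{\psi}\), whose premise has type \(\Rightarrow_j\).
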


\begin{figure}[htb]
  \[
    \AxiomC{\(\Gamma \gg \Delta\)}
    \RightLabel{\(\wk\)}
    \UnaryInfC{\(\Gamma, \Gamma' \gg \Delta, \Delta'\)}
    \DisplayProof
    \qquad
    \AxiomC{\(\Gamma, \Gamma' \gg \Delta, \Delta'\)}
    \RightLabel{\(\ctr\)}
    \UnaryInfC{\(\Gamma, (\Gamma')^s \gg \Delta, (\Delta')^s\)}
    \DisplayProof
  \]
  where we remember that for a multiset \(\Theta\), \(\Theta^s\) is \(\Theta\) without repeptitions.
  \[
    \AxiomC{\(\Gamma \gg \Delta, \chi\)}
    \AxiomC{\(\chi, \Gamma \gg \Delta\)}
    \RightLabel{\(\cut\)}
    \BinaryInfC{\(\Gamma \gg \Delta\)}
    \DisplayProof
    \qquad
    \AxiomC{\(\Gamma \Rightarrow_0 \Delta\)}
    \RightLabel{\(\Rightarrow_1\)}
    \UnaryInfC{\(\Gamma \Rightarrow_1 \Delta\)}
    \DisplayProof
  \]
  \caption{Structural rules}
  \label{fig:structural-rules-ER}
\end{figure}

In the following lemma, we put together some structural properties of \(\g{\ER}\) (see Figure~\ref{fig:structural-rules-ER}),  which can be shown by an induction on the height and Theorem~\ref{local-adm-and-eliminability}.
Again, cut elimination will be proven in Section~\ref{sec:cut-elim}

\begin{lemma}\label{lm:structural-rules-ER}
  In \(\g{\ER}\) and \(\g{\ER} + \cut\) we have the following.
  \begin{enumerate}
    \item \((\wk)\) is eliminable and admissible preserving height and rules.
    \item \((\Rightarrow_1)\) is eliminable and admissible preserving height and rules.
    \item \((\botR)\), \((\toL)\) and \((\toR)\) are invertible preserving height and rules.
    \item \((\ctr)\) is eliminable and admissible preserving height and rules.
  \end{enumerate}
\end{lemma}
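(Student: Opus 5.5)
We argue by induction on the height of proofs in \(\g{\ER}\) (resp. \(\g{\ER}+\cut\)), treating the four items in order, since later items use earlier ones. Because both calculi are wellfounded, Theorem~\ref{local-adm-and-eliminability} tells us that admissibility coincides with eliminability. It therefore suffices to show that each rule is admissible preserving height and rules. Here ``rules'' in the presence of \(\cut\) means that a \(\cut\) is introduced only where one already occurred.

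\emph{Weakening.} We push \((\wk)\) upward. For \((\ax)\), \((\botL)\) and \((\modal[i]{\ER})\), the extra formulas are simply absorbed into the weakening formulas \(\Gamma,\Delta\) of the conclusion; for \((\modal[i]{\ER})\) the premise is untouched, so the subscript \(\gg\) of the conclusion is irrelevant. For \((\botR)\), \((\toL)\), \((\toR)\), \((\ERax[1,0])\) and \((\cut)\), we apply the induction hypothesis to the premises and reapply the same rule. This preserves the subscript, since weakening does not change it.

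\emph{The rule \((\Rightarrow_1)\).} Given a proof of \(\Gamma\Rightarrow_0\Delta\), we replace the subscript at the root by \(1\) and proceed by cases on the last rule. Axioms and \((\botL)\) are available at both subscripts. For the propositional rules and \(\cut\), every premise has the same subscript as the conclusion, so we apply the induction hypothesis to the premises. For \((\modal[i]{\ER})\), the conclusion may carry either subscript (\(\gg\)) and the premise is fixed to \(\Rightarrow_i\), so we keep the same premise. No case for \((\ERax[1,0])\) arises, because that rule only concludes \(\Rightarrow_1\)-sequents. Height and rules are preserved.

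\emph{Invertibility.} We proceed as usual by induction on height. If the formula \(\bot\), \(\phi\to\psi\) on the right, or \(\phi\to\psi\) on the left is principal, we take the premise, using (\(\wk\)) where needed. If it is a weakening formula of \((\ax)\), \((\botL)\) or \((\modal[i]{\ER})\), we change the weakening formulas to obtain the inverted sequent directly. Otherwise, for the remaining rules, including \((\ERax[1,0])\), \(\cut\) and the propositional rules, the formula sits in the context, and we invert the premises and reapply the rule. The formula cannot sit in the premise of \((\modal[i]{\ER})\), because the premise context is not inherited there.

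\emph{Contraction.} This is the main obstacle, as in any modal calculus with a diagonal formula. We use induction on height, simultaneously for all contracted multisets. If no contracted formula is principal, we push the contraction up. If a duplicated formula is principal in a propositional rule, we use height-preserving invertibility and then the induction hypothesis on the premise(s). For \((\ERax[1,0])\) with a duplicated \(\nec_0\phi\), the premise contains \(\necd_0\phi,\nec_0\phi\); we contract the two copies of \(\nec_0\phi\) by the induction hypothesis. For \((\modal[i]{\ER})\) we have two sub-cases. If the duplicate is on the right or among the weakening formulas, we simply drop it. If the duplicates are auxiliary, the premise contains \(\necd_i\chi\) twice (or \(\nec_{\overline i}\chi\) twice), and we contract in the premise by the induction hypothesis, obtaining an instance with a single copy. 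The delicate point is the auxiliary case, where a duplicated \(\nec_i\chi\) may appear once as auxiliary and once as weakening. In that case we declare the weakening copy redundant, so no contraction in the premise is even needed.
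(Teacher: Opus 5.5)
Your proposal is correct and takes the same route as the paper. The paper justifies this lemma only with the remark that these properties follow by induction on the height together with Theorem~\ref{local-adm-and-eliminability} (admissibility coincides with eliminability for wellfounded calculi). Your case analysis fills in that one-line argument faithfully, including the \((\ERax[1,0])\) case and the distinction between auxiliary and weakening copies in \((\modal[i]{\ER})\) for contraction.
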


Again, using the good structural behaviour of the sequent calculus we can start relating \(\ER\) and \(\g{\ER} + \cut\).
Let us show the first correspondence.

\begin{lemma}\label{lm:correspondence-to-hilbert-ER}
  For any formula \(\phi\), \(\ER \vdash \phi\) implies \(\g{\ER} + \cut \vdash { \Rightarrow_0 \phi}\).
\end{lemma}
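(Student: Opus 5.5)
We follow the proof of Lemma~\ref{lm:correspondence-to-hilbert-CS-CSM}: it suffices to show that the set \(\set{\phi \mid \g{\ER} + \cut \vdash { \Rightarrow_0 \phi}}\) contains all axioms of \(\ER\) and is closed under \((\MP)\), \((\NEC[0])\) and \((\NEC[1])\). Since \(\ER = \CS \oplus \ERax[1,0]\), the axioms to check are the propositional tautologies, \((\Kax[i])\), \((\Lax[i])\), \((\Cax[i,\overline{i}])\) for \(i \leq 1\), and \((\ERax[1,0])\).

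Propositional tautologies are derivable with \((\ax)\), \((\botL)\), \((\botR)\), \((\toL)\), \((\toR)\) at index \(0\). For \((\Kax[i])\), \((\Lax[i])\) and \((\Cax[i,\overline{i}])\) we copy the derivations given for \(\g{\CS}\), replacing \((\modal[i]{\CS})\) by \((\modal[i]{\ER})\). The conclusion of that rule may carry either arrow, so we take \(\Rightarrow_0\). Its premise is a \(\Rightarrow_i\)-sequent. The top sequents of those derivations are instances of \((\Ax)\), which is available for both arrows by the lemma \(\g{\ER} \vdash \phi, \Gamma \gg \phi, \Delta\), and the \((\toL)\) steps happen entirely inside \(\Rightarrow_i\). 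So the translated derivations are legitimate.

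For \((\ERax[1,0])\) we derive
\[
  \AxiomC{}
  \RightLabel{\(\Ax\)}
  \UnaryInfC{\(\nec_0\phi, \phi, \nec_1(\nec_0\phi\to\phi) \Rightarrow_1 \phi\)}
  \RightLabel{\(\ERax[1,0]\)}
  \UnaryInfC{\(\nec_0\phi, \nec_1(\nec_0\phi\to\phi) \Rightarrow_1 \phi\)}
  \RightLabel{\(\toR\)}
  \UnaryInfC{\(\nec_1(\nec_0\phi\to\phi) \Rightarrow_1 \nec_0\phi\to\phi\)}
  \RightLabel{\(\modal[1]{\ER}\)}
  \UnaryInfC{\(\Rightarrow_0 \nec_1(\nec_0\phi\to\phi)\)}
  \DisplayProof
\]
Here \((\modal[1]{\ER})\) is applied with \(\Sigma_0 = \Sigma_1 = \varnothing\), and its premise is a \(\Rightarrow_1\)-sequent. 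This is exactly where the rule \((\ERax[1,0])\) becomes applicable.

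Closure under modus ponens is as before. Given \(\pi \vdash { \Rightarrow_0 \phi \to \psi}\) and \(\tau \vdash { \Rightarrow_0 \phi}\), we apply \((\cut)\) at index \(0\) to \(\wk(\tau)\) and \(\inv{\toR}(\pi)\), using Lemma~\ref{lm:structural-rules-ER}.

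Necessitation is the only step that differs, and it is the point needing care. From \(\pi \vdash { \Rightarrow_0 \phi}\), the rule \((\modal[i]{\ER})\) requires the premise \(\nec_i \phi \Rightarrow_i \phi\). For \(i=0\) this follows by \((\wk)\). For \(i=1\) we must also change the arrow, which we do with the admissible rule \((\Rightarrow_1)\) of Lemma~\ref{lm:structural-rules-ER}, followed by \((\wk)\). Applying \((\modal[i]{\ER})\) with empty \(\Sigma\)'s then yields \(\Rightarrow_0 \nec_i\phi\).

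Since admissibility of \((\Rightarrow_1)\) and \((\wk)\) has already been established, no genuine obstacle remains. The only delicate point is to keep the arrow indices consistent throughout the translated derivations.
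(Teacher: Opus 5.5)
Your proposal is correct and follows the paper's proof. It uses the same closure argument, reuses the \(\CS\) derivations with \((\modal[i]{\ER})\) in place of \((\modal[i]{\CS})\), and gives exactly the paper's derivation of \((\ERax[1,0])\). Your explicit use of the admissible rule \((\Rightarrow_1)\) to obtain the premise \(\nec_1\phi \Rightarrow_1 \phi\) in the \((\NEC[1])\) case is a detail the paper leaves implicit when it says the closure properties are proven as before.
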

\begin{proof}
  We have to show that \(\set{\phi \mid \g{\ER} + \cut \vdash { \Rightarrow_0 \phi}}\) contains the axioms of \(\ER\) and is closed under \((\MP)\), \((\NEC[0])\) and \((\NEC[1])\).
  In most of them the necessary proofs in \(\g{\ER} + \cut\) look as the proofs in Lemma~\ref{lm:correspondence-to-hilbert-CS-CSM}, so we will skip them.
  The only new axiom is \((\ERax[1,0])\) and the desired proof is
  \[
    \AxiomC{}
    \RightLabel{\(\Ax\)}
    \UnaryInfC{\( \necd_0 \phi, \nec_1(\nec_0 \phi \to \phi)\Rightarrow_1  \phi\)}
    \RightLabel{\(\ERax[1,0]\)}
    \UnaryInfC{\( \nec_0 \phi, \nec_1(\nec_0 \phi \to \phi)\Rightarrow_1  \phi\)}
    \RightLabel{\(\toR\)}
    \UnaryInfC{\( \nec_1(\nec_0 \phi \to \phi)\Rightarrow_1 \nec_0 \phi \to \phi\)}
    \RightLabel{\(\modal[1]{\ER}\)}
    \UnaryInfC{\( \Rightarrow_0 \nec_1(\nec_0 \phi \to \phi)\)}
    \DisplayProof
  \]

  The closure properties are proven as in Lemma~\ref{lm:correspondence-to-hilbert-CS-CSM}.
\end{proof}

As before, for the full correspondence we need to interpret \(\ER\)-sequents as formulas.
Given a sequent \(S = \Gamma \Rightarrow_i \Delta\) we define its \emph{formula interpretation} as
\[
  S^\flat = 
  \begin{cases}
    \bigwedge \Gamma \to \bigvee \Delta &\text{if \(i = 0\)}, \\
    \nec_1(\bigwedge \Gamma \to \bigvee \Delta) &\text{if \(i = 1\)}.
  \end{cases}
\]

\begin{lemma}\label{lm:soundness-of-rules-ER}
  The rules of \(\g{\ER} + \cut\) are sound in \(\ER\).
\end{lemma}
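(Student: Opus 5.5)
I will argue rule by rule: for every instance \((S_0,\ldots,S_{n-1},S)\) of a rule of \(\g{\ER}+\cut\), if \(\ER \vdash S_k^\flat\) for all \(k<n\), then \(\ER \vdash S^\flat\). The case split follows the shape of the sequents involved.

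\emph{Propositional rules and cut.} For \((\ax)\), \((\botL)\), \((\botR)\), \((\toL)\), \((\toR)\), \((\cut)\) instantiated with \(\Rightarrow_0\), the formula \(\bigwedge_{k<n} S_k^\flat \to S^\flat\) is a tautology, as in Lemma~\ref{lm:soundness-of-rules-CS-CSM}. With \(\Rightarrow_1\), write \(S_k^\flat = \nec_1 \theta_k\) and \(S^\flat = \nec_1\theta\). Here \(\bigwedge_k \theta_k \to \theta\) is a tautology. By \((\NEC[1])\) and \((\Kax[1])\) we get \(\bigwedge_k \nec_1\theta_k \to \nec_1\theta\), and hence soundness. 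Axioms with \(\Rightarrow_1\) are then handled by necessitating the tautology.

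\emph{The rule \((\ERax[1,0])\).} Suppose \(\ER\vdash \nec_1(\nec_0\phi\wedge\phi\wedge\bigwedge\Gamma\to\bigvee\Delta)\). The axiom \(\ERax[1,0]\) gives \(\nec_1(\nec_0\phi\to\phi)\). Combining the two under \(\nec_1\) via \((\Kax[1])\) and the tautology
\[
(\nec_0\phi\to\phi)\wedge(\nec_0\phi\wedge\phi\wedge\textstyle\bigwedge\Gamma\to\bigvee\Delta)\to(\nec_0\phi\wedge\bigwedge\Gamma\to\bigvee\Delta),
\]
we obtain \(\nec_1(\nec_0\phi\wedge\bigwedge\Gamma\to\bigvee\Delta)\), which is the conclusion's interpretation.

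\emph{The rule \((\modal[i]{\ER})\).} The premise is a \(\Rightarrow_i\)-sequent, so its interpretation differs by \(i\).

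For \(i=0\), the premise interpretation is the plain implication. I follow the \(\CS\) derivation of Lemma~\ref{lm:soundness-of-rules-CS-CSM} and get \(\ER\vdash \bigwedge\nec_0\Sigma_0\wedge\bigwedge\nec_1\Sigma_1\to\nec_0\phi\). This uses \((\NEC[0])\), \((\Kax[0])\), \(4_0\), \((\Cax[0,1])\) and \((\Lax[0])\).

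For \(i=1\), the premise interpretation is \(\nec_1(\bigwedge\necd_1\Sigma_1\wedge\bigwedge\nec_0\Sigma_0\wedge\nec_1\phi\to\phi)\). Distributing with \((\Kax[1])\) gives
\[
\textstyle\bigwedge\nec_1\necd_1\Sigma_1\wedge\bigwedge\nec_1\nec_0\Sigma_0\to\nec_1(\nec_1\phi\to\phi).
\]
The antecedent follows from \(\bigwedge\nec_1\Sigma_1\wedge\bigwedge\nec_0\Sigma_0\), using \(4_1\) and \((\Cax[1,0])\). Then \((\Lax[1])\) yields \(\nec_1\phi\). Note that, unlike the \(\CS\) case, no necessitation step is needed here, since the \(\nec_1\) is already present in the interpretation.

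Finally, I get the conclusion \(\nec_i\Sigma_i,\nec_{\overline i}\Sigma_{\overline i},\Gamma\gg\nec_i\phi,\Delta\) by weakening, which is propositional. When \(\gg\) is \(\Rightarrow_1\), I additionally necessitate with \((\NEC[1])\). This is legitimate because the derived implication \(\bigwedge\nec_i\Sigma_i\wedge\bigwedge\nec_{\overline i}\Sigma_{\overline i}\to\nec_i\phi\) is itself a theorem.

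\emph{Main obstacle.} The main point of care is this combination: \(i=1\) with a \(\Rightarrow_0\) conclusion, and \(i=0\) with a \(\Rightarrow_1\) conclusion. The issue is to keep track of which level of boxing the interpretation has. Since every intermediate step yields an outright theorem, though, these cases go through uniformly.
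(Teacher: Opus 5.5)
Your proof is correct and follows the same rule-by-rule route as the paper: tautologies for \(\Rightarrow_0\)-instances, tautology plus \(\NEC[1]\) and \(\Kax[1]\) for \(\Rightarrow_1\)-instances, the axiom \(\ERax[1,0]\) combined under \(\nec_1\), and the \(\NEC[i]\)/\(\Kax[i]\)/\(4_i\)/\(\Cax[i,\overline{i}]\)/\(\Lax[i]\) chain for the modal rule. Your explicit treatment of \(\modal[1]{\ER}\) is actually more accurate than the paper's. There the premise is a \(\Rightarrow_1\)-sequent whose interpretation is already under \(\nec_1\), so \(\Kax[1]\) replaces the necessitation step, whereas the paper writes the premise uniformly as a \(\Rightarrow_0\)-sequent.
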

\begin{proof}
  If \(R\) is one of \((\ax)\), \((\botL)\), \((\botR)\), \((\toL)\), \((\toR)\) or \((\cut)\) with \( \Rightarrow_0\)-sequents, then the proof is exactly as in Lemma~\ref{lm:soundness-of-rules-CS-CSM}.
  Assume \(R\) is one of those rules but with \(\Rightarrow_1\)-sequents and consider an instance
  \[
    \AxiomC{\(\Gamma_0 \Rightarrow_1 \Delta_0\)}
    \AxiomC{\(\cdots\)}
    \AxiomC{\(\Gamma_{n-1} \Rightarrow_1 \Delta_{n-1}\)}
    \RightLabel{\(R\)}
    \TrinaryInfC{\(\Gamma \Rightarrow_1 \Delta\)}
    \DisplayProof
  \]
  Note that \(\ER \vdash (\bigwedge_{i < n} \bigwedge \Gamma_i \to \bigvee \Delta_i) \to (\bigwedge \Gamma \to \bigvee \Delta)\) as it is a propositional tautology.
  Then using \((\NEC[1])\) and \((\Kax[1])\) we obtain that \(\ER \vdash (\bigwedge_{i < n} \nec_1(\bigwedge \Gamma_i \to \bigvee \Delta_i)) \to \nec_1(\bigwedge \Gamma \to \bigvee \Delta)\), from where the desired soundness follows.

  Assume \(R\) is \((\modal[i]{\ER})\) and that we have an instance
  \[
    \AxiomC{\(\necd_i \Sigma_i, \nec_{\overline{i}} \Sigma_{\overline{i}}, \nec_i \phi \Rightarrow_0 \phi\)}
    \RightLabel{\(\modal[i]{\ER}\)}
    \UnaryInfC{\(\nec_i \Sigma_i, \nec_{\overline{i}} \Sigma_{\overline{i}}, \Gamma \gg \nec_i \phi, \Delta\)}
    \DisplayProof
  \]
  Then we have the following reasoning
  \begin{align*}
    &\ER \vdash \bigwedge \necd_i \Sigma_i \wedge \bigwedge \nec_{\overline{i}} \Sigma_{\overline{i}} \wedge \nec_i \phi \to \phi,
    &&\text{by assumption}, \\
    &\ER \vdash \bigwedge \necd_i \Sigma_i \wedge \bigwedge \nec_{\overline{i}} \Sigma_{\overline{i}} \to (\nec_i \phi \to \phi),
    &&\text{by propositional reasoning}, \\
    &\ER \vdash \bigwedge \nec_i\necd_i \Sigma_i \wedge \bigwedge \nec_i\nec_{\overline{i}} \Sigma_{\overline{i}} \to \nec_i(\nec_i \phi \to \phi),
    &&\text{by \((\NEC[i])\) and \((\Kax[i])\)}, \\
    &\ER \vdash \bigwedge \nec_i \Sigma_i \wedge \bigwedge \nec_i\nec_{\overline{i}} \Sigma_{\overline{i}} \to \nec_i(\nec_i \phi \to \phi),
    &&\text{by \((4_i)\)}, \\
    &\ER \vdash \bigwedge \nec_i \Sigma_i \wedge \bigwedge \nec_{\overline{i}} \Sigma_{\overline{i}} \to \nec_i(\nec_i \phi \to \phi),
    &&\text{by \((\Cax[i,\overline{i}])\)}, \\
    &\ER \vdash \bigwedge \nec_i \Sigma_i \wedge \bigwedge \nec_{\overline{i}} \Sigma_{\overline{i}} \to \nec_i\phi,
    &&\text{by \((\Lax[i])\)}, \\
    &\ER \vdash \bigwedge \nec_i \Sigma_i \wedge \bigwedge \nec_{\overline{i}} \Sigma_{\overline{i}} \wedge \bigwedge \Gamma \to \nec_i\phi \vee \bigvee \Delta,
    &&\text{by propositional reasoning}, \\
    &\ER \vdash \nec_1\left(\bigwedge \nec_i \Sigma_i \wedge \bigwedge \nec_{\overline{i}} \Sigma_{\overline{i}} \wedge \bigwedge \Gamma \to \nec_i\phi \vee \bigvee \Delta\right),
    &&\text{by \((\NEC[1])\) (only if \({\gg} \text{ is } {\Rightarrow_1}\))}.
  \end{align*}

  Assume \(R\) is \((\ERax[1,0])\), that we have an instance
  \[
    \AxiomC{\(\necd_0 \phi, \Gamma \Rightarrow_1 \Delta\)}
    \RightLabel{\(\ERax[1,0]\)}
    \UnaryInfC{\(\nec_0 \phi, \Gamma \Rightarrow_1 \Delta\)}
    \DisplayProof
  \]
  and  that \(\ER \vdash \nec_1(\phi \wedge \nec_0 \phi \wedge \bigwedge \Gamma \to \bigvee \Delta)\).
  By propositional reasoning
  \[
    \ER \vdash (\nec_0 \phi \to \phi) \wedge (\phi \wedge \nec_0 \phi \wedge \bigwedge \Gamma \to \bigvee \Delta) \to (\nec_0 \phi \wedge \bigwedge \Gamma \to \bigvee \Delta),
  \]
  so using \((\NEC[1])\) and \((\Kax[1])\) we obtain
  \[
    \ER \vdash \nec_1(\nec_0 \phi \to \phi) \wedge \nec_1(\phi \wedge \nec_0 \phi \wedge \bigwedge \Gamma \to \bigvee \Delta) \to \nec_1(\nec_0 \phi \wedge \bigwedge \Gamma \to \bigvee \Delta).
  \]
  Since \(\ER \vdash \nec_1(\nec_0 \phi \to \phi)\) by axiom \((\ERax[1,0])\) and \(\ER \vdash \nec_1(\phi \wedge \nec_0 \phi \wedge \bigwedge \Gamma \to \bigvee \Delta)\) we can conclude the desired \(\ER \vdash \nec_1(\nec_0 \phi \wedge \bigwedge \Gamma \to \bigvee \Delta)\).
\end{proof}

Notice that the correspondence shown in the next theorem is only partial.
In particular, it only works for sequents with the \( \Rightarrow_0\) arrow.
For the full correspondence,  i.e.~for all sequents, we will first need to prove cut elimination.
\begin{theorem}\label{th:correspondence-to-hilbert-ER-weak}
  Let \(S\) be a \(\Rightarrow_0\)-sequent.
  We have that \(\ER \vdash S^\flat\) iff \(\g{\ER} + \cut \vdash S\).
\end{theorem}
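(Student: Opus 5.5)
The plan mirrors the proof of Theorem~\ref{th:correspondence-to-hilbert-CS-CSM}, with one extra point to handle: a proof of a \(\Rightarrow_0\)-sequent may pass through \(\Rightarrow_1\)-sequents.

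Write \(S = \Gamma \Rightarrow_0 \Delta\). For the left-to-right direction, assume \(\ER \vdash \bigwedge \Gamma \to \bigvee \Delta\).
\begin{enumerate}
  \item Lemma~\ref{lm:correspondence-to-hilbert-ER} gives \(\g{\ER} + \cut \vdash {\Rightarrow_0 \bigwedge \Gamma \to \bigvee \Delta}\).
  \item Invertibility of \((\toR)\) from Lemma~\ref{lm:structural-rules-ER} yields \(\bigwedge \Gamma \Rightarrow_0 \bigvee \Delta\).
  \item To decompose the conjunction and disjunction we use the abbreviations \(\wedge\) and \(\vee\) in terms of \(\to\) and \(\bot\). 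Unfolding them produces only \((\toL)\), \((\toR)\) and \((\botR)\) steps, all invertible by Lemma~\ref{lm:structural-rules-ER}.
\end{enumerate}
For example, \(\bigvee \Delta\) is a nested \(\neg\delta_0 \to \cdots\), so inverting \((\toR)\) moves \(\neg \delta_0\) to the left. We then invert \((\toL)\) on \(\delta_0 \to \bot\), take the first premise, and obtain \(\delta_0\) on the right. For \(\wedge\) on the left, \(\neg(\neg\phi \vee \neg\psi)\) is handled the same way, and \((\botR)\)-inversion removes the leftover \(\bot\). If instead of inverting we simply cut with the derivable sequents \(\bigwedge\Gamma' , \ldots\), we can use cuts against proofs of \(\Gamma \Rightarrow_0 \bigwedge \Gamma\) and \(\bigvee \Delta \Rightarrow_0 \Delta\). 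These are built from \((\Ax)\) and the propositional rules, which is equally easy because \(\cut\) is available. In either version we obtain \(\g{\ER}+\cut \vdash \Gamma \Rightarrow_0 \Delta\). The empty cases (\(\Gamma\) or \(\Delta\) empty, read as \(\top\) and \(\bot\)) are handled by \((\botR)\) and its inversion.

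For the right-to-left direction we induct on the height of a proof in \(\g{\ER}+\cut\), using Lemma~\ref{lm:soundness-of-rules-ER}. The claim proved is stronger than the statement: every sequent occurring in the proof, whether a \(\Rightarrow_0\)- or a \(\Rightarrow_1\)-sequent, has its formula interpretation provable in \(\ER\). This strengthening is necessary because the premise of \((\modal[i]{\ER})\) and the sequents above it may carry the arrow \(\Rightarrow_1\) (or \(\Rightarrow_0\)), regardless of the arrow in the conclusion. Soundness is stated for all instances of all rules with the interpretation \(S^\flat\) covering both arrows, so the induction goes through uniformly: leaves are sound instances of zero-premise rules, and each inner node follows from its premises by soundness. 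Specialising to the root, which is a \(\Rightarrow_0\)-sequent, gives \(\ER \vdash \bigwedge\Gamma \to \bigvee\Delta\).

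Neither direction is hard. The one point needing care is in the left-to-right direction: the invertibility and derivability facts used must be those for \(\Rightarrow_0\)-sequents specifically, which Lemma~\ref{lm:structural-rules-ER} provides. The restriction to \(\Rightarrow_0\) in the statement comes only from Lemma~\ref{lm:correspondence-to-hilbert-ER}, which produces \(\Rightarrow_0\)-sequents. Obtaining \(\Rightarrow_1\)-sequents from \(\ER \vdash \nec_1(\cdots)\) would require stripping a \(\nec_1\), which is not available at this stage, and this is why the full correspondence has to wait for cut elimination.
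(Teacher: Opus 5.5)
Your proposal is correct and follows the paper's own argument. For left to right you use Lemma~\ref{lm:correspondence-to-hilbert-ER} followed by invertibility of the propositional rules, and for the converse an induction on proof height via Lemma~\ref{lm:soundness-of-rules-ER}, with the induction correctly covering \(\Rightarrow_1\)-sequents as well as \(\Rightarrow_0\)-sequents.
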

\begin{proof}
  The proof is completely analogous to Theorem~\ref{th:correspondence-to-hilbert-CS-CSM} but using Lemmas~\ref{lm:correspondence-to-hilbert-ER} and \ref{lm:soundness-of-rules-ER} instead of Lemmas~\ref{lm:correspondence-to-hilbert-CS-CSM} and \ref{lm:soundness-of-rules-CS-CSM}.
\end{proof}

\section{Cut elimination}\label{sec:cut-elim}

The methodology that we will use to show cut elimination for \(\g{\CS}\), \(\g{\CSM}\) and \(\g{\ER}\) will be local progress proof theory.
The advantage of this approach is that the calculi do not have diagonal formulas, which greatly simplifies the inductive measure needed in the cut elimination procedure.
In particular, contrary to the original proof \cite{Valentini}, this cut elimination does not need to make a deep analysis (i.e., follow the trace of a formula upwards) of the proofs above the cut rule; instead it suffices to make a case distinction on their last rules.\footnote{Another simplified cut elimination for \(\GL\) was obtained in \cite{Brighton, ian-cut-elim}, using a measure based on proof search. We prefer to show cut elimination in a non-wellfounded system as this can also be used to establish uniform Lyndon interpolation.}

If wanted,  this cut-elimination procedure could be made effective as it is only necessary to approximate a big enough portion of the non-wellfounded proof.
However, since we do not need the effectiveness of cut elimination for our other results, we will keep the exposition as simple as possible and disregard the effectiveness.

\subsection{Non-wellfounded calculi}

The connection of provability logics with non-wellfounded calculi is wellknown since the work of Savateev and Shamkanov \cite{shamkanovGl}.
As usual, non-wellfounded calculi for the logics \(\CS\), \(\CSM\) and \(\ER\) can be obtained from the wellfounded calculi by removing the diagonal formula in the modal rules.

The particular rules needed are shown in Figure~\ref{fig:non-wellfounded-rules}.
Again, the formula displayed in the conclusion is called the \emph{principal formula} (of the rule instance) and  the formulas in \(\nec_0 \Sigma_0\), \(\nec_1 \Sigma_1\) will be called  \emph{auxiliary formulas}.
 \(\Gamma\) and \(\Delta\) will be called the \emph{weakening formulas} of the rule instance, and  it still holds that arbitarily changing the weakening formulas of a rule instance still makes it a rule instance of the same rule.

\begin{figure}
  \[
    \AxiomC{\(\necd_i \Sigma_i, \nec_{\overline{i}} \Sigma_{\overline{i}} \Rightarrow \phi\)}
    \RightLabel{\(\modal[i]{\KTCS}\)}
    \UnaryInfC{\(\nec_i \Sigma_i, \nec_{\overline{i}} \Sigma_{\overline{i}}, \Gamma \Rightarrow \nec_i \phi, \Delta\)}
    \DisplayProof
  \]

  \[
    \AxiomC{\(\necd_0 \Sigma_0, \nec_{1} \Sigma_{1} \Rightarrow \phi\)}
    \RightLabel{\(\modal[0]{\KTCSM}\)}
    \UnaryInfC{\(\nec_0 \Sigma_0, \nec_{1} \Sigma_{1}, \Gamma \Rightarrow \nec_i \phi, \Delta\)}
    \DisplayProof
    \quad
    \AxiomC{\(\necd_0 \Sigma_0, \necd_{1} \Sigma_{1} \Rightarrow \phi\)}
    \RightLabel{\(\modal[1]{\KTCSM}\)}
    \UnaryInfC{\(\nec_0 \Sigma_0, \nec_{1} \Sigma_{1}, \Gamma \Rightarrow \nec_i \phi, \Delta\)}
    \DisplayProof
  \]

  \[
    \AxiomC{\(\necd_i \Sigma_i, \nec_{\overline{i}} \Sigma_{\overline{i}} \Rightarrow_i \phi\)}
    \RightLabel{\(\modal[i]{\KTER}\)}
    \UnaryInfC{\(\nec_i \Sigma_i, \nec_{\overline{i}} \Sigma_{\overline{i}}, \Gamma \gg \nec_i \phi, \Delta\)}
    \DisplayProof
  \]
  \caption{Non-wellfounded rules}
  \label{fig:non-wellfounded-rules}
\end{figure}

\begin{definition}
  We define the following local progress sequent calculi.
  \begin{enumerate}
    \item \(\n{\CS}\) is given by the rules of Figure~\ref{fig:prop-rules} together with the rules \((\modal[0]{\KTCS})\) and \((\modal[1]{\KTCS})\) from Figure~\ref{fig:non-wellfounded-rules}.
      Progress is only made at the premises of \((\modal[0]{\KTCS})\) and \((\modal[1]{\KTCS})\).
    \item \(\n{\CSM}\) is given by the rules of Figure~\ref{fig:prop-rules} together with the rules \((\modal[0]{\KTCSM})\) and \((\modal[1]{\KTCSM})\) from Figure~\ref{fig:non-wellfounded-rules}.
      Progress is only made at the premises of \((\modal[0]{\KTCSM})\) and \((\modal[1]{\KTCSM})\).
    \item \(\n{\ER}\) is given by the rules of Figure~\ref{fig:prop-rules-ER} together with the rule \((\ERax[1,0])\) of Figure~\ref{fig:modal-rules-ER} and the rules \((\modal[0]{\KTER})\) and \((\modal[1]{\KTER})\) from Figure~\ref{fig:non-wellfounded-rules}.
      Progress is only made at the premises of \((\modal[0]{\KTER})\) and \((\modal[1]{\KTER})\).
      \qedhere
  \end{enumerate}
\end{definition}

As usual, these calculi behave well structurally, which can be shown by a simple induction on the local height.

\begin{lemma}\label{lm:structural-rules-non-wellfounded}
  In \(\n{\CS}\), \(\n{\CSM}\), \(\n{\ER}\), \(\n{\CS} + \cut\), \(\n{\CSM} + \cut\), \(\n{\ER} + \cut\) we have the following:
  \begin{enumerate}
    \item \((\wk)\) is eliminable and admissible preserving local height and local rules;
    \item \((\botR)\), \((\toL)\) and \((\toR)\) are invertible preserving local height and local rules;
    \item \((\ctr)\) is eliminable and admissible preserving local height and local rules.
  \end{enumerate}
  In addition, \((\Rightarrow_1)\) is eliminable and admissible preserving local height and local rules in \(\n{\ER}\) and in \( \n{\ER} + \cut\).
\end{lemma}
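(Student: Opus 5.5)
The plan is to handle each item by induction on the local height of the given proof, transforming only the main local fragment and reusing the subproofs above the first progressing nodes unchanged. Since rule instances remain instances when the weakening formulas are changed, every modal rule (and every axiom) can absorb modifications of its conclusion directly. Its premise, which is the progressing node, is then left untouched. This is what makes local height and local rules preserved. For eliminability I will first establish admissibility preserving local rules. Admissibility preserving local rules implies local admissibility: if the \(\pi_i\) are locally \(R\)-free for \(R\) the structural rule, the constructed proof uses only rules from \(\bigcup_i \lrul(\pi_i)\) and so is locally \(R\)-free. Theorem~\ref{local-adm-and-eliminability} then gives eliminability.

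For weakening, I induct on \(\lhg(\pi)\). If the last rule is \((\ax)\), \((\botL)\) or a modal rule \((\modal[i]{\KTCS})\), \((\modal[i]{\KTCSM})\) or \((\modal[i]{\KTER})\), I add \(\Gamma',\Delta'\) to the weakening formulas. Otherwise the last rule is propositional or \((\ERax[1,0])\); these are context-preserving, so I apply the induction hypothesis to the premises and reapply the rule. Any cut is treated the same way, with the cut formula left fixed. For \((\Rightarrow_1)\) in \(\n{\ER}\) I follow the same pattern. The axioms and modal rules allow an arbitrary arrow \(\gg\) in the conclusion, so I just switch the arrow there. For the propositional rules and cut I apply the induction hypothesis and reapply the rule. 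The case \((\ERax[1,0])\) cannot arise, because its conclusion is a \(\Rightarrow_1\)-sequent, so it is not the last rule of a proof of a \(\Rightarrow_0\)-sequent.

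For invertibility of \((\botR)\), \((\toL)\) and \((\toR)\), I again induct on local height. If the relevant formula is principal in the last rule, I take the corresponding premise, which has smaller local height and a subset of the local rules. If it is a weakening formula of an axiom or modal rule, I replace it by the components required by the inverted rule, which is still a rule instance. If it is a side formula of a propositional rule, \((\ERax[1,0])\) or cut, I use the induction hypothesis on the premises and reapply the rule. Note that a formula occurring on the left of the conclusion of a modal rule is either auxiliary, hence a \(\nec\)-formula, or a weakening formula. So implications are never auxiliary and the inversion always goes through.

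Contraction is the step I expect to be the main obstacle. The delicate cases are those where a duplicated formula is principal. For \((\toL)\) and \((\toR)\) with principal \(\phi\to\psi\), I use height-preserving invertibility on the other copy in the premise and then the induction hypothesis for the contracted components. For the modal rules, duplicated auxiliary formulas \(\nec_i\chi\) appear as \(\necd_i\chi\) twice (or \(\nec_{\overline i}\chi\) twice) in the premise. Here I do not contract inside the premise, since that would alter the progressing subproof. Instead I simply drop the second copy from \(\Sigma_i\) and keep the premise sequent unchanged up to weakening: the premise for the contracted \(\Sigma_i\) is a sub-multiset, so I apply admissible weakening in reverse. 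More simply, I reuse the original premise with a redundant multiset, because an instance with \(\Sigma_i\) containing repetitions yields the same conclusion after moving one copy to \(\Gamma\). Similarly for \((\ERax[1,0])\) with duplicated \(\nec_0\phi\), I invert it by keeping one copy as a side formula and applying the induction hypothesis. With cut present, cut is context-sharing, so contraction simply commutes with it. In every case the local height does not increase and no new rules enter the main local fragment, which completes the lemma.
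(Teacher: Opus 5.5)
Your treatment of weakening, of \((\Rightarrow_1)\) and of the three inversions matches the paper's ``simple induction on the local height''. Only the main local fragment is rewritten, and axioms and modal rules absorb changes of their weakening formulas, so those parts are fine.

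The gap is in contraction, in exactly the case you single out. Take a modal rule, say \((\modal[i]{\KTCS})\), where the duplicated \(\nec_i\chi\) occurs twice among the auxiliary formulas. Then \(\Sigma_i=\chi,\chi,\Sigma'_i\) and the premise is \(\necd_i\chi,\necd_i\chi,\necd_i\Sigma'_i,\nec_{\overline{i}}\Sigma_{\overline{i}}\Rightarrow\phi\). The same happens for a doubled \(\nec_{\overline{i}}\chi\), and for the \(\KTCSM\) and \(\KTER\) rules. The left side of a modal premise is exactly \(\necd_i\Sigma_i,\nec_{\overline{i}}\Sigma_{\overline{i}}\) for the auxiliary formulas chosen in the conclusion. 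So after contraction the premise you need generally has fewer formulas than the one you have.

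Concretely, suppose \(\pi\) derives \(\nec_i p,\nec_i p\Rightarrow\nec_i q\) from \(\nec_i p,p,\nec_i p,p\Rightarrow q\). A cut-free proof of \(\nec_i p\Rightarrow\nec_i q\) must end in \((\modal[i]{\KTCS})\) with premise \(\nec_i p,p\Rightarrow q\) or \(\Rightarrow q\). Reaching that premise is a contraction, not a weakening. ``Admissible weakening in reverse'' is precisely the statement under proof. Moving one copy of \(\nec_i\chi\) into \(\Gamma\) leaves two copies in the conclusion and changes the premise the rule demands. So you cannot avoid contracting inside the premise. Your induction on \(\lhg(\pi)\) cannot supply this, since the subproof above the progressing node is an arbitrary non-wellfounded proof whose local height is unrelated to that of \(\pi\).

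The missing idea is that contracting at or above a progressing node is harmless. There are two ways to use this.

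\emph{Corecursion.} Define the contracted proof by corecursion. The call on the modal premise (contracting both \(\nec_i\chi\) and \(\chi\)) is guarded by the progressing node. All other calls decrease \(\lhg\); the principal \((\toL)\) and \((\toR)\) cases use your height-preserving inversions. The output then has a main local fragment no higher than that of \(\pi\) and using no new rules, and every infinite branch meets infinitely many progressing nodes. This gives admissibility in the \(\ctr\)-free calculus, preserving local height and local rules.

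\emph{A \((\ctr)\) inference at the progressing node.} That node is discarded when forming the main local fragment. So the result is a locally \(\ctr\)-free proof in \(\n{\CS}+\ctr\), with no increase of local height or local rules. This is local admissibility, and Theorem~\ref{local-adm-and-eliminability} then gives eliminability.

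Together the two give both halves of item 3. Relatedly, your step from admissibility preserving local rules to local admissibility only has content if the construction is run on locally \(R\)-free proofs in \(\mathcal{G}+R\); on proofs in \(\mathcal{G}\), local admissibility collapses to plain admissibility. For weakening and the inversions this is unproblematic, because those constructions never look above a progressing node. For contraction, the modal case is exactly where you are forced above one.
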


\subsection{Translations}\label{subsec:translations}

The proof of cut elimination for the \(\g{L}\) calculi (where \(L \in \set{\CS, \CSM, \ER}\)) can be pictorically represented as follows:
% https://q.uiver.app/#q=WzAsNCxbMCwwLCJcXGd7TH0gKyBcXGN1dCJdLFsyLDAsIlxcbntMfSArIFxcY3V0Il0sWzIsMSwiXFxue0x9Il0sWzAsMSwiXFxne0x9Il0sWzAsMV0sWzEsMl0sWzIsM10sWzMsMF1d
\[\begin{tikzcd}
	{\g{L} + \cut} && {\n{L} + \cut} \\
	{\g{L}} && {\n{L}}
	\arrow[from=1-1, to=1-3]
	\arrow[from=1-3, to=2-3]
	\arrow[from=2-1, to=1-1]
	\arrow[from=2-3, to=2-1]
\end{tikzcd}\]
In this subsection we will define the arrows on the top and bottom of the picture, i.e., how to translate from \(\g{L} + \cut\) to \(\n{L} + \cut\) and from \(\n{L}\) to \(\g{L}\).
For the translation at the top we will use the admissibility of L\"ob's rule in each of the wellfounded calculi, as shown in the next lemma.

\begin{lemma}
  We have that
  \begin{enumerate}
    \item The rules
      \[
        \AxiomC{\(\necd_i \Sigma_i, \nec_{\overline{i}} \Sigma_{\overline{i}}, \nec_i \phi \Rightarrow \phi\)}
        \RightLabel{\(\lob[i]{\CS}\)}
        \UnaryInfC{\(\necd_i \Sigma_i, \nec_{\overline{i}} \Sigma_{\overline{i}}\Rightarrow \phi\)}
        \DisplayProof
      \]
      for \(i \leq 1\) are admissible in \(\g{\CS} + \cut\).
    \item The rules
      \[
        \AxiomC{\(\necd_0 \Sigma_0, \nec_{1} \Sigma_{1}, \nec_0 \phi \Rightarrow \phi\)}
        \RightLabel{\(\lob[0]{\CSM}\)}
        \UnaryInfC{\(\necd_0 \Sigma_0, \nec_{1} \Sigma_{1} \Rightarrow \phi\)}
        \DisplayProof
        \quad
        \AxiomC{\(\necd_0 \Sigma_0, \necd_{1} \Sigma_{1}, \nec_1 \phi \Rightarrow \phi\)}
        \RightLabel{\(\lob[1]{\CSM}\)}
        \UnaryInfC{\(\necd_0 \Sigma_0, \necd_{1} \Sigma_{1} \Rightarrow \phi\)}
        \DisplayProof
      \]
      are admissible in \(\g{\CSM} + \cut\).
    \item The rules
      \[
        \AxiomC{\(\necd_i \Sigma_i, \nec_{\overline{i}} \Sigma_{\overline{i}}, \nec_i \phi \Rightarrow_i \phi\)}
        \RightLabel{\(\lob[i]{\ER}\)}
        \UnaryInfC{\(\necd_i \Sigma_i, \nec_{\overline{i}} \Sigma_{\overline{i}}\Rightarrow_i \phi\)}
        \DisplayProof
      \]
      for \(i \leq 1\) are admissible in \(\g{\ER} +\cut\).
  \end{enumerate}
\end{lemma}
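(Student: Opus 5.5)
Given a proof \(\pi\) of the premise \(\necd_i \Sigma_i, \nec_{\overline{i}} \Sigma_{\overline{i}}, \nec_i \phi \Rightarrow \phi\) in \(\g{\CS} + \cut\), we build a proof of the conclusion from one application of the modal rule and one cut. No induction on \(\pi\) is needed, since cut is available. The idea is to derive \(\nec_i \phi\) from the context and then cut it against \(\pi\).

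\textbf{Step 1 (for \(\CS\)).} Apply \((\modal[i]{\CS})\) to \(\pi\) itself, reading it as a premise of that rule. Its left side is exactly \(\necd_i \Sigma_i, \nec_{\overline{i}} \Sigma_{\overline{i}}, \nec_i \phi\), with diagonal formula \(\nec_i\phi\), and its right side is \(\phi\). Choosing the weakening formulas \(\Gamma = \Sigma_i\) and \(\Delta = \varnothing\), this yields
\[
  \nec_i \Sigma_i, \nec_{\overline{i}} \Sigma_{\overline{i}}, \Sigma_i \Rightarrow \nec_i \phi ,
\]
that is, \(\necd_i \Sigma_i, \nec_{\overline{i}} \Sigma_{\overline{i}} \Rightarrow \nec_i \phi\). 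Weakening (Lemma~\ref{lm:structural-rules-CS-CSM}) adds \(\phi\) to the right, giving \(\necd_i \Sigma_i, \nec_{\overline{i}} \Sigma_{\overline{i}} \Rightarrow \phi, \nec_i\phi\).

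\textbf{Step 2.} Cut on \(\nec_i \phi\): the left premise is the sequent just obtained, and the right premise is \(\pi\), namely \(\nec_i\phi, \necd_i \Sigma_i, \nec_{\overline{i}} \Sigma_{\overline{i}} \Rightarrow \phi\). The conclusion is \(\necd_i \Sigma_i, \nec_{\overline{i}} \Sigma_{\overline{i}} \Rightarrow \phi\), as required.

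\textbf{\(\CSM\).} The argument is identical. For \(\lob[0]{\CSM}\) we use \((\modal[0]{\CSM})\) with weakening \(\Sigma_0\). For \(\lob[1]{\CSM}\) we use \((\modal[1]{\CSM})\), whose conclusion context \(\nec_0\Sigma_0, \nec_1\Sigma_1\) we weaken by \(\Sigma_0, \Sigma_1\). In both cases the premise of the modal rule has exactly the shape of \(\pi\).

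\textbf{\(\ER\).} The premise \(\pi\) is a \(\Rightarrow_i\)-sequent, and \((\modal[i]{\ER})\) requires its premise to be a \(\Rightarrow_0\)-sequent, with a conclusion of either arrow. For \(i=0\) there is no mismatch. Apply \((\modal[0]{\ER})\) with conclusion arrow \(\Rightarrow_0\) and weakening \(\Sigma_0\), then cut with \(\pi\) in \(\Rightarrow_0\).

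For \(i=1\) the premise is \(\necd_1\Sigma_1, \nec_0\Sigma_0, \nec_1\phi \Rightarrow_1 \phi\). Reading it as a premise of \((\modal[1]{\ER})\) forces that premise to be a \(\Rightarrow_1\)-sequent. In Figure~\ref{fig:modal-rules-ER} the rule's premise is written \(\Rightarrow_i\), so for \(i=1\) it is indeed \(\Rightarrow_1\); the soundness proof writing \(\Rightarrow_0\) is a typo, and Lemma~\ref{lm:correspondence-to-hilbert-ER} uses a \(\Rightarrow_1\) premise. So apply \((\modal[1]{\ER})\) to \(\pi\), choosing \(\gg\) to be \(\Rightarrow_1\) and weakening \(\Sigma_1\). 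This gives \(\necd_1\Sigma_1, \nec_0\Sigma_0 \Rightarrow_1 \nec_1\phi\). Then cut with \(\pi\), where the cut rule in Figure~\ref{fig:structural-rules-ER} is available for \(\gg\) equal to \(\Rightarrow_1\).

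\textbf{Main obstacle.} The only delicate point is this bookkeeping of arrow indices in the \(\ER\) case: the arrow of the modal rule's premise must match \(\pi\), and its conclusion arrow must match the target \(\Rightarrow_i\). If it failed, I would fall back on \((\Rightarrow_1)\), which is admissible by Lemma~\ref{lm:structural-rules-ER}, to convert \(\Rightarrow_0\) to \(\Rightarrow_1\).
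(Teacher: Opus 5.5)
Your proof is correct and is the paper's argument. You apply the modal rule to \(\pi\) itself, taking \(\Sigma_i\) (resp.\ \(\Sigma_0,\Sigma_1\) for \(\lob[1]{\CSM}\)) as left weakening formulas and adding \(\phi\) on the right, which puts \(\nec_i\phi\) on the right; you then cut on \(\nec_i\phi\) against \(\pi\). In the \(\ER\) case your final reading is the right one: the premise of \((\modal[i]{\ER})\) is a \(\Rightarrow_i\)-sequent as in Figure~\ref{fig:modal-rules-ER}, so the construction goes through verbatim with \(\gg\) taken as \(\Rightarrow_i\), and you can drop your opening claim that the premise must be a \(\Rightarrow_0\)-sequent along with the fallback via \((\Rightarrow_1)\).
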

\begin{proof}
  Proof of 1.
  Let \(\pi \vdash \necd_i \Sigma_i, \nec_{\overline{i}} \Sigma_{\overline{i}}, \nec_i \phi \Rightarrow \phi\) in \(\g{\CS} + \cut\).
  The desired proof is
  \[
    \AxiomC{\(\pi\)}
    \noLine
    \UnaryInfC{\(\necd_i \Sigma_i, \nec_{\overline{i}} \Sigma_{\overline{i}}, \nec_i \phi \Rightarrow \phi\)}
    \RightLabel{\(\modal[i]{\CS}\)}
    \UnaryInfC{\(\necd_i \Sigma_i, \nec_{\overline{i}} \Sigma_{\overline{i}} \Rightarrow \phi, \nec_i\phi\)}
    \AxiomC{\(\pi\)}
    \noLine
    \UnaryInfC{\(\necd_i \Sigma_i, \nec_{\overline{i}} \Sigma_{\overline{i}}, \nec_i \phi \Rightarrow \phi\)}
    \RightLabel{\(\cut\)}
    \BinaryInfC{\(\necd_i \Sigma_i, \nec_{\overline{i}} \Sigma_{\overline{i}} \Rightarrow \phi\)}
    \DisplayProof
  \]

  Proof of 2.
  For \(\lob[0]{\CSM}\) we construct the same proof as in \(\CS\), let us show the admissibility of \(\lob[1]{\CSM}\).
  Assume we have the proof \(\pi \vdash \necd_0 \Sigma_0, \necd_{1} \Sigma_{1}, \nec_1 \phi \Rightarrow \phi\) in \(\g{\CSM} + \cut\).
  The desired proof is
  \[
    \AxiomC{\(\pi\)}
    \noLine
    \UnaryInfC{\(\necd_0 \Sigma_0, \necd_{1} \Sigma_{1}, \nec_1 \phi \Rightarrow \phi\)}
    \RightLabel{\(\modal[1]{\CSM}\)}
    \UnaryInfC{\(\necd_0 \Sigma_0, \necd_{1} \Sigma_{1} \Rightarrow \phi, \nec_1 \phi\)}
    \AxiomC{\(\pi\)}
    \noLine
    \UnaryInfC{\(\necd_0 \Sigma_0, \necd_{1} \Sigma_{1}, \nec_1 \phi \Rightarrow \phi\)}
    \RightLabel{\(\cut\)}
    \BinaryInfC{\(\necd_0 \Sigma_0, \necd_{1} \Sigma_{1} \Rightarrow \phi\)}
    \DisplayProof
  \]

  Proof of 3.
  The proofs are completely analogous to the case of \(\CS\).
\end{proof}

The translation is then defined via corecursion.

\begin{theorem}
  \label{th:from-wellfounded-to-nonwellfounded-bimodal}
  For any sequent \(S\) we have the following.
  \begin{enumerate}
    \item If \(\g{\CS} + \cut \vdash S\), then \(\n{\CS} + \cut \vdash S\).
    \item If \(\g{\CSM} + \cut \vdash S\), then \(\n{\CSM} + \cut \vdash S\).
    \item If \(\g{\ER} + \cut \vdash S\), then \(\n{\ER} + \cut \vdash S\).
  \end{enumerate}
\end{theorem}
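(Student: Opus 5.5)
We define, by corecursion, a map sending each proof in \(\g{L} + \cut\) of a sequent \(S\) to a non-wellfounded proof of \(S\) in \(\n{L} + \cut\). All rules of \(\g{L}\) except the modal ones are shared with \(\n{L}\), and so is \(\cut\). The translation therefore copies the last rule and translates the premises corecursively whenever the last rule of \(\pi\) is \((\ax)\), \((\botL)\), \((\botR)\), \((\toL)\), \((\toR)\), \((\cut)\) or, for \(\ER\), \((\ERax[1,0])\). The only real case is a modal rule. To make the corecursion productive we proceed by induction on the height of the wellfounded proof inside a corecursive construction: a non-modal step emits one rule, and a modal step emits a progressing rule.

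Take the \(\CS\) case with last rule \((\modal[i]{\CS})\), premise \(\necd_i \Sigma_i, \nec_{\overline{i}} \Sigma_{\overline{i}}, \nec_i \phi \Rightarrow \phi\) proved by \(\pi'\), and conclusion \(\nec_i \Sigma_i, \nec_{\overline{i}} \Sigma_{\overline{i}}, \Gamma \Rightarrow \nec_i \phi, \Delta\). We apply the preceding lemma (admissibility of \(\lob[i]{\CS}\) in \(\g{\CS}+\cut\)) to \(\pi'\). This yields a wellfounded proof \(\pi''\) of \(\necd_i \Sigma_i, \nec_{\overline{i}} \Sigma_{\overline{i}} \Rightarrow \phi\), which is exactly the premise of \((\modal[i]{\KTCS})\) for the same conclusion. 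We then output \((\modal[i]{\KTCS})\) and continue corecursively with \(\pi''\). The same pattern covers \(\CSM\), using \(\lob[0]{\CSM}\) and \(\lob[1]{\CSM}\) with \((\modal[0]{\KTCSM})\) and \((\modal[1]{\KTCSM})\), and \(\ER\), using \(\lob[i]{\ER}\) with \((\modal[i]{\KTER})\). In the \(\ER\) case the arrow \(\gg\) in the conclusion is arbitrary and the premise has arrow \(\Rightarrow_i\), as both rule shapes demand. Precisely, the translation \(t\) is defined on all wellfounded proofs by
\[
t(R(\pi_0,\ldots,\pi_{n-1})) = R(t(\pi_0),\ldots,t(\pi_{n-1}))
\]
for non-modal \(R\), and
\[
t(\modal[i]{}(\pi')) = \modal[i]{\KTCS}(t(\lob[i]{}(\pi')))
\]
for the modal rules.

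It remains to check that the resulting preproof is a proof. This is the main obstacle, because \(\pi''\) contains \(\pi'\) twice and is not smaller than \(\pi\), so there is no wellfounded recursion on height. The resolution is that every unfolding through a modal rule emits a progressing node, while every non-modal rule strictly decreases the height of the current wellfounded proof. Between two consecutive modal steps along any branch, only finitely many non-modal rules occur: at most the height of the wellfounded proof currently being translated. Hence the construction is productive, every finite stage is a finite tree, and the limit is a well-defined tree. Moreover, any infinite branch must pass through infinitely many modal steps, because otherwise from some point on it would only traverse non-modal rules of a fixed wellfounded proof, which is impossible. Each modal step is a progressing node, so every infinite branch has infinitely many progressing nodes, and the translated tree is a proof in \(\n{L}+\cut\) of \(S\).
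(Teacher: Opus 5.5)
Your proposal is correct and follows essentially the same route as the paper: a corecursive translation that copies the non-modal rules (including \((\ERax[1,0])\) for \(\ER\)) and replaces each wellfounded modal rule by the corresponding non-wellfounded modal rule applied to the translation of the L\"ob-admissibility proof of its premise. Your progress argument is also the paper's: non-modal steps strictly lower the height of the wellfounded proof being translated, and each modal step emits a progressing node, so every infinite branch contains infinitely many progressing nodes.
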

\begin{proof}
  Let us write the proof for \(\CS\), for \(\CSM\) and \(\ER\) the proof is completely analogous.
  We define a function \(\alpha\) from proofs in \(\g{\CS} + \cut\) to preproofs in \(\n{\CS} + \cut\) by corecursion such that \(\alpha(\pi)\) has the same conclusion as \(\pi\).
  When the definition is finished we will justify that \(\alpha(\pi)\) is always a proof in \(\n{\CS} + \cut\).
  So let \(\pi\) be a proof in \(\g{\CS} + \cut\), we proceed by cases on the last rule \(R\) applied to \(\pi\).

  If \(R\) is one of \((\ax)\), \((\botL)\), \((\botR)\), \((\toL)\), \((\toR)\), \((\cut)\) then we define \(\alpha\) as follows:
  \[
    \AxiomC{\(\pi_0\)}
    \noLine
    \UnaryInfC{\(S_0\)}
    \AxiomC{\(\cdots\)}
    \AxiomC{\(\pi_{n-1}\)}
    \noLine
    \UnaryInfC{\(S_{n-1}\)}
    \RightLabel{\(R\)}
    \TrinaryInfC{\(S\)}
    \DisplayProof
    \quad
    \overset{\alpha}{\longmapsto}
    \quad
    \AxiomC{\(\alpha(\pi_0)\)}
    \noLine
    \UnaryInfC{\(S_0\)}
    \AxiomC{\(\cdots\)}
    \AxiomC{\(\alpha(\pi_{n-1})\)}
    \noLine
    \UnaryInfC{\(S_{n-1}\)}
    \RightLabel{\(R\)}
    \TrinaryInfC{\(S\)}
    \DisplayProof
  \]
  since \(R\) is also a rule of \(\n{\CS} + \cut\).
  If \(R\) is \((\modal[i]{\CS})\) then we define \(\alpha\) as follows
  \[
    \AxiomC{\(\pi_0\)}
    \noLine
    \UnaryInfC{\(\necd_i \Sigma_i, \nec_{\overline{i}} \Sigma_{\overline{i}}, \nec_i \phi \Rightarrow \phi\)}
    \RightLabel{\(\modal[i]{\CS}\)}
    \UnaryInfC{\(\nec_i \Sigma_i, \nec_{\overline{i}} \Sigma_{\overline{i}}, \Gamma \Rightarrow \nec_i \phi, \Delta\)}
    \DisplayProof
    \quad
    \overset{\alpha}{\longmapsto}
    \quad
    \AxiomC{\(\alpha(\lob[i]{\CS}(\pi_0))\)}
    \noLine
    \UnaryInfC{\(\necd_i \Sigma_i, \nec_{\overline{i}} \Sigma_{\overline{i}} \Rightarrow \phi\)}
    \RightLabel{\(\modal[i]{\KTCS}\)}
    \UnaryInfC{\(\nec_i \Sigma_i, \nec_{\overline{i}} \Sigma_{\overline{i}}, \Gamma \Rightarrow \nec_i \phi, \Delta\)}
    \DisplayProof
  \]
  If \(R\) is one of \((\ax)\), \((\botL)\), \((\botR)\), \((\toL)\), \((\toR)\), \((\cut)\) then the height of the corecursive calls is strictly lower than the height of the input.
  This is not the case if \(R\) is \((\modal[i]{\CS})\), but then there is an application of \(\modal[i]{\KTCS}\) from the root to the corecursive call.
  This imply that any infinite branch in \(\alpha(\pi)\) will have infinitely many applications of \(\modal[i]{\KTCS}\), so it will be a proof as desired.
\end{proof}

For the translation at the bottom of the picture, we will exploit the absence of \((\cut)\) and we will use that the subformula property holds in the cut-free calculi.

\begin{definition}
  Let \(\phi\) be a formula, we define the set of its subformulas as
  \begin{align*}
    &\sub(p) = \set{p},
    &&\sub(\bot) = \set{\bot}, \\
    &\sub(\phi_0 \to \phi_1) = \set{\phi_0 \to \phi_1} \union \sub(\phi_0) \union \sub(\phi_1),
    &&\sub(\nec_i \phi_0) = \set{\nec_i \phi_0} \union \sub(\phi_0).
  \end{align*}
  For any multiset \(\Gamma\) we will write \(\sub(\Gamma)\) to mean the set \(\Union_{\phi \in \Gamma} \sub(\phi)\) and for a (possibly \(\ER\)-)sequent \(\Gamma \gg \Delta\) we will write \(\sub(\Gamma \gg \Delta)\) to mean \(\sub(\Gamma) \union \sub(\Delta)\).
\end{definition}

The following lemma follows by inspection of the shape of the rules.

\begin{lemma}[Local subformula property]
  Let \(R\) be a rule of \(\g{\CS}\), \(\n{\CS}\), \(\g{\CSM}\), \(\n{\CSM}\), \(\g{\ER}\), or \(\n{\ER}\) and \((S_0,\ldots,S_{n-1},S) \in R\).
  Then \(\sub(S_i) \subseteq \sub(S)\) for all \(i < n\).
\end{lemma}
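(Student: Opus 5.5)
The local subformula property is checked by going through the rules one at a time. For every instance \((S_0,\ldots,S_{n-1},S)\) we show that each formula occurring in a premise \(S_i\), on either side, lies in \(\sub(S)\). Two facts from the definition of \(\sub\) make this easy. First, \(\sub\) is monotone under inclusion of multisets. Second, \(\sub\) is transitive: if \(\psi \in \sub(\chi)\) then \(\sub(\psi) \subseteq \sub(\chi)\), which is proved by a short induction on \(\chi\). So it suffices to show that every formula occurring in a premise is a subformula of some formula occurring in the conclusion.

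\emph{Propositional rules.} For \((\ax)\) and \((\botL)\) there are no premises, so there is nothing to check. For \((\botR)\) the premise \(\Gamma \gg \Delta\) is a sub-multiset of the conclusion. For \((\toL)\) and \((\toR)\) the premises consist of \(\Gamma,\Delta\) together with \(\phi\) and/or \(\psi\). Both \(\phi\) and \(\psi\) belong to \(\sub(\phi\to\psi)\), and \(\phi \to \psi\) is the principal formula of the conclusion. The same argument covers the \(\Rightarrow_i\)-versions in \(\g{\ER}\) and \(\n{\ER}\), because the arrow index plays no role in \(\sub\).

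\emph{Modal rules.} Consider \((\modal[i]{\CS})\) and its variants \((\modal[i]{\CSM})\), \((\modal[i]{\ER})\) and the non-wellfounded rules of Figure~\ref{fig:non-wellfounded-rules}. Each premise formula belongs to one of the following kinds:
\begin{enumerate}
\item an auxiliary formula \(\nec_j\sigma\) with \(\sigma \in \Sigma_j\), which occurs literally in the conclusion;
\item an unboxed copy \(\sigma\) coming from \(\necd_j\Sigma_j\), which lies in \(\sub(\nec_j\sigma)\);
\item the diagonal formula \(\nec_i\phi\), which is the principal formula of the conclusion (in the \(\CSM\) rules the conclusion's box index should match the diagonal one, as the soundness proof requires);
\item the right-hand formula \(\phi\), which lies in \(\sub(\nec_i\phi)\).
\end{enumerate}
The non-wellfounded rules simply drop the diagonal formula, so the same check applies to them. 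For \((\ERax[1,0])\) the premise adds only \(\phi\), and \(\phi \in \sub(\nec_0\phi)\), where \(\nec_0\phi\) is the principal formula.

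There is no real obstacle here. The only delicate point is to make sure the weakening formulas \(\Gamma,\Delta\) are handled correctly, and they cause no trouble: in the modal rules they are dropped in the premise, and in the other rules they are carried over unchanged. Note also that \((\cut)\) is deliberately excluded from the list of rules, since its cut formula \(\chi\) need not be a subformula of the conclusion.
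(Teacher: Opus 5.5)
Your proof is correct and takes the same route as the paper, which only says that the lemma follows by inspection of the shape of the rules; your case-by-case check makes that inspection explicit. You are also right that the conclusions of \((\modal[0]{\CSM})\) and \((\modal[1]{\CSM})\) must read \(\nec_0\phi\) and \(\nec_1\phi\) respectively, not \(\nec_i\phi\), since otherwise the diagonal formula need not lie in \(\sub(S)\).
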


\begin{theorem}
  For any finite sets \(\Lambda_0, \Lambda_1\), we have that
  \begin{enumerate}
    \item \(\n{\CS} \vdash \Gamma \Rightarrow \Delta\) implies \(\g{\CS} \vdash \nec_0 \Lambda_0, \nec_1 \Lambda_1, \Gamma \Rightarrow \Delta\);
    \item \(\n{\CSM} \vdash \Gamma \Rightarrow \Delta\) implies \(\g{\CSM} \vdash \nec_0 \Lambda_0, \nec_1 \Lambda_1, \Gamma \Rightarrow \Delta\);
    \item \(\n{\ER} \vdash \Gamma \gg \Delta\) implies \(\g{\ER} \vdash \nec_0 \Lambda_0, \nec_1 \Lambda_1, \Gamma \gg \Delta\).
  \end{enumerate}
\end{theorem}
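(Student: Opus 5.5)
We argue by induction on a well-founded measure that forces the non-wellfounded proof to be cut off; the natural choice is the number of $\nec$-subformulas of the endsequent that are \emph{not} yet available as boxed context. Fix a proof $\pi \vdash \Gamma \gg \Delta$ in $\n{L}$ and finite sets $\Lambda_0,\Lambda_1$. Let $B$ be the finite set of $\nec$-formulas in $\sub(\Gamma \gg \Delta)$; by the local subformula property every sequent in $\pi$ has all its subformulas in $\sub(\Gamma\gg\Delta)$. We prove the statement by induction on the size of $B \setminus (\nec_0\Lambda_0 \cup \nec_1\Lambda_1)$, and within that by induction on $\lhg(\pi)$. Generalising over arbitrary $\Lambda_0,\Lambda_1$ (rather than only $\varnothing$) is exactly what makes the induction go through.

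For the inner induction, consider the last rule of $\pi$. If it is $(\ax)$, $(\botL)$, $(\botR)$, $(\toL)$, $(\toR)$ or $(\ERax[1,0])$, we apply the inner hypothesis to the immediate subproofs, which have smaller local height and the same $\Lambda$'s. The relevant set $B$ can only shrink, since the subformulas are contained in the original ones. We then reapply the same rule in $\g{L}$; for $(\ERax[1,0])$ this is fine because the extra context $\nec_0\Lambda_0,\nec_1\Lambda_1$ is just carried along on the left. The real case is a modal rule, say $(\modal[i]{\KTCS})$, with premise $\necd_i\Sigma_i,\nec_{\overline i}\Sigma_{\overline i} \Rightarrow \phi$ and conclusion $\nec_i\Sigma_i,\nec_{\overline i}\Sigma_{\overline i},\Gamma' \Rightarrow \nec_i\phi,\Delta'$. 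Here there are two subcases.

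\emph{Subcase $\phi \in \Lambda_i$.} Then $\nec_i\phi$ already occurs on the left of the target sequent, so the target is an instance of $(\Ax)$, available by the lemma on generalized axioms.

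\emph{Subcase $\phi \notin \Lambda_i$.} We apply the outer induction hypothesis to the premise's subproof with context $\Lambda_0' ,\Lambda_1'$ obtained by adding $\Sigma_i\cup\Lambda_i\cup\{\phi\}$ to the $i$-th set and $\Lambda_{\overline i}$ (plus, for $\CSM$ with $i=1$, also $\Sigma_0\cup\Lambda_0$ as appropriate) to the other. Since $\nec_i\phi\in B$ is now covered, the measure strictly decreases; here we must note that the $B$ for the premise is contained in the original $B$. This yields a $\g{L}$-proof of
\[
\nec_0\Lambda_0',\nec_1\Lambda_1',\necd_i\Sigma_i,\nec_{\overline i}\Sigma_{\overline i}\Rightarrow\phi.
\]
That sequent contains the diagonal formula $\nec_i\phi$, together with $\nec_i\Lambda_i$, and $\nec_{\overline i}\Lambda_{\overline i}$ placed as auxiliary formulas. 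Possibly after admissible weakening (Lemma~\ref{lm:structural-rules-CS-CSM}/\ref{lm:structural-rules-ER}) to add the unboxed copies $\Lambda_i$ required by $\necd_i$, one application of $(\modal[i]{\CS})$ (resp.\ $\CSM$, $\ER$) gives $\nec_0\Lambda_0,\nec_1\Lambda_1,\nec_i\Sigma_i,\nec_{\overline i}\Sigma_{\overline i},\Gamma'\Rightarrow\nec_i\phi,\Delta'$. The conclusion arrow matches since the wellfounded modal rule allows any $\gg$ below while the premise uses $\Rightarrow_i$, as in the non-wellfounded rule.

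The main obstacle is bookkeeping the side conditions of the modal rules for each logic. For $\CS$ and $\ER$, the context $\nec_{\overline i}\Lambda_{\overline i}$ may appear only boxed in the premise, and $\nec_i\Lambda_i$ must appear as $\necd_i\Lambda_i$; so in the premise we should only add $\Lambda_i$ to the $i$-th set (with its unboxed copy) and keep $\Lambda_{\overline i}$ boxed. For $\CSM$ we must treat $\modal[0]{}$ versus $\modal[1]{}$ asymmetrically. A second point to check is that the measure really decreases: this requires that $\nec_i\phi$ was not already in $\nec_i\Lambda_i$, which is exactly the excluded subcase. It also uses that the $\Lambda$'s may be restricted to subformulas of the endsequent, and that extra elements outside $B$ can be discarded by weakening.
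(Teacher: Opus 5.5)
Your proof is correct and is essentially the paper's argument. The paper uses the same nested induction on the number of not-yet-covered subformulas and the local height; its measure $\omega\bigl(\sum_{i\le 1}|\sub(\Gamma\gg\Delta)\setminus\Lambda_i|\bigr)+\lhg(\pi)$ is only a variant of yours. The case split is also the same: the $(\Ax)$ shortcut when $\phi\in\Lambda_i$, and otherwise the induction hypothesis with $\phi$ added to $\Lambda_i$, then weakening, then one application of the wellfounded modal rule.

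One cosmetic point: adding $\Sigma_i$ (and, for $\CSM$, $\Sigma_0$) to the context sets is unnecessary. It duplicates boxed formulas already present in the premise, so you would also need admissible contraction, which you never invoke. Adding just $\phi$ to $\Lambda_i$ suffices.
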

\begin{proof}
  We prove the statement for \(\ER\), the reasoning for \(\CS\) and \(\CSM\) is analogous.
  Let \(\pi \vdash \Gamma \gg \Delta\) in \(\n{\ER}\), we proceed by induction on the ordinal measure \(
  \omega(\sum_{i \leq 1} |\sub(\Gamma \gg \Delta) \setminus \Lambda_i|) + \lhg(\pi)\) and a case distinction on the last rule \(R\) applied in \(\pi\).
  If \(R\) is one of \((\ax)\), \((\botL)\), \((\botR)\), \((\toL)\), \((\toR)\) or \((\ERax[1,0])\), then it suffices to apply the induction hypothesis to the immediate subproofs and then apply \(R\).
  In more detail,  \(\pi\) has the form
  \[
    \AxiomC{\(\pi_0\)}
    \noLine
    \UnaryInfC{\(\Gamma_0 \gg \Delta_0\)}
    \AxiomC{\(\cdots\)}
    \AxiomC{\(\pi_{n-1}\)}
    \noLine
    \UnaryInfC{\(\Gamma_{n-1} \gg \Delta_{n-1}\)}
    \RightLabel{\(R\)}
    \TrinaryInfC{\(\Gamma \gg \Delta\)}
    \DisplayProof
  \]
  where \(\sub(\Gamma_i \gg \Delta_i) \subseteq \sub(\Gamma \gg \Delta)\) and \(\lhg(\pi_i) < \lhg(\pi)\).
  Then by the induction hypothesis there are proofs \(\tau_i \vdash \nec_0 \Lambda_0, \nec_1 \Lambda_1, \Gamma_i \gg \Delta_i\) for \(i < n\).
  The desired proof is
  \[
    \AxiomC{\(\tau_0\)}
    \noLine
    \UnaryInfC{\(\nec_0 \Lambda_0, \nec_1 \Lambda_1, \Gamma_0 \gg \Delta_0\)}
    \AxiomC{\(\cdots\)}
    \AxiomC{\(\tau_{n-1}\)}
    \noLine
    \UnaryInfC{\(\nec_0 \Lambda_0, \nec_1 \Lambda_1,\Gamma_{n-1} \gg \Delta_{n-1}\)}
    \RightLabel{\(R\)}
    \TrinaryInfC{\(\nec_0 \Lambda_0, \nec_1 \Lambda_1,\Gamma \gg \Delta\)}
    \DisplayProof
  \]

  If \(R\) is  \((\modal[0]{\KTER})\), then \(\pi\) is of the shape
  \[
    \AxiomC{\(\pi_0\)}
    \noLine
    \UnaryInfC{\(\necd_0 \Sigma_0, \nec_1 \Sigma_1 \Rightarrow_0 \phi\)}
    \RightLabel{\(\modal[0]{\KTER}\)}
    \UnaryInfC{\(\nec_0 \Sigma_0, \nec_1 \Sigma_1, \Gamma \gg \nec_0 \phi, \Delta\)}
    \DisplayProof
  \]
  where we denote the conclusion of \(\pi_0\) by \(S_0\) and the conclusion of \(\pi\) by \(S\).
  There are two cases, if \(\phi \in \Lambda_0\) then the desired proof is
  \[
    \AxiomC{}
    \RightLabel{\(\Ax\)}
    \UnaryInfC{\(\nec_0 \Lambda_0, \nec_1 \Lambda_1, \nec_0 \Sigma_0, \nec_1 \Sigma_1, \Gamma \gg \nec_0 \phi, \Delta\)}
    \DisplayProof
  \]
  If \(\phi \not \in \Lambda_0\), then we have  that \(|\sub(S_0) \setminus (\Lambda_0 \union \set{\phi})| < |\sub(S_0) \setminus \Lambda_0| \leq |\sub(S) \setminus \Lambda_0|\), where the second inequality holds because of  \(\sub(S_0) \subseteq \sub(S)\).
  For the same reason,  we  have  $|\sub(S_1) \setminus \Lambda_1| \leq |\sub(S) \setminus \Lambda_1|$. Hence,  we can apply the induction hypothesis with sets \(\Lambda_0 \union \set{\phi}, \Lambda_1\) to \(\pi_0\).
Thus, we obtain a proof 
$\tau_0 \vdash \nec_0 \Lambda_0, \nec_1 \Lambda_1, \necd_0 \Sigma_0, \nec_1 \Sigma_1, \nec_0 \phi \Rightarrow_0 \phi\) in \(\g{\ER}$.
  The desired proof is
  \[
    \AxiomC{\(\wk(\tau_0)\)}
    \noLine
    \UnaryInfC{\(\necd_0 \Lambda_0, \nec_1 \Lambda_1, \necd_0 \Sigma_0, \nec_1 \Sigma_1, \nec_0 \phi \Rightarrow_0 \phi\)}
    \RightLabel{\(\modal[0]{\ER}\)}
    \UnaryInfC{\(\nec_0 \Lambda_0, \nec_1 \Lambda_1, \nec_0 \Sigma_0, \nec_1 \Sigma_1, \Gamma \gg \nec_0 \phi, \Delta\)}
    \DisplayProof
  \]

  The case when \(R \text{ is } (\modal[1]{\KTER})\) is analogous to the previous case.
\end{proof}

\subsection{Local admissibility of cut}

In this subsection, we prove cut elimination for the systems \(\n{\CS}\), \(\n{\CSM}\) and \(\n{\ER}\).
Using the translations defined in Subsection~\ref{subsec:translations}, we obtain cut elimination for \(\g{\CS}\), \(\g{\CSM}\) and \(\g{\ER}\) as a corollary.

\begin{theorem}\label{th:cut-elim-CS}
  \(\cut\) is eliminable in \(\n{\CS}\).
  As a corollary \(\cut\) is eliminable in \(\g{\CS}\).
\end{theorem}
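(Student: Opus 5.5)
By Theorem~\ref{local-adm-and-eliminability}, it suffices to show that \((\cut)\) is \emph{locally admissible} in \(\n{\CS}\). Concretely, given locally \(\cut\)-free proofs \(\pi_0 \vdash \Gamma \Rightarrow \Delta, \chi\) and \(\pi_1 \vdash \chi, \Gamma \Rightarrow \Delta\) in \(\n{\CS}+\cut\), I will produce a locally \(\cut\)-free proof of \(\Gamma \Rightarrow \Delta\). The construction is an induction on the lexicographic measure \((|\chi|, \lhg(\pi_0)+\lhg(\pi_1))\), with a case distinction on the last rules of \(\pi_0\) and \(\pi_1\). Because neither proof uses \(\cut\) in its main local fragment, both last rules are rules of \(\n{\CS}\).

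The corollary for \(\g{\CS}\) then follows by chasing the square from Subsection~\ref{subsec:translations}. Suppose \(\g{\CS}+\cut \vdash S\). Then Theorem~\ref{th:from-wellfounded-to-nonwellfounded-bimodal} gives \(\n{\CS}+\cut\vdash S\). Eliminability gives \(\n{\CS}\vdash S\). Finally, the translation theorem with \(\Lambda_0=\Lambda_1=\varnothing\) gives \(\g{\CS}\vdash S\).

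The cases are the usual ones.
\begin{itemize}
\item \textbf{Axiom cases.} If either premise ends in \((\ax)\) or \((\botL)\), the conclusion is obtained directly, or through a weakening of the other proof, using Lemma~\ref{lm:structural-rules-non-wellfounded}. Both weakening and contraction preserve local height and local rules, so no local cut is introduced.
\item \textbf{Non-principal cases.} If the cut formula is not principal in the last rule of one premise and that rule is propositional, I permute the cut upwards. I use the invertibility of \((\botR)\), \((\toL)\), \((\toR)\) preserving local height to split the other premise, then apply the induction hypothesis, which is available because local height decreases. If that last rule is a modal rule \((\modal[i]{\KTCS})\) in which \(\chi\) is a weakening formula, the conclusion \(\Gamma\Rightarrow\Delta\) is already an instance of the same modal rule with different weakening formulas. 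This yields a proof with no cut in its main local fragment.
\item \textbf{Principal propositional case.} If \(\chi=\phi\to\psi\) is principal on both sides, I replace the cut by cuts on \(\phi\) and \(\psi\), which have smaller complexity, and I use contraction to clean up.
\end{itemize}

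The key case, which I expect to be the main obstacle, is when \(\chi=\nec_i\phi\) is principal on the right in \(\pi_0\) via \((\modal[i]{\KTCS})\) and appears as an auxiliary formula in a modal rule concluding \(\pi_1\). Suppose \(\pi_0\) ends with premise \(\necd_i\Sigma_i,\nec_{\overline i}\Sigma_{\overline i}\Rightarrow\phi\). The rule in \(\pi_1\) is either \((\modal[i]{\KTCS})\) or \((\modal[\overline i]{\KTCS})\), with premise \(\necd_j\Pi_j,\nec_{\overline j}\Pi_{\overline j},\dots\Rightarrow\psi\) containing \(\nec_i\phi\), and possibly \(\phi\) if \(j=i\).

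I then build a new proof ending with the modal rule of \(\pi_1\), whose principal formula is \(\nec_j\psi\). Above it, I cut \(\nec_i\phi\) against the premise of \(\pi_1\), using the left premise obtained by reapplying \((\modal[i]{\KTCS})\) to the premise of \(\pi_0\) with suitable weakening. I also cut \(\phi\) against the premise of \(\pi_0\) when \(j=i\). The crucial point is that these new cuts sit strictly above a progressing node, namely the premise of the new modal rule, so they lie outside the main local fragment. The result is therefore locally cut-free and no measure is needed for them, which is exactly why the non-wellfounded setting avoids a Valentini-style analysis.

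Some care is needed with the context in this case:
\begin{itemize}
\item When \(j=\overline i\), the formulas \(\nec_i\Sigma_i\) of \(\pi_0\) stay boxed in the new premise. This is allowed, since \((\modal[\overline i]{\KTCS})\) admits \(\nec_i\)-formulas undecorated.
\item When \(j=i\), the formulas \(\nec_{\overline i}\Sigma_{\overline i}\) must be available undecorated, which they are.
\end{itemize}
The remaining work is to check that the resulting preproof satisfies the global progress condition. It does: every infinite branch either stays in one of the original proofs or passes through the new progressing modal step. The whole construction is wrapped in the corecursion implicit in Theorem~\ref{local-adm-and-eliminability}.
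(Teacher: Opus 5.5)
Your proposal is correct and follows essentially the same route as the paper. The paper also proves local admissibility of cut by induction on \(\omega|\chi| + (\lhg(\pi)+\lhg(\tau))\). It uses the standard propositional reductions, and in the two modal cases (\(j=i\) and \(j=\overline{i}\)) it places the new cuts on \(\nec_i\phi\), and on \(\phi\) when \(j=i\), above a reapplied modal rule so that they lie outside the main local fragment. Your ``suitable weakening'' corresponds to the paper's explicit step of first weakening both premises so that the two modal instances are maximal and have the same auxiliary formulas, which is what makes reapplying \((\modal[i]{\KTCS})\) in the context of \(\pi_1\)'s premise legitimate.
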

\begin{proof}
  We will show that \(\cut\) is locally admissible in \(\n{\CS}\).
  Let \(\pi \vdash \Gamma \Rightarrow \Delta, \chi\) and \(\tau \vdash \chi, \Gamma \Rightarrow \Delta\) in \(\n{\CS}\).
  We proceed by induction on the ordinal measure \(\omega|\chi| + (\lhg(\pi) + \lhg(\tau))\) and case analysis.
  Most cases can be found in Appendix~\ref{sec:cut-reductions}, here we only show the cases that are unique for this calculus.

  Assume \(\pi\) and \(\tau\) are of the following shape
  \[
    \AxiomC{\(\pi_0\)}
    \noLine
    \UnaryInfC{\(\necd_i \Sigma_i, \nec_{\overline{i}} \Sigma_{\overline{i}} \Rightarrow \chi_0\)}
    \RightLabel{\(\modal[i]{\KTCS}\)}
    \UnaryInfC{\(\nec_i \Sigma_i, \nec_{\overline{i}} \Sigma_{\overline{i}}, \Gamma' \Rightarrow \nec_i \chi_0, \nec_i \phi, \Delta'\)}
    \DisplayProof
    \quad
    \AxiomC{\(\tau_0\)}
    \noLine
    \UnaryInfC{\(\necd_i \chi_0, \necd_i \Sigma_i, \nec_{\overline{i}} \Sigma_{\overline{i}} \Rightarrow \phi\)}
    \RightLabel{\(\modal[i]{\KTCS}\)}
    \UnaryInfC{\(\nec_i \chi_0,\nec_i \Sigma_i, \nec_{\overline{i}} \Sigma_{\overline{i}}, \Gamma' \Rightarrow  \nec_{i} \phi, \Delta'\)}
    \DisplayProof
  \]
  where \(\chi = \nec_i \chi_0\), \(\Gamma = \nec_i \Sigma_i, \nec_{\overline{i}} \Sigma_{\overline{i}}, \Gamma'\), \(\Delta = \nec_i \phi, \Delta'\) and we assume without loss of generality that the auxiliary formulas of the last rule of \(\pi\) and \(\tau\) are equal, as otherwise we can use weakening on \(\pi_0\) and \(\tau_0\) to make them equal by making the rule instances maximal\footnote{A rule instance of a modal rule is called \emph{maximal} if none of the weakening fomrulas is a \(\nec\)-formula.}.
  Note that this step is permited since it does not alter the local height and mantains the local cut-freeness of \(\pi\) and \(\tau\).
  The desired locally cut-free proof is
  \[
    \AxiomC{\(\wk(\pi_0)\)}
    \noLine
    \UnaryInfC{\(\necd_i \Sigma_i, \nec_{\overline{i}} \Sigma_{\overline{i}} \Rightarrow \phi, \chi_0 \)}
    \AxiomC{\(\pi_0\)}
    \noLine
    \UnaryInfC{\(\necd_i \Sigma_i, \nec_{\overline{i}} \Sigma_{\overline{i}} \Rightarrow \chi_0\)}
    \RightLabel{\(\modal[i]{\KTCS}\)}
    \UnaryInfC{\(\chi_0, \necd_i \Sigma_i, \nec_{\overline{i}} \Sigma_{\overline{i}} \Rightarrow \phi, \nec_i \chi_0\)}
    \AxiomC{\(\tau_0\)}
    \noLine
    \UnaryInfC{\(\necd_i \chi_0, \necd_i \Sigma_i, \nec_{\overline{i}} \Sigma_{\overline{i}} \Rightarrow \phi\)}
    \RightLabel{\(\cut\)}
    \BinaryInfC{\(\chi_0, \necd_i \Sigma_i, \nec_{\overline{i}} \Sigma_{\overline{i}} \Rightarrow \phi\)}
    \RightLabel{\(\cut\)}
    \BinaryInfC{\(\necd_i \Sigma_i, \nec_{\overline{i}} \Sigma_{\overline{i}} \Rightarrow \phi\)}
    \RightLabel{\(\modal[i]{\KTCS}\)}
    \UnaryInfC{\(\nec_i \Sigma_i, \nec_{\overline{i}} \Sigma_{\overline{i}}, \Gamma' \Rightarrow  \nec_{i} \phi, \Delta'\)}
    \DisplayProof
  \]

  Assume \(\pi\) and \(\tau\) are of the following shape
  \[
    \AxiomC{\(\pi_0\)}
    \noLine
    \UnaryInfC{\(\necd_i \Sigma_i, \nec_{\overline{i}} \Sigma_{\overline{i}} \Rightarrow \chi_0\)}
    \RightLabel{\(\modal[i]{\KTCS}\)}
    \UnaryInfC{\(\nec_i \Sigma_i, \nec_{\overline{i}} \Sigma_{\overline{i}}, \Gamma' \Rightarrow \nec_i \chi_0, \nec_{\overline{i}} \phi, \Delta'\)}
    \DisplayProof
    \quad
    \AxiomC{\(\tau_0\)}
    \noLine
    \UnaryInfC{\(\nec_i \chi_0, \nec_i \Sigma_i, \necd_{\overline{i}} \Sigma_{\overline{i}} \Rightarrow \phi\)}
    \RightLabel{\(\modal[\overline{i}]{\KTCS}\)}
    \UnaryInfC{\(\nec_i \chi_0,\nec_i \Sigma_i, \nec_{\overline{i}} \Sigma_{\overline{i}}, \Gamma' \Rightarrow  \nec_{\overline{i}} \phi, \Delta'\)}
    \DisplayProof
  \]
  where \(\chi = \nec_i \chi_0\), \(\Gamma = \nec_i \Sigma_i, \nec_{\overline{i}} \Sigma_{\overline{i}}, \Gamma'\), \(\Delta = \nec_{\overline{i}} \phi, \Delta'\) and we assume again without loss of generality that the auxiliary formulas of the last rule of \(\pi\) and \(\tau\) are equal.
  The desired locally cut-free proof is
  \[
    \AxiomC{\(\pi_0\)}
    \noLine
    \UnaryInfC{\(\necd_i \Sigma_i, \nec_{\overline{i}} \Sigma_{\overline{i}} \Rightarrow \chi_0\)}
    \RightLabel{\(\modal[i]{\KTCS}\)}
    \UnaryInfC{\(\nec_i \Sigma_i, \necd_{\overline{i}} \Sigma_{\overline{i}} \Rightarrow \phi, \nec_i \chi_0 \)}
    \AxiomC{\(\tau_0\)}
    \noLine
    \UnaryInfC{\(\nec_i \chi_0, \nec_i \Sigma_i, \necd_{\overline{i}} \Sigma_{\overline{i}} \Rightarrow \phi\)}
    \RightLabel{\(\cut\)}
    \BinaryInfC{\(\nec_i \Sigma_i, \necd_{\overline{i}} \Sigma_{\overline{i}} \Rightarrow \phi\)}
    \RightLabel{\(\modal[\overline{i}]{\KTCS}\)}
    \UnaryInfC{\(\nec_i \Sigma_i, \nec_{\overline{i}} \Sigma_{\overline{i}}, \Gamma' \Rightarrow  \nec_{\overline{i}} \phi, \Delta'\)}
    \DisplayProof
  \]
\end{proof}

\begin{theorem}\label{th:cut-elim-CSM}
  \(\cut\) is eliminable in \(\n{\CSM}\).
  As a corollary \(\cut\) is eliminable in \(\g{\CSM}\).
\end{theorem}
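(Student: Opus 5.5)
We show that \(\cut\) is locally admissible in \(\n{\CSM}\) and then invoke Theorem~\ref{local-adm-and-eliminability} to get eliminability. The corollary for \(\g{\CSM}\) follows the route of the diagram in Subsection~\ref{subsec:translations}. A proof in \(\g{\CSM}+\cut\) is sent to \(\n{\CSM}+\cut\) by Theorem~\ref{th:from-wellfounded-to-nonwellfounded-bimodal}. We then eliminate \(\cut\) there. Finally we translate back to \(\g{\CSM}\) using the theorem on translating \(\n{\CSM}\)-proofs, taking \(\Lambda_0=\Lambda_1=\varnothing\).

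For local admissibility we take \(\pi \vdash \Gamma \Rightarrow \Delta, \chi\) and \(\tau \vdash \chi, \Gamma \Rightarrow \Delta\), both locally cut-free. We argue by induction on \(\omega|\chi| + (\lhg(\pi)+\lhg(\tau))\) with a case analysis on the last rules. The following cases go exactly as in Theorem~\ref{th:cut-elim-CS}, via the reductions of Appendix~\ref{sec:cut-reductions}:
\begin{itemize}
\item axiom and \(\botL\) cases;
\item commutations when the cut formula is not principal on one side;
\item propositional principal cases, which lower \(|\chi|\).
\end{itemize}
Whenever a modal rule of the other side is involved, we first make the modal rule instances maximal by weakening (Lemma~\ref{lm:structural-rules-non-wellfounded}). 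This preserves local height and local cut-freeness, so we may assume both modal instances have the same auxiliary formulas \(\nec_0\Sigma_0,\nec_1\Sigma_1\).

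The new work is the modal--modal case where \(\chi=\nec_j\chi_0\) is principal in \(\pi\) and \(\nec_j\chi_0\) is auxiliary in the modal rule \((\modal[k]{\KTCSM})\) concluding \(\tau\). There are four combinations of \(j,k\).

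\textbf{Case \(j=k\).} We mimic the first case of Theorem~\ref{th:cut-elim-CS}. The premise of \(\tau\) contains \(\necd_j\chi_0\). We perform two cuts:
\begin{itemize}
\item a cut on \(\nec_j\chi_0\), whose left premise is \(\pi\)'s modal step re-applied above \(\pi_0\); this uses the \(\omega|\chi|\) component because the local heights decrease past the progressing node;
\item a cut on \(\chi_0\) against \(\wk(\pi_0)\), which has smaller complexity.
\end{itemize}
This works provided \(\pi_0\)'s premise context can be weakened into \(\tau_0\)'s. For \(j=k=1\), the premise of \(\pi_0\) is \(\necd_0\Sigma_0,\necd_1\Sigma_1\). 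This equals the premise context of \((\modal[1]{\KTCSM})\) in \(\tau\), so it is fine. For \(j=k=0\), the two contexts also agree.

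\textbf{Case \(j=0,k=1\).} Here \(\tau_0 \vdash \necd_0\chi_0,\necd_0\Sigma_0,\necd_1\Sigma_1 \Rightarrow \phi\), so \(\chi_0\) also appears. The rule \((\modal[0]{\KTCSM})\) can be re-applied above \(\pi_0\) to produce \(\nec_0\chi_0\) in context \(\necd_0\Sigma_0,\necd_1\Sigma_1\), since the extra formulas are weakening formulas. We then cut \(\nec_0\chi_0\) and \(\chi_0\) as in the case \(j=k\). After that we reapply \((\modal[1]{\KTCSM})\).

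\textbf{Case \(j=1,k=0\).} Here \(\tau_0 \vdash \nec_1\chi_0,\necd_0\Sigma_0,\nec_1\Sigma_1\Rightarrow\phi\). Only \(\nec_1\chi_0\) is needed, obtained by re-applying \((\modal[1]{\KTCSM})\) above \(\pi_0\) with a single cut, analogously to the second case of Theorem~\ref{th:cut-elim-CS}.

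\textbf{Main obstacle.} The delicate point is checking that the re-applied modal step produces a sequent whose context matches the cut partner, given the asymmetric contexts of \((\modal[0]{\KTCSM})\) and \((\modal[1]{\KTCSM})\). For example, \(\pi_0\) in the \(\nec_1\) rule needs \(\necd_1\Sigma_1\), which is available in \(\tau_0\) only when \(k=1\). For \(k=0\), the context of \(\tau_0\) contains only \(\nec_1\Sigma_1\). This is enough, because the re-applied rule's conclusion needs only the boxed formulas.

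We must also check that every cut introduced is either on a formula of smaller complexity, or on \(\chi\) with strictly smaller summed local height. The latter holds because the new premises are subproofs \(\pi_0,\tau_0\) above progressing nodes, which keeps the constructed proof locally cut-free below the new modal rule.
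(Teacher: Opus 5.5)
Your construction is the paper's. It has the same measure and the same reduction to the four modal--modal combinations. When \(j=0\) or \(j=k\) it uses two cuts: first on \(\nec_j\chi_0\) between \(\tau_0\) and the modal step re-applied over \(\pi_0\), then on \(\chi_0\) against \(\wk(\pi_0)\). When \(j=1,k=0\) it uses a single cut on \(\nec_1\chi_0\). The corollary goes through the translations in the same way.

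The justification you give for the cut on \(\nec_j\chi_0\) is wrong, though. Since \(\pi\) and \(\tau\) end in modal rules, their main local fragments consist of the root alone, while \(\lhg(\tau_0)\) can be arbitrarily large. So passing to \(\pi_0,\tau_0\) does not lower the summed local height, and the cut formula is still \(\chi\). The induction hypothesis is therefore not available for that cut.

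It is also not needed. Every cut you introduce lies above the premise of the final \((\modal[k]{\KTCSM})\), which is a progressing node. Hence the main local fragment of the new proof is just that one rule instance, and the proof is locally cut-free as it stands. This is the reason in your last clause, and it is why the paper invokes no induction hypothesis in these cases. Contrast the \((\ERax[1,0])\) case of Theorem~\ref{th:cut-elim-ER}, where the new cuts do sit in the main local fragment.
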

\begin{proof}
  We will show that \(\cut\) is locally admissible in \(\n{\CS}\).
  Let \(\pi \vdash \Gamma \Rightarrow \Delta, \chi\) and \(\tau \vdash \chi, \Gamma \Rightarrow \Delta\) in \(\n{\CS}\).
  We proceed by induction on the ordinal measure \(\omega|\chi| + (\lhg(\pi) + \lhg(\tau))\) and by a case analysis.
  Most cases can be found in Appendix~\ref{sec:cut-reductions}, here we only show the cases that are unique for this calculus.

  Assume \(\pi\) and \(\tau\) are of the following shape
  \[
    \AxiomC{\(\pi_0\)}
    \noLine
    \UnaryInfC{\(\necd_0 \Sigma_0, \nec_1 \Sigma_1 \Rightarrow  \chi_0\)}
    \RightLabel{\(\modal[0]{\KTCSM}\)}
    \UnaryInfC{\(\nec_0 \Sigma_0, \nec_1 \Sigma_1, \Gamma' \Rightarrow \nec_0 \chi_0, \nec_0 \phi, \Delta'\)}
    \DisplayProof
    \quad
    \AxiomC{\(\tau_0\)}
    \noLine
    \UnaryInfC{\(\necd_0 \chi_0, \necd_0 \Sigma_0, \nec_1 \Sigma_1 \Rightarrow  \phi, \)}
    \RightLabel{\(\modal[0]{\KTCSM}\)}
    \UnaryInfC{\(\nec_0 \chi_0, \nec_0 \Sigma_0, \nec_1 \Sigma_1, \Gamma' \Rightarrow  \nec_0 \phi, \Delta'\)}
    \DisplayProof
  \]
  where \(\chi = \nec_0 \chi_0\), \(\Gamma = \nec_0 \Sigma_0, \nec_1 \Sigma_1, \Gamma'\), \(\Delta = \nec_0 \phi, \Delta'\) and we assume without loss of generality that the auxiliary formulas of the last rule of \(\pi\) and \(\tau\) are equal, as otherwise we can use weakening on \(\pi_0\) and \(\tau_0\) to make them equal by making the rule instances maximal.
  Note that this step is permited since it does not alter the local height and mantains the local cut-freeness of \(\pi\) and \(\tau\).
  The desired locally cut-free proof is
  \[
    \AxiomC{\(\wk(\pi_0)\)}
    \noLine
    \UnaryInfC{\(\necd_0 \Sigma_0, \nec_1 \Sigma_1 \Rightarrow  \phi, \chi_0\)}
    \AxiomC{\(\pi_0\)}
    \noLine
    \UnaryInfC{\(\necd_0 \Sigma_0, \nec_1 \Sigma_1 \Rightarrow  \chi_0\)}
    \RightLabel{\(\modal[0]{\KTCSM}\)}
    \UnaryInfC{\(\chi_0, \necd_0 \Sigma_0, \nec_1 \Sigma_1 \Rightarrow  \phi, \nec_0 \chi_0\)}
    \AxiomC{\(\tau_0\)}
    \noLine
    \UnaryInfC{\(\necd_0 \chi_0, \necd_0 \Sigma_0, \nec_1 \Sigma_1 \Rightarrow  \phi \)}
    \RightLabel{\(\cut\)}
    \BinaryInfC{\(\chi_0, \necd_0 \Sigma_0, \nec_1 \Sigma_1 \Rightarrow  \phi \)}
    \RightLabel{\(\cut\)}
    \BinaryInfC{\( \necd_0 \Sigma_0, \nec_1 \Sigma_1 \Rightarrow  \phi \)}
    \RightLabel{\(\modal[0]{\KTCSM}\)}
    \UnaryInfC{\(\nec_0 \Sigma_0, \nec_1 \Sigma_1, \Gamma' \Rightarrow  \nec_0 \phi, \Delta'\)}
    \DisplayProof
  \]

  Assume \(\pi\) and \(\tau\) are of the following shape
  \[
    \AxiomC{\(\pi_0\)}
    \noLine
    \UnaryInfC{\(\necd_0 \Sigma_0, \nec_1 \Sigma_1 \Rightarrow \chi_0\)}
    \RightLabel{\(\modal[0]{\KTCSM}\)}
    \UnaryInfC{\(\nec_0 \Sigma_0, \nec_1 \Sigma_1, \Gamma' \Rightarrow \nec_0 \chi_0, \nec_1 \phi, \Delta'\)}
    \DisplayProof
    \quad
    \AxiomC{\(\tau_0\)}
    \noLine
    \UnaryInfC{\(\necd_0 \chi_0, \necd_0 \Sigma_0, \necd_1 \Sigma_1 \Rightarrow   \phi\)}
    \RightLabel{\(\modal[1]{\KTCSM}\)}
    \UnaryInfC{\(\nec_0 \chi_0, \nec_0 \Sigma_0, \nec_1 \Sigma_1, \Gamma' \Rightarrow  \nec_1 \phi, \Delta'\)}
    \DisplayProof
  \]
  where \(\chi = \nec_0 \chi_0\), \(\Gamma = \nec_0 \Sigma_0, \nec_1 \Sigma_1, \Gamma'\), \(\Delta = \nec_1 \phi, \Delta\) and we assume  without loss of generality that the auxiliary formulas of the last rule of \(\pi\) and \(\tau\) are equal, as otherwise we can use weakening on \(\pi_0\) and \(\tau_0\) to make them equal by making the rule instances maximal.
  The desired locally cut-free proof is
  \[
    \AxiomC{\(\wk(\pi_0)\)}
    \noLine
    \UnaryInfC{\(\necd_0 \Sigma_0, \necd_1 \Sigma_1 \Rightarrow   \phi, \chi_0\)}
    \AxiomC{\(\pi_0\)}
    \noLine
    \UnaryInfC{\(\necd_0 \Sigma_0, \nec_1 \Sigma_1 \Rightarrow \chi_0\)}
    \RightLabel{\(\modal[0]{\KTCSM}\)}
    \UnaryInfC{\(\chi_0, \necd_0 \Sigma_0, \necd_1 \Sigma_1 \Rightarrow   \phi, \nec_0 \chi_0\)}
    \AxiomC{\(\tau_0\)}
    \noLine
    \UnaryInfC{\(\necd_0 \chi_0, \necd_0 \Sigma_0, \necd_1 \Sigma_1 \Rightarrow   \phi\)}
    \RightLabel{\(\cut\)}
    \BinaryInfC{\(\chi_0, \necd_0 \Sigma_0, \necd_1 \Sigma_1 \Rightarrow   \phi\)}
    \RightLabel{\(\cut\)}
    \BinaryInfC{\(\necd_0 \Sigma_0, \necd_1 \Sigma_1 \Rightarrow   \phi\)}
    \RightLabel{\(\modal[1]{\KTCSM}\)}
    \UnaryInfC{\(\nec_0 \Sigma_0, \nec_1 \Sigma_1, \Gamma' \Rightarrow  \nec_1 \phi, \Delta'\)}
    \DisplayProof
  \]

  Assume \(\pi\) and \(\tau\) are of the following shape
  \[
    \AxiomC{\(\pi_0\)}
    \noLine
    \UnaryInfC{\(\necd_0 \Sigma_0, \necd_1 \Sigma_1 \Rightarrow \chi_0\)}
    \RightLabel{\(\modal[1]{\KTCSM}\)}
    \UnaryInfC{\(\nec_0 \Sigma_0, \nec_1 \Sigma_1, \Gamma' \Rightarrow \nec_1 \chi_0, \nec_0 \phi, \Delta'\)}
    \DisplayProof
    \quad
    \AxiomC{\(\tau_0\)}
    \noLine
    \UnaryInfC{\(\nec_1 \chi_0, \necd_0 \Sigma_0, \nec_1 \Sigma_1 \Rightarrow \phi\)}
    \RightLabel{\(\modal[0]{\KTCSM}\)}
    \UnaryInfC{\(\nec_1 \chi_0, \nec_0 \Sigma_0, \nec_1 \Sigma_1, \Gamma' \Rightarrow  \nec_0 \phi, \Delta'\)}
    \DisplayProof
  \]
  where \(\Gamma = \nec_0 \Sigma_0, \nec_1 \Sigma_1, \Gamma'\), \(\Delta = \nec_0 \phi, \Delta'\) and we assumed (again) without loss of generality that the auxiliary formulas of the last rule of \(\pi\) and \(\tau\) are equal, as otherwise we can use weakening on \(\pi_0\) and \(\tau_0\) to make them equal by making the rule instances maximal.
  The desired locally cut-free proof is
  \[
    \AxiomC{\(\pi_0\)}
    \noLine
    \UnaryInfC{\(\necd_0 \Sigma_0, \necd_1 \Sigma_1 \Rightarrow \chi_0\)}
    \RightLabel{\(\modal[1]{\KTCSM}\)}
    \UnaryInfC{\(\necd_0 \Sigma_0, \nec_1 \Sigma_1 \Rightarrow \phi, \nec_1 \chi_0\)}
    \AxiomC{\(\tau_0\)}
    \noLine
    \UnaryInfC{\(\nec_1 \chi_0, \necd_0 \Sigma_0, \nec_1 \Sigma_1 \Rightarrow \phi\)}
    \RightLabel{\(\cut\)}
    \BinaryInfC{\(\necd_0 \Sigma_0, \nec_1 \Sigma_1 \Rightarrow \phi\)}
    \RightLabel{\(\modal[0]{\KTCSM}\)}
    \UnaryInfC{\(\nec_0 \Sigma_0, \nec_1 \Sigma_1, \Gamma' \Rightarrow  \nec_0 \phi, \Delta'\)}
    \DisplayProof
  \]

  Assume \(\pi\) and \(\tau\) are of the following shape
  \[
    \AxiomC{\(\pi_0\)}
    \noLine
    \UnaryInfC{\(\necd_0 \Sigma_0, \necd_1 \Sigma_1 \Rightarrow \chi_0\)}
    \RightLabel{\(\modal[1]{\KTCSM}\)}
    \UnaryInfC{\(\nec_0 \Sigma_0, \nec_1 \Sigma_1, \Gamma' \Rightarrow \nec_1 \chi_0, \nec_1 \phi, \Delta'\)}
    \DisplayProof
    \quad
    \AxiomC{\(\tau_0\)}
    \noLine
    \UnaryInfC{\(\necd_1 \chi_0, \necd_0 \Sigma_0, \necd_1 \Sigma_1 \Rightarrow  \phi\)}
    \RightLabel{\(\modal[1]{\KTCSM}\)}
    \UnaryInfC{\(\nec_1 \chi_0, \nec_0 \Sigma_0, \nec_1 \Sigma_1, \Gamma' \Rightarrow  \nec_1 \phi, \Delta'\)}
    \DisplayProof
  \]
  where \(\Gamma = \nec_0 \Sigma_0, \nec_1 \Sigma_1, \Gamma'\), \(\Delta = \nec_1 \phi, \Delta'\) and we assumed (again) without loss of generality that the auxiliary formulas of the last rule of \(\pi\) and \(\tau\) are equal, as otherwise we can use weakening on \(\pi_0\) and \(\tau_0\) to make them equal by making the rule instances maximal.
  The desired locally cut-free proof is
  \[
    \AxiomC{\(\wk(\pi_0)\)}
    \noLine
    \UnaryInfC{\(\necd_0 \Sigma_0, \necd_1 \Sigma_1 \Rightarrow  \phi, \chi_0\)}
    \AxiomC{\(\pi_0\)}
    \noLine
    \UnaryInfC{\(\necd_0 \Sigma_0, \necd_1 \Sigma_1 \Rightarrow \chi_0\)}
    \RightLabel{\(\modal[1]{\KTCSM}\)}
    \UnaryInfC{\(\chi_0, \necd_0 \Sigma_0, \necd_1 \Sigma_1 \Rightarrow  \phi, \nec_1 \chi_0\)}
    \AxiomC{\(\tau_0\)}
    \noLine
    \UnaryInfC{\(\necd_1 \chi_0, \necd_0 \Sigma_0, \necd_1 \Sigma_1 \Rightarrow  \phi\)}
    \RightLabel{\(\cut\)}
    \BinaryInfC{\(\chi_0, \necd_0 \Sigma_0, \necd_1 \Sigma_1 \Rightarrow  \phi\)}
    \RightLabel{\(\cut\)}
    \BinaryInfC{\(\necd_0 \Sigma_0, \necd_1 \Sigma_1 \Rightarrow  \phi\)}
    \RightLabel{\(\modal[1]{\KTCSM}\)}
    \UnaryInfC{\(\nec_0 \Sigma_0, \nec_1 \Sigma_1, \Gamma' \Rightarrow  \nec_1 \phi, \Delta'\)}
    \DisplayProof
  \]
\end{proof}

\begin{theorem}\label{th:cut-elim-ER}
  \(\cut\) is eliminable in \(\n{\ER}\).
  As a corollary \(\cut\) is eliminable in \(\g{\ER}\).
\end{theorem}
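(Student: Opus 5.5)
By Theorem~\ref{local-adm-and-eliminability}, it suffices to show that \(\cut\) is locally admissible in \(\n{\ER}\). So let \(\pi \vdash \Gamma \gg \Delta, \chi\) and \(\tau \vdash \chi, \Gamma \gg \Delta\) be proofs in \(\n{\ER}\) that are locally cut-free. I will build a locally cut-free proof of \(\Gamma \gg \Delta\) by induction on \(\omega|\chi| + (\lhg(\pi)+\lhg(\tau))\), with a case analysis on the last rules of \(\pi\) and \(\tau\). The corollary for \(\g{\ER}\) then follows from the square of translations. Given \(\g{\ER}+\cut \vdash S\), Theorem~\ref{th:from-wellfounded-to-nonwellfounded-bimodal} gives \(\n{\ER}+\cut \vdash S\). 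Eliminability gives \(\n{\ER}\vdash S\). The translation back, with \(\Lambda_0=\Lambda_1=\varnothing\), gives \(\g{\ER}\vdash S\).

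\textbf{Routine cases.} These go exactly as in Appendix~\ref{sec:cut-reductions}, now uniformly in the arrow \(\gg\):
\begin{itemize}
\item axioms;
\item non-principal propositional rules, where the cut is permuted upwards and the local height decreases;
\item principal \(\to\) cuts, which reduce to cuts on smaller formulas using invertibility (Lemma~\ref{lm:structural-rules-non-wellfounded});
\item modal rules where the cut formula is a weakening formula, replaced by an instance with the same weakening;
\item \((\ERax[1,0])\) with the cut formula non-principal, where the cut is permuted above it.
\end{itemize}

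\textbf{Principal cases.} These are new. Let \(\chi=\nec_i\chi_0\) be principal in \(\pi\) by \((\modal[i]{\KTER})\), whose premise is \(\necd_i\Sigma_i,\nec_{\overline i}\Sigma_{\overline i}\Rightarrow_i \chi_0\).
\begin{itemize}
\item \(\tau\) ends in \((\modal[j]{\KTER})\) with \(\nec_i\chi_0\) auxiliary. I make both instances maximal and copy the constructions from Theorem~\ref{th:cut-elim-CS}, now with \(\Rightarrow_j\) in the premises.
 \begin{itemize}
 \item If \(j=i\), I reapply \((\modal[i]{\KTER})\) to \(\pi_0\) to get \(\chi_0,\necd_i\Sigma_i,\nec_{\overline i}\Sigma_{\overline i}\Rightarrow_i\phi,\nec_i\chi_0\). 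I cut this against \(\tau_0\) on \(\nec_i\chi_0\); the local height is smaller because \(\tau_0\) sits above a progressing node. I then cut against \(\wk(\pi_0)\) on \(\chi_0\), which is of smaller complexity.
 \item If \(j\neq i\), I apply \((\modal[i]{\KTER})\) to \(\pi_0\), with weakening context \(\Sigma_{\overline i}\) and arrow \(\Rightarrow_{\overline i}\). The rule permits this because its conclusion arrow \(\gg\) is arbitrary. I then cut against \(\tau_0\).
 \end{itemize}
 In both subcases the new cuts are above the final modal rule, hence after a progressing node. So the resulting proof is locally cut-free once the inner cuts are handled by the induction hypothesis.
\item \(\tau\) ends in \((\ERax[1,0])\) with principal formula \(\nec_0\chi_0\). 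Then \(i=0\), the arrow is \(\Rightarrow_1\), and \(\tau_0\vdash \necd_0\chi_0,\Gamma\Rightarrow_1\Delta\). First I cut \(\pi\) (weakened by \(\chi_0\)) against \(\tau_0\) on \(\nec_0\chi_0\), by the induction hypothesis on local height, obtaining \(\chi_0,\Gamma\Rightarrow_1\Delta\). Next, \(\pi_0\) proves \(\necd_0\Sigma_0,\nec_1\Sigma_1\Rightarrow_0\chi_0\). Since \(\nec_0\Sigma_0\subseteq\Gamma\), I need \(\Sigma_0\) on the left under \(\Rightarrow_1\); this is obtained by applying \((\ERax[1,0])\) once for each element of \(\Sigma_0\). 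Concretely, I lift \(\pi_0\) via \((\Rightarrow_1)\) and weakening to \(\necd_0\Sigma_0,\Gamma\Rightarrow_1\Delta,\chi_0\). I cut on \(\chi_0\), of smaller complexity, to get \(\necd_0\Sigma_0,\Gamma\Rightarrow_1\Delta\). Then repeated \((\ERax[1,0])\) and contraction give \(\Gamma\Rightarrow_1\Delta\).
\end{itemize}

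\textbf{Main obstacle.} The difficulty is the last case. \(\pi_0\) is a subproof above a progressing node, so the cut on \(\chi_0\) does not decrease local height, and it is not obvious that the result stays locally cut-free. Complexity decreases, however, so the induction hypothesis applies to \(\wk\) and \((\Rightarrow_1)\) of \(\pi_0\), which are proofs since those rules are admissible. I also have to check that \((\Rightarrow_1)\), \(\wk\) and \(\ctr\) preserve local cut-freeness, which Lemma~\ref{lm:structural-rules-non-wellfounded} provides since they preserve local rules. The ordering of the measure, complexity first and then local height, makes both inductive calls legitimate.
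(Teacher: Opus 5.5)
Your overall route is the paper's. You prove local admissibility by induction on $\omega|\chi|+(\lhg(\pi)+\lhg(\tau))$, use the appendix reductions for the routine cases and the same two modal--modal reductions, and in the $(\ERax[1,0])$ case use the paper's reduction with the applications of $(\ERax[1,0])$ moved below the cut on $\chi_0$, which is immaterial.

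Your justification in the modal--modal subcase $j=i$ is wrong, though. Since $\tau$ ends in $(\modal[i]{\KTER})$, its main local fragment is just the root, so $\lhg(\tau)$ is already minimal and $\lhg(\tau_0)$ is not smaller. Moreover, $\pi_0$ and $\tau_0$ need not be locally cut-free. The induction hypothesis is not needed there anyway. Both new cuts stand above the final, progressing modal rule, so the tree is locally cut-free as it stands, which is how the paper concludes. Drop the appeal to the induction hypothesis in both modal--modal subcases.

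The same observation leaves a genuine gap in the $(\ERax[1,0])$ case, and your ``main obstacle'' paragraph only deals with the measure. The induction hypothesis turns locally cut-free premises into a locally cut-free conclusion. But $\pi_0$ sits above the progressing premise of $(\modal[0]{\KTER})$, so local cut-freeness of $\pi$ says nothing about it, and its main local fragment may contain cuts; your own modal--modal reductions produce exactly such proofs. Hence $(\Rightarrow_1)$ and $\wk$ applied to $\pi_0$ give a proof, but not necessarily a locally cut-free one. The cut on $\chi_0$ is therefore not covered by the induction hypothesis, even though $|\chi_0|<|\chi|$. The paper's reduction feeds ${\Rightarrow_1}(\wk(\pi_0))$ to the induction hypothesis in the same way, without comment.

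One way to close the gap:
\begin{enumerate}
\item Every sequent in the main local fragment of $\pi_0$ is a $\Rightarrow_0$-sequent. For cuts between $\Rightarrow_0$-sequents the $(\ERax[1,0])$ case cannot arise, and every inductive call again concerns $\Rightarrow_0$-sequents (the modal--modal cases make no call). So first prove local admissibility of cuts on $\Rightarrow_0$-sequents by your induction.
\item Use this result, from the top down through the finite main local fragment of $\pi_0$, to obtain a locally cut-free proof of the same sequent.
\item Only then lift that proof with $(\Rightarrow_1)$ and weakening, and apply the induction hypothesis for the cut on $\chi_0$.
\end{enumerate}
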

\begin{proof}
  We will show that \(\cut\) is locally admissible in \(\n{\CS}\).
  Let \(\pi \vdash \Gamma \gg \Delta, \chi\) and \(\tau \vdash \chi, \Gamma \gg \Delta\) in \(\n{\CS}\).
  We proceed by induction on the ordinal measure \(\omega|\chi| + (\lhg(\pi) + \lhg(\tau))\) and a case analysis.
  Again,  most cases can be found in Appendix~\ref{sec:cut-reductions}, here we only show the cases that are unique for this calculus.

  Assume \(\pi\) and \(\tau\) are of the following shape
  \[
    \AxiomC{\(\pi_0\)}
    \noLine
    \UnaryInfC{\(\necd_0 \Sigma_0, \nec_{1} \Sigma_{1} \Rightarrow_0 \chi_0 \)}
    \RightLabel{\(\modal[0]{\KTER}\)}
    \UnaryInfC{\(\nec_0 \Sigma_0, \nec_{1} \Sigma_{1}, \Gamma' \Rightarrow_1 \nec_0 \chi_0, \Delta \)}
    \DisplayProof
    \quad
    \AxiomC{\(\tau_0\)}
    \noLine
    \UnaryInfC{\(\necd_0 \chi_0,\nec_0 \Sigma_0, \nec_{1} \Sigma_{1}, \Gamma' \Rightarrow_1  \Delta \)}
    \RightLabel{\(\ERax[1,0]\)}
    \UnaryInfC{\(\nec_0 \chi_0, \nec_0 \Sigma_0, \nec_{1} \Sigma_{1}, \Gamma' \Rightarrow_1  \Delta \)}
    \DisplayProof
  \]
  where \(\chi = \nec_0 \chi_0\), \(\Gamma = \nec_0 \Sigma_0, \nec_{1} \Sigma_{1}, \Gamma'\).
  The desired locally cut-free proof is
  {
    \def\defaultHypSeparation{\hskip 2pt} %could be used to make rules less wide
    \[
      \AxiomC{\( {\Rightarrow_1}(\wk(\pi_0))\)}
      \noLine
      \UnaryInfC{\(\Sigma_0,\nec_{i \leq 1} \Sigma_i, \Gamma' \Rightarrow_1 \Delta, \chi_0 \)}
      \doubleLine
      \RightLabel{\(\ERax[1,0]\)}
      \UnaryInfC{\(\nec_{i \leq 1} \Sigma_i, \Gamma' \Rightarrow_1 \Delta, \chi_0 \)}
      \AxiomC{\(\pi_0\)}
      \noLine
      \UnaryInfC{\(\Sigma_0, \nec_{i \leq 1} \Sigma_i, \Rightarrow_0 \chi_0 \)}
      \RightLabel{\(\modal[0]{\KTER}\)}
      \UnaryInfC{\(\chi_0,\nec_{i \leq 1} \Sigma_i, \Gamma' \Rightarrow_1  \Delta, \nec_0 \chi_0\)}
      \AxiomC{\(\tau_0\)}
      \noLine
      \UnaryInfC{\(\necd_0 \chi_0,\nec_{i \leq 1} \Sigma_i, \Gamma' \Rightarrow_1  \Delta \)}
      \RightLabel{\(\cut\) (I.H.)}
      \BinaryInfC{\(\chi_0,\nec_{i \leq 1} \Sigma_i, \Gamma' \Rightarrow_1  \Delta \)}
      \RightLabel{\(\cut\) (I.H.)}
      \BinaryInfC{\(\nec_{i \leq 1} \Sigma_i, \Gamma' \Rightarrow_1  \Delta \)}
      \DisplayProof
    \]
  }
  where \(\nec_{i \leq 1} \Sigma_i = \nec_0 \Sigma_0, \nec_1 \Sigma_1\) and the first use of the induction hypothesis is thanks to reducing the sum of local heights while the second use of the induction hypothesis is thanks to lowering the complexity of the cut formula.

  Assume \(\pi\) and \(\tau\) are of the following shape
  \[
    \AxiomC{\(\pi_0\)}
    \noLine
    \UnaryInfC{\(\necd_i \Sigma_i, \nec_{\overline{i}} \Sigma_{\overline{i}} \Rightarrow_i \chi_0\)}
    \RightLabel{\(\modal[i]{\KTER}\)}
    \UnaryInfC{\(\nec_i \Sigma_i, \nec_{\overline{i}} \Sigma_{\overline{i}}, \Gamma' \gg \nec_i \chi_0, \nec_i \phi, \Delta'\)}
    \DisplayProof
    \quad
    \AxiomC{\(\tau_0\)}
    \noLine
    \UnaryInfC{\(\necd_i \chi_0, \necd_i \Sigma_i, \nec_{\overline{i}} \Sigma_{\overline{i}} \Rightarrow_i \phi\)}
    \RightLabel{\(\modal[i]{\KTER}\)}
    \UnaryInfC{\(\nec_i \chi_0,\nec_i \Sigma_i, \nec_{\overline{i}} \Sigma_{\overline{i}}, \Gamma' \gg  \nec_{i} \phi, \Delta'\)}
    \DisplayProof
  \]
  where \(\chi = \nec_i \chi_0\), \(\Gamma = \nec_i \Sigma_i, \nec_{\overline{i}} \Sigma_{\overline{i}}, \Gamma'\), \(\Delta = \nec_i \phi, \Delta'\) and we assumed  without loss of generality that the auxiliary formulas of the last rule of \(\pi\) and \(\tau\) are equal, as otherwise we can use weakening on \(\pi_0\) and \(\tau_0\) to make them equal by making the rule instances maximal.
  The desired locally cut-free proof is
  \[
    \AxiomC{\(\wk(\pi_0)\)}
    \noLine
    \UnaryInfC{\(\necd_i \Sigma_i, \nec_{\overline{i}} \Sigma_{\overline{i}} \Rightarrow_i \phi, \chi_0 \)}
    \AxiomC{\(\pi_0\)}
    \noLine
    \UnaryInfC{\(\necd_i \Sigma_i, \nec_{\overline{i}} \Sigma_{\overline{i}} \Rightarrow_i \chi_0\)}
    \RightLabel{\(\modal[i]{\KTCS}\)}
    \UnaryInfC{\(\chi_0, \necd_i \Sigma_i, \nec_{\overline{i}} \Sigma_{\overline{i}} \Rightarrow_i \phi, \nec_i \chi_0\)}
    \AxiomC{\(\tau_0\)}
    \noLine
    \UnaryInfC{\(\necd_i \chi_0, \necd_i \Sigma_i, \nec_{\overline{i}} \Sigma_{\overline{i}} \Rightarrow_i \phi\)}
    \RightLabel{\(\cut\)}
    \BinaryInfC{\(\chi_0, \necd_i \Sigma_i, \nec_{\overline{i}} \Sigma_{\overline{i}} \Rightarrow_i \phi\)}
    \RightLabel{\(\cut\)}
    \BinaryInfC{\(\necd_i \Sigma_i, \nec_{\overline{i}} \Sigma_{\overline{i}} \Rightarrow_i \phi\)}
    \RightLabel{\(\modal[i]{\KTCS}\)}
    \UnaryInfC{\(\nec_i \Sigma_i, \nec_{\overline{i}} \Sigma_{\overline{i}}, \Gamma' \gg  \nec_{i} \phi, \Delta'\)}
    \DisplayProof
  \]

  Assume \(\pi\) and \(\tau\) are of the following shape
  \[
    \AxiomC{\(\pi_0\)}
    \noLine
    \UnaryInfC{\(\necd_i \Sigma_i, \nec_{\overline{i}} \Sigma_{\overline{i}} \Rightarrow_i \chi_0\)}
    \RightLabel{\(\modal[i]{\KTCS}\)}
    \UnaryInfC{\(\nec_i \Sigma_i, \nec_{\overline{i}} \Sigma_{\overline{i}}, \Gamma' \gg \nec_i \chi_0, \nec_{\overline{i}} \phi, \Delta'\)}
    \DisplayProof
    \quad
    \AxiomC{\(\tau_0\)}
    \noLine
    \UnaryInfC{\(\nec_i \chi_0, \nec_i \Sigma_i, \necd_{\overline{i}} \Sigma_{\overline{i}} \Rightarrow_{\overline{i}} \phi\)}
    \RightLabel{\(\modal[\overline{i}]{\KTCS}\)}
    \UnaryInfC{\(\nec_i \chi_0,\nec_i \Sigma_i, \nec_{\overline{i}} \Sigma_{\overline{i}}, \Gamma' \gg  \nec_{\overline{i}} \phi, \Delta'\)}
    \DisplayProof
  \]
  where \(\chi = \nec_i \chi_0\), \(\Gamma = \nec_i \Sigma_i, \nec_{\overline{i}} \Sigma_{\overline{i}}, \Gamma'\), \(\Delta = \nec_{\overline{i}} \phi, \Delta'\) and we assume again without loss of generality that the auxiliary formulas of the last rule of \(\pi\) and \(\tau\) are equal.
  The desired locally cut-free proof is
  \[
    \AxiomC{\(\pi_0\)}
    \noLine
    \UnaryInfC{\(\necd_i \Sigma_i, \nec_{\overline{i}} \Sigma_{\overline{i}} \Rightarrow_i \chi_0\)}
    \RightLabel{\(\modal[i]{\KTCS}\)}
    \UnaryInfC{\(\nec_i \Sigma_i, \necd_{\overline{i}} \Sigma_{\overline{i}} \Rightarrow_{\overline{i}} \phi, \nec_i \chi_0 \)}
    \AxiomC{\(\tau_0\)}
    \noLine
    \UnaryInfC{\(\nec_i \chi_0, \nec_i \Sigma_i, \necd_{\overline{i}} \Sigma_{\overline{i}} \Rightarrow_{\overline{i}} \phi\)}
    \RightLabel{\(\cut\)}
    \BinaryInfC{\(\nec_i \Sigma_i, \necd_{\overline{i}} \Sigma_{\overline{i}} \Rightarrow_{\overline{i}} \phi\)}
    \RightLabel{\(\modal[\overline{i}]{\KTCS}\)}
    \UnaryInfC{\(\nec_i \Sigma_i, \nec_{\overline{i}} \Sigma_{\overline{i}}, \Gamma' \gg  \nec_{\overline{i}} \phi, \Delta'\)}
    \DisplayProof
  \]
\end{proof}

The cut elimination for \(\ER\) allows us to improve the result of Theorem~\ref{th:correspondence-to-hilbert-ER-weak} to all sequents.

\begin{theorem}\label{th:correspondence-to-hilbert-ER}
  For any \(\ER\)-sequent \(S\), we have that \(\ER \vdash S^\flat\) iff \(\g{\ER} \vdash S\).
\end{theorem}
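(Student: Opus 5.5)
For the direction from right to left, I will use soundness directly: by Lemma~\ref{lm:soundness-of-rules-ER}, every rule of \(\g{\ER}\) is sound in \(\ER\). An induction on the height of a proof of \(S\) in \(\g{\ER}\) therefore gives \(\ER \vdash S^\flat\). This covers sequents with either arrow, since the soundness lemma was stated for instances with both \(\Rightarrow_0\) and \(\Rightarrow_1\).

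For the left-to-right direction, first consider \(S = \Gamma \Rightarrow_0 \Delta\). Theorem~\ref{th:correspondence-to-hilbert-ER-weak} gives \(\g{\ER} + \cut \vdash S\), and Theorem~\ref{th:cut-elim-ER} removes the cut, so \(\g{\ER} \vdash S\).

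The interesting case is \(S = \Gamma \Rightarrow_1 \Delta\), where \(S^\flat = \nec_1(\bigwedge \Gamma \to \bigvee \Delta)\). Here I will derive \(S\) from a proof of the \(\Rightarrow_0\)-sequent
\[
  \nec_1(\textstyle\bigwedge \Gamma \to \bigvee \Delta) \Rightarrow_0 \textstyle\bigwedge \Gamma \to \bigvee \Delta ,
\]
which does not hold in \(\ER\). Instead I will work with \(\Rightarrow_1\)-sequents directly.

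Set \(\theta := \bigwedge\Gamma \to \bigvee\Delta\). The first step is to obtain \(\g{\ER} + \cut \vdash {\Rightarrow_1 \theta}\). From \(\ER \vdash \nec_1 \theta\), the case already proved gives \(\g{\ER} \vdash {\Rightarrow_0 \nec_1\theta}\). Applying the admissible rule \((\Rightarrow_1)\) of Lemma~\ref{lm:structural-rules-ER} gives \(\Rightarrow_1 \nec_1 \theta\).

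Next I cut this against a proof of \(\nec_1 \theta \Rightarrow_1 \theta\). The sequent \(\nec_1 \theta \Rightarrow_1 \theta\) holds in the intended reading, since \(\nec_1(\nec_1\theta \to \theta)\) is an instance of L\"ob, but it is not derivable directly because \(\Rightarrow_1\) carries no reflexivity for \(\nec_1\). Finding a derivation of \(\Rightarrow_1 \theta\) is therefore the main obstacle.

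My first attempt is to strengthen the induction in Lemma~\ref{lm:correspondence-to-hilbert-ER}. I would show that the set
\[
  \{\phi \mid \g{\ER} + \cut \vdash {\Rightarrow_0 \phi} \text{ and } \g{\ER} + \cut \vdash {\Rightarrow_1 \phi}\}
\]
contains all axioms of \(\ER\) and is closed under \(\MP\), \(\NEC[0]\) and \(\NEC[1]\). The plan for each clause is as follows:

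\begin{itemize}
\item \textbf{Axioms.} The \(\Rightarrow_0\) derivations given earlier transfer by \((\Rightarrow_1)\).
\item \textbf{Modus ponens.} The argument is the same as before, now inside \(\Rightarrow_1\), using \(\cut\), weakening and invertibility of \((\toR)\), all available for \(\Rightarrow_1\)-sequents.
\item \textbf{Necessitation.} Suppose \(\Rightarrow_1 \phi\) and \(\Rightarrow_0 \phi\) are provable. Weaken the \(\Rightarrow_0\) proof to \(\necd_i\phi... \), more precisely to \(\nec_i\phi \Rightarrow_i \phi\) using \((\Rightarrow_1)\) when \(i=1\). Then \((\modal[i]{\ER})\) yields \(\nec_i\phi\) on the right of either arrow, because its conclusion arrow \(\gg\) is arbitrary.
\end{itemize}

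So both \(\Rightarrow_0 \nec_i\phi\) and \(\Rightarrow_1 \nec_i\phi\) follow, and the strengthened set is closed under all the rules. This gives \(\g{\ER} + \cut \vdash {\Rightarrow_1\theta}\) whenever \(\ER\vdash\theta\).

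It remains to make sure that \(\ER \vdash \nec_1\theta\) actually yields a derivation of \(\Rightarrow_1\theta\). If \(\theta\) itself is an \(\ER\)-theorem, the strengthened set gives it directly, but in general only \(\nec_1\theta\) is known. To bridge this I would instead prove the statement ``\(\ER \vdash \psi\) implies \(\g{\ER}+\cut \vdash {\Rightarrow_1 \theta}\) for every \(\nec_1\theta\) that appears as a conjunct of \(\psi\)''. The more natural route, and the one I would try, is a semantic-free syntactic argument: show that \(\ER \vdash \nec_1\theta\) implies \(\ER \vdash \theta'\) for a suitable \(\theta'\) with \(\Rightarrow_1 \theta' \) entailing \(\Rightarrow_1\theta\). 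The candidate is
\[
  \theta' := \textstyle\bigwedge_{\phi} (\nec_0\phi \to \phi) \to \theta ,
\]
with \(\phi\) ranging over finitely many formulas, motivated by the fact that \(\ER\) is \(\CS \oplus \ERax[1,0]\) read under \(\nec_1\). The needed implication would come from the \((\ERax[1,0])\) rule together with cut.

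The hypotheses \(\nec_0\phi \to \phi\) in \(\theta'\) would be discharged under \(\Rightarrow_1\) by \((\ERax[1,0])\). I expect that establishing this conservativity-style fact, namely that from \(\ER\vdash\nec_1\theta\) one can extract such a \(\theta'\) provable in \(\CSM\), is the delicate step. Once it is in place, cut elimination (Theorem~\ref{th:cut-elim-ER}) finishes the argument.
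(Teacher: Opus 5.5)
Your right-to-left direction and your treatment of $\Rightarrow_0$-sequents are fine and agree with the paper. The gap is in the $\Rightarrow_1$ case: nothing in the proposal derives ${\Rightarrow_1 \theta}$ from $\ER \vdash \nec_1\theta$. The strengthened induction only gives ${\Rightarrow_1 \phi}$ for $\ER$-theorems $\phi$. It is also redundant, since $(\Rightarrow_1)$ is admissible. So everything hinges on your final conservativity claim, for which you give no argument.

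In the form you state it, with $\theta'$ provable in $\CSM$, the claim is false. Take $\theta = \nec_1(\nec_0 p \to p)$; then $\ER \vdash \nec_1\theta$ by $(\ERax[1,0])$ and $4_1$. Consider a finite chain $w_n,\ldots,w_0$ in which both accessibility relations are the strict order ($w_j$ sees $w_k$ iff $j>k$) and $p$ is false everywhere. This is a $\CSM$-model: the relations coincide and are transitive and conversely wellfounded. On it, formulas of modal depth at most $d$ have constant truth value on $w_d,\ldots,w_n$. Hence for $n$ larger than the modal depth of a given finite $F$, every instance $\nec_0\phi \to \phi$ with $\phi \in F$ holds at $w_n$. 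But $\theta$ fails at $w_n$, because $\nec_0 p \to p$ fails at $w_0$. With $\ER$ in place of $\CSM$ the claim does hold, but it is essentially a restatement of the missing step, not a route around it.

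The idea you are missing is one you nearly had. You already know that ${\Rightarrow_0 \nec_1\theta}$ has a cut-free proof in $\g{\ER}$. The only rule whose conclusion can be ${\Rightarrow_0 \nec_1\theta}$ is $(\modal[1]{\ER})$ with $\Sigma_0 = \Sigma_1 = \varnothing$. So its premise $\nec_1\theta \Rightarrow_1 \theta$ \emph{is} derivable, contrary to what you assert.

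Now cut it against ${\Rightarrow_1 \theta, \nec_1\theta}$, which is a weakening of the ${\Rightarrow_1 \nec_1\theta}$ you already obtained via $(\Rightarrow_1)$. This yields $\g{\ER} + \cut \vdash {\Rightarrow_1 \theta}$; it is just the admissible rule $(\lob[1]{\ER})$ with empty context. Cut elimination for $\g{\ER}$ and invertibility of $(\toR)$, $(\toL)$ and $(\botR)$ then give $\g{\ER} \vdash \Gamma \Rightarrow_1 \Delta$.

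The paper runs the same shape argument through $\n{\ER}$ instead. There $(\modal[1]{\KTER})$ has no diagonal formula, so the premise of the last rule of a cut-free proof of ${\Rightarrow_0 \nec_1\theta}$ is literally ${\Rightarrow_1\theta}$. It then inverts and translates back to $\g{\ER}$.
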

\begin{proof}
  The if direction is proven by a simple induction in the height of the proof in \(\g{\ER}\) using Lemma~\ref{lm:soundness-of-rules-ER}, so let us show the only if direction.
  If \(S\) is a \( \Rightarrow_0\)-sequent, then the result follows from Theorem~\ref{th:correspondence-to-hilbert-ER-weak}.  Hence assume \(S = \Gamma \Rightarrow_1 \Delta\).
  From \(\ER \vdash \nec_1(\bigwedge \Gamma \to \bigvee \Delta)\), we obtain by Lemma~\ref{lm:correspondence-to-hilbert-ER} that \(\g{\ER} + \cut \vdash { \Rightarrow_0 \nec_1(\bigwedge \Gamma \to \bigvee \Delta)}\).
  Then by one of the translations of Subsection~\ref{subsec:translations} and cut elimination in \(\n{\ER}\), we obtain that \(\n{\ER} \vdash { \Rightarrow_0 \nec_1 (\bigwedge \Gamma \to \bigvee \Delta)}\).
  By the shape of the rules of \(\n{\ER}\) (in particular the proof of the previous sequent must end in an application of \(\modal[i]{\KTER}\)), this implies that \(\n{\ER} \vdash { \Rightarrow_1 \bigwedge \Gamma \to \bigvee \Delta}\) and hence, by invertibility, \(\n{\ER} \vdash { \Gamma \Rightarrow_1 \Delta}\).
  Again, by one of the translations of Subsection~\ref{subsec:translations} we can conclude  \(\g{\ER} \vdash {\Gamma \Rightarrow_1 \Delta}\).
\end{proof}

We finalize the section with a corollary which sums up all the results we have until now.

\begin{corollary}
  Let \(S\) be a sequent (when working with \(\ER\) an \(\ER\)-sequent).
  We have that
  \begin{enumerate}
    \item \(\CS \vdash S^\flat\) iff \(\g{\CS}\; (+ \cut) \vdash S\) iff \(\n{\CS}\; (+ \cut) \vdash S\);
    \item \(\CSM \vdash S^\flat\) iff \(\g{\CSM}\; (+ \cut) \vdash S\) iff \(\n{\CSM}\; (+ \cut) \vdash S\);
    \item \(\ER \vdash S^\flat\) iff \(\g{\ER}\; (+ \cut) \vdash S\) iff \(\n{\ER}\; (+ \cut) \vdash S\).
  \end{enumerate}
\end{corollary}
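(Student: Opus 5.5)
The corollary only assembles results already established, so the plan is to chain them for each logic \(L \in \set{\CS, \CSM, \ER}\). For each \(L\) I would close the loop
\[
  L \vdash S^\flat \;\Rightarrow\; \g{L} + \cut \vdash S \;\Rightarrow\; \n{L} + \cut \vdash S \;\Rightarrow\; \n{L} \vdash S \;\Rightarrow\; \g{L} \vdash S \;\Rightarrow\; L \vdash S^\flat .
\]
The intermediate equivalences involving the systems without \(\cut\) then follow automatically. Each system is trivially contained in the same system extended by \(\cut\). Once the cycle is closed, every statement in it is equivalent to every other.

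The individual arrows come from earlier results.
\begin{enumerate}
  \item For the first arrow I would use Theorem~\ref{th:correspondence-to-hilbert-CS-CSM} for \(\CS\) and \(\CSM\). For \(\ER\) with a \(\Rightarrow_0\)-sequent I would use Theorem~\ref{th:correspondence-to-hilbert-ER-weak}, and for arbitrary \(\ER\)-sequents Theorem~\ref{th:correspondence-to-hilbert-ER}, which already gives \(\g{\ER} \vdash S\) directly.
  \item The second arrow is Theorem~\ref{th:from-wellfounded-to-nonwellfounded-bimodal}.
  \item The third arrow is the eliminability of \(\cut\) in \(\n{L}\), given by Theorems~\ref{th:cut-elim-CS}, \ref{th:cut-elim-CSM} and~\ref{th:cut-elim-ER}.
  \item The fourth arrow is the translation theorem from \(\n{L}\) to \(\g{L}\), instantiated with \(\Lambda_0 = \Lambda_1 = \varnothing\).
  \item The last arrow is soundness. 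Lemmas~\ref{lm:soundness-of-rules-CS-CSM} and \ref{lm:soundness-of-rules-ER} state that every rule of \(\g{L}+\cut\) is sound in \(L\), and an induction on the height of a (wellfounded) proof yields \(L \vdash S^\flat\).
\end{enumerate}

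Only the \(\ER\) case with \(\Rightarrow_1\)-sequents needs care, because Theorem~\ref{th:correspondence-to-hilbert-ER-weak} covers only \(\Rightarrow_0\)-sequents. This is exactly why Theorem~\ref{th:correspondence-to-hilbert-ER} was proved, so I would invoke that theorem rather than the weak version.

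The one direction not literally stated before is \(\g{\ER} + \cut \vdash S \Rightarrow \ER \vdash S^\flat\) for \(\Rightarrow_1\)-sequents. It still follows from Lemma~\ref{lm:soundness-of-rules-ER}, since that lemma covers all rules of \(\g{\ER}+\cut\), including the \(\Rightarrow_1\) instances. I therefore expect no real obstacle; the main point is bookkeeping to ensure every arrow of the cycle is cited for all three logics.
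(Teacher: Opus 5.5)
Your proposal is correct and matches the paper's argument. The paper gives the corollary no separate proof and presents it as the assembly of exactly the results you chain: Theorems~\ref{th:correspondence-to-hilbert-CS-CSM} and \ref{th:correspondence-to-hilbert-ER}, the translation from \(\g{L}+\cut\) to \(\n{L}+\cut\), cut elimination in \(\n{L}\), the translation from \(\n{L}\) to \(\g{L}\) with \(\Lambda_0=\Lambda_1=\varnothing\), and soundness of the rules. Your separate remark on soundness of \(\g{\ER}+\cut\) for \(\Rightarrow_1\)-sequents is true but not needed, since your cycle already reaches \(\ER \vdash S^\flat\) through \(\g{\ER}\).
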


\section{Interpolation}
\label{sec:interpolation}

Having at our disposal the non-wellfounded calculi and the cut elimination theorem, we are prepared to show uniform Lyndon interpolation for \(\CS\), \(\CSM\) and \(\ER\).

For the uniformity,  we will use a modification of proof search which we call \emph{interpolation templates}.
The idea of an interpolation template as follows:
perform a proof search of the sequent at its root, but take into account that parts of the sequent may be missing.
For example, when evaluating which modal rules can be applied to a sequent \(\nec_0 \Sigma_0, \nec_1 \Sigma_1, \Gamma \Rightarrow \nec_0 \Theta_0, \nec_1 \Theta_1, \Delta\) in \(\CS\),  we have to take into account that the principal formula may be \emph{outside} \(\nec_0 \Theta_0, \nec_1 \Theta_1, \Delta\).
This will provoke that in addition to the usual possible premise,  we need to add two extra ones: \(\necd_0 \Sigma_0, \nec_1 \Sigma_1 \Rightarrow \) in case the principal formula is outside and it is a \(\nec_0\)-formula, and \(\nec_0 \Sigma_0, \necd_1 \Sigma_1 \Rightarrow\) in case the principal formula is outside and it is a \(\nec_1\)-formula.

To get not only the uniform property but also the Lyndon property for the interpolants, the use of the non-wellfounded calculi is essential.
This is because the diagonal formula in the wellfounded calculi ruins the polarity of the principal formula in the modal rules of \(\g{\CS}\), \(\g{\CSM}\) and \(\g{\ER}\).
The non-wellfounded calculi,  not having a diagonal formula,  behave well with respect to the polarities.

Finally, let us comment that the proof of uniform Lyndon interpolation for \(\CSM\) is really simple: we will use that \(\CSM\) can be interpreted in \(\CS\) and that \(\CS\) has uniform Lyndon interpolation.
It is an open question whether a similar interpretation of \(\ER\) in \(\CSM\) or \(\CS\) is possible.
%This will save a lot of space, and we leave as an open question is such an interpretation of \(\ER\) in \(\CSM\) or \(\CS\) is possible.

%With all this considerations in mind, we jump to the proofs.

\subsection{Uniform Lyndon Interpolation for \(\CS\) and \(\CSM\)}

We start by defining the notion of interpolation template for \(\CS\).
Notice that, even if it is based on an non-wellfounded system, we need the interpolation template to be a finite object, i.e., a cyclic proof and not any non-wellfounded proof.
Otherwise, the construction of the interpolant 
%(being the interpolant a finite, wellfounded, formula) 
would be hard if not impossible.

\begin{figure}
  \[
    \AxiomC{\(\)}
    \RightLabel{\(\emp\)}
    \UnaryInfC{\( \Rightarrow \)}
    \DisplayProof
    \qquad
    \AxiomC{\([\necd_i \Sigma^s_i, \nec_{\overline{i}} \Sigma^s_{\overline{i}} \Rightarrow]_{i \leq 1}\)}
    \AxiomC{\([\necd_i \Sigma^s_i, \nec_{\overline{i}} \Sigma^s_{\overline{i}} \Rightarrow\phi]_{i \leq 1, \phi \in \Theta_i}\)}
    \RightLabel{\(\modal[+]{\KTCS}\)}
    \BinaryInfC{\(\nec_0 \Sigma_0, \nec_1 \Sigma_1, \Gamma \Rightarrow \nec_0 \Theta_0, \nec_1 \Theta_1, \Delta\)}
    \DisplayProof
  \]
  where \(\Gamma, \Delta \subseteq \var\) and \(\Gamma \inter \Delta = \varnothing\).

  \caption{Interpolation template rules for \(\CS\)}
  \label{fig:interpolation-template-CS}
\end{figure}

\begin{definition}
  A \emph{\(\CS\)-interpolation template} for \(\Gamma \Rightarrow \Delta\) is a cyclic proof of \(\Gamma \Rightarrow \Delta\) in the local progress sequent calculus with the propositional rules of Figure~\ref{fig:prop-rules} and the rules of Figure~\ref{fig:interpolation-template-CS} where progress is only made at the premises of \((\modal[+]{\KTCS})\).
\end{definition}

We can show that any sequent has an interpolation template.
For that purpose let \(\Gamma \Rightarrow \Delta\) be a sequent, define \(|\Gamma \Rightarrow \Delta| = \sum_{\phi \in \Gamma,\Delta} |\phi|\).
We will use this measure in the following lemma.

\begin{lemma}\label{existence-of-interpolation-templates-CS}
  Every sequent has a \(\CS\)-interpolation template.
\end{lemma}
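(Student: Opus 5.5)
We build the template by a proof search that applies the template rules in a fixed priority order. We then close the resulting tree into a finite tree with backedges. The key observation is that the premises of \((\modal[+]{\KTCS})\) range over a finite set of sequents determined by the root.

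\emph{Local phase.} Given a sequent \(S = \Gamma \Rightarrow \Delta\), we expand it bottom-up as follows.
\begin{enumerate}
\item If some formula in \(\Gamma \cup \Delta\) is \(\bot\) or an implication, apply \((\botL)\), \((\botR)\), \((\toL)\) or \((\toR)\) to it.
\item Otherwise \(S\) has the shape \(\nec_0 \Sigma_0, \nec_1 \Sigma_1, \Gamma' \Rightarrow \nec_0 \Theta_0, \nec_1 \Theta_1, \Delta'\) with \(\Gamma', \Delta' \subseteq \var\).
\begin{enumerate}
\item If \(\Gamma' \inter \Delta' \neq \varnothing\), close with \((\ax)\).
\item If the whole sequent is empty, close with \((\emp)\).
\item Otherwise apply \((\modal[+]{\KTCS})\). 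Its side condition \(\Gamma' \inter \Delta' = \varnothing\) now holds, and its premises are the sequents \(\necd_i \Sigma^s_i, \nec_{\overline{i}} \Sigma^s_{\overline{i}} \Rightarrow\) and \(\necd_i \Sigma^s_i, \nec_{\overline{i}} \Sigma^s_{\overline{i}} \Rightarrow \phi\) for \(\phi \in \Theta_i\).
\end{enumerate}
\end{enumerate}
Each propositional rule strictly decreases the measure \(|\Gamma \Rightarrow \Delta|\) in every premise, since \((\toL)\) and \((\toR)\) replace \(\phi \to \psi\) by \(\phi\) and/or \(\psi\), and \((\botL)\), \((\botR)\) remove \(\bot\). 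Hence every local fragment, i.e.\ every stretch of the search between modal steps, is finite, with height bounded by \(|S|\).

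\emph{Finiteness of modal premises.} By the local subformula property (which also holds for the template rules, by inspection), every sequent in the search tree consists of formulas from \(\sub(S_{\mathrm{root}})\). Every premise of \((\modal[+]{\KTCS})\) has a left side that is a set, because of the \(\Sigma^s\) and because \(\necd_i\Sigma^s_i, \nec_{\overline{i}}\Sigma^s_{\overline{i}}\) contains each formula at most once, or at most twice if some \(\sigma\) lies in both \(\Sigma^s_i\) and \(\Sigma^s_{\overline{i}}\), which is still bounded. Its right side has at most one formula. So the premises of modal steps range over a finite set \(P\), whose size is bounded in terms of \(|\sub(S_{\mathrm{root}})|\).

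\emph{Building the cyclic proof.} We unfold the search as a tree, but stop at a progressing node \(w\) (a premise of a modal step) whenever some strictly earlier progressing node \(w' < w\) on the same branch carries the same sequent. Such a \(w\) becomes a repeat node with \(w^\circ = w'\). Two things must be checked.
\begin{enumerate}
\item Finiteness: between consecutive progressing nodes a branch has bounded length by the local phase, and by the pigeonhole principle on \(P\) a branch contains at most \(|P|+1\) progressing nodes before a repetition. So every branch is finite, and by K\"onig's lemma (the tree is finitely branching) the tree is finite.
\item Progress: the interval \((w^\circ, w]\) contains \(w\) itself, which is progressing. So the progress condition for cyclic proofs holds.
\end{enumerate}
To match the definition exactly, which requires \(w\) and \(w^\circ\) to carry the same rule as well as the same sequent, we note that the search is deterministic given the sequent, if we fix a choice of principal formula. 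Hence equal sequents receive equal rules.

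The main obstacle is the bookkeeping in the finiteness argument. Multisets are the concern: contraction is not available inside the template calculus, so the sets \(\Sigma^s\) are exactly what makes \(P\) finite. We also have to check that the \((\modal[+]{\KTCS})\) premises are well formed even when some \(\Theta_i\) or \(\Sigma_i\) is empty; then the premise \(\Rightarrow\) is closed by \((\emp)\) at the next step.
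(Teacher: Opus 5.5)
Your proof is correct and uses the same ingredients as the paper's: proof search with the template rules, the subformula property, strict decrease of \(|\Gamma \Rightarrow \Delta|\) under the propositional rules, and a pigeonhole argument on the finitely many possible premises of \((\modal[+]{\KTCS})\).

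The one genuine difference is where you place repeat nodes.

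\emph{The paper.} It turns a leaf into a repeat as soon as \emph{any} ancestor carries the same sequent. It then argues termination by contradiction: an infinite branch would have pairwise distinct sequents, hence infinitely many modal steps, hence a repeated modal premise. It must also check the progress condition separately, by noting that a cycle \((w^\circ, w]\) without a \((\modal[+]{\KTCS})\) step would give \(|S_{w^\circ}| > |S_w| = |S_{w^\circ}|\).

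\emph{Your version.} You only allow repeats at premises of modal steps, pointing to earlier such premises. So the progress condition is immediate, since \(w\) itself is progressing. Termination follows from a uniform bound on branch length plus K\"onig's lemma, and the measure is needed only to bound the local phases.

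The paper's choice closes cycles earlier and so may give smaller templates; yours makes the cyclic-proof condition trivial. Your deterministic search also handles the requirement that \(w\) and \(w^\circ\) carry the same rule, which the paper leaves implicit. That point could also be settled just by labelling the repeat leaf with its companion's rule.

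One small slip does not affect correctness. Your account of when the left side of a modal premise contains a duplicate is off. A \(\sigma\) in both \(\Sigma^s_i\) and \(\Sigma^s_{\overline{i}}\) yields the distinct formulas \(\nec_i \sigma, \sigma, \nec_{\overline{i}} \sigma\). Duplicates arise instead when, for example, both \(\sigma\) and \(\nec_i \sigma\) lie in \(\Sigma^s_i\). This is harmless: a modal premise is determined by \(i\), the sets \(\Sigma^s_0, \Sigma^s_1\), and an optional \(\phi\), so \(P\) is finite regardless.
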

\begin{proof}
  Notice that if \(R\) is a rule of a \(\CS\)-interpolation templates and \((S_0,\ldots,S_{n-1},S)\) is an instance of \(R\) then \(\sub(S_i) \subseteq \sub(S)\) for \(i < n\), so any preproof generated with rules of \(\CS\)-interpolation templates contains only formulas which are subformulas of the sequent at the root.
  In addition, if \(R\) is distinct from \((\modal[+]{\KTCS})\) then \(|S_i| < |S|\) for \(i < n\).

  Also, notice that for any sequent \(\Gamma \Rightarrow \Delta\),  there is always one rule of interpolation templates that can be applied to it.
  Then we proceed as follows to construct an interpolation template for \(\Gamma \Rightarrow \Delta\):
  \begin{enumerate}
    \item Stage 0. Start by putting \(\Gamma \Rightarrow \Delta\) at the root of the tree without a rule.
    \item Stage \(n+1\). For each leaf \(w\) at stage \(n\) without a rule do one of the following (in order):
      \begin{enumerate}
        \item If there is a \(v < w\) with the same sequent, make \(w\) a repeat node pointing to \(v\).
        \item Otherwise, select a rule \(R\) and a rule instance \((S_0,\ldots,S_{n-1},S) \in R\) such that \(w\) is annotated with sequent \(S\). Annotate \(w\) with the rule \(R\) and create new nodes \(w0\), \ldots, \(w(n-1)\) immediate successors of \(w\) annotated with sequent \(S_0,\ldots,S_{n-1}\) respectively.
      \end{enumerate}
  \end{enumerate}
  First, we show that this process finishes.

  Suppose otherwise, then at the limit we would have an non-wellfounded tree with at least one infinite branch \((w_i)_{i \in \mathbb{N}}\).
  Let \(w_i\) be annotated with sequent \(S_i\) and rule \(R_i\), notice that all the \(S_i\)s are pairwise different, as otherwise the infinite branch would have been closed during the construction with a repeat.
  If there are only finitely many \(i\)s such that \(R_i = (\modal[+]{\KTCS})\) then there is a maximum \(i_0\) such that for any \(j > i_0\) we have that \(R_i \neq (\modal[+]{\KTCS})\).
  Then \(|S_{i_0}| > |S_{i_0 + 1}| > \cdots > |S_{i_0 + k}| > \cdots\), which is impossible.
  So there are infinitely many \(i\)s such that \(R_i = (\modal[+]{\KTCS})\), let \(\set{i_j}_{j \in \mathbb{N}}\) be those \(i\)s.
  Let \(m\) be the cardinality of \(\sub(\Gamma \Rightarrow \Delta)\), thanks to the subformula property and since premises of \((\modal[+]{\KTCS})\) are determined by two sets of formulas (\(\Sigma^s_0\) and \(\Sigma^s_1\)) and a possible choice of formula (\(\phi\)), we have that the possible number of premises of \((\modal[+]{\KTCS})\) in \(T\) is \(4^m(m+1) = 2^m2^m(m+1)\).
  However, in \((w_i)_{i \in \mathbb{N}}\) there are infinitely many premises of \((\modal[+]{\KTCS})\), so at least there is a repeated sequent, a contradiction.

  The created tree \(T\) is clearly a cyclic preproof.
  Given a node \(w\) of \(T\) let \(S_w\) be the sequent at \(w\) and \(R_w\) the rule at \(w\).
  Asumme \(w \in \rep(T)\) such that for each \(v \in (w^\circ,w]\) is non-progressing.
  Then for each \(v \in [w^\circ,w)\), \(R_v \neq (\modal[+]{\KTCS})\) so \(|S_{w^\circ}| > |S_w| = |S_{w^\circ}|\), a contradiction.
\end{proof}

We fix vocabularies \(V_+\) and \(V_-\) for which we want to calculate the uniform Lyndon interpolant.
Once we have the interpolation template we will divide the construction of the interpolant in two parts.
First, we will calculate a formula at each node \(w\) of the interpolation template which we call \emph{preinterpolant at \(w\)}.
Then, using the preinterpolants, we will create a equational system which will be solvable in \(\CS\).
The interpolant will then be obtained by applying the substitution which solves the equational system to the preinterpolant at the root.

We have no way of interpolating repeat nodes, so in principle we would get stuck in them.
However, due to the uniqueness of interpolants we know that the interpolant at a repeat node must be logically equivalent to the interpolant at its cyclic companion, in other words, we obtain an equation.
Solving all the equations that appear in the interpolation template will just be solving an equational system.

We start with the definitions of preinterpolants.

\begin{definition}
  Let \(T\) be an interpolation template and \(w\) a node of \(T\), let us denote the sequent at \(w\) as \(\Gamma_w \Rightarrow \Delta_w\).
  We will annotate each of the sequents of \(T\) with a formula \(\kappa\) called the \emph{preinterpolant at \(\kappa\)}, denoted as \(\kappa : \Gamma_w \Rightarrow \Delta_w\).
  We proceed by induction on the tree structure of \(T\).
  If \(w\) is a repeat node of shape
  \[
    \AxiomC{\(\)}
    \RightLabel{\(\rep\)}
    \UnaryInfC{\(\Gamma_w \Rightarrow \Delta_w\)}
    \DisplayProof
  \]
  then \(\kappa_w\) will be a fresh propositional variable \(x_w\).
  This variable shall not appear in any formula of \(T\) and for repeat nodes \(w,v\) such that \(w \neq v\) we must have that \(x_w \neq x_v\).
  In case \(w\) is not a repeat node,  we proceed by cases on the rule at \(w\). We give the construction as follows (the preinterpolant at the conclusion is defined recursively from the preinterpolants at the premises):
  \[
    \AxiomC{\(\)}
    \RightLabel{\(\ax\)}
    \UnaryInfC{\(\bot : p, \Gamma \Rightarrow p ,\Delta\)}
    \DisplayProof
    \qquad
    \AxiomC{\(\)}
    \RightLabel{\(\emp\)}
    \UnaryInfC{\(\top : {\Rightarrow}\)}
    \DisplayProof
    \qquad
    \AxiomC{\(\)}
    \RightLabel{\(\botL\)}
    \UnaryInfC{\(\bot : p, \Gamma \Rightarrow p ,\Delta\)}
    \DisplayProof
    \qquad
    \AxiomC{\(\kappa : \Gamma \Rightarrow \Delta\)}
    \RightLabel{\(\botR\)}
    \UnaryInfC{\(\kappa : \Gamma \Rightarrow \bot, \Delta\)}
    \DisplayProof
  \]

  \[
    \AxiomC{\(\kappa_0 : \Gamma \Rightarrow \phi, \Delta\)}
    \AxiomC{\(\kappa_1 : \psi, \Gamma \Rightarrow \Delta\)}
    \RightLabel{\(\toL\)}
    \BinaryInfC{\(\kappa_0 \vee \kappa_1 : \phi \to \psi, \Gamma \Rightarrow \Delta\)}
    \DisplayProof
    \qquad
    \AxiomC{\(\kappa : \phi, \Gamma \Rightarrow \psi, \Delta\)}
    \RightLabel{\(\toR\)}
    \UnaryInfC{\(\kappa : \Gamma \Rightarrow \phi \to \psi, \Delta\)}
    \DisplayProof
  \]

  \[
    \AxiomC{\(\kappa^{\nec_i} : \necd_i \Sigma^s_i, \nec_{\overline{i}}\Sigma^s_{\overline{i}} \Rightarrow\)}
    \AxiomC{\([\kappa^{\nec_i}_\phi :\necd_i \Sigma^s, \nec_{\overline{i}} \Sigma^s_i \Rightarrow\phi]_{i \leq 1, \phi \in \Theta_i}\)}
    \RightLabel{\(\modal[+]{\KTCS}\)}
    \BinaryInfC{\( \kappa  :\nec_0 \Sigma_0, \nec_1 \Sigma_1, \Gamma \Rightarrow \nec_0 \Theta_0, \nec_1 \Theta_1, \Delta\)}
    \DisplayProof
  \]
  where
  \[
    \kappa := \bigwedge_{i \leq 1} \left(\nec_i \kappa^{\nec_i} \wedge \bigwedge\limits_{\phi \in \Theta_i} \pos_i \kappa^{\nec_i}_\phi\right)\wedge \bigwedge (\Gamma \inter V_+) \wedge \bigwedge \neg(\Delta \inter V_-)
  \]
\end{definition}

Let \(T\) be an interpolation template, we proceed to define the equational system attached to \(T\). 
For each node \(w\) let \(\kappa_w\) be the preinterpolant at \(w\).
Remember that the \emph{height of a node \(w\)}, denoted \(\hg(w)\), is the height of the subtree generated at \(w\).
Then leafs have height \(0\) and the height of the root is the same as the height of the tree.
For each \(i \in \mathbb{N}\) let \(\bar{x}_i\) be an arbitrary enumeration of \(\set{x_{w} \mid w \in \rep(T), \hg(w^\circ) = i}\).
If \(\hg(T) = H\), then we define \(\bar{x}_T = \bar{x}_0 \bar{x}_1 \cdots \bar{x}_H\), \(\bar{x}_T\) is an anumeration of \(\set{x_w \mid w \in \rep(T)}\).
We define the \emph{equational system of \(T\)}, denoted \(\mathcal{E}_T\), as
\[
  \set{(x_w, +, \kappa_{w}) \mid w \in \rep(T)}.
\]
In the following lemma we show that it \(\mathcal{E}_T\) is solvable in \(\CS\).

\begin{lemma}\label{equational-systems-are-solvable-CS}
  \(\mathcal{E}_T\) is a positive modalized Lyndon equational system over \((\bar{x}_T, V_+, V_-)\).
  As a corollary, \(\mathcal{E}_T\) has a solution in \(\CS\).
\end{lemma}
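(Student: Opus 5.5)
We have to check three things: that \(\mathcal{E}_T\) is Lyndon over \((\bar{x}_T,V_+,V_-)\), that it is positive, and that it is modalized with respect to the ordering \(\bar{x}_T\). Once these hold, the corollary is immediate from the earlier theorem that every positive modalized Lyndon equational system has a solution in \(\CS\).

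Positivity is by definition, since every equation has the form \((x_w,+,\kappa_w)\).

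\textbf{Lyndon condition.} Since every \(b_i=+\), we have \(B_+=\set{x_w\mid w\in\rep(T)}\) and \(B_-=\varnothing\). So we must show \(\voc_+(\kappa_v)\subseteq V_+\cup B_+\) and \(\voc_-(\kappa_v)\subseteq V_-\) for every node \(v\), and we prove this by induction on the tree structure of \(T\), following the definition of preinterpolants.

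\begin{itemize}
\item At repeat nodes, \(\kappa_v = x_v\), which has positive vocabulary \(\set{x_v}\subseteq B_+\) and empty negative vocabulary.
\item At \((\ax)\), \((\botL)\), \((\emp)\) the preinterpolant is \(\bot\) or \(\top\), which has empty vocabulary.
\item At \((\botR)\), \((\toR)\), \((\toL)\) the preinterpolant is copied or formed by \(\vee\). Disjunction preserves polarities (using the definitions of \(\vee\), \(\neg\) and \(\voc_\pm\)), so the claim follows from the induction hypothesis.
\item At \((\modal[+]{\KTCS})\), the subformulas \(\nec_i\kappa^{\nec_i}\) and \(\pos_i\kappa^{\nec_i}_\phi\) keep the polarities of the premise preinterpolants (\(\pos_i=\neg\nec_i\neg\) flips twice). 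The conjunct \(\bigwedge(\Gamma\cap V_+)\) contributes only positive variables in \(V_+\). The conjunct \(\bigwedge\neg(\Delta\cap V_-)\) contributes only negative variables in \(V_-\).
\end{itemize}

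Conjunctions preserve polarities, so in every case the claim holds. Finally, the fresh variables \(x_w\) are distinct from \(V_+\cup V_-\), as the definition of an equational system requires.

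\textbf{Modalized condition (the main step).} Take the equation for the repeat node \(w\), with companion \(u=w^\circ\), so \(\kappa_u\) is the right-hand side. We must show that \(\kappa_u\) is modalized in every \(x_{w'}\) that occurs no later than \(x_w\) in \(\bar{x}_T\), i.e.\ with \(\hg(w'^\circ)\le\hg(u)\).

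A variable \(x_{w'}\) occurs in \(\kappa_u\) only if \(w'\) lies in the subtree rooted at \(u\). Consider the path from \(u\) to \(w'\). The construction of \(\kappa\) places a \(\nec_i\) or \(\pos_i\) on top of a premise preinterpolant exactly when the path passes through a progressing node, i.e.\ a premise of \((\modal[+]{\KTCS})\). So \(x_{w'}\) is modalized in \(\kappa_u\) iff the path from \(u\) to \(w'\) contains a progressing node.

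We split on where the companion \(w'^\circ\) lies; this is where the height ordering in \(\bar{x}_T\) is used.

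\begin{itemize}
\item If \(u\le w'^\circ\), then the segment \((w'^\circ,w']\) is part of the path from \(u\) to \(w'\). Since \(T\) is a cyclic proof, this segment contains a progressing node, so \(x_{w'}\) is modalized.
\item If \(w'^\circ<u\), then \(u\) lies strictly inside the subtree of \(w'^\circ\), so \(\hg(w'^\circ)>\hg(u)\). Hence \(x_{w'}\) comes later than \(x_w\) in the enumeration and imposes no requirement.
\end{itemize}

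The companions \(u\) and \(w'^\circ\) are both ancestors of \(w'\), so they are comparable and these two cases are exhaustive.

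Nodes \(w'\) whose companions have equal height to \(u\) fall in the first case. Indeed, equal height together with comparability forces \(w'^\circ=u\), and then the same cyclic-proof argument applies. This covers the requirement that \(\phi_i\) be modalized in \(p_0,\dots,p_i\), including \(x_w\) itself.
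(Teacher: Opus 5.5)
Your proof is correct and follows essentially the paper's argument. The Lyndon and positivity conditions go by induction over the template. For modalization you use three facts: \(x_v\) can occur in \(\kappa_{w^\circ}\) only if \(w^\circ < v\); the cyclic-proof progress condition puts a premise of the modal template rule inside \((v^\circ, v]\); and the height ordering of \(\bar{x}_T\) rules out the remaining case \(v^\circ < w^\circ\). Your organisation, which first restricts to repeat nodes above \(w^\circ\) and then splits on the two necessarily comparable companions, is slightly cleaner than the paper's write-up, which states the relevant case the wrong way round as \(v^\circ \le w^\circ\) where \(w^\circ \le v^\circ\) is meant.
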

\begin{proof}
  For each node \(w\) of \(T\) let \(\kappa_w\) be the preinterpolant at \(w\).
  By induction on the height of \(w\), we can show that \(\voc_+(\kappa_w) \subseteq V_+ \union \set{x_v \mid v \in \rep(T)}\) and \(\voc_-(\kappa_w) \subseteq V_-\), so it is clearly positive.

  It remains to show that it is modalized. For this it suffices to prove that for every \(w,v \in \rep(T)\) if \(\hg(v^\circ) \leq \hg(w^\circ) \), then \(\kappa_w\) is modalized in \(x_v\).
  We have the following facts which are easy to establish:
  \begin{enumerate}
    \item for each \(v \in \rep(T)\) and node \(w\) if \(x_v\) occurs at \(\kappa_w\) then \(w \leq v\),
    \item for each node \(w\) with rule \((\modal[+]{\KTCS})\) we have that \(\kappa_v\) is modalized in \(\set{x_w \mid w \in \rep(T)}\),
    \item for each \(v \in \rep(T)\) and node \(w \not \in \rep(T)\) with immediate successors \(w_0,\ldots,w_{n-1}\), if \(\kappa_{w_0}\), \ldots, \(\kappa_{w_{n-1}}\) are modalized in \(x_v\) then \(\kappa_w\) is modalized in \(v\).
  \end{enumerate}
  If \(\hg(v^\circ) \leq \hg(w^\circ)\) we have that either \(v^\circ \leq w^\circ\) or \(v^\circ\) and \(w^\circ\) are incomparable.
  Assume \(v^\circ\) and \(w^\circ\) are incomparable.
  Then it must also be the case that \(v\) and \(w^\circ\) are incomparable, so \(\kappa_{w^\circ}\) is modalized in \(x_v\) as \(x_v\) does not occur in \(\kappa_{w^\circ}\).
  Finally, assume that \(v^\circ \leq w^\circ\).
  We know there must be a node \(u \in [v^\circ, v)\) which is annotated with \(\modal[+]{\KTCS}\) so \(\kappa_u\) is modalized in \(x_v\).
  Then, using the third fact, we can show that (by induction) the preinterpolant of any node below \(u\) is modalized in \(x_v\) (since it is not a repeat node and all the preinterpolants of the immediate successors are also modalized in \(x_v\), either by virtue of the node not being comparable with \(v\) or by the induction hypothesis).
  We can conclude, as \(w^\circ \leq v^\circ \leq u\), that \(\kappa_{w^\circ}\) is modalized in \(x_v\).
\end{proof}

Finally, we have all the necessary tools to define the interpolant.

\begin{definition}[Interpolant]
  Given an interpolation template \(T\) we define the interpolant of \(T\), denoted \(\iota_T\), as the formula obtained by applying the substitution solving \(\mathcal{E}_T\) to the preinterpolant at the root of \(T\).
\end{definition}

Strictly speaking the definition of the interpolant is not unique.
For an interpolation template \(T\), \(\iota_T\) will depend on the choice of the solution of \(T\).
However, any two interpolants will be \(\CS\)-logically equivalent, so this lack of unicity should not worry us.
\label{uniqueness-of-interpolant-CS}

With the interpolant defined there is only one task left, we need to verify that it truly fulfills the necessary interpolation properties.
Notice that \(\voc_+(\iota_T) \subset V_+\) and \(\voc_-(\iota_T)\) holds from having applied a solution of an equational system over \((\bar{x}_T,V_+,V_-)\) to a formula \(\kappa\) with \(\voc_+(\kappa) \subseteq V_+ \union \set{x_w \mid w \in \rep(T)}\) and \(\voc_-(\kappa) \subseteq V_-\) together with Lemma~\ref{substitution-and-polarity}.
The other two properties of interpolants will be proven thanks to the following two theorems.
\label{vocabularies-interpolant-CS}

\begin{theorem}\label{th:first-verification-CS}
  Let \(T\) be an interpolation template for \(\Gamma \Rightarrow \Delta\).
  Then \(\n{\CS} \vdash \Gamma \Rightarrow \Delta, \iota_T\).
\end{theorem}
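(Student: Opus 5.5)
We build the required proof in \(\n{\CS}\) by corecursion along the interpolation template \(T\). Fix a solution \((\cdot)^*\) of \(\mathcal{E}_T\), given by Lemma~\ref{equational-systems-are-solvable-CS}. The claim we establish is: for every node \(w\) of \(T\) with sequent \(\Gamma_w \Rightarrow \Delta_w\), there is a proof of \(\Gamma_w \Rightarrow \Delta_w, \kappa_w^*\) in \(\n{\CS} + \cut\). Applying this at the root gives \(\Gamma \Rightarrow \Delta, \iota_T\), and Theorem~\ref{th:cut-elim-CS} then removes the cuts.

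Repeat nodes are handled with the equation. For a repeat node \(w\) we have \(\kappa_w^* = x_w^*\), and \(\CS \vdash x_w^* \leftrightarrow \kappa_{w^\circ}^*\). By the correspondence corollary we obtain a (non-wellfounded) proof of \(\kappa_{w^\circ}^* \Rightarrow x_w^*\). We cut it against the corecursively produced proof of \(\Gamma_{w^\circ} \Rightarrow \Delta_{w^\circ}, \kappa_{w^\circ}^*\), which ends in the same sequent since \(w\) and \(w^\circ\) carry the same sequent. In other words, we unfold the cycle. Guardedness holds because every cycle in \(T\) passes through a \((\modal[+]{\KTCS})\) node. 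We will make sure the translation of that node uses a genuine \((\modal[i]{\KTCS})\) step of \(\n{\CS}\) on the path from the companion to the repeat, so every infinite branch of the resulting preproof progresses infinitely often. Cuts are allowed in this preproof, since we eliminate them afterwards.

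The local steps go as follows.
\begin{itemize}
\item For \((\ax)\) and \((\botL)\) the preinterpolant is \(\bot\); the sequent is an axiom, and weakening in the interpolant side gives the claim.
\item For \((\emp)\) we prove \(\Rightarrow \top\).
\item \((\botR)\) and \((\toR)\) are immediate.
\item For \((\toL)\) we apply \((\toL)\) to the premise proofs \(\Gamma \Rightarrow \phi, \Delta, \kappa_0^*\) and \(\psi, \Gamma \Rightarrow \Delta, \kappa_1^*\), after weakening and \((\veeR)\), to get \(\kappa_0^* \vee \kappa_1^*\).
\end{itemize}

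The main case is \((\modal[+]{\KTCS})\). The interpolant is a conjunction, so by \((\wedgeR)\) we prove each conjunct separately.
\begin{itemize}
\item The conjuncts from \(\Gamma \cap V_+\) follow by \((\Ax)\).
\item For a conjunct \(\neg q\) with \(q \in \Delta \cap V_-\) we use \((\negR)\) and \((\Ax)\).
\item For \(\nec_i (\kappa^{\nec_i})^*\) we apply \((\modal[i]{\KTCS})\) with principal formula \(\nec_i(\kappa^{\nec_i})^*\), obtaining premise \(\necd_i\Sigma_i, \nec_{\overline i}\Sigma_{\overline i} \Rightarrow (\kappa^{\nec_i})^*\). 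This premise follows by contraction from the corecursive proof for the template premise \(\necd_i\Sigma^s_i, \nec_{\overline i}\Sigma^s_{\overline i} \Rightarrow\).
\item For \(\pos_i(\kappa^{\nec_i}_\phi)^* = \neg\nec_i\neg(\kappa^{\nec_i}_\phi)^*\) with \(\phi\in\Theta_i\), we apply \((\negR)\) to move \(\nec_i\neg(\kappa^{\nec_i}_\phi)^*\) to the left. We then apply \((\modal[i]{\KTCS})\) with principal formula \(\nec_i\phi\) and with \(\neg(\kappa^{\nec_i}_\phi)^*\) added to the auxiliary \(\nec_i\)-formulas. This yields the premise \(\neg(\kappa^{\nec_i}_\phi)^*, \nec_i\neg(\kappa^{\nec_i}_\phi)^*, \necd_i\Sigma_i, \nec_{\overline i}\Sigma_{\overline i} \Rightarrow \phi\). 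After \((\negL)\) and weakening, it reduces to the corecursive proof of \(\necd_i\Sigma^s_i, \nec_{\overline i}\Sigma^s_{\overline i} \Rightarrow \phi, (\kappa^{\nec_i}_\phi)^*\).
\end{itemize}
Every corecursive call at this node sits above a progressing premise of \((\modal[i]{\KTCS})\), which is the guardedness we need.

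We expect the main obstacle to be making the corecursion rigorous, in particular showing that the resulting preproof is a proof. Three issues have to be checked. First, the cuts introduced at repeat nodes should not break progress. Second, the repeat-node step must not call itself without passing a modal rule. Third, the proofs of \(\kappa_{w^\circ}^* \Rightarrow x_w^*\) obtained from the corollary are themselves already proofs, so they introduce no bad branches. We would formalize this as a map \(\alpha\) from nodes of \(T\) to preproofs, defined by the clauses above, in the style of Theorem~\ref{th:from-wellfounded-to-nonwellfounded-bimodal}. Any infinite branch of \(\alpha(\epsilon)\) either eventually stays in one of the equivalence proofs, and is fine there, or follows a path in the unfolding of \(T\). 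Such a path traverses cycles infinitely often, and each traversal passes a progressing \((\modal[i]{\KTCS})\) node, since \(T\) is a cyclic proof. So every infinite branch has infinitely many progressing nodes.
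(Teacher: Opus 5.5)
Your construction is the paper's: a corecursive map \(\alpha\) on template nodes, closing repeat nodes by a cut against a proof of \(\kappa^*_{w^\circ} \Rightarrow x^*_w\), and handling \((\modal[+]{\KTCS})\) conjunct by conjunct with \((\modal[i]{\KTCS})\), followed by cut elimination. (The paper builds the preproof in \(\n{\CS}+\cut+\wk\), which your repeat-node cut and \(\pos_i\)-conjunct steps also implicitly need; and in the \(\nec_i\)-conjunct step you need weakening rather than contraction, or simply take \(\Sigma^s_i\) as the auxiliary formulas.) The one real difference is the progress check. The paper uses the measure \(\omega|\Gamma_w \Rightarrow \Delta_w| + \lgth(w)\), which drops at every non-modal corecursive call. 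You instead appeal to the cycle condition of \(T\). That also works, but it needs the standard observation that an unfolded path can jump to a strictly lower companion only finitely often in a row, so infinitely many back-edge segments \((w^\circ,w]\) are traversed in full.
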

\begin{proof}
  Given a node \(w\) of \(T\) let us denote the sequent at \(w\) as \(\Gamma_w \Rightarrow \Delta_w\), the preinterpolant of \(T\) at \(w\) as \(\kappa_w\) and assume \((\cdot)^*\) is a solution of \(\mathcal{E}_T\).
  We define a function \(\alpha\) such that if \(w\) is a node of \(T\) then \(\alpha(w)\) is a preproof in \(\n{\CS} + \cut + \wk\) of \(\Gamma_w \Rightarrow \Delta_w, \kappa^*_w\).
  Then we will show that \(\alpha(w)\) is in fact a proof in \(\n{\CS} + \cut + \wk\) and then using that \(\wk\) is eliminable in \(\n{\CS} + \cut\) and that \(\cut\) is eliminable in \(\n{\CS}\) we obtain the desired proof in \(\n{\CS}\).

  We proceed to define \(\alpha\) by corecursion and cases.
  First, assume \(w \in \rep(T)\) then \(w^\circ\) has the same sequent as \(w\) and \(\kappa_w = x_w\).
  Also, by virtue of \((\cdot)^*\) being a solution of \(\mathcal{E}_T\), we have that \(\CS \vdash x^*_w \leftrightarrow \kappa^*_{w^\circ}\).
  Then we define \(\alpha\) as
  \[
    \AxiomC{}
    \RightLabel{\(\rep\)}
    \UnaryInfC{\(x_w : \Gamma \Rightarrow \Delta\)}
    \DisplayProof
    \quad
    \overset{\alpha}{\longmapsto}
    \quad
    \AxiomC{\(\alpha(w^\circ)\)}
    \noLine
    \UnaryInfC{\(\Gamma \Rightarrow \Delta, \kappa^*_w\)}
    \RightLabel{\(\wk\)}
    \UnaryInfC{\( \Gamma \Rightarrow \Delta, x^*_w, \kappa^*_w\)}
    \AxiomC{\(\tau\)}
    \noLine
    \UnaryInfC{\(\kappa^*_w,\Gamma \Rightarrow \Delta, x^*_w, \)}
    \RightLabel{\(\cut\)}
    \BinaryInfC{\(\Gamma \Rightarrow \Delta, x^*_w\)}
    \DisplayProof
  \]
  where the proof \(\tau\) exists since \(\CS \vdash x^*_w \leftrightarrow \kappa^*_{w^\circ}\).

  Now assume \(w\) is not a repetition node and let \(R\) be the rule at \(w\).
  Below there are the definitions when \(R\) is \((\ax)\), \((\emp)\), \((\botL)\), \((\botR)\), \((\toL)\) or \((\toR)\).
  \[
    \AxiomC{}
    \RightLabel{\(\ax\)}
    \UnaryInfC{\(\bot : p, \Gamma \Rightarrow p, \Delta\)}
    \DisplayProof
    \quad
    \overset{\alpha}{\longmapsto}
    \quad
    \AxiomC{}
    \RightLabel{\(\ax\)}
    \UnaryInfC{\(p, \Gamma \Rightarrow p, \Delta, \bot\)}
    \DisplayProof
  \]

  \[
    \AxiomC{}
    \RightLabel{\(\emp\)}
    \UnaryInfC{\(\top : {\Rightarrow}\)}
    \DisplayProof
    \quad
    \overset{\alpha}{\longmapsto}
    \quad
    \AxiomC{}
    \RightLabel{\(\botL\)}
    \UnaryInfC{\(\bot \Rightarrow \bot\)}
    \RightLabel{\(\toR\)}
    \UnaryInfC{\( \Rightarrow \top\)}
    \DisplayProof
  \]

  \[
    \AxiomC{}
    \RightLabel{\(\botL\)}
    \UnaryInfC{\(\bot : \bot, \Gamma \Rightarrow \Delta\)}
    \DisplayProof
    \quad
    \overset{\alpha}{\longmapsto}
    \quad
    \AxiomC{}
    \RightLabel{\(\botL\)}
    \UnaryInfC{\(\bot \Gamma \Rightarrow \Delta, \bot\)}
    \DisplayProof
  \]

  \[
    \AxiomC{\(w0\)}
    \noLine
    \UnaryInfC{\(\kappa : \Gamma \Rightarrow \Delta\)}
    \RightLabel{\(\botR\)}
    \UnaryInfC{\(\kappa : \Gamma \Rightarrow \bot, \Delta\)}
    \DisplayProof
    \quad
    \overset{\alpha}{\longmapsto}
    \quad
    \AxiomC{\(\alpha(w0)\)}
    \noLine
    \UnaryInfC{\(\Gamma \Rightarrow \Delta, \kappa^*\)}
    \RightLabel{\(\botR\)}
    \UnaryInfC{\(\Gamma \Rightarrow \bot, \Delta, \kappa^*\)}
    \DisplayProof
  \]

  \[
    \AxiomC{\(w0\)}
    \noLine
    \UnaryInfC{\(\kappa_0 : \Gamma \Rightarrow \phi, \Delta\)}
    \AxiomC{\(w1\)}
    \noLine
    \UnaryInfC{\(\kappa_1 : \psi, \Gamma \Rightarrow \Delta\)}
    \BinaryInfC{\(\kappa_0 \vee \kappa_1 : \phi \to \psi, \Gamma \Rightarrow \Delta\)}
    \DisplayProof
    \quad
    \overset{\alpha}{\longmapsto}
    \quad
    \AxiomC{\(w0\)}
    \noLine
    \UnaryInfC{\(\Gamma \Rightarrow \phi, \Delta, \kappa^*_0\)}
    \RightLabel{\(\wk\)}
    \UnaryInfC{\(\Gamma \Rightarrow \phi, \Delta, \kappa^*_0, \kappa^*_1\)}
    \RightLabel{\(\veeR\)}
    \UnaryInfC{\(\Gamma \Rightarrow \phi, \Delta, \kappa^*_0 \vee \kappa^*_1\)}
    \AxiomC{\(w1\)}
    \noLine
    \UnaryInfC{\(\psi, \Gamma \Rightarrow \Delta, \kappa^*_1\)}
    \RightLabel{\(\wk\)}
    \UnaryInfC{\(\Gamma \Rightarrow \phi, \Delta, \kappa^*_0, \kappa^*_1\)}
    \RightLabel{\(\veeR\)}
    \UnaryInfC{\(\Gamma \Rightarrow \phi, \Delta, \kappa^*_0 \vee \kappa^*_1\)}
    \BinaryInfC{\(\phi \to \psi, \Gamma \Rightarrow \Delta, \kappa^*_0 \vee \kappa^*_1\)}
    \DisplayProof
  \]

  \[
    \AxiomC{\(w0\)}
    \noLine
    \UnaryInfC{\(\kappa : \phi, \Gamma \Rightarrow \psi, \Delta\)}
    \RightLabel{\(\toR\)}
    \UnaryInfC{\(\kappa : \Gamma \Rightarrow \phi \to \psi, \Delta\)}
    \DisplayProof
    \quad
    \overset{\alpha}{\longmapsto}
    \quad
    \AxiomC{\(\alpha(w0)\)}
    \noLine
    \UnaryInfC{\(\phi, \Gamma \Rightarrow \psi, \Delta, \kappa^*\)}
    \RightLabel{\(\toR\)}
    \UnaryInfC{\(\Gamma \Rightarrow \phi \to \psi, \Delta, \kappa^*\)}
    \DisplayProof
  \]

  Finally, let us define \(\alpha(w)\) when \(R = (\modal[+]{\KTCS})\).
  Then \(w\) is of shape
  \[
    \AxiomC{\(\left[\begin{matrix} w^{\nec_i} \\ \kappa^{\nec_i} : \necd_i \Sigma^s_i, \nec_{\overline{i}} \Sigma^s_{\overline{i}} \Rightarrow\end{matrix}\right]_{i \leq 1}\)}
    \AxiomC{\(\left[\begin{matrix} w^{\nec_i}_\phi \\ \kappa^{\nec_i}_\phi :\necd_i \Sigma^s, \nec_{\overline{i}} \Sigma^s_i \Rightarrow\phi\end{matrix}\right]_{i \leq 1, \phi \in \Theta_i}\)}
    \RightLabel{\(\modal[+]{\KTCS}\)}
    \BinaryInfC{\( \kappa_w  :\nec_0 \Sigma_0, \nec_1 \Sigma_1, \Gamma \Rightarrow \nec_0 \Theta_0, \nec_1 \Theta_1, \Delta\)}
    \DisplayProof
  \]
  where
  \[
\kappa^*_{w} := \bigwedge_{i \leq 1} \left(\nec_i (\kappa^{\nec_i})^* \wedge \bigwedge_{\phi \in \Theta_i} \pos_i (\kappa^{\nec_i}_\phi)^*\right)\wedge \bigwedge (\Gamma \inter V_+) \wedge \bigwedge \neg(\Delta \inter V_-).
  \]
  Since \(\kappa^*_w\) is a conjunction of formulas it suffices to construct a preproof for each conjunct and join them with~\((\wedgeR)\).
  \begin{itemize}
    \item Preproof for \(\nec_i \kappa^{\nec_i}\) for \(i \leq 1\).
      The desired preproof is
      \[
        \AxiomC{\(\alpha(w^{\nec_i})\)}
        \noLine
        \UnaryInfC{\(\necd_i \Sigma^s_i, \nec_{\overline{i}} \Sigma^s_{\overline{i}} \Rightarrow (\kappa^{\nec_i})^*\)}
        \RightLabel{\(\modal[i]{\KTCS}\)}
        \UnaryInfC{\(\nec_0 \Sigma_0, \nec_1 \Sigma_1, \Gamma \Rightarrow \nec_0 \Theta_0, \nec_1 \Theta_1, \Delta, \nec_i (\kappa^{\nec_i})^*\)}
        \DisplayProof
      \]
    \item Preproof for \(\pos_i \kappa^{\nec_i}_\phi\) for \(i \leq 1\) and \(\phi \in \Theta_i\).
      The desired preproof is
      \[
        \AxiomC{\(\alpha(w^{\nec_i}_\phi)\)}
        \noLine
        \UnaryInfC{\(\necd_i \Sigma^s_i, \nec_{\overline{i}} \Sigma^s_{\overline{i}} \Rightarrow \phi, (\kappa^{\nec_i}_\phi)^*\)}
        \RightLabel{\(\negL\)}
        \UnaryInfC{\(\neg (\kappa^{\nec_i}_\phi)^*, \necd_i \Sigma^s_i, \nec_{\overline{i}} \Sigma^s_{\overline{i}} \Rightarrow \phi \)}
        \RightLabel{\(\wk\)}
        \UnaryInfC{\(\necd_i \neg (\kappa^{\nec_i}_\phi)^*, \necd_i \Sigma^s_i, \nec_{\overline{i}} \Sigma^s_{\overline{i}} \Rightarrow \phi \)}
        \RightLabel{\(\modal[i]{\KTCS}\)}
        \UnaryInfC{\(\nec_i \neg (\kappa^{\nec_i}_\phi)^*, \nec_0 \Sigma_0, \nec_1 \Sigma_1, \Gamma \Rightarrow \nec_0 \Theta_0, \nec_1 \Theta_1, \Delta, \nec_i \phi\)}
        \RightLabel{\(\negR\)}
        \UnaryInfC{\( \nec_0 \Sigma_0, \nec_1 \Sigma_1, \Gamma \Rightarrow \nec_0 \Theta_0, \nec_1 \Theta_1, \Delta, \nec_i \phi, \pos_i (\kappa^{\nec_i}_\phi)^*\)}
        \DisplayProof
      \]
    \item Preproof for \(p \in \Gamma \inter V_+\) and for \(\neg q\) where \(q \in \Delta \inter V_-\).
      The desired preproof, respectively, are
      \[
        \AxiomC{\(\)}
        \RightLabel{\(\ax\)}
        \UnaryInfC{\(\nec_0 \Sigma_0, \nec_1 \Sigma, \Gamma \Rightarrow \nec_0 \Theta_0, \nec_1 \Theta_1, \Delta, p\)}
        \DisplayProof
        \quad
        \AxiomC{\(\)}
        \RightLabel{\(\ax\)}
        \UnaryInfC{\(q, \nec_0 \Sigma_0, \nec_1 \Sigma, \Gamma \Rightarrow \nec_0 \Theta_0, \nec_1 \Theta_1, \Delta\)}
        \RightLabel{\(\negR\)}
        \UnaryInfC{\(\nec_0 \Sigma_0, \nec_1 \Sigma, \Gamma \Rightarrow \nec_0 \Theta_0, \nec_1 \Theta_1, \Delta, \neg q\)}
        \DisplayProof
      \]
  \end{itemize}

  Finally, let us argue that \(\alpha(w)\) is always a proof and not only a preproof.
  Given a node \(w\) assign to it the measure \(\omega |\Gamma_w \Rightarrow \Delta_w| + \lgth(w)\) where \(\lgth(w)\) is the length of \(w\) as a sequence of natural numbers.
  We notice that this measure always decreases from \(\alpha(w)\) to its corecursive call, except when \(w\) is annotated with the rule \((\modal[+]{\KTCS})\).
  However, in this case we will find that progress is made (i.e., there is an application of \((\modal[i]{\KTCS})\)) from the root of the preproof fragment given by \(\alpha(w)\) to the corecursive calls.
  This implies that any infinite branch will have infinitely many applications of \((\modal[i]{\KTCS})\), as otherwise we will infinitely decrease the measure on corecursive calls.
\end{proof}

\begin{theorem}\label{th:second-verification-CS}
  Let \(T\) be an interpolation template for \(\Gamma \Rightarrow \Delta\) and \(\Xi \Rightarrow \Lambda\) be a sequent such that \(\voc_b(\Xi \Rightarrow \Lambda) \subseteq V_b\) for \(b \in \set{+,-}\).
  Then \(\n{\CS} \vdash \Gamma, \Xi \Rightarrow \Delta, \Lambda\) implies \(\n{\CS} \vdash \iota_T, \Xi \Rightarrow \Lambda\).
\end{theorem}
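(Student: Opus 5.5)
We prove, for every node \(w\) of \(T\), the strengthened claim: if \(\Xi \Rightarrow \Lambda\) satisfies \(\voc_b(\Xi \Rightarrow \Lambda) \subseteq V_b\) and \(\n{\CS} \vdash \Gamma_w, \Xi \Rightarrow \Delta_w, \Lambda\), then \(\n{\CS} \vdash \kappa^*_w, \Xi \Rightarrow \Lambda\), where \((\cdot)^*\) is the fixed solution of \(\mathcal{E}_T\). The theorem is the case \(w = \epsilon\). Because proofs are non-wellfounded, we do not argue by plain induction. Instead we build, by corecursion, a preproof in \(\n{\CS} + \cut + \wk\) (plus the admissible inversions), and show that every infinite branch progresses. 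Eliminability of \(\wk\) and \(\cut\) (Lemma~\ref{lm:structural-rules-non-wellfounded}, Theorem~\ref{th:cut-elim-CS}) then yields a cut-free proof. The corecursive states are pairs \((w, \rho)\), with \(\rho\) a cut-free proof of \(\Gamma_w, \Xi \Rightarrow \Delta_w, \Lambda\). By contraction admissibility we may assume \(\Xi, \Lambda\) are sets.

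\textbf{Local step.} We proceed by cases on the rule at \(w\).
\begin{itemize}
\item If \(w\) is a repeat node, we pass to \((w^\circ, \rho)\), which has the same sequent. We then cut with a proof of \(x^*_w, \Xi \Rightarrow \Lambda\) from \(\kappa^*_{w^\circ}, \Xi \Rightarrow \Lambda\), using \(\CS \vdash x^*_w \leftrightarrow \kappa^*_{w^\circ}\) and cut-completeness.
\item For \(\toL\), \(\toR\) and \(\botR\), we use invertibility preserving local height (Lemma~\ref{lm:structural-rules-non-wellfounded}) to turn \(\rho\) into proofs of the premises. We recurse on them and combine using \(\veeL\) for \(\kappa_0 \vee \kappa_1\), or directly in the unary cases.
\item For \(\ax\) and \(\botL\) at \(w\) the preinterpolant is \(\bot\), so \(\bot, \Xi \Rightarrow \Lambda\) is trivial.
\item For \(\emp\), \(\rho\) is already a proof of \(\Xi \Rightarrow \Lambda\), and we weaken by \(\top\).
\end{itemize}

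\textbf{Modal case.} This is the main step. Here \(\Gamma_w \Rightarrow \Delta_w\) is \(\nec_0\Sigma_0, \nec_1\Sigma_1, \Gamma \Rightarrow \nec_0\Theta_0, \nec_1\Theta_1, \Delta\) with \(\Gamma, \Delta\) disjoint sets of variables. We inspect the cut-free \(\rho\) of \(\Gamma_w, \Xi \Rightarrow \Delta_w, \Lambda\). We first invert all propositional connectives of \(\Xi, \Lambda\); note that \(\Gamma_w, \Delta_w\) are already saturated. This turns the goal into finitely many leaf goals: a conjunction over the resulting saturated sequents, from which \(\kappa^*_w, \Xi \Rightarrow \Lambda\) is reassembled by the same propositional rules, applied now to \(\Xi, \Lambda\) alone. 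For each saturated sequent \(\nec\Sigma, \nec\Sigma', \Gamma, \Gamma' \Rightarrow \ldots, \Delta, \Delta'\), with \(\Gamma', \Delta'\) variables coming from \(\Xi\), the last rule of its proof is one of three kinds.
\begin{itemize}
\item An axiom \(p \Rightarrow p\) entirely in the \(\Xi/\Lambda\) part is trivial.
\item An axiom with \(p \in \Gamma\) and \(p \in \Lambda\)-part gives \(p \in V_+\), so \(p\) is a conjunct of \(\kappa^*_w\). The case \(p \in \Xi\)-part, \(p \in \Delta\) symmetrically uses \(\neg p\). An axiom with \(p \in \Gamma \cap \Delta\) cannot occur, by the side condition.
\item A modal rule \(\modal[i]{\KTCS}\) with principal \(\nec_i\phi\) has a premise of the form \(\necd_i\Sigma_i, \nec_{\overline{i}}\Sigma_{\overline{i}}, \necd_i\Pi_i, \nec_{\overline{i}}\Pi_{\overline{i}} \Rightarrow \phi\), where the \(\Pi\) are the boxed formulas from \(\Xi\).
  \begin{itemize}
  \item If \(\phi \in \Theta_i\), we recurse on \((w^{\nec_i}_\phi, \cdot)\) with new side sequent \(\necd_i\Pi_i, \nec_{\overline{i}}\Pi_{\overline{i}} \Rightarrow\), whose vocabulary stays inside \(V_b\). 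This gives \(\kappa^{\nec_i*}_\phi, \necd_i\Pi_i, \nec_{\overline{i}}\Pi_{\overline{i}} \Rightarrow\). Together with the conjunct \(\pos_i\kappa^{\nec_i*}_\phi\), an application of \(\modal[i]{\KTCS}\) (with \(\bot\) as its principal content after \(\negL\)) closes the goal. The details mirror the \(\pos_i\) preproof of Theorem~\ref{th:first-verification-CS}.
  \item If \(\nec_i\phi\) lies in \(\Lambda\), we recurse on \((w^{\nec_i}, \cdot)\) with side sequent \(\necd_i\Pi_i, \nec_{\overline{i}}\Pi_{\overline{i}} \Rightarrow \phi\). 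We then apply \(\modal[i]{\KTCS}\) using \(\nec_i\kappa^{\nec_i*}\) on the left.
  \end{itemize}
\end{itemize}
Both modal recursions pass a \(\modal[i]{\KTCS}\) step. Weakening is needed to match the deduplicated \(\Sigma^s\), and it is handled by contraction.

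\textbf{Progress and main obstacle.} In the non-modal cases the measure \(\omega|\Gamma_w \Rightarrow \Delta_w| + \lgth(w)\) from Theorem~\ref{th:first-verification-CS} does not decrease at repeat nodes by itself. I would use instead the pair \((\omega\cdot\text{template measure}, \lhg(\rho))\). Repeat jumps do not increase \(\lhg(\rho)\), and each propositional step strictly lowers the template measure or \(\lhg(\rho)\). Every modal recursion inserts a progressing \(\modal[i]{\KTCS}\) in the constructed preproof, so every infinite branch progresses infinitely often. The delicate point, which I expect to be the hardest, is the modal case. There one must check that the interleaving of inversions on \(\Xi, \Lambda\) with the template's modal rule is legitimate, i.e.\ that cut-freeness of \(\rho\) forces its saturated leaves to end in one of the enumerated forms. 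One must also check that polarity bookkeeping keeps the new side sequents within \((V_+, V_-)\). The latter holds since \(\necd_i\Pi\) and \(\phi\) have the same polarity as their source in \(\Xi, \Lambda\).
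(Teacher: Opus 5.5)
Your proposal is correct and is essentially the paper's proof. It uses the same corecursive construction on pairs \((w,\rho)\), the same cut with the fixpoint equivalence \(x^*_w\leftrightarrow\kappa^*_{w^\circ}\) at repeat nodes, inversions at propositional template nodes, and the same \(\pos_i\)/\(\nec_i\) treatment of the two modal subcases. The main organisational difference is that at a \(\modal[+]{\KTCS}\) node you invert \(\Xi,\Lambda\) up front. The paper instead recurses on \((w,\pi_0)\) and lets \(\lhg(\pi)\) drop in the measure \(\omega^2|\Gamma_w\Rightarrow\Delta_w|+\omega\lgth(w)+\lhg(\pi)\).

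One correction to your progress discussion: \(\omega|\Gamma_w\Rightarrow\Delta_w|+\lgth(w)\) does strictly decrease at a repeat jump, since the sequent is unchanged and \(\lgth(w^\circ)<\lgth(w)\). So with your up-front inversion that measure alone already suffices. You also omitted the trivial \((\botL)\) leaf with \(\bot\) coming from \(\Xi\).
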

\begin{proof}
  Given a node \(w\) of \(T\) let us denote the sequent at \(w\) as \(\Gamma_w \Rightarrow \Delta_w,\), the preinterpolant at \(w\) as \(\kappa_w\) and assume \((\cdot)^*\) is a solution of \(\mathcal{E}_T\).
  We define a function \(\beta\) such that if \(w\) is a node of \(T\) and \(\pi\) is a proof in \(\n{\CS}\) of \(\Gamma_w, \Xi \Rightarrow \Delta_w, \Lambda\) where \(\Xi \Rightarrow \Lambda\) is a sequent with \(\voc_b(\Xi \Rightarrow \Lambda) \subseteq V_b\) for \(b \in \set{+,-}\), then \(\beta(w,\pi)\) is a preproof in \(\n{\CS} + \cut + \wk\) of \(\kappa^*_w, \Xi \Rightarrow \Lambda\).
  Then we will show that \(\beta(w,\pi)\) is in fact a proof in \(\n{\CS} + \cut + \wk\) and then using that \(\wk\) is eliminable in \(\n{\CS}\) and that \(\cut\) is eliminable in \(\n{\CS}\) we obtian the desired proof in \(\n{\CS}\).

  We proceed to define \(\beta\) by corecursion and cases.
  First, assume \(w \in \rep(T)\) then \(w^\circ\) has the same sequent as \(w\) and \(\kappa_w = x_w\).
  Also, by virtue of \((\cdot)^*\) being a solution of \(\mathcal{E}_T\), we have that \(\CS \vdash x^*_w \leftrightarrow \kappa^*_{w^\circ}\).
  Then we define \(\beta\) as
  \[
    \left(
      \AxiomC{}
      \RightLabel{\(\rep\)}
      \UnaryInfC{\(x_w : \Gamma \Rightarrow \Delta\)}
      \DisplayProof
      ,\quad
      \AxiomC{\(\pi\)}
      \noLine
      \UnaryInfC{\(\Gamma, \Xi \Rightarrow \Delta, \Lambda\)}
      \DisplayProof
    \right)
    \quad
    \overset{\beta}{\longmapsto}
    \quad
    \AxiomC{\(\tau\)}
    \noLine
    \UnaryInfC{\(x^*_w, \Xi \Rightarrow \Lambda, \kappa^*_{w^\circ}\)}
    \AxiomC{\(\beta(w^\circ, \pi)\)}
    \noLine
    \UnaryInfC{\(\kappa^*_{w^\circ}, \Xi \Rightarrow \Lambda\)}
    \RightLabel{\(\wk\)}
    \UnaryInfC{\(\kappa^*_{w^\circ}, x^*_w, \Xi \Rightarrow \Lambda\)}
    \RightLabel{\(\cut\)}
    \BinaryInfC{\(x^*_w, \Xi \Rightarrow \Lambda\)}
    \DisplayProof
  \]
  where the proof \(\tau\) exists since \(\CS \vdash x^*_w \leftrightarrow \kappa^*_{w^\circ}\).

  Now assume that \(w \not\in\rep(T)\) and let \(R\) be the rule at \(w\).
  Below there are the definitions when \(R\) is \((\ax)\), \((\emp)\), \((\botL)\), \((\toL)\) or \((\toR)\).
  \[
    \left(
      \AxiomC{}
      \RightLabel{\(\ax\)}
      \UnaryInfC{\(\bot : p, \Gamma \Rightarrow p, \Delta\)}
      \DisplayProof,
      \AxiomC{\(\pi\)}
      \noLine
      \UnaryInfC{\(p, \Gamma, \Xi \Rightarrow p, \Delta, \Lambda\)}
      \DisplayProof
    \right)
    \quad
    \overset{\beta}{\longmapsto}
    \quad
    \AxiomC{\(\)}
    \RightLabel{\(\botL\)}
    \UnaryInfC{\(\bot, \Xi \Rightarrow \Lambda\)}
    \DisplayProof
  \]

  \[
    \left(
      \AxiomC{}
      \RightLabel{\(\emp\)}
      \UnaryInfC{\(\top : { \Rightarrow }\)}
      \DisplayProof,
      \AxiomC{\(\pi\)}
      \noLine
      \UnaryInfC{\(\Xi \Rightarrow \Lambda\)}
      \DisplayProof
    \right)
    \quad
    \overset{\beta}{\longmapsto}
    \quad
    \AxiomC{\(\pi\)}
    \noLine
    \UnaryInfC{\(\Xi \Rightarrow \Lambda\)}
    \RightLabel{\(\wk\)}
    \UnaryInfC{\(\top, \Xi \Rightarrow \Lambda\)}
    \DisplayProof
  \]

  \[
    \left(
      \AxiomC{}
      \RightLabel{\(\botL\)}
      \UnaryInfC{\(\bot : \bot, \Gamma \Rightarrow \Delta\)}
      \DisplayProof,
      \AxiomC{\(\pi\)}
      \noLine
      \UnaryInfC{\(\bot, \Gamma, \Xi \Rightarrow \Delta, \Lambda\)}
      \DisplayProof
    \right)
    \quad
    \overset{\beta}{\longmapsto}
    \quad
    \AxiomC{\(\)}
    \RightLabel{\(\botL\)}
    \UnaryInfC{\(\bot, \Xi \Rightarrow \Lambda\)}
    \DisplayProof
  \]

  \[
    \left(
      \AxiomC{\(w0\)}
      \noLine
      \UnaryInfC{\(\kappa_w :  \Gamma \Rightarrow \Delta\)}
      \RightLabel{\(\botR\)}
      \UnaryInfC{\(\kappa_w :  \Gamma \Rightarrow \bot, \Delta\)}
      \DisplayProof,
      \AxiomC{\(\pi\)}
      \noLine
      \UnaryInfC{\(\Gamma, \Xi \Rightarrow \bot, \Delta, \Lambda\)}
      \DisplayProof
    \right)
    \quad
    \overset{\beta}{\longmapsto}
    \quad
    \AxiomC{\(\beta(w0,\inv{\botR}(\pi))\)}
    \noLine
    \UnaryInfC{\(\kappa^*_w, \Xi \Rightarrow \Lambda\)}
    \RightLabel{\(\wk\)}
    \UnaryInfC{\(\kappa^*_w, \Xi \Rightarrow \Lambda\)}
    \DisplayProof
  \]

  \begin{multline*}
    \left(
      \AxiomC{\(w0\)}
      \noLine
      \UnaryInfC{\(\kappa_0 :  \psi, \Gamma \Rightarrow \Delta\)}
      \AxiomC{\(w1\)}
      \noLine
      \UnaryInfC{\(\kappa_1 :  \Gamma \Rightarrow \phi, \Delta\)}
      \RightLabel{\(\toL\)}
      \BinaryInfC{\(\kappa_0 \vee \kappa_1 :  \phi \to \psi, \Gamma \Rightarrow \Delta\)}
      \DisplayProof,
      \AxiomC{\(\pi\)}
      \noLine
      \UnaryInfC{\(\phi \to \psi, \Gamma, \Xi \Rightarrow \Delta, \Lambda\)}
      \DisplayProof
    \right)
    \quad
    \overset{\beta}{\longmapsto} \\
    \AxiomC{\(\beta(w0,\inv{\toL_0}(\pi))\)}
    \noLine
    \UnaryInfC{\(\kappa^*_0, \Xi \Rightarrow \Lambda\)}
    \AxiomC{\(\beta(w1,\inv{\toL_1}(\pi))\)}
    \noLine
    \UnaryInfC{\(\kappa^*_1, \Xi \Rightarrow \Lambda\)}
    \RightLabel{\(\veeL\)}
    \BinaryInfC{\(\kappa^*_0 \vee \kappa^*_1, \Xi \Rightarrow \Lambda\)}
    \DisplayProof
  \end{multline*}

  \[
    \left(
      \AxiomC{\(w0\)}
      \noLine
      \UnaryInfC{\(\kappa_w :  \phi, \Gamma \Rightarrow \psi, \Delta\)}
      \RightLabel{\(\toR\)}
      \UnaryInfC{\(\kappa_w :  \Gamma \Rightarrow \phi \to \psi, \Delta\)}
      \DisplayProof,
      \AxiomC{\(\pi\)}
      \noLine
      \UnaryInfC{\(\Gamma, \Xi \Rightarrow \phi \to \psi, \Delta, \Lambda\)}
      \DisplayProof
    \right)
    \quad
    \overset{\beta}{\longmapsto}
    \quad
    \AxiomC{\(\beta(w0,\inv{\toR}(\pi))\)}
    \noLine
    \UnaryInfC{\(\kappa^*_w, \Xi \Rightarrow \Lambda\)}
    \RightLabel{\(\wk\)}
    \UnaryInfC{\(\kappa^*_w, \Xi \Rightarrow \Lambda\)}
    \DisplayProof
  \]

  Finally, assume \(R\) is \((\modal[+]{\KTCS})\).
  Then \(w\) is of shape
  \[
    \AxiomC{\(\left[\begin{matrix} w^{\nec_i} \\ \kappa^{\nec_i} : \necd_i \Sigma^s_i, \nec_{\overline{i}}\Sigma^s_{\overline{i}}  \Rightarrow \end{matrix}\right]_{i \leq 1}\)}
    \AxiomC{\(\left[\begin{matrix} w^{\nec_i}_\phi \\ \kappa^{\nec_i}_\phi :\necd_i \Sigma^s, \nec_{\overline{i}} \Sigma^s_i \Rightarrow\phi\end{matrix}\right]_{i \leq 1, \phi \in \Theta_i}\)}
    \RightLabel{\(\modal[+]{\KTCS}\)}
    \BinaryInfC{\( \kappa_w  :\nec_0 \Sigma_0, \nec_1 \Sigma_1, \Gamma \Rightarrow \nec_0 \Theta_0, \nec_1 \Theta_1, \Delta\)}
    \DisplayProof
  \]
  where
  \[
    \kappa^*_{w} := \bigwedge_{i \leq 1}\left(\nec_i (\kappa^{\nec_i})^* \wedge \bigwedge_{\phi \in \Theta_i} \pos_i (\kappa^{\nec_i}_\phi)^*\right)\wedge \bigwedge (\Gamma \inter V_+) \wedge \bigwedge \neg(\Delta \inter V_-)
  \]
  We define \(\beta(w,\pi)\) by case analysis on the shape of \(\pi\).
  \begin{itemize}
    \item Last rule of \(\pi\) is \((\ax)\).
      There are 4 cases depending on where the repeated propositional variable is located.
      \begin{itemize}
        \item Assume \(p \in \Gamma\) and \(p \in \Delta\).
          This case is impossble, as \(w\) is annotated with \((\modal[+]{\KTCS})\) implies that \(\Gamma \cap \Delta = \varnothing\).
        \item Assume \(p \in \Gamma\) and \(p \in \Lambda\).
          Since \(p \in \Lambda\) and \(\voc_+(\Xi \Rightarrow \Lambda) \subseteq V_+\) we have that \(p \in V_+\).
          Then, as \(p \in \Gamma \inter V_+\) and \(p \not \in \set{x_w \mid w \in \rep(T)}\), we obtain that \(p\) is a conjunct of \(\kappa^*_w\).
          The desired preproof is
          \[
            \AxiomC{\(\)}
            \RightLabel{\(\ax\)}
            \UnaryInfC{\(p, \Xi \Rightarrow p, \Lambda'\)}
            \RightLabel{\(\wk + \wedgeL\)}
            \doubleLine
            \UnaryInfC{\(\kappa^*_w, \Xi \Rightarrow p, \Lambda'\)}
            \DisplayProof
          \]
          where \(\Lambda = p, \Lambda'\)
        \item Assume \(p \in \Xi\) and \(p \in \Delta\).
          Since \(p \in \Xi\) and \(\voc_-(\Xi \Rightarrow \Lambda) \subseteq V_-\) we have that \(p \in V_-\).
          Then, as \(p \in \Delta \inter V_-\) and \(p \not \in \set{x_w \mid w \in \rep(T)}\), we obtain that \(\neg p\) is a conjunct of \(\kappa^*_w\).
          The desired preproof is
          \[
            \AxiomC{\(\)}
            \RightLabel{\(\ax\)}
            \UnaryInfC{\(p, \Xi' \Rightarrow p, \Lambda\)}
            \RightLabel{\(\negL\)}
            \UnaryInfC{\(\neg p, p, \Xi' \Rightarrow \Lambda\)}
            \RightLabel{\(\wk + \wedgeL\)}
            \doubleLine
            \UnaryInfC{\(\kappa^*_w, p, \Xi' \Rightarrow \Lambda\)}
            \DisplayProof
          \]
          where \(\Xi = p, \Xi'\)
        \item Assume \(p \in \Xi\) and \(p \in \Lambda\).
          Then the desired preproof is
          \[
            \AxiomC{}
            \RightLabel{\(\ax\)}
            \UnaryInfC{\(\kappa^*_w, p, \Xi' \Rightarrow p, \Lambda'\)}
            \DisplayProof
          \]
          where \(\Xi = p, \Xi'\) and \(\Lambda = p, \Lambda'\).
      \end{itemize}
    \item Last rule of \(\pi\) is \((\botL)\).
      Since \(\bot\) is not in \(\Gamma\) (which consists in propositional variables only), then we have that \(\pi\) has the following shape
      \[
        \AxiomC{\(\)}
        \RightLabel{\(\botL\)}
        \UnaryInfC{\(\nec_0 \Sigma_0, \nec_1 \Sigma_1, \Gamma, \bot, \Xi' \Rightarrow \nec_0 \Theta_0, \nec_1 \Theta_1, \Delta, \Lambda\)}
        \DisplayProof
      \]
      where \(\Xi = \bot, \Xi'\).
      Then \(\beta(w,\pi)\) is defined as
      \[
        \AxiomC{\(\)}
        \RightLabel{\(\botL\)}
        \UnaryInfC{\(\kappa^*_w, \bot, \Xi' \Rightarrow \Lambda\)}
        \DisplayProof
      \]
    \item Last rule of \(\pi\) is \((\botR)\).
      Since \(\bot\) is not in \(\Delta\) (which consists in propositional variables only), then we have that \(\pi\) has the following shape
      \[
        \AxiomC{\(\pi_0\)}
        \noLine
        \UnaryInfC{\(\nec_0 \Sigma_0, \nec_1 \Sigma_1, \Gamma, \Xi \Rightarrow \nec_0 \Theta_0, \nec_1 \Theta_1, \Delta, \Lambda'\)}
        \RightLabel{\(\botR\)}
        \UnaryInfC{\(\nec_0 \Sigma_0, \nec_1 \Sigma_1, \Gamma, \Xi \Rightarrow \nec_0 \Theta_0, \nec_1 \Theta_1, \Delta, \bot, \Lambda'\)}
        \DisplayProof
      \]
      where \(\Lambda = \bot, \Lambda'\).
      Then \(\beta(w,\pi)\) is defined as
      \[
        \AxiomC{\(\beta(w,\pi_0)\)}
        \noLine
        \UnaryInfC{\(\kappa^*_w, \bot, \Xi \Rightarrow \Lambda'\)}
        \RightLabel{\(\botR\)}
        \UnaryInfC{\(\kappa^*_w, \bot, \Xi \Rightarrow \bot, \Lambda'\)}
        \DisplayProof
      \]
    \item Last rule of \(\pi\) is \((\toL)\).
      Since \(\Gamma\) consists of propositional variables only, we have that the principal formula of \((\toL)\) must be at \(\Xi\).
      Then we have that \(\pi\) has the following shape
      \[
        \AxiomC{\(\pi_0\)}
        \noLine
        \UnaryInfC{\(\nec_0 \Sigma_0, \nec_1 \Sigma_1, \Gamma, \Xi' \Rightarrow \nec_0 \Theta_0, \nec_1 \Theta_1, \Delta, \phi, \Lambda\)}
        \AxiomC{\(\pi_1\)}
        \noLine
        \UnaryInfC{\(\nec_0 \Sigma_0, \nec_1 \Sigma_1, \Gamma,  \psi, \Xi' \Rightarrow \nec_0 \Theta_0, \nec_1 \Theta_1, \Delta, \Lambda\)}
        \RightLabel{\(\toL\)}
        \BinaryInfC{\(\nec_0 \Sigma_0, \nec_1 \Sigma_1, \Gamma,  \phi \to \psi, \Xi' \Rightarrow \nec_0 \Theta_0, \nec_1 \Theta_1, \Delta, \Lambda\)}
        \DisplayProof
      \]
      where \(\Xi = \phi \to \psi, \Xi'\).
      Then \(\beta(w,\pi)\) is defined as
      \[
        \AxiomC{\(\beta(w,\pi_0)\)}
        \noLine
        \UnaryInfC{\(\kappa^*_w, \Xi' \Rightarrow \phi, \Lambda\)}
        \AxiomC{\(\beta(w,\pi_1)\)}
        \noLine
        \UnaryInfC{\(\kappa^*_w, \psi, \Xi' \Rightarrow \Lambda\)}
        \RightLabel{\(\toL\)}
        \BinaryInfC{\(\kappa^*_w, \phi \to \psi, \Xi' \Rightarrow \Lambda\)}
        \DisplayProof
      \]
    \item Last rule of \(\pi\) is \((\toR)\).
      Since \(\Delta\) consists of propositional variables only, we have that the principal formula of \((\toR)\) must be at \(\Lambda\).
      Then we have that \(\pi\) has the following shape
      \[
        \AxiomC{\(\pi_0\)}
        \noLine
        \UnaryInfC{\(\nec_0 \Sigma_0, \nec_1 \Sigma_1, \Gamma, \phi, \Xi \Rightarrow \nec_0 \Theta_0, \nec_1 \Theta_1, \Delta, \psi, \Lambda'\)}
        \RightLabel{\(\toR\)}
        \UnaryInfC{\(\nec_0 \Sigma_0, \nec_1 \Sigma_1, \Gamma, \Xi \Rightarrow \nec_0 \Theta_0, \nec_1 \Theta_1, \Delta, \phi \to \psi, \Lambda'\)}
        \DisplayProof
      \]
      where \(\Lambda = \phi \to \psi, \Lambda'\).
      Then \(\beta(w,\pi)\) is defined as
      \[
        \AxiomC{\(\beta(w,\pi_0)\)}
        \noLine
        \UnaryInfC{\(\kappa^*_w, \phi, \Xi \Rightarrow \psi, \Lambda'\)}
        \RightLabel{\(\toL\)}
        \UnaryInfC{\(\kappa^*_w, \Xi \Rightarrow \phi \to \psi, \Lambda\)}
        \DisplayProof
      \]
    \item Last rule of \(\pi\) is \((\modal[i]{\KT})\) and the principal formula \(\nec_i \phi\) is in \(\nec_i \Theta_i\).
      Then \(\pi\) has the following shape
      \[
        \AxiomC{\(\pi_0\)}
        \noLine
        \UnaryInfC{\(\necd_i \Sigma'_i, \nec_{\overline{i}} \Sigma'_{\overline{i}}, \necd_i \Xi_{\nec_{\overline{i}}}, \nec_i \Xi_{\nec_{\overline{i}}} \Rightarrow \phi\)}
        \RightLabel{\(\modal[i]{\KTCS}\)}
        \UnaryInfC{\(\nec_0 \Sigma_0, \nec_1 \Sigma_1, \Gamma, \Xi \Rightarrow \nec_0 \Theta_0, \nec_1 \Theta_1, \Delta, \Lambda\)}
        \DisplayProof
      \]
      where \(\Sigma'_i \subseteq \Sigma_i\), \(\Sigma'_{\overline{i}} \subseteq \Sigma_{\overline{i}}\) and \(\nec_i \Xi_{\nec_i}, \nec_{\overline{i}} \Xi_{\nec_{\overline{i}}} \subseteq \Xi\).
      Then the desired preproof is
      \[
        \AxiomC{\(\beta(w^{\nec_i}_\phi, \pi_0)\)}
        \noLine
        \UnaryInfC{\((\kappa^{\nec_i}_\phi)^*, \necd_i \Xi_{\nec_{\overline{i}}}, \nec_i \Xi_{\nec_{\overline{i}}} \Rightarrow\)}
        \RightLabel{\(\negR\)}
        \UnaryInfC{\(\necd_i \Xi_{\nec_{\overline{i}}}, \nec_i \Xi_{\nec_{\overline{i}}} \Rightarrow \neg (\kappa^{\nec_i}_\phi)^* \)}
        \RightLabel{\(\modal[i]{\KTCS}\)}
        \UnaryInfC{\(\Xi \Rightarrow \nec_i \neg (\kappa^{\nec_i}_\phi)^*, \Lambda\)}
        \RightLabel{\(\negL\)}
        \UnaryInfC{\(\pos_i (\kappa^{\nec_i}_\phi)^*, \Xi \Rightarrow  \Lambda\)}
        \doubleLine
        \RightLabel{\(\wk + \wedgeL\)}
        \UnaryInfC{\(\kappa^*_w, \Xi \Rightarrow  \Lambda\)}
        \DisplayProof
      \]
    \item Last rule of \(\pi\) is \((\modal[i]{\KT})\) and the principal formula is in \(\Lambda\).
      Then \(\pi\) has the following shape
      \[
        \AxiomC{\(\pi_0\)}
        \noLine
        \UnaryInfC{\(\necd_i \Sigma'_i, \nec_{\overline{i}} \Sigma'_{\overline{i}}, \necd_i \Xi_{\nec_{\overline{i}}}, \nec_i \Xi_{\nec_{\overline{i}}} \Rightarrow \phi\)}
        \RightLabel{\(\modal[i]{\KTCS}\)}
        \UnaryInfC{\(\nec_0 \Sigma_0, \nec_1 \Sigma_1, \Gamma, \Xi \Rightarrow \nec_0 \Theta_0, \nec_1 \Theta_1, \Delta, \nec_i\phi, \Lambda'\)}
        \DisplayProof
      \]
      where \(\Lambda = \nec_i \phi, \Lambda'\), \(\Sigma'_i \subseteq \Sigma_i\), \(\Sigma'_{\overline{i}} \subseteq \Sigma_{\overline{i}}\) and \(\nec_i \Xi_{\nec_i}, \nec_{\overline{i}} \Xi_{\nec_{\overline{i}}} \subseteq \Xi\).
      Then the desired preproof is
      \[
        \AxiomC{\(\beta(w^{\nec_i}, \pi_0)\)}
        \noLine
        \UnaryInfC{\((\kappa^{\nec_i})^*, \necd_i \Xi_{\nec_{\overline{i}}}, \nec_i \Xi_{\nec_{\overline{i}}} \Rightarrow \phi\)}
        \RightLabel{\(\wk\)}
        \UnaryInfC{\(\necd_i (\kappa^{\nec_i})^*, \necd_i \Xi_{\nec_{\overline{i}}}, \nec_i \Xi_{\nec_{\overline{i}}} \Rightarrow \phi\)}
        \RightLabel{\(\modal[i]{\KTCS}\)}
        \UnaryInfC{\(\nec_i (\kappa^{\nec_i})^*, \Xi \Rightarrow \nec_i \phi, \Lambda'\)}
        \doubleLine
        \RightLabel{\(\wk + \wedgeL\)}
        \UnaryInfC{\(\kappa^*_w, \Xi \Rightarrow \nec_i \phi, \Lambda'\)}
        \DisplayProof
      \]
  \end{itemize}

  Finally, let us argue that \(\beta(w, \pi)\) is always a proof and not only a preproof.
  Given a node \(w\) assign to it the measure \(\omega^2 |\Gamma_w \Rightarrow \Delta_w| + \omega\lgth(w) + \lhg(\pi)\) where \(\lgth(w)\) is the length of \(w\) as a sequence of natural numbers.
  We notice that this measure always decreases from \(\beta(w,\pi)\) to its corecursive calls, except when \(w\) is annotated with the rule \((\modal[+]{\KTCS})\) and the last rule of \(\pi\) is \(\modal[i]{\CS}\).
  However, in this case we will find that progress is made (i.e., there is an application of \((\modal[i]{\KTCS})\)) from the root of the preproof fragment given by \(\beta(w,\pi)\) to the corecursive calls.
  This implies that any infinite branch will have infinitely many applications of \((\modal[i]{\KTCS})\), as otherwise we will infinitely decrease the measure on corecursive calls.
\end{proof}

After all the necessary technical work, we can finally conclude the desired result.

\begin{theorem}\label{th:ulip-CS}
  \(\CS\) has uniform Lyndon interpolation.
\end{theorem}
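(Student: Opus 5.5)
Given a formula \(\phi\) and vocabularies \(V_+, V_-\), I will take the sequent \({\Rightarrow} \neg\phi\), or equivalently \(\phi \Rightarrow\). The polarity bookkeeping has to match the conditions of Theorem~\ref{th:second-verification-CS}: there the side sequent \(\Xi \Rightarrow \Lambda\) has \(\voc_+\) in \(V_+\), and \(\iota_T\) ends up on its left. So I work with the sequent \(\phi \Rightarrow\). By Lemma~\ref{existence-of-interpolation-templates-CS} there is a \(\CS\)-interpolation template \(T\) for \(\phi \Rightarrow\). By Lemma~\ref{equational-systems-are-solvable-CS} the system \(\mathcal{E}_T\) has a solution in \(\CS\), so \(\iota_T\) is defined. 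I claim \(\iota_T\) is the uniform Lyndon interpolant of \(\phi\) for \((V_+,V_-)\).

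First, the vocabulary condition. As observed right after the definition of the interpolant, \(\voc_+(\kappa_\epsilon) \subseteq V_+ \cup \{x_w\}\) and \(\voc_-(\kappa_\epsilon)\subseteq V_-\). The solution satisfies the vocabulary constraints of a positive Lyndon system. Lemma~\ref{substitution-and-polarity}, applied with all unknowns in the positive position, then gives \(\voc_+(\iota_T)\subseteq V_+\) and \(\voc_-(\iota_T)\subseteq V_-\).

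Second, \(\CS\vdash \phi\to\iota_T\). Theorem~\ref{th:first-verification-CS} gives \(\n{\CS}\vdash \phi \Rightarrow \iota_T\). By the corollary at the end of Section~\ref{sec:cut-elim} this transfers to \(\CS \vdash (\phi\Rightarrow\iota_T)^\flat\), which is \(\phi\to\iota_T\).

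Third, minimality. Let \(\psi\) satisfy \(\voc_+(\psi)\subseteq V_+\) and \(\voc_-(\psi)\subseteq V_-\), and suppose \(\CS\vdash\phi\to\psi\). By the same corollary and invertibility of \((\toR)\) (Lemma~\ref{lm:structural-rules-non-wellfounded}), \(\n{\CS}\vdash \phi\Rightarrow\psi\). Take \(\Xi=\varnothing\) and \(\Lambda=\{\psi\}\). The hypothesis \(\voc_b(\Xi\Rightarrow\Lambda)\subseteq V_b\) of Theorem~\ref{th:second-verification-CS} holds, reading the vocabulary of a sequent as \(\voc_b\) of its right side together with \(\voc_{\overline b}\) of its left side. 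That theorem then yields \(\n{\CS}\vdash\iota_T\Rightarrow\psi\), hence \(\CS\vdash\iota_T\to\psi\).

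The main obstacle is only bookkeeping: matching the sequent-vocabulary convention in Theorem~\ref{th:second-verification-CS} with the right choice of root sequent (\(\phi\Rightarrow\) rather than \({\Rightarrow}\phi\)). If the conventions turn out flipped, I would instead use the template for \(\phi \Rightarrow\) with \(\Lambda=\{\psi\}\) as above, and check directly in the \((\ax)\) cases of that proof that the variable conditions align. Everything else is a direct assembly of the preceding results.
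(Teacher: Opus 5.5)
Your proposal is correct and is essentially the paper's own proof: take a template \(T\) for \(\phi \Rightarrow\), get the vocabulary bounds from the solution of \(\mathcal{E}_T\) via Lemma~\ref{substitution-and-polarity}, obtain \(\CS \vdash \phi \to \iota_T\) from Theorem~\ref{th:first-verification-CS}, and obtain minimality from Theorem~\ref{th:second-verification-CS} with \(\Xi = \varnothing\) and \(\Lambda = \{\psi\}\). Your reading of \(\voc_b(\Xi \Rightarrow \Lambda)\) as \(\voc_b\) of the right side together with \(\voc_{\overline{b}}\) of the left side is exactly the convention used in the \((\ax)\) cases of that proof, so the fallback you mention is not needed.
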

\begin{proof}
  Let \(T\) be an interpolation template for \( \phi \Rightarrow \).
  We already know that \(\voc_b(\iota_T) \subseteq V_b\) for \(b \in \set{+,-}\) and by Theorem~\ref{th:first-verification-CS} we have that \(\CS \vdash \phi \to \iota_T\).
  Finally, assume that \(\CS \vdash \phi \to \psi\) where \(\psi\) is such that \(\voc_b(\psi) \subseteq V_b\) for \(b \in \set{+,-}\).
  Then \(\n{\CS} \vdash \phi \Rightarrow \psi\) and by Theorem~\ref{th:second-verification-CS} we obtain that \(\n{\CS} \vdash \iota_T \Rightarrow \psi\), i.e., \(\CS \vdash \iota_T \to \psi\) as desired.
\end{proof}

Using the uniform Lyndon interpolation of \(\CS\) we show it also for \(\CSM\).
First let us define a function \((\cdot)^\sharp : \wff \function \wff\) as follows:
  \begin{align*}
    &p^\sharp = p, 
    &&\bot^\sharp = \bot, \\
    &(\phi \to \psi)^\sharp = \phi^\sharp \to \psi^\sharp, \\
    &(\nec_0 \phi)^\sharp = \nec_0 \phi^\sharp \wedge \nec_1 \phi^\sharp, 
    &&(\nec_1 \phi)^\sharp = \nec_1 \phi^\sharp.
  \end{align*}
The following lemma shows that \((\cdot)^\sharp\) is an interpretation of \(\CSM\) in \(\CS\).

\begin{lemma}\label{lm:interpretation-CSM-in-CS}
  Let \(\phi\) be any formula, we have the following.
  \begin{enumerate}
    \item \(\voc_b(\phi^\sharp) \subseteq \voc_b(\phi)\) for \(b \in \set{+,-}\).
    \item \(\CSM \vdash \phi \leftrightarrow \phi^\sharp\).
    \item If \(\CSM \vdash \phi\) then \(\CS \vdash \phi^\sharp\).
  \end{enumerate}
\end{lemma}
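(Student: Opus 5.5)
Items 1 and 2 are proved by induction on the complexity of \(\phi\), and item 3 by induction on the length of a Hilbert-style \(\CSM\)-derivation. For item 1, the only non-trivial clause is \((\nec_0\phi)^\sharp = \nec_0\phi^\sharp \wedge \nec_1\phi^\sharp\). Here \(\wedge\) is defined from \(\to\) and \(\bot\) by a double negation, so \(\voc_b(\alpha\wedge\beta) = \voc_b(\alpha)\union\voc_b(\beta)\). Hence \(\voc_b((\nec_0\phi)^\sharp) = \voc_b(\phi^\sharp) \subseteq \voc_b(\phi) = \voc_b(\nec_0\phi)\) by the induction hypothesis. The implication clause uses the polarity swap in the antecedent together with the induction hypothesis for both polarities, which is why I prove the claim for \(b\in\set{+,-}\) simultaneously.

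For item 2, the propositional cases follow from the induction hypothesis by replacement of equivalents. For \(\nec_i\) I use the induction hypothesis \(\CSM\vdash\phi\leftrightarrow\phi^\sharp\) together with \((\NEC[i])\) and \((\Kax[i])\) to get \(\CSM\vdash \nec_i\phi\leftrightarrow\nec_i\phi^\sharp\). This already settles the \(\nec_1\) case. For \(\nec_0\) it remains to show \(\CSM\vdash\nec_0\phi^\sharp\leftrightarrow\nec_0\phi^\sharp\wedge\nec_1\phi^\sharp\), which is immediate from \((\Max[0,1])\).

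Item 3 is the main work. I show that the set \(\set{\phi \mid \CS\vdash\phi^\sharp}\) contains the axioms of \(\CSM\) and is closed under the rules. Since \((\cdot)^\sharp\) commutes with \(\to\) and \(\bot\), tautologies map to tautologies and \((\MP)\) is preserved. For \((\NEC[1])\) I apply \((\NEC[1])\) in \(\CS\). For \((\NEC[0])\), from \(\CS\vdash\phi^\sharp\) I get both \(\nec_0\phi^\sharp\) and \(\nec_1\phi^\sharp\). For the axioms, I write \(B_0\psi := \nec_0\psi\wedge\nec_1\psi\), so that \((\nec_0\phi)^\sharp = B_0\phi^\sharp\), and verify the translations in \(\CS\) as follows.
\begin{itemize}
\item \((\Kax[0])\) becomes \(B_0(\alpha\to\beta)\to B_0\alpha\to B_0\beta\), which follows from \(\Kax[0]\) and \(\Kax[1]\) componentwise.
\item \((\Max[0,1])\) becomes \(B_0\alpha\to\nec_1\alpha\), which is trivial.
\item \((\Cax[1,0])\) becomes \(B_0\alpha\to\nec_1 B_0\alpha\). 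This uses \(\Cax[1,0]\) for \(\nec_1\nec_0\alpha\), \(4_1\) for \(\nec_1\nec_1\alpha\), and \(\Kax[1]\) to combine the two conjuncts.
\item \((\Cax[0,1])\) becomes \(\nec_1\alpha\to B_0\nec_1\alpha\), which follows from \(\Cax[0,1]\) and \(4_1\).
\item \((\Lax[1])\) is translated to an instance of \(\Lax[1]\).
\end{itemize}

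The hardest step is \((\Lax[0])\), whose translation is \(B_0(B_0\alpha\to\alpha)\to B_0\alpha\), a Löb principle for the defined box \(B_0\). I would first derive \(\nec_1\alpha\). From \(\nec_1(B_0\alpha\to\alpha)\) and \(\nec_1(\nec_1\alpha\to\nec_0\alpha)\), the latter being the unavailable step, I would need \(\nec_1\nec_0\alpha\), so I would not go that way. Instead I would use the standard trick that \(B_0\) is itself a \(\GL\)-like box. I set \(\theta := B_0\alpha\to\alpha\) and aim to show \(\CS\vdash \nec_0(B_0\alpha\to\alpha)\wedge\nec_1(B_0\alpha\to\alpha)\to B_0\alpha\). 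I do this in two stages.
\begin{enumerate}
\item Apply \(\Lax[1]\) to \(\nec_1\alpha\). From \(\nec_1\theta\) and \(\nec_1\nec_1\alpha\to\nec_1\nec_0\alpha\)? This again needs \(\nec_1\nec_0\alpha\), so instead I apply \(\Lax[1]\) to the formula \(B_0\alpha\). By \(4_1\) and \(\Cax[1,0]\) we have \(\nec_1 B_0\alpha\to\nec_1\nec_1 B_0\alpha\) behaviour, so I would show \(\nec_1\theta\to\nec_1(\nec_1 B_0\alpha\to B_0\alpha)\), with \(\nec_0\alpha\) obtained under \(\nec_1\) from \(\nec_1\nec_0\alpha\) via \(\Cax\).
\item Having obtained \(\nec_1 B_0\alpha\), hence \(\nec_1\alpha\), apply \(\Lax[0]\): from \(\nec_0\theta\) and \(\nec_0\nec_1\alpha\), which comes from \(\Cax[0,1]\), we get \(\nec_0(\nec_0\alpha\to\alpha)\), hence \(\nec_0\alpha\).
\end{enumerate}

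If the Hilbert bookkeeping in stage 1 becomes messy, my fallback is to verify this formula in the cut-free calculus \(\g{\CS}\): apply \((\modal[1]{\CS})\) with diagonal formula \(\nec_1 B_0\alpha\), then \((\modal[0]{\CS})\), and close with \((\Ax)\). This is permitted by Theorem~\ref{th:correspondence-to-hilbert-CS-CSM}.
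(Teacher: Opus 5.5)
\textbf{There is a genuine gap in your treatment of $\Lax[0]$.} Items 1 and 2, the closure conditions, and all axioms other than $\Lax[0]$ are handled correctly.

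The problem is Stage 1, with $\theta = B_0\alpha\to\alpha$. The formula $\nec_1\theta\to\nec_1(\nec_1 B_0\alpha\to B_0\alpha)$ that you propose to show is not a theorem of $\CS$. The step ``$\nec_0\alpha$ obtained under $\nec_1$ from $\nec_1\nec_0\alpha$ via $\Cax$'' is also not available: the $\Cax$ axioms only lead from $\nec_0\alpha$ to $\nec_1\nec_0\alpha$, never back.

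Here is a countermodel.
\begin{itemize}
\item Take worlds $x,y,z$ with $xR_1y$, $yR_0z$ and $xR_0z$. Both relations are transitive and conversely wellfounded. $xR_1yR_0z$ implies $xR_0z$, which is the frame condition of $\Cax[1,0]$, and the condition for $\Cax[0,1]$ holds vacuously. So every theorem of $\CS$ is valid on this frame.
\item Let $\alpha=p$ be false everywhere. Then $\theta$ holds at $y$ because $\nec_0p$ fails there, so $\nec_1\theta$ holds at $x$. But at $y$ the formula $\nec_1B_0p$ holds vacuously while $B_0p$ fails.
\item The same model refutes $\nec_1\theta\to\nec_1\alpha$. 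In it $\nec_0\theta$ fails at $x$, since at $z$ the formula $B_0p$ is vacuously true and $p$ is false.
\end{itemize}
So $\nec_0\theta$ is indispensable for obtaining $\nec_1\alpha$. Stage 1 never uses it, and Stage 2, which presupposes $\nec_1\alpha$, has nothing to start from.

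\textbf{The missing idea is that Stage 2 must run inside the $\nec_1$-L\"ob argument, not after it.} Your Stage 2 reasoning actually proves
\[
\text{(a)}\qquad \CS\vdash\nec_0\theta\wedge\nec_1\alpha\to\nec_0\alpha,
\]
with $\nec_1\alpha$ as a hypothesis. The repair then goes as follows.
\begin{enumerate}
\item From (a), $\CS\vdash\nec_0\theta\wedge\theta\to(\nec_1\alpha\to\alpha)$.
\item Apply $\NEC[1]$ and $\Kax[1]$, and transport $\nec_0\theta$ into the scope of $\nec_1$ by $\Cax[1,0]$.
\item Apply $\Lax[1]$ to $\alpha$ itself, which was your first, abandoned idea. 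The formula $\nec_1(\nec_1\alpha\to\nec_0\alpha)$ you thought unavailable does follow from $\nec_0\theta$ in exactly this way. This gives $\CS\vdash\nec_0\theta\wedge\nec_1\theta\to\nec_1\alpha$.
\item Finally, (a) yields $\nec_0\alpha$.
\end{enumerate}

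This is the Hilbert-style counterpart of the paper's proof, which works in $\g{\CS}$. For each $i$ it derives $\nec_0\theta,\nec_1\theta\Rightarrow\nec_i\alpha$ by $(\modal[i]{\CS})$ with diagonal formula $\nec_i\alpha$, followed by $(\toL)$ on $\theta$. The remaining conjunct $\nec_{\overline{i}}\alpha$ is obtained by a further $(\modal[\overline{i}]{\CS})$ with diagonal formula $\nec_{\overline{i}}\alpha$.

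Your fallback points in this direction, but as stated it is not a derivation. $\nec_1B_0\alpha$ is not a principal formula of the goal sequent. You also need the two modal rules nested in both orders, one nesting for each conjunct of $B_0\alpha$.
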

\begin{proof}
  The first two points follow easily by induction on the complexity of the formula \(\phi\).
  To show the last point it suffices to prove that \(\set{\phi \mid \CS \vdash \phi^\sharp}\) contains the axioms of \(\CSM\) and is closed under \((\MP)\) and \((\NEC[i])\) for \(i \leq 1\).
  The closure properties are straightforward to show, so we focus in showing that all the axioms of \(\CS\) belong to that set.
  For \((\Kax[1])\) and \((\Lax[1])\) the proof is trivial, as \((\cdot)^\sharp\) commutes will all the connectives occuring in those axiom schemes.

  Axiom \((\Kax[0])\).
  We have that
  \[
    (\nec_0(\phi \to \psi) \to \nec_0 \phi \to \nec_0 \psi)^\sharp = 
    (\nec_0(\phi^\sharp \to \psi^\sharp) \wedge \nec_1(\phi^\sharp \to \psi^\sharp)) \to ( \nec_0 \phi^\sharp \wedge \nec_1 \phi ^\sharp) \to (\nec_0 \psi^\sharp \wedge \nec_1 \psi^\sharp).
  \]
  That \(\CS\) proves that formula is equivalent to
  \[
    \CS \vdash
    \nec_0(\phi^\sharp \to \psi^\sharp) \wedge \nec_1(\phi^\sharp \to \psi^\sharp) \wedge \nec_0 \phi^\sharp \wedge \nec_1 \phi ^\sharp \to \nec_0 \psi^\sharp \wedge \nec_1 \psi^\sharp,
  \]
  which clearly holds by \((\Kax[0])\) and \((\Kax[1])\).

  Axiom \((\Lax[0])\).
  We have that
  \[
    (\nec_0(\nec_0 \phi \to \phi) \to \nec_0 \phi)^\sharp =
     \nec_0(\nec_0 \phi^\sharp \wedge \nec_1 \phi^\sharp \to \phi^\sharp) \wedge \nec_1(\nec_0 \phi^\sharp \wedge \nec_1 \phi^\sharp \to \phi^\sharp) \to \nec_0 \phi^\sharp \wedge \nec_1 \phi^\sharp.
  \]
  Proving this in \(\CS\) is equivalent to showing
  \[
    \CS 
    \vdash
     \nec_0(\nec_0 \phi^\sharp \wedge \nec_1 \phi^\sharp \to \phi^\sharp) \wedge \nec_1(\nec_1 \phi^\sharp \wedge \nec_0 \phi^\sharp \to \phi^\sharp) \to \nec_0 \phi^\sharp \wedge \nec_1 \phi^\sharp.
  \]
  We show the proof of this formula in \(\g{\CS}\).
  Let \(\Gamma = \set{\nec_0(\nec_0 \phi^\sharp \wedge \nec_1 \phi^\sharp \to \phi^\sharp) \to \phi^\sharp, \nec_1(\nec_1 \phi^\sharp \wedge \nec_0 \phi^\sharp \to \phi^\sharp)}\).
  Define \(\pi_i\) for \(i \leq 1\) to be the proof
  \[
    \AxiomC{}
    \RightLabel{\(\Ax\)}
    \UnaryInfC{\(  \Gamma, \nec_i \phi^\sharp, \nec_{\overline{i}} \phi^\sharp \Rightarrow \nec_i \phi^\sharp, \phi^\sharp\)}
    \AxiomC{}
    \RightLabel{\(\Ax\)}
    \UnaryInfC{\(  \Gamma, \nec_i \phi^\sharp, \nec_{\overline{i}} \phi^\sharp \Rightarrow  \nec_i \phi^\sharp, \phi^\sharp\)}
    \RightLabel{\(\wedgeR\)}
    \BinaryInfC{\(  \Gamma, \nec_i \phi^\sharp, \nec_{\overline{i}} \phi^\sharp \Rightarrow \nec_{\overline{i}} \phi^\sharp \wedge \nec_i \phi^\sharp, \phi^\sharp\)}
    \AxiomC{}
    \RightLabel{\(\Ax\)}
    \UnaryInfC{\( \phi^\sharp, \Gamma, \nec_i \phi^\sharp, \nec_{\overline{i}} \phi^\sharp \Rightarrow \phi^\sharp\)}
    \RightLabel{\(\toL\)}
    \BinaryInfC{\(  \nec_{\overline{i}} \phi^\sharp \wedge \nec_i \phi^\sharp \to \phi^\sharp, \Gamma, \nec_i \phi^\sharp, \nec_{\overline{i}} \phi^\sharp \Rightarrow \phi^\sharp\)}
    \RightLabel{\(\modal[{\overline{i}}]{\CS}\)}
    \UnaryInfC{\( \Gamma,  \nec_i \phi^\sharp \Rightarrow \phi^\sharp, \nec_{\overline{i}} \phi^\sharp\)}
    \DisplayProof
  \]
  Then define \(\tau_i\) for \(i \leq 1\) to be the proof
  \[
    \AxiomC{}
    \RightLabel{\(\Ax\)}
    \UnaryInfC{\(  \Gamma, \nec_i \phi^\sharp \Rightarrow \phi^\sharp, \nec_i \phi^\sharp \)}
    \AxiomC{\(\pi_i\)}
    \noLine
    \UnaryInfC{\( \Gamma,  \nec_i \phi^\sharp \Rightarrow \phi^\sharp, \nec_{\overline{i}} \phi^\sharp\)}
    \RightLabel{\(\wedgeR\)}
    \BinaryInfC{\( \Gamma, \nec_i \phi^\sharp \Rightarrow \phi^\sharp, \nec_i \phi^\sharp \wedge \nec_{\overline{i}} \phi^\sharp\)}
    \AxiomC{}
    \RightLabel{\(\Ax\)}
    \UnaryInfC{\(\phi^\sharp, \Gamma, \nec_i \phi^\sharp\Rightarrow \phi^\sharp\)}
    \RightLabel{\(\toL\)}
    \BinaryInfC{\(\nec_i \phi^\sharp \wedge \nec_{\overline{i}} \phi^\sharp \to \phi^\sharp, \Gamma,  \nec_i \phi^\sharp \Rightarrow \phi^\sharp\)}
    \RightLabel{\(\modal[i]{\CS}\)}
    \UnaryInfC{\(\Gamma \Rightarrow \nec_i \phi^\sharp\)}
    \DisplayProof
  \]
  Then the desired proof is
  \[
    \AxiomC{\(\tau_0\)}
    \noLine
    \UnaryInfC{\( \Gamma \Rightarrow \nec_0 \phi^\sharp\)}
    \AxiomC{\(\tau_1\)}
    \noLine
    \UnaryInfC{\( \Gamma \Rightarrow \nec_1 \phi^\sharp \)}
    \RightLabel{\(\wedgeR\)}
    \BinaryInfC{\( \Gamma \Rightarrow \nec_0 \phi^\sharp \wedge \nec_1 \phi^\sharp \)}
    \doubleLine \RightLabel{\(\toR + \wedgeL\)}
    \UnaryInfC{\( \Rightarrow \nec_0(\nec_0 \phi^\sharp \wedge \nec_1 \phi^\sharp \to \phi^\sharp) \wedge \nec_1(\nec_1 \phi^\sharp \wedge \nec_0 \phi^\sharp \to \phi^\sharp) \to \nec_0 \phi^\sharp \wedge \nec_1 \phi^\sharp \)}
    \DisplayProof
  \]

  Axiom \((\Cax[1,0])\).
  We have that \((\nec_0 \phi \to \nec_1 \nec_0 \phi)^\sharp = \nec_0 \phi^\sharp \wedge \nec_1 \phi^\sharp \to \nec_1(\nec_0 \phi^\sharp \wedge \nec_1 \phi^\sharp)\).
  By normality, showing that \(\CS\) proves this is equivalent to showing that
  \[
    \CS \vdash \nec_0 \phi^\sharp \wedge \nec_1 \phi^\sharp \to \nec_1\nec_0 \phi^\sharp \wedge \nec_1\nec_1 \phi^\sharp,
  \]
  which clearly holds using \((\Cax[1,0])\) and transitivity of \(\nec_1\).

  Axiom \((\Cax[0,1])\).
  We have that \((\nec_1 \phi \to \nec_0 \nec_1 \phi)^\sharp = \nec_1 \phi^\sharp \to \nec_0 \nec_1 \phi^\sharp \wedge \nec_1 \nec_1 \phi^\sharp\).
  Again, \(\CS\) clearly proves this using \(\Cax[1,0]\) and transitivity of \(\nec_1\).

  Axiom \((\Max[0,1])\).
  We have that \((\nec_0 \phi \to \nec_1 \phi)^\sharp = \nec_0 \phi^\sharp \wedge \nec_1 \phi^\sharp \to \nec_1 \phi^\sharp\).
  Clearly, \(\CS\) proves this as it is a propositional tautology.
\end{proof}

We can finally conclude that \(\CSM\) has uniform Lyndon interpolation.

\begin{theorem}
  \(\CSM\) has uniform Lyndon interpolation.
\end{theorem}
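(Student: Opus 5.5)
Given \(\phi\) and vocabularies \(V_+,V_-\), we transfer the problem to \(\CS\) through the translation \((\cdot)^\sharp\). Let \(\iota\) be the uniform Lyndon interpolant of \(\phi^\sharp\) for \((V_+,V_-)\) in \(\CS\); it exists by Theorem~\ref{th:ulip-CS}. We claim that the same \(\iota\) is an interpolant of \(\phi\) for \((V_+,V_-)\) in \(\CSM\), and we check the three defining conditions in turn.

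\emph{Vocabulary.} The condition \(\voc_+(\iota)\subseteq V_+\) and \(\voc_-(\iota)\subseteq V_-\) holds directly by the choice of \(\iota\).

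\emph{Implication.} Since \(\CS\vdash\phi^\sharp\to\iota\) and \(\CS\subseteq\CSM\), we get \(\CSM\vdash\phi^\sharp\to\iota\). By Lemma~\ref{lm:interpretation-CSM-in-CS}(2), \(\CSM\vdash\phi\leftrightarrow\phi^\sharp\), hence \(\CSM\vdash\phi\to\iota\).

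\emph{Uniformity.} Let \(\psi\) satisfy \(\voc_b(\psi)\subseteq V_b\) for \(b\in\set{+,-}\), and assume \(\CSM\vdash\phi\to\psi\). By Lemma~\ref{lm:interpretation-CSM-in-CS}(3), \(\CS\vdash(\phi\to\psi)^\sharp\), which is syntactically the formula \(\phi^\sharp\to\psi^\sharp\). By Lemma~\ref{lm:interpretation-CSM-in-CS}(1), \(\voc_b(\psi^\sharp)\subseteq\voc_b(\psi)\subseteq V_b\). So \(\psi^\sharp\) is a legitimate candidate in \(\CS\), and the uniform property of \(\iota\) yields \(\CS\vdash\iota\to\psi^\sharp\). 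Consequently \(\CSM\vdash\iota\to\psi^\sharp\), and using \(\CSM\vdash\psi^\sharp\leftrightarrow\psi\) (Lemma~\ref{lm:interpretation-CSM-in-CS}(2) again) we conclude \(\CSM\vdash\iota\to\psi\).

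No step here is a real obstacle: the argument only combines three facts already established, namely the inclusion \(\CS\subseteq\CSM\), the vocabulary preservation of \((\cdot)^\sharp\), and the fact that \((\cdot)^\sharp\) is an interpretation of \(\CSM\) in \(\CS\) that is provably equivalent to the identity inside \(\CSM\). The one point that needs care is in the uniformity step: we must pass to \(\psi^\sharp\) rather than \(\psi\), because the \(\CS\)-uniformity only applies to a \(\CS\)-theorem, and the \(\CS\)-theorem we actually have is \(\phi^\sharp\to\psi^\sharp\).
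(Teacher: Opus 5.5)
Your proposal is correct and follows the paper's proof exactly: you take the \(\CS\)-uniform Lyndon interpolant \(\iota\) of \(\phi^\sharp\), verify the three conditions using \(\CS\subseteq\CSM\) and the three parts of Lemma~\ref{lm:interpretation-CSM-in-CS}, and pass to \(\psi^\sharp\) in the uniformity step. You also state the hypothesis correctly as \(\CSM\vdash\phi\to\psi\), where the paper's text has the typo \(\CS\vdash\phi\to\psi\), and you make explicit the final use of \(\CSM\vdash\psi^\sharp\leftrightarrow\psi\), which the paper leaves implicit.
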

\begin{proof}
  Let \(\phi\) be a formula and \(V_+\) and \(V_-\) be vocabularies.
  Note that \(\phi^\sharp\) has a \(\CS\)-uniform Lyndon interpolant~\(\iota\). Let us show that \(\iota\) is the \(\CSM\)-uniform Lyndon interpolant of \(\phi\).
  We automatically have that \(\voc_{b}(\iota) \subseteq V_b\) for \(b \in \set{+,-}\).
  From \(\CS \vdash \phi^\sharp \to \iota\) we obtain that \(\CSM \vdash \phi \to \iota\) using that \(\CS \subseteq \CSM\) and Lemma~\ref{lm:interpretation-CSM-in-CS}.

  Finally, assume that \(\CS \vdash \phi \to \psi\) where \(\voc_b(\psi) \subseteq V_b\) for \(b \in \set{+,-}\).
  By Lemma~\ref{lm:interpretation-CSM-in-CS} we have that \(\CS \vdash \phi^\sharp \to \psi^\sharp\) and \(\voc_b(\psi^\sharp) \subseteq V_b\) for \(b \in \set{+,-}\).
  Using that \(\iota\) is the \(\CS\)-uniform Lyndon interpolant of \(\phi^\sharp\) we obtain that \(\CS \vdash \iota \to \psi^\sharp\).
  Finally, using Lemma~\ref{lm:interpretation-CSM-in-CS} and that \(\CS \subseteq \CSM\) we obtain that \(\CSM \vdash \iota \to \psi\), as desired.
\end{proof}

\subsection{Uniform Lyndon Interpolation for ER}

Contrary to the \(\CS\) case, the rules of \(\ER\) do not always lower the complexity of sequents, due to the rule \((\ERax[1,0])\).
For this reason, we will need to change a bit the proofs we did for \(\CS\) in the previous subsection.
The main difference is that we will need a notion of saturation (defined below) to exhaust the proof search space.

\begin{definition}
  Let \(\Gamma \Rightarrow_i \Delta\) be an \(\ER\)-sequent.
  We say that a formula \(\phi\) is
  \begin{multicols}{2}
    \begin{enumerate}
      \item \emph{Left-saturated} if
        \begin{enumerate}
          \item \(\phi\) is atomic or a \(\nec_1\)-formula,
          \item \(\phi = \phi_0 \to \phi_1\) and \(\phi_0 \in \Delta\) or \(\phi_1 \in \Gamma\),
          \item \(\phi = \nec_0 \phi_0\) and \(i = 0\) or \(i = 1\) and \(\phi_0 \in \Gamma\).
        \end{enumerate}
      \item \emph{Right-saturated} if
        \begin{enumerate}
          \item \(\phi\) is atomic or a \(\nec_i\)-formula for \(i \leq 1\).
          \item \(\phi = \phi_0 \to \phi_1\) and \(\phi_0 \in \Gamma\) and \(\phi_1 \in \Delta\),
        \end{enumerate}
    \end{enumerate}
  \end{multicols}
  A sequent \(\Gamma \Rightarrow_i \Delta\) is said to be \emph{saturated} if every \(\phi \in \Gamma\) is left-saturated in \(\Gamma \Rightarrow_i \Delta\) and every \(\psi \in \Delta\) is right-saturated in \(\Gamma \Rightarrow_i \Delta\).
  We define the \emph{saturation complexity} of a sequent \(\Gamma \Rightarrow_i \Delta\), denoted \(\satc{\Gamma \Rightarrow_i \Delta}\), as the number
  \[
    \sum \left(\set{|\phi| \mid \phi \in \Gamma, \text{\(\phi\) not left-saturated in \(\Gamma \Rightarrow_i \Delta\)}} \union
    \set{|\phi| \mid \phi \in \Delta, \text{\(\phi\) not right-saturated in \(\Gamma \Rightarrow_i \Delta\)}}\right).
  \]
\end{definition}
Notice that if \((S_0,\ldots,S_{n-1},S)\) is an instance of a rule for interpolation templates distinct from \(\modal[+]{\KTER}\) then \(\satc{S_i} < \satc{S}\) for \(i < n\).

\begin{figure}
  \[
    \AxiomC{\(\)}
    \RightLabel{\(\emp\)}
    \UnaryInfC{\( \Rightarrow \)}
    \DisplayProof
    \quad
    \AxiomC{\(\necd_0 \phi, \Gamma \Rightarrow_1 \Delta\)}
    \RightLabel{\(\ERax[1,0]^{\mathrm{Sat}}\)}
    \UnaryInfC{\(\nec_0 \phi, \Gamma \Rightarrow_1 \Delta\)}
    \DisplayProof
  \]
  where in \((\ERax[1,0]^{\mathrm{Sat}})\) the formula \(\nec_0 \phi\) is not left-saturated at the conclusion.

  \[
    \AxiomC{\(\phi \to \psi, \Gamma \Rightarrow_i \phi, \Delta\)}
    \AxiomC{\(\psi, \phi \to \psi, \Gamma \Rightarrow_i \Delta\)}
    \RightLabel{\(\toL^{\mathrm{Sat}}\)}
    \BinaryInfC{\(\phi \to \psi, \Gamma \Rightarrow_i \Delta\)}
    \DisplayProof
    \quad
    \AxiomC{\(\phi, \Gamma \Rightarrow_i \psi, \phi \to \psi, \Delta\)}
    \RightLabel{\(\toR^{\mathrm{Sat}}\)}
    \UnaryInfC{\(\Gamma \Rightarrow_i \phi \to \psi, \Delta\)}
    \DisplayProof
  \]
  where
  \begin{enumerate}
    \item in \((\toL^{\mathrm{Sat}})\) the formula \(\phi \to \psi\) is not left-saturated at the conclusion,
    \item in \((\toR^{\mathrm{Sat}})\) the formula \(\phi \to \psi\) is not right-saturated at the conclusion.
  \end{enumerate}

  \[
    \AxiomC{\([\necd_i \Sigma^s_i, \nec_{\overline{i}} \Sigma^s_{\overline{i}} \Rightarrow_i]_{i \leq 1}\)}
    \AxiomC{\([\necd_i \Sigma^s_i, \nec_{\overline{i}} \Sigma^s_{\overline{i}} \Rightarrow_i \phi]_{i \leq 1, \phi \in \Theta_i}\)}
    \RightLabel{\(\modal[+]{\KTER}\)}
    \BinaryInfC{\(\nec_0 \Sigma_0, \nec_1 \Sigma_1, \Gamma \Rightarrow_i \nec_0 \Theta_0, \nec_1 \Theta_1, \Delta\)}
    \DisplayProof
  \]
  where
  \begin{enumerate}
    \item \(\nec_0 \Sigma_0, \nec_1 \Sigma_1, \Gamma \Rightarrow \nec_0 \Theta_0, \nec_1 \Theta_1, \Delta\) is saturated,
    \item \(\Gamma, \Delta\) contain no \(\nec\)-formulas,
    \item \((\Gamma \inter \Delta) \inter \text{Var} = \varnothing\) and \(\bot \not\ \in \Gamma\) (in other words, the conclusion is not an instance of \((\ax)\) or \((\botL)\)).
  \end{enumerate}

  \caption{Interpolation template rules for \(\ER\)}
  \label{fig:interpolation-template-ER}
\end{figure}

\begin{definition}
  An \emph{\(\ER\)-interpolation template} is a cyclic proof in the local progress calculus with the rules \((\ax), (\botL)\) from Figure~\ref{fig:prop-rules-ER} and the rules of Figure~\ref{fig:interpolation-template-ER}, where progress is only made at the premises of \(\modal[+]{\KTER}\).
\end{definition}

Mimicking the proof of Lemma~\ref{existence-of-interpolation-templates-CS}, changing the measure on sequents from \(|S|\) to \(\satc{S}\), we obtain the following lemma.

\begin{lemma}
  Every \(\ER\)-sequent has an \(\ER\)-interpolation template.
\end{lemma}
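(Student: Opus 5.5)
We mimic the construction in the proof of Lemma~\ref{existence-of-interpolation-templates-CS}, with the measure \(|S|\) replaced by the saturation complexity \(\satc{S}\). Given an \(\ER\)-sequent \(\Gamma \Rightarrow_i \Delta\), we build a tree in stages. At stage \(n+1\), each open leaf \(w\) is treated as follows. If some \(v < w\) carries the same \(\ER\)-sequent (including the same arrow index), \(w\) becomes a repeat node pointing to \(v\). Otherwise we annotate \(w\) with an applicable template rule and add its premises as children.

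We first check that some rule always applies. If the sequent is an instance of \((\ax)\) or \((\botL)\), we use that rule. Otherwise, if it is not saturated, some formula witnesses this. A non-left-saturated implication allows \((\toL^{\mathrm{Sat}})\). A non-right-saturated implication allows \((\toR^{\mathrm{Sat}})\). A non-left-saturated \(\nec_0\)-formula forces \(i=1\) with \(\phi_0 \notin \Gamma\), so \((\ERax[1,0]^{\mathrm{Sat}})\) applies. The empty sequent is handled by \((\emp)\). In the remaining case the sequent is saturated and not an axiom, and we split it as \(\nec_0\Sigma_0,\nec_1\Sigma_1,\Gamma' \Rightarrow_i \nec_0\Theta_0,\nec_1\Theta_1,\Delta'\) with \(\Gamma',\Delta'\) free of \(\nec\)-formulas. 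Then \((\modal[+]{\KTER})\) applies, since all side conditions hold.

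For termination and the subformula bound, note that every template rule has the local subformula property: sequents only grow by adding subformulas. Moreover, no rule duplicates a formula beyond what is recorded by sets. In the saturated rules the principal formula is kept, but since contexts of repeated nodes are compared up to the same multiset, we should work with sequents whose sides are sets. Concretely, we assume, as done with \(\Sigma^s\), that the saturation rules only add formulas not already present (otherwise the formula would already be saturated). Hence along any branch all sequents are drawn from the finite set of pairs of subsets of \(\sub(\Gamma\Rightarrow_i\Delta)\) together with an index in \(\{0,1\}\).

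Suppose the construction does not terminate. By König's lemma there is an infinite branch with pairwise distinct sequents, as in the \(\CS\) case. But there are only finitely many set-sequents over the subformulas, which gives an immediate contradiction. If instead we keep multisets, we argue as in the \(\CS\) proof. Between two consecutive \((\modal[+]{\KTER})\) applications \(\satc{\cdot}\) strictly decreases, so infinitely many \((\modal[+]{\KTER})\) premises occur. These premises are determined by \(\Sigma^s_0,\Sigma^s_1\), an index, and an optional \(\phi\), hence are boundedly many, and a repetition must occur.

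It remains to check that the resulting cyclic preproof is a cyclic proof, i.e.\ that there is a progress point in \((w^\circ,w]\) for each repeat \(w\). Suppose no node in \([w^\circ,w)\) carries \((\modal[+]{\KTER})\). Then every rule in that segment strictly decreases \(\satc{\cdot}\), by the remark preceding the figure. This yields \(\satc{S_{w^\circ}} > \satc{S_w} = \satc{S_{w^\circ}}\), a contradiction.

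The main obstacle is verifying that each non-modal template rule strictly lowers \(\satc{\cdot}\) and keeps the sequent inside a finite space. For instance, \((\toL^{\mathrm{Sat}})\) adds \(\psi\) to the left, and \(\psi\) may be unsaturated, but \(|\psi| < |\phi\to\psi|\) while \(\phi\to\psi\) becomes saturated. Similarly, \((\ERax[1,0]^{\mathrm{Sat}})\) adds \(\phi_0\), which is smaller than \(\nec_0\phi_0\). However, adding a formula may also de-saturate nothing else, since saturation conditions are monotone in \(\Gamma,\Delta\). This monotonicity is the key point to confirm.
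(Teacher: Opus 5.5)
Your proof is correct and takes the paper's own route: the paper simply reruns the construction of Lemma~\ref{existence-of-interpolation-templates-CS} with $\satc{\cdot}$ in place of $|\cdot|$, which is exactly your multiset argument, and the monotonicity you flag does hold because the non-modal template rules only add formulas and preserve the arrow index. Discard the set-based shortcut, though: $(\toR^{\mathrm{Sat}})$ fires as soon as $\phi\notin\Gamma$ \emph{or} $\psi\notin\Delta$, so it can add a second copy of $\phi$ or of $\psi$, which makes your assumption that saturation rules only add new formulas false for the multiset sequents used here (the multiset argument does not need that assumption).
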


We fix vocabularies \(V_+\) and \(V_-\) for which we want to calculate the uniform Lyndon interpolant.

\begin{definition}
  Let \(T\) be an interpolation template and \(w\) a node of \(T\), let us denote the sequent at \(w\) as \(\Gamma_w \Rightarrow \Delta_w\).
  We will annotate each of the sequents of \(T\) with a formula \(\kappa\) called the \emph{preinterpolant at \(\kappa\)}, denoted as \(\kappa : \Gamma_w \Rightarrow \Delta_w\).
  We proceed by induction on the tree structure of \(T\).
  If \(w\) is a repeat node of shape
  \[
    \AxiomC{\(\)}
    \RightLabel{\(\rep\)}
    \UnaryInfC{\(\Gamma_w \Rightarrow_{i_w} \Delta_w\)}
    \DisplayProof
  \]
  then \(\kappa_w\) will be a fresh propositional variable \(x_w\).
  This variable shall not appear in any formula of \(T\) and for repeat nodes \(w,v\) such that \(w \neq v\) we must have that \(x_w \neq x_v\).
  In case \(w\) is not a repeat node we proceed by cases on the rule at \(w\), we describe the construction pictorically as follows (the preinterpolant at the conclusion is defined recursively from the preinterpolants at the premises):
  \[
    \AxiomC{\(\)}
    \RightLabel{\(\ax\)}
    \UnaryInfC{\(\bot : p, \Gamma \Rightarrow_i p, \Delta\)}
    \DisplayProof
    \quad
    \AxiomC{\(\)}
    \RightLabel{\(\botL\)}
    \UnaryInfC{\(\bot : \bot, \Gamma \Rightarrow_i \Delta\)}
    \DisplayProof
    \quad
    \AxiomC{\(\)}
    \RightLabel{\(\emp\)}
    \UnaryInfC{\(\top : { \Rightarrow_i }\)}
    \DisplayProof
  \]

  \[
    \AxiomC{\(\kappa_0 : \phi \to \psi, \Gamma \Rightarrow_i \phi, \Delta\)}
    \AxiomC{\(\kappa_1 : \psi, \phi \to \psi, \Gamma \Rightarrow_i \Delta\)}
    \RightLabel{\(\toL^{\mathrm{Sat}}\)}
    \BinaryInfC{\(\kappa_0 \vee \kappa_1 : \phi \to \psi, \Gamma \Rightarrow_i \Delta\)}
    \DisplayProof
    \quad
    \AxiomC{\(\kappa : \phi, \Gamma \Rightarrow_i \psi, \phi \to \psi, \Delta\)}
    \RightLabel{\(\toR^{\mathrm{Sat}}\)}
    \UnaryInfC{\(\kappa : \Gamma \Rightarrow_i \phi \to \psi, \Delta\)}
    \DisplayProof
  \]

  \[
    \AxiomC{\(\kappa : \necd_0 \phi, \Gamma \Rightarrow_1 \Delta\)}
    \RightLabel{\(\ERax[1,0]^{\mathrm{Sat}}\)}
    \UnaryInfC{\(\kappa : \nec_0 \phi, \Gamma \Rightarrow_1 \Delta\)}
    \DisplayProof
    \quad
    \AxiomC{\([\kappa^{\nec_i} : \necd_i \Sigma^s_i, \nec_{\overline{i}} \Sigma^s_{\overline{i}} \Rightarrow_i]_{i \leq 1}\)}
    \AxiomC{\([\kappa^{\nec_i}_\phi : \necd_i \Sigma^s_i, \nec_{\overline{i}} \Sigma^s_{\overline{i}} \Rightarrow_i \phi]_{i \leq 1, \phi \in \Theta_i}\)}
    \RightLabel{\(\modal[+]{\KTER}\)}
    \BinaryInfC{\(\kappa : \nec_0 \Sigma_0, \nec_1 \Sigma_1, \Gamma \gg \nec_0 \Theta_0, \nec_1 \Theta_1, \Delta\)}
    \DisplayProof
  \]
  where
  \[
    \kappa := \bigwedge_{i \leq 1}\left(\nec_i \kappa^{\nec_i} \wedge \bigwedge_{\phi \in \Theta_i} \pos_i \kappa^{\nec_i}_\phi\right)\wedge \bigwedge (\Gamma \inter V_+) \wedge \bigwedge \neg(\Delta \inter V_-).
  \]
\end{definition}

Again, each interpolation template gives an equational system that we proceed to define. For each \(i \in \mathbb{N}\) let \(\bar{x}_i\) be an arbitrary enumeration of \(\set{x_{w} \mid w \in \rep(T), \hg(w^\circ) = i}\).
If \(\hg(T) = H\), then we define \(\bar{x}_T = \bar{x}_0 \bar{x}_1 \cdots \bar{x}_H\), \(\bar{x}_T\) is an anumeration of \(\set{x_w \mid w \in \rep(T)}\).
We define the \emph{equational system of \(T\)}, denoted \(\mathcal{E}_T\), as
\[
  \set{(x_w, +, \kappa_{w}) \mid w \in \rep(T)}.
\]
In the following lemma we show that it \(\mathcal{E}_T\) is solvable in \(\ER\).

\begin{lemma}
  \(\mathcal{E}_T\) is a positive modalized Lyndon equational system over \((\bar{x}_T, V_+, V_-)\).
  As a corollary, \(\mathcal{E}_T\) has a solution in \(\CS\).
\end{lemma}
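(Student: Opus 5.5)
The plan follows the proof of Lemma~\ref{equational-systems-are-solvable-CS} almost verbatim. The only differences come from the new rules of \(\ER\)-interpolation templates, namely \((\toL^{\mathrm{Sat}})\), \((\toR^{\mathrm{Sat}})\) and \((\ERax[1,0]^{\mathrm{Sat}})\). Once the lemma is established, solvability follows from the theorem on positive modalized Lyndon equational systems. That theorem gives a solution in \(\CS\), \(\CSM\) and \(\ER\); since \(\CS \subseteq \ER\), a \(\CS\)-solution is in particular an \(\ER\)-solution.

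First, I would show by induction on the height of \(w\) that \(\voc_+(\kappa_w) \subseteq V_+ \union \set{x_v \mid v \in \rep(T)}\) and \(\voc_-(\kappa_w) \subseteq V_-\). The leaves are immediate: repeat nodes give a fresh \(x_w\), and the axioms give \(\bot\) or \(\top\). The rules \((\toR^{\mathrm{Sat}})\) and \((\ERax[1,0]^{\mathrm{Sat}})\) copy the preinterpolant of their premise. The rule \((\toL^{\mathrm{Sat}})\) forms a disjunction, which preserves polarities. For \((\modal[+]{\KTER})\), the constructions \(\nec_i\), \(\pos_i\) and conjunction preserve polarities. The literals \(\bigwedge(\Gamma \inter V_+)\) contribute only positive variables in \(V_+\), and \(\bigwedge \neg(\Delta \inter V_-)\) contributes only negative variables in \(V_-\). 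This shows that \(\mathcal{E}_T\) is a positive Lyndon equational system over \((\bar{x}_T, V_+, V_-)\). The unknowns do not occur in \(V_+ \union V_-\) because they are fresh.

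For modalization, the goal is to show that whenever \(\hg(v^\circ) \leq \hg(w^\circ)\), the formula \(\kappa_{w^\circ}\) is modalized in \(x_v\). Together with the ordering of \(\bar{x}_T\), this gives the modalized condition. I would reuse the same three facts as before, now for the \(\ER\) rules.
\begin{enumerate}
  \item If \(x_v\) occurs in \(\kappa_w\), then \(w \leq v\). This holds by induction, since each preinterpolant is built only from the preinterpolants of successors.
  \item At a \((\modal[+]{\KTER})\) node, every \(x\) occurs only under \(\nec_i\) or \(\pos_i\). The remaining conjuncts are atoms from \(\Gamma\) and \(\Delta\), which are not unknowns.
  \item At every other non-repeat node, \(\kappa_w\) is a disjunction or a copy of the premises' preinterpolants, so modalization in \(x_v\) is inherited from the successors.
\end{enumerate}

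The case split is then the same as in the \(\CS\) proof. If \(v^\circ\) and \(w^\circ\) are incomparable, then \(v\) is not above \(w^\circ\), so by the first fact \(x_v\) does not occur in \(\kappa_{w^\circ}\). If \(v^\circ \leq w^\circ\), the cyclic proof condition yields a progressing node in \((v^\circ, v]\), hence a \((\modal[+]{\KTER})\) node \(u \in [v^\circ, v)\). I would then propagate modalization downward from \(u\) to \(v^\circ\) using the second and third facts. Off-branch successors are either incomparable with \(v\) or handled by the induction, exactly as in the \(\CS\) case. Since \(w^\circ\) lies on that path, \(\kappa_{w^\circ}\) is modalized in \(x_v\).

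The step I expect to need the most care is this propagation argument. Saturation rules keep the principal formula in the premise, so cycles may be long, and it is tempting to worry that some path from \(v^\circ\) to \(v\) avoids the modal rule. This is excluded by the definition of cyclic proof, since progress occurs only at premises of \((\modal[+]{\KTER})\). So nothing new is needed beyond checking that none of the new rules introduces an unguarded occurrence of an unknown.
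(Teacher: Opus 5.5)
Your overall route is the paper's. The paper proves this lemma only by pointing to the structure of Lemma~\ref{equational-systems-are-solvable-CS}, and your polarity induction, your three facts checked against \((\toL^{\mathrm{Sat}})\), \((\toR^{\mathrm{Sat}})\), \((\ERax[1,0]^{\mathrm{Sat}})\) and \((\modal[+]{\KTER})\), and your appeal to the general solvability theorem are what is needed. There is, however, a hole in your case split for modalization. Take \(\leq\) to be the prefix order, as in your Fact 1. A proper ancestor has strictly larger height, so the hypothesis \(\hg(v^\circ) \leq \hg(w^\circ)\) together with \(v^\circ \leq w^\circ\) forces \(v^\circ = w^\circ\). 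Your two cases therefore cover only ``incomparable'' and ``same companion''. The configuration \(w^\circ < v^\circ\), where the cycle of \(v\) is nested inside the cycle of \(w\), is never treated. That is precisely the case in which \(\kappa_{w^\circ}\) genuinely contains \(x_v\), and the case for which the ordering \(\bar{x}_0\bar{x}_1\cdots\bar{x}_H\) was chosen. In that case your sentence ``\(w^\circ\) lies on that path'' is false: you propagate from \(u\) only down to \(v^\circ\), and \(w^\circ\) lies strictly below \(v^\circ\).

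The repair needs no new idea. Do not stop at \(v^\circ\); instead propagate modalization in \(x_v\) from \(u\) to every ancestor of \(u\). Each such node \(t\) has a descendant, so it is not a leaf and hence not a repeat node. Its successor on the branch to \(v\) is handled inductively. Every other successor \(s\) satisfies \(s \not\leq v\), so \(x_v\) does not occur in \(\kappa_s\) by Fact 1. Fact 3 then applies to \(t\). Since \(w^\circ \leq v^\circ \leq u\), this shows that \(\kappa_{w^\circ}\) is modalized in \(x_v\) in both remaining cases, \(w^\circ = v^\circ\) and \(w^\circ < v^\circ\). The paper's \(\CS\) proof also states this case as \(v^\circ \leq w^\circ\), but its final step uses \(w^\circ \leq v^\circ \leq u\), which is the configuration that actually requires the argument.
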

\begin{proof}
  The proof follows the same structure as the proof of Lemma~\ref{equational-systems-are-solvable-CS}.
\end{proof}

The definition of interpolant is analogous to the \(\CS\) logic using interpolation templates and equational systems.
Notice that the same remarks about the uniqueness of interpolants that we made for \(\CS\) can be applied to \(ER\) (see page \pageref{uniqueness-of-interpolant-CS}).

\begin{definition}[Interpolant]
  Given an interpolation template \(T\) we define the interpolant of \(T\), denoted \(\iota_T\), as the formula obtained by applying the substitution solving \(\mathcal{E}_T\) to the preinterpolant at the root of \(T\).
\end{definition}

As in the case of \(\CS\), we finish by proving that the intepolant has the necessary properties.
Again, showing that \(\voc_b(\iota_T) \subseteq V_b\) for \(b \in \set{+,-}\) is straightforward by definition (see page \pageref{vocabularies-interpolant-CS}).
The proofs of the two following theorems differ slightly at some points from those for \(\CS\).
In particular, we have to exploit the properties of saturation, as now we measure the complexity of sequent via saturation complexity.

\begin{theorem}\label{th:first-verification-ER}
  Let \(T\) be an interpolation template for \(\Gamma \Rightarrow_i \Delta\).
  Then \(\n{\ER} \vdash \Gamma \Rightarrow_i \Delta, \iota_T\).
\end{theorem}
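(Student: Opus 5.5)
I will follow the proof of Theorem~\ref{th:first-verification-CS} closely. Fix a solution \((\cdot)^*\) of \(\mathcal{E}_T\) in \(\ER\). By corecursion I define, for each node \(w\) of \(T\) with sequent \(\Gamma_w \Rightarrow_{i_w} \Delta_w\), a preproof \(\alpha(w)\) in \(\n{\ER} + \cut + \wk\) of \(\Gamma_w \Rightarrow_{i_w} \Delta_w, \kappa^*_w\). Once I show \(\alpha(\epsilon)\) is a proof, weakening eliminability (Lemma~\ref{lm:structural-rules-non-wellfounded}) and cut elimination (Theorem~\ref{th:cut-elim-ER}) give the claim.

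\textbf{Repeat nodes.} Here I use a cut against a finite proof \(\tau\) of \(\kappa^*_{w^\circ}, \Gamma_w \Rightarrow_{i_w} \Delta_w, x^*_w\). This needs \(\ER \vdash x^*_w \leftrightarrow \kappa^*_{w^\circ}\) to yield a proof of a sequent carrying the arrow \(\Rightarrow_{i_w}\), possibly \(\Rightarrow_1\). I obtain \(\Rightarrow_0\) versions from the full correspondence, Theorem~\ref{th:correspondence-to-hilbert-ER}. I then pass to \(\Rightarrow_1\) with the admissible rule \((\Rightarrow_1)\) from Lemma~\ref{lm:structural-rules-non-wellfounded}, together with the translation from \(\g{\ER}\) to \(\n{\ER}\) via \(\g{\ER}+\cut\).

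\textbf{Rules other than \((\modal[+]{\KTER})\).} For \((\ax)\), \((\botL)\) and \((\emp)\) the cases are exactly as for \(\CS\); for \((\emp)\) with arrow \(\Rightarrow_1\) I prove \(\Rightarrow_1 \top\) by \((\botL)\) and \((\toR)\). For \((\toL^{\mathrm{Sat}})\) and \((\toR^{\mathrm{Sat}})\) I apply \((\toL)\) or \((\toR)\) of \(\n{\ER}\) and follow with contraction. For example, from \(\alpha(w0)\) and \(\alpha(w1)\) I get \(\phi\to\psi, \phi \to \psi, \Gamma \Rightarrow_i \Delta, \kappa^*_0 \vee \kappa^*_1\) after weakening and \((\veeR)\), then contract. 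For \((\ERax[1,0]^{\mathrm{Sat}})\) I apply \((\ERax[1,0])\) directly, since the preinterpolant is unchanged.

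\textbf{The \((\modal[+]{\KTER})\) case.} I build one proof per conjunct and join them with \((\wedgeR)\), as in the \(\CS\) case. The difference is that the premises now carry \(\Rightarrow_i\) with \(i\) the modality index, and \((\modal[i]{\KTER})\) accepts any arrow \(\gg\) in its conclusion. Hence \(\nec_i (\kappa^{\nec_i})^*\) is introduced from \(\alpha(w^{\nec_i})\), and \(\pos_i(\kappa^{\nec_i}_\phi)^*\) from \(\alpha(w^{\nec_i}_\phi)\) using \((\negL)\), \((\wk)\), \((\modal[i]{\KTER})\) and \((\negR)\). The propositional conjuncts close by \((\ax)\).

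\textbf{Main obstacle: well-foundedness of progress.} The measure \(\omega |\Gamma_w \Rightarrow \Delta_w| + \lgth(w)\) used for \(\CS\) no longer decreases, because the saturation rules keep the principal formula. I replace it with \(\omega\satc{\Gamma_w \Rightarrow_{i_w} \Delta_w} + \lgth(w)\). By the remark after the definition of saturation complexity, \(\satc{\cdot}\) strictly decreases along every template rule except \((\modal[+]{\KTER})\).

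This measure must also handle back edges. At a repeat node the corecursive call goes to \(w^\circ\), where the sequent is the same and \(\lgth\) decreases. I must still check that the extra fragments introduced there, namely \(\tau\), the cut and \(\wk\), are finite, so that they contribute no infinite branches. I also use that any cycle in \(T\) passes through a \((\modal[+]{\KTER})\) node.

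Therefore every infinite branch of \(\alpha(\epsilon)\) passes infinitely often through \((\modal[+]{\KTER})\) nodes, each of which contributes a \((\modal[i]{\KTER})\) progress step, and \(\alpha(\epsilon)\) is a proof.
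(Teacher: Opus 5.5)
Your proposal is essentially the paper's proof. It uses the same corecursive \(\alpha\) into \(\n{\ER}+\cut+\wk\), the same cut against an \(\ER\)-equivalence proof \(\tau\) at repeat nodes, contraction after \((\toL)\) and \((\toR)\) for the saturated implication rules, a direct \((\ERax[1,0])\) step, and the same measure \(\omega\satc{\cdot}+\lgth(w)\). One correction: \(\tau\) comes from the corecursive translation of \(\g{\ER}+\cut\) into \(\n{\ER}+\cut\), so it is in general infinite rather than finite; this is harmless, since \(\tau\) is itself a proof and any branch that eventually stays inside it already progresses infinitely often.
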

\begin{proof}
  Given a node \(w\) of \(T\) let us denote the sequent at \(w\) as \(\Gamma_w \Rightarrow_{i_w} \Delta_w\), the preinterpolant of \(T\) at \(w\) as \(\kappa_w\) and assume \((\cdot)^*\) is a solution of \(\mathcal{E}_T\).
  We define a function \(\alpha\) such that if \(w\) is a node of \(T\) then \(\alpha(w)\) is a preproof in \(\n{\ER} + \cut + \wk\) of \(\Gamma_w \Rightarrow_{i_w} \Delta_w, \kappa^*_w\).
  Then we will show that \(\alpha(w)\) is in fact a proof in \(\n{\ER} + \cut + \wk\) and then using that \(\wk\) is eliminable in \(\n{\ER} + \cut\) and that \(\cut\) is eliminable in \(\n{\ER}\) we obtain the desired proof in \(\n{\ER}\).

  We proceed to define \(\alpha\) by corecursion and cases.
  First, assume \(w \in \rep(T)\) then \(w^\circ\) has the same sequent as \(w\) and \(\kappa_w = x_w\).
  Also, by virtue of \((\cdot)^*\) being a solution of \(\mathcal{E}_T\), we have that \(\ER \vdash x^*_w \leftrightarrow \kappa^*_{w^\circ}\).
  Then we define \(\alpha\) as
  \[
    \AxiomC{}
    \RightLabel{\(\rep\)}
    \UnaryInfC{\(x_w : \Gamma \Rightarrow_i \Delta\)}
    \DisplayProof
    \quad
    \overset{\alpha}{\longmapsto}
    \quad
    \AxiomC{\(\alpha(w^\circ)\)}
    \noLine
    \UnaryInfC{\(\Gamma \Rightarrow_i \Delta, \kappa^*_w\)}
    \RightLabel{\(\wk\)}
    \UnaryInfC{\( \Gamma \Rightarrow_i \Delta, x^*_w, \kappa^*_w\)}
    \AxiomC{\(\tau\)}
    \noLine
    \UnaryInfC{\(\kappa^*_w,\Gamma \Rightarrow_i \Delta, x^*_w, \)}
    \RightLabel{\(\cut\)}
    \BinaryInfC{\(\Gamma \Rightarrow_i \Delta, x^*_w\)}
    \DisplayProof
  \]
  where the proof \(\tau\) exists since \(\ER \vdash x^*_w \leftrightarrow \kappa^*_{w^\circ}\) (so also \(\ER \vdash \nec_1(x^*_w \leftrightarrow \kappa^*_{w^\circ})\)).

  Now assume \(w\) is not a repetition node and let \(R\) be the rule at \(w\).
  Below there are the definitions when \(R\) is \((\ax)\), \((\emp)\), \((\botL)\), \((\toL^{\mathrm{Sat}})\), \((\toR^{\mathrm{Sat}})\) or \((\ERax[1,0]^{\mathrm{Sat}})\).
  \[
    \AxiomC{}
    \RightLabel{\(\ax\)}
    \UnaryInfC{\(\bot : p, \Gamma \Rightarrow p, \Delta\)}
    \DisplayProof
    \quad
    \overset{\alpha}{\longmapsto}
    \quad
    \AxiomC{}
    \RightLabel{\(\ax\)}
    \UnaryInfC{\(p, \Gamma \Rightarrow p, \Delta, \bot\)}
    \DisplayProof
  \]

  \[
    \AxiomC{}
    \RightLabel{\(\emp\)}
    \UnaryInfC{\(\top : {\Rightarrow}\)}
    \DisplayProof
    \quad
    \overset{\alpha}{\longmapsto}
    \quad
    \AxiomC{}
    \RightLabel{\(\botL\)}
    \UnaryInfC{\(\bot \Rightarrow \bot\)}
    \RightLabel{\(\toR\)}
    \UnaryInfC{\( \Rightarrow \top\)}
    \DisplayProof
  \]

  \[
    \AxiomC{}
    \RightLabel{\(\botL\)}
    \UnaryInfC{\(\bot : \bot, \Gamma \Rightarrow \Delta\)}
    \DisplayProof
    \quad
    \overset{\alpha}{\longmapsto}
    \quad
    \AxiomC{}
    \RightLabel{\(\botL\)}
    \UnaryInfC{\(\bot \Gamma \Rightarrow \Delta, \bot\)}
    \DisplayProof
  \]

  \begin{multline*}
    \AxiomC{\(w0\)}
    \noLine
    \UnaryInfC{\(\kappa_0 : \phi \to \psi, \Gamma \Rightarrow \phi, \Delta\)}
    \AxiomC{\(w1\)}
    \noLine
    \UnaryInfC{\(\kappa_1 : \psi, \phi \to \psi, \Gamma \Rightarrow \Delta\)}
    \RightLabel{\(\toL^{\mathrm{Sat}}\)}
    \BinaryInfC{\(\kappa_0 \vee \kappa_1 : \phi \to \psi, \Gamma \Rightarrow \Delta\)}
    \DisplayProof
    \quad
    \overset{\alpha}{\longmapsto} \\
    \AxiomC{\(\alpha(w0)\)}
    \noLine
    \UnaryInfC{\(\phi \to \psi, \Gamma \Rightarrow \phi, \Delta, \kappa^*_0\)}
    \RightLabel{\(\wk\)}
    \UnaryInfC{\(\phi \to \psi,\Gamma \Rightarrow \phi, \Delta, \kappa^*_0, \kappa^*_1\)}
    \RightLabel{\(\veeR\)}
    \UnaryInfC{\(\phi \to \psi,\Gamma \Rightarrow \phi, \Delta, \kappa^*_0 \vee \kappa^*_1\)}
    \AxiomC{\(\alpha(w1)\)}
    \noLine
    \UnaryInfC{\(\psi, \phi \to \psi, \Gamma \Rightarrow \Delta, \kappa^*_1\)}
    \RightLabel{\(\wk\)}
    \UnaryInfC{\(\psi,\phi \to \psi,\Gamma \Rightarrow \Delta, \kappa^*_0, \kappa^*_1\)}
    \RightLabel{\(\veeR\)}
    \UnaryInfC{\(\psi,\phi \to \psi,\Gamma \Rightarrow  \Delta, \kappa^*_0 \vee \kappa^*_1\)}
    \RightLabel{\(\toL\)}
    \BinaryInfC{\(\phi \to \psi,\phi \to \psi, \Gamma \Rightarrow \Delta, \kappa^*_0 \vee \kappa^*_1\)}
    \RightLabel{\(\ctr\)}
    \UnaryInfC{\(\phi \to \psi, \Gamma \Rightarrow \Delta, \kappa^*_0 \vee \kappa^*_1\)}
    \DisplayProof
  \end{multline*}

  \[
    \AxiomC{\(w0\)}
    \noLine
    \UnaryInfC{\(\kappa : \phi, \Gamma \Rightarrow \psi, \phi \to \psi, \Delta\)}
    \RightLabel{\(\toR^{\mathrm{Sat}}\)}
    \UnaryInfC{\(\kappa : \Gamma \Rightarrow \phi \to \psi, \Delta\)}
    \DisplayProof
    \quad
    \overset{\alpha}{\longmapsto}
    \quad
    \AxiomC{\(\alpha(w0)\)}
    \noLine
    \UnaryInfC{\(\phi, \Gamma \Rightarrow \psi, \phi \to \psi, \Delta, \kappa^*\)}
    \RightLabel{\(\toR\)}
    \UnaryInfC{\(\Gamma \Rightarrow \phi \to \psi, \phi \to \psi, \Delta, \kappa^*\)}
    \RightLabel{\(\ctr\)}
    \UnaryInfC{\(\Gamma \Rightarrow \phi \to \psi, \Delta, \kappa^*\)}
    \DisplayProof
  \]

  \[
    \AxiomC{\(w0\)}
    \noLine
    \UnaryInfC{\(\kappa : \necd_0 \phi, \Gamma \Rightarrow_1 \Delta\)}
    \RightLabel{\(\ERax[1,0]^{\mathrm{Sat}}\)}
    \UnaryInfC{\(\kappa : \nec_0 \phi, \Gamma \Rightarrow_1 \Delta\)}
    \DisplayProof
    \quad
    \overset{\alpha}{\longmapsto}
    \quad
    \AxiomC{\(\alpha(w0)\)}
    \noLine
    \UnaryInfC{\(\necd_0 \phi, \Gamma \Rightarrow_1 \Delta, \kappa^*\)}
    \RightLabel{\(\ERax[1,0]\)}
    \UnaryInfC{\(\nec_0 \phi, \Gamma \Rightarrow_1 \Delta, \kappa^*\)}
    \DisplayProof
  \]

  Finally, let us define \(\alpha(w)\) when \(R = \modal[+]{\KTCS}\).
  Then \(w\) is of shape
  \[
    \AxiomC{\(\left[\begin{matrix}w^{\nec_i} \\ \kappa^{\nec_i} : \necd_i \Sigma^s_i, \nec_{\overline{i}} \Sigma^s_{\overline{i}} \Rightarrow_i\end{matrix}\right]_{i \leq 1}\)}
    \AxiomC{\(\left[\begin{matrix} w^{\nec_i}_\phi \\ \kappa^{\nec_i}_\phi :\necd_i \Sigma^s_i, \nec_{\overline{i}} \Sigma^s_{\overline{i}} \Rightarrow_i\phi\end{matrix}\right]_{i \leq 1, \phi \in \Theta_i}\)}
    \RightLabel{\(\modal[+]{\KTER}\)}
    \BinaryInfC{\( \kappa_w  :\nec_0 \Sigma_0, \nec_1 \Sigma_1, \Gamma \gg \nec_0 \Theta_0, \nec_1 \Theta_1, \Delta\)}
    \DisplayProof
  \]
  where
  \[
    \kappa^*_{w} := \bigwedge_{i \leq 1} \left(\nec_i (\kappa^{\nec_i})^* \wedge \bigwedge\limits_{\phi \in \Theta_i} \pos_i (\kappa^{\nec_i}_\phi)^*\right)\wedge \bigwedge (\Gamma \inter V_+) \wedge \bigwedge \neg(\Delta \inter V_-)
  \]
  Since \(\kappa^*_w\) is a conjunction of formulas it suffices to construct a preproof for each conjunct and join them with \((\wedgeR)\).
  \begin{itemize}
    \item Preproof for \(\nec_i \kappa^{\nec_i}\) for \(i \leq 1\).
      The desired preproof is
      \[
        \AxiomC{\(\alpha(w^{\nec_i})\)}
        \noLine
        \UnaryInfC{\(\necd_i \Sigma^s_i, \nec_{\overline{i}} \Sigma^s_{\overline{i}} \Rightarrow_i (\kappa^{\nec_i})^*\)}
        \RightLabel{\(\modal[i]{\KTCS}\)}
        \UnaryInfC{\(\nec_0 \Sigma_0, \nec_1 \Sigma_1, \Gamma \gg \nec_0 \Theta_0, \nec_1 \Theta_1, \Delta, \nec_i (\kappa^{\nec_i})^*\)}
        \DisplayProof
      \]
    \item Preproof for \(\pos_i \kappa^{\nec_i}_\phi\) for \(i \leq 1\) and \(\phi \in \Theta_i\).
      The desired preproof is
      \[
        \AxiomC{\(\alpha(w^{\nec_i}_\phi)\)}
        \noLine
        \UnaryInfC{\(\necd_i \Sigma^s_i, \nec_{\overline{i}} \Sigma^s_{\overline{i}} \Rightarrow_i \phi, (\kappa^{\nec_i}_\phi)^*\)}
        \RightLabel{\(\negL\)}
        \UnaryInfC{\(\neg (\kappa^{\nec_i}_\phi)^*, \necd_i \Sigma^s_i, \nec_{\overline{i}} \Sigma^s_{\overline{i}} \Rightarrow_i \phi \)}
        \RightLabel{\(\wk\)}
        \UnaryInfC{\(\necd_i \neg (\kappa^{\nec_i}_\phi)^*, \necd_i \Sigma^s_i, \nec_{\overline{i}} \Sigma^s_{\overline{i}} \Rightarrow_i \phi \)}
        \RightLabel{\(\modal[i]{\KTCS}\)}
        \UnaryInfC{\(\nec_i \neg (\kappa^{\nec_i}_\phi)^*, \nec_0 \Sigma_0, \nec_1 \Sigma_1, \Gamma \gg \nec_0 \Theta_0, \nec_1 \Theta_1, \Delta, \nec_i \phi\)}
        \RightLabel{\(\negR\)}
        \UnaryInfC{\( \nec_0 \Sigma_0, \nec_1 \Sigma_1, \Gamma \gg \nec_0 \Theta_0, \nec_1 \Theta_1, \Delta, \nec_i \phi, \pos_i (\kappa^{\nec_i}_\phi)^*\)}
        \DisplayProof
      \]
    \item Preproof for \(p \in \Gamma \inter V_+\) and for \(\neg q\) where \(q \in \Delta \inter V_-\).
      The desired preproof, respectively, are
      \[
        \AxiomC{\(\)}
        \RightLabel{\(\ax\)}
        \UnaryInfC{\(\nec_0 \Sigma_0, \nec_1 \Sigma, \Gamma \gg \nec_0 \Theta_0, \nec_1 \Theta_1, \Delta, p\)}
        \DisplayProof
        \quad
        \AxiomC{\(\)}
        \RightLabel{\(\ax\)}
        \UnaryInfC{\(q, \nec_0 \Sigma_0, \nec_1 \Sigma, \Gamma \gg \nec_0 \Theta_0, \nec_1 \Theta_1, \Delta\)}
        \RightLabel{\(\negR\)}
        \UnaryInfC{\(\nec_0 \Sigma_0, \nec_1 \Sigma, \Gamma \gg \nec_0 \Theta_0, \nec_1 \Theta_1, \Delta, \neg q\)}
        \DisplayProof
      \]
  \end{itemize}

  Finally, let us argue that \(\alpha(w)\) is always a proof and not only a preproof.
  Given a node \(w\) assign to it the measure \(\omega \satc{\Gamma_w \Rightarrow_{i_w} \Delta_w} + \lgth(w)\) where \(\lgth(w)\) is the length of \(w\) as a sequence of natural numbers.
  We notice that this measure always decreases from \(\alpha(w)\) to its corecursive call, except when \(w\) is annotated with the rule \((\modal[+]{\KTER})\).
  However, in this case we will find that progress is made (i.e., there is an application of \((\modal[i]{\KTER})\)) from the root of the preproof fragment given by \(\alpha(w)\) to the corecursive calls.
  This implies that any infinite branch will have infinitely many applications of \((\modal[i]{\KTER})\), as otherwise we will infinitely decrease the measure on corecursive calls.
\end{proof}

\begin{theorem}\label{th:second-verification-ER}
  Let \(T\) be an interpolation template for \(\Gamma \Rightarrow_i \Delta\) and \(\Xi \Rightarrow_i \Lambda\) be a sequent such that \(\voc_b(\Xi \Rightarrow_i \Lambda) \subseteq V_b\) for \(b \in \set{+,-}\).
  Then \(\n{\ER} \vdash \Gamma, \Xi \Rightarrow_i \Delta, \Lambda\) implies \(\n{\ER} \vdash \iota_T, \Xi \Rightarrow_i \Lambda\).
\end{theorem}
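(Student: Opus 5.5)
We mirror the proof of Theorem~\ref{th:second-verification-CS}. Fix a solution \((\cdot)^*\) of \(\mathcal{E}_T\). By corecursion we define a function \(\beta\). Given a node \(w\) of \(T\) with sequent \(\Gamma_w \Rightarrow_{i_w} \Delta_w\), a sequent \(\Xi \Rightarrow_{i_w} \Lambda\) with \(\voc_b(\Xi \Rightarrow_{i_w}\Lambda)\subseteq V_b\), and a proof \(\pi \vdash \Gamma_w,\Xi \Rightarrow_{i_w} \Delta_w,\Lambda\) in \(\n{\ER}\), the value \(\beta(w,\pi)\) is a preproof of \(\kappa^*_w,\Xi\Rightarrow_{i_w}\Lambda\) in \(\n{\ER}+\cut+\wk\). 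After showing that every \(\beta(w,\pi)\) is a proof, we conclude with Lemma~\ref{lm:structural-rules-non-wellfounded} and Theorem~\ref{th:cut-elim-ER}.

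Repeat nodes are handled exactly as for \(\CS\): we cut with \(\tau\), where \(\tau\) is built from \(\ER\vdash x^*_w\leftrightarrow\kappa^*_{w^\circ}\). When \(i_w=1\) we need \(\ER\vdash\nec_1(x^*_w\leftrightarrow\kappa^*_{w^\circ})\) together with the full correspondence of Theorem~\ref{th:correspondence-to-hilbert-ER}, so that \(\tau\) exists as a \(\Rightarrow_1\)-proof.

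For \((\ax)\), \((\botL)\) and \((\emp)\) at \(w\), we copy the \(\CS\) construction. For the saturating rules \((\toL^{\mathrm{Sat}})\), \((\toR^{\mathrm{Sat}})\) and \((\ERax[1,0]^{\mathrm{Sat}})\), the premise of the template contains the conclusion's principal formula again, so we have to transform \(\pi\) accordingly.
\begin{itemize}
\item For \((\toL^{\mathrm{Sat}})\), apply \(\inv{\toL}\) to \(\pi\) and then weaken the premise with \(\phi\to\psi\). This stays inside \(\n{\ER}\) by Lemma~\ref{lm:structural-rules-non-wellfounded}, and we combine the two results by \((\veeL)\).
\item For \((\toR^{\mathrm{Sat}})\), use \(\inv{\toR}\) followed by weakening.
\item For \((\ERax[1,0]^{\mathrm{Sat}})\), weaken \(\pi\) by \(\phi\) on the left. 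This is sound because \(\n{\ER}\vdash\nec_0\phi,\Gamma\Rightarrow_1\Delta\) implies \(\n{\ER}\vdash\phi,\nec_0\phi,\Gamma\Rightarrow_1\Delta\).
\end{itemize}
Each of these calls lowers \(\satc{\cdot}\) of the template node while staying inside \(\n{\ER}\).

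At a \((\modal[+]{\KTER})\) node the conclusion is saturated and its side formulas are propositional. We do a case analysis on the last rule of \(\pi\), as for \(\CS\).
\begin{itemize}
\item Axioms and rules whose principal formula lies in \(\Xi,\Lambda\) are treated as before, recursing on \((w,\pi_0)\) with the same \(w\).
\item The new case is \((\ERax[1,0])\) with principal \(\nec_0\phi\). If \(\nec_0\phi\) lies in \(\Xi\), we recurse on \(\pi_0\) and reapply \((\ERax[1,0])\) on the \(\Xi\) side. If it lies in \(\nec_0\Sigma_0\), saturation gives \(\phi\in\Gamma_w\) already, so the premise of \(\pi\) has a duplicated \(\phi\). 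Contracting with \((\ctr)\) (which preserves local height) yields a proof of the same sequent of lower local height, and we recurse on it.
\item If the last rule of \(\pi\) is \((\modal[j]{\KTER})\) with principal formula in \(\Theta_j\) or in \(\Lambda\), we use the \(\pos_j\kappa^{\nec_j}_\phi\) or \(\nec_j\kappa^{\nec_j}\) conjunct as in the \(\CS\) proof. The premise arrow is now \(\Rightarrow_j\), which matches the arrow of the template premise, and the corresponding \(\Xi\)-part \(\necd_j\Xi_{\nec_j},\nec_{\overline{j}}\Xi_{\nec_{\overline{j}}}\Rightarrow_j\) still satisfies the vocabulary hypothesis.
\end{itemize}

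For well-foundedness we use the measure \(\omega^2\satc{\Gamma_w\Rightarrow_{i_w}\Delta_w}+\omega\lgth(w)+\lhg(\pi)\). It decreases at every corecursive call except those passing through a modal rule of \(\pi\) at a \((\modal[+]{\KTER})\) node, and there a \((\modal[j]{\KTER})\) progress step is emitted. The main obstacle is the interaction of \((\ERax[1,0])\) in \(\pi\) with saturation: we must check that the contraction step genuinely reduces \(\lhg(\pi)\) without changing \(w\), and that the weakened or inverted proofs used for the saturating template rules do not increase local height.
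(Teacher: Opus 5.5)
\textbf{Gap: missing propositional cases at a \((\modal[+]{\KTER})\) node.} Most of your plan matches the paper's proof: the corecursive \(\beta\), the repeat nodes, the saturating template rules (inversion plus weakening works as well as the paper's plain weakening), both \((\ERax[1,0])\) subcases, the modal subcases, and the measure. The problem is the case analysis at a \((\modal[+]{\KTER})\) node. You carry over from the \(\CS\) proof the assumption that a propositional rule at the bottom of \(\pi\) has its principal formula in \(\Xi\) or \(\Lambda\). In \(\CS\) this holds because \(\Gamma,\Delta\subseteq\var\) at a \((\modal[+]{\KTCS})\) node. In the \(\ER\) template, however, \(\Gamma,\Delta\) only exclude \(\nec\)-formulas: they may contain implications, and \(\Delta\) may contain \(\bot\). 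So \(\pi\) can end with
\((\toL)\) on some \(\phi\to\psi\in\Gamma\),
\((\toR)\) on some \(\phi\to\psi\in\Delta\), or
\((\botR)\) on \(\bot\in\Delta\).
None of these is in your case list. You also cannot simply recurse on \((w,\pi_0)\). The premise does not have the form \(\Gamma_w,\Xi'\Rightarrow_{i_w}\Delta_w,\Lambda'\), and the formula \(\phi\) that crossed the arrow need not respect \(V_+,V_-\), so it cannot be absorbed into \(\Xi,\Lambda\). Hence \((\ERax[1,0])\) is not the only new case, and as stated \(\beta\) is not total.

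\textbf{How to fix it.} These cases are exactly what the implication clauses of left- and right-saturation are for. They yield to the same contraction trick you already use for \((\ERax[1,0])\) on \(\nec_0\Sigma_0\).
For \((\toL)\) on \(\phi\to\psi\in\Gamma\), left-saturation of the node's sequent puts \(\phi\) already on its right side or \(\psi\) already on its left side. In the matching premise \(\pi_0\) or \(\pi_1\), contract the duplicate and weaken \(\phi\to\psi\) back in. This gives a proof of the original end-sequent whose local height is at most that of the premise, hence below \(\lhg(\pi)\); then recurse at the same \(w\).
For \((\toR)\) on \(\phi\to\psi\in\Delta\), right-saturation puts \(\phi\) on the left and \(\psi\) on the right already, so contract both and weaken.
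For \((\botR)\) on \(\bot\in\Delta\), weaken \(\pi_0\) by \(\bot\).
In all three cases only the \(\lhg(\pi)\) component of your measure drops, with \(w\) fixed, so your productivity argument goes through unchanged once these cases are added.
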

\begin{proof}
  Given a node \(w\) of \(T\) let us denote the sequent at \(w\) as \(\Gamma_w \Rightarrow_{i_w} \Delta_w,\), the preinterpolant at \(w\) as \(\kappa_w\) and assume \((\cdot)^*\) is a solution of \(\mathcal{E}_T\).
  We define a function \(\beta\) such that if \(w\) is a node of \(T\) and \(\pi\) is a proof in \(\n{\ER}\) of \(\Gamma_w, \Xi \Rightarrow_{i_w} \Delta_w, \Lambda\) where \(\Xi \Rightarrow_{i_w} \Lambda\) is a sequent with \(\voc_b(\Xi \Rightarrow_{i_w} \Lambda) \subseteq V_b\) for \(b \in \set{+,-}\), then \(\beta(w,\pi)\) is a preproof in \(\n{\ER} + \cut + \wk\) of \(\kappa^*_w, \Xi \Rightarrow_{i_w} \Lambda\).
  Then we will show that \(\beta(w,\pi)\) is in fact a proof in \(\n{\ER} + \cut + \wk\) and then using that \(\wk\) is eliminable in \(\n{\ER}\) and that \(\cut\) is eliminable in \(\n{\ER}\) we obtian the desired proof in \(\n{\ER}\).

  We proceed to define \(\beta\) by corecursion and cases.
  First, assume \(w \in \rep(T)\) then \(w^\circ\) has the same sequent as \(w\) and \(\kappa_w = x_w\).
  Also, by virtue of \((\cdot)^*\) being a solution of \(\mathcal{E}_T\), we have that \(\ER \vdash x^*_w \leftrightarrow \kappa^*_{w^\circ}\).
  Then we define \(\beta\) as
  \[
    \left(
      \AxiomC{}
      \RightLabel{\(\rep\)}
      \UnaryInfC{\(x_w : \Gamma \Rightarrow_i \Delta\)}
      \DisplayProof
      ,\quad
      \AxiomC{\(\pi\)}
      \noLine
      \UnaryInfC{\(\Gamma, \Xi \Rightarrow_i \Delta, \Lambda\)}
      \DisplayProof
    \right)
    \quad
    \overset{\beta}{\longmapsto}
    \quad
    \AxiomC{\(\tau\)}
    \noLine
    \UnaryInfC{\(x^*_w, \Xi \Rightarrow_i \Lambda, \kappa^*_{w^\circ}\)}
    \AxiomC{\(\beta(w^\circ, \pi)\)}
    \noLine
    \UnaryInfC{\(\kappa^*_{w^\circ}, \Xi \Rightarrow_i \Lambda\)}
    \RightLabel{\(\wk\)}
    \UnaryInfC{\(\kappa^*_{w^\circ}, x^*_w, \Xi \Rightarrow_i \Lambda\)}
    \RightLabel{\(\cut\)}
    \BinaryInfC{\(x^*_w, \Xi \Rightarrow_i \Lambda\)}
    \DisplayProof
  \]
  where the proof \(\tau\) exists since \(\ER \vdash x^*_w \leftrightarrow \kappa^*_{w^\circ}\).

  Now assume that \(w\) is not are repetition node and let \(R\) be the rule at \(w\).
  Below there are the definitions when \(R\) is \((\ax)\), \((\emp)\), \((\botL)\), \((\toL^{\mathrm{Sat}})\), \((\toR^{\mathrm{Sat}})\) or \((\ERax[1,0]^{\mathrm{Sat}})\).
  \[
    \left(
      \AxiomC{}
      \RightLabel{\(\ax\)}
      \UnaryInfC{\(\bot : p, \Gamma \Rightarrow_i p, \Delta\)}
      \DisplayProof,
      \AxiomC{\(\pi\)}
      \noLine
      \UnaryInfC{\(p, \Gamma, \Xi \Rightarrow_i p, \Delta, \Lambda\)}
      \DisplayProof
    \right)
    \quad
    \overset{\beta}{\longmapsto}
    \quad
    \AxiomC{\(\)}
    \RightLabel{\(\botL\)}
    \UnaryInfC{\(\bot, \Xi \Rightarrow_i \Lambda\)}
    \DisplayProof
  \]

  \[
    \left(
      \AxiomC{}
      \RightLabel{\(\emp\)}
      \UnaryInfC{\(\top : { \Rightarrow_i }\)}
      \DisplayProof,
      \AxiomC{\(\pi\)}
      \noLine
      \UnaryInfC{\(\Xi \Rightarrow_i \Lambda\)}
      \DisplayProof
    \right)
    \quad
    \overset{\beta}{\longmapsto}
    \quad
    \AxiomC{\(\pi\)}
    \noLine
    \UnaryInfC{\(\Xi \Rightarrow_i \Lambda\)}
    \RightLabel{\(\wk\)}
    \UnaryInfC{\(\top, \Xi \Rightarrow_i \Lambda\)}
    \DisplayProof
  \]

  \[
    \left(
      \AxiomC{}
      \RightLabel{\(\botL\)}
      \UnaryInfC{\(\bot : \bot, \Gamma \Rightarrow_i \Delta\)}
      \DisplayProof,
      \AxiomC{\(\pi\)}
      \noLine
      \UnaryInfC{\(\bot, \Gamma, \Xi \Rightarrow_i \Delta, \Lambda\)}
      \DisplayProof
    \right)
    \quad
    \overset{\beta}{\longmapsto}
    \quad
    \AxiomC{\(\)}
    \RightLabel{\(\botL\)}
    \UnaryInfC{\(\bot, \Xi \Rightarrow_i \Lambda\)}
    \DisplayProof
  \]

  \begin{multline*}
    \left(
      \AxiomC{\(w0\)}
      \noLine
      \UnaryInfC{\(\kappa_0 : \phi \to \psi,\Gamma \Rightarrow_i \phi, \Delta\)}
      \AxiomC{\(w1\)}
      \noLine
      \UnaryInfC{\(\kappa_1 : \psi, \phi \to \psi,\Gamma \Rightarrow_i \Delta\)}
      \RightLabel{\(\toL^{\mathrm{Sat}}\)}
      \BinaryInfC{\(\kappa_0 \vee \kappa_1 :  \phi \to \psi, \Gamma \Rightarrow_i \Delta\)}
      \DisplayProof,
      \AxiomC{\(\pi\)}
      \noLine
      \UnaryInfC{\(\phi \to \psi, \Gamma, \Xi \Rightarrow_i \Delta, \Lambda\)}
      \DisplayProof
    \right)
    \quad
    \overset{\beta}{\longmapsto} \\
    \AxiomC{\(\beta(w0,\wk(\pi))\)}
    \noLine
    \UnaryInfC{\(\kappa^*_0, \Xi \Rightarrow_i \Lambda\)}
    \AxiomC{\(\beta(w1,\wk(\pi))\)}
    \noLine
    \UnaryInfC{\(\kappa^*_1, \Xi \Rightarrow_i \Lambda\)}
    \RightLabel{\(\veeL\)}
    \BinaryInfC{\(\kappa^*_0 \vee \kappa^*_1, \Xi \Rightarrow_i \Lambda\)}
    \DisplayProof
  \end{multline*}

  \[
    \left(
      \AxiomC{\(w0\)}
      \noLine
      \UnaryInfC{\(\kappa_w :  \phi, \Gamma \Rightarrow_i \psi, \phi \to \psi, \Delta\)}
      \RightLabel{\(\toR^{\mathrm{Sat}}\)}
      \UnaryInfC{\(\kappa_w :  \Gamma \Rightarrow_i \phi \to \psi, \Delta\)}
      \DisplayProof,
      \AxiomC{\(\pi\)}
      \noLine
      \UnaryInfC{\(\Gamma, \Xi \Rightarrow_i \phi \to \psi, \Delta, \Lambda\)}
      \DisplayProof
    \right)
    \quad
    \overset{\beta}{\longmapsto}
    \quad
    \AxiomC{\(\beta(w0,\wk(\pi))\)}
    \noLine
    \UnaryInfC{\(\kappa^*_w, \Xi \Rightarrow_i \Lambda\)}
    \RightLabel{\(\wk\)}
    \UnaryInfC{\(\kappa^*_w, \Xi \Rightarrow_i \Lambda\)}
    \DisplayProof
  \]

  \[
    \left(
      \AxiomC{\(w0\)}
      \noLine
      \UnaryInfC{\(\kappa_w : \necd_0 \phi, \Gamma \Rightarrow_i \Delta\)}
      \RightLabel{\(\ERax[1,0]^{\mathrm{Sat}}\)}
      \UnaryInfC{\(\kappa_w : \nec_0 \phi, \Gamma \Rightarrow_i \Delta\)}
      \DisplayProof,
      \AxiomC{\(\pi\)}
      \noLine
      \UnaryInfC{\(\nec_0 \phi, \Gamma, \Xi \Rightarrow_i \Delta, \Lambda\)}
      \DisplayProof
    \right)
    \quad
    \overset{\beta}{\longmapsto}
    \quad
    \AxiomC{\(\beta(w0, \wk(\pi))\)}
    \noLine
    \UnaryInfC{\(\kappa^*_w, \Xi \Rightarrow_i \Lambda\)}
    \RightLabel{\(\wk\)}
    \UnaryInfC{\(\kappa^*_w, \Xi \Rightarrow_i \Lambda\)}
    \DisplayProof
  \]

  Finally, assume \(R\) is \((\modal[+]{\KTCS})\).
  Then \(w\) is of shape
  \[
    \AxiomC{\(\left[\begin{matrix} w^{\nec_i} \\ \kappa^{\nec_i} : \necd_i \Sigma^s_i, \nec_{\overline{i}}\Sigma^s_{\overline{i}}  \Rightarrow_i \end{matrix}\right]_{i \leq 1}\)}
    \AxiomC{\(\left[\begin{matrix} w^{\nec_i}_\phi \\ \kappa^{\nec_i}_\phi :\necd_i \Sigma^s, \nec_{\overline{i}} \Sigma^s_i \Rightarrow_i\phi\end{matrix}\right]_{i \leq 1, \phi \in \Theta_i}\)}
    \RightLabel{\(\modal[+]{\KTER}\)}
    \BinaryInfC{\( \kappa_w  :\nec_0 \Sigma_0, \nec_1 \Sigma_1, \Gamma \gg \nec_0 \Theta_0, \nec_1 \Theta_1, \Delta\)}
    \DisplayProof
  \]
  where
  \[
    \kappa^*_{w} := \bigwedge_{i \leq 1}\left(\nec_i (\kappa^{\nec_i})^* \wedge \bigwedge_{\phi \in \Theta_i} \pos_i (\kappa^{\nec_i}_\phi)^*\right)\wedge \bigwedge (\Gamma \inter V_+) \wedge \bigwedge \neg(\Delta \inter V_-).
  \]
  We define \(\beta(w,\pi)\) by case analysis on the shape of \(\pi\).
  \begin{itemize}
    \item Last rule of \(\pi\) is \((\ax)\).
      There are \(4\) cases depending on where the repeated propositional variable is located.
      \begin{itemize}
        \item Assume \(p \in \Gamma \cap \in \Delta\).
          This case is impossible, as \(w\) is annotated with \((\modal[+]{\KTER})\) so \((\Gamma \cap \Delta) \cap \text{Var} = \varnothing\).
        \item Assume \(p \in \Gamma \cap \Lambda\).
          Since \(p \in \Lambda\) and \(\voc_+(\Xi \gg \Lambda) \subseteq V_+\) we have that \(p \in V_+\).
          Then as \(p \in \Gamma \cap V_+\) and \(p \not\in \set{x_w \mid w \in \rep(T)}\), we obtain that \(p\) is a conjunct of \(\kappa^*_w\).
          The desired preproof is
          \[
            \AxiomC{\(\)}
            \RightLabel{\(\ax\)}
            \UnaryInfC{\(p, \Xi \gg p, \Lambda'\)}
            \doubleLine\RightLabel{\(\wk + \wedgeL\)}
            \UnaryInfC{\(\kappa^*_w, \Xi \gg p, \Lambda'\)}
            \DisplayProof
          \]
          where \(\Lambda = p, \Lambda'\).
        \item Assume \(p \in \Xi \cap \Delta\).
          Since \(p \in \Xi\) and \(\voc_-(\Xi \gg \Lambda) \subseteq V_-\) we have that \(p \in V_-\).
          Then as \(p \in \Delta \cap V_-\) and \(p \not\in \set{x_w \mid w \in \rep(T)}\), we obtain that \(\neg p\) is a conjunct of \(\kappa^*_w\).
          The desired preproof is
          \[
            \AxiomC{\(\)}
            \RightLabel{\(\ax\)}
            \UnaryInfC{\(p, \Xi' \gg p, \Lambda\)}
            \RightLabel{\(\negL\)}
            \UnaryInfC{\(\neg p, p, \Xi' \gg \Lambda\)}
            \doubleLine\RightLabel{\(\wk + \wedgeL\)}
            \UnaryInfC{\(\kappa^*_w, p, \Xi' \gg \Lambda\)}
            \DisplayProof
          \]
          where \(\Xi = p, \Xi'\).
        \item Assume \(p \in \Xi \cap \Lambda\).
          Then the desired preproof is
          \[
            \AxiomC{}
            \RightLabel{\(\ax\)}
            \UnaryInfC{\(\kappa^*_w, \Xi \gg \Lambda\)}
            \DisplayProof
          \]
      \end{itemize}
    \item Last rule of \(\pi\) is \((\botL)\).
      Since \(\bot\) is not in \(\Gamma\), then we have that \(\pi\) has the following shape
      \[
        \AxiomC{\(\)}
        \RightLabel{\(\botL\)}
        \UnaryInfC{\(\nec_0 \Sigma_0, \nec_1 \Sigma_1, \Gamma, \bot, \Xi' \gg \nec_0 \Theta_0, \nec_1 \Theta_1, \Delta, \Lambda\)}
        \DisplayProof
      \]
      where \(\Xi = \bot, \Xi'\).
      Then \(\beta(w,\pi)\) is defined as
      \[
        \AxiomC{\(\)}
        \RightLabel{\(\botL\)}
        \UnaryInfC{\(\kappa^*_w, \bot, \Xi' \gg \Lambda\)}
        \DisplayProof
      \]
    \item Last rule of \(\pi\) is \((\botR)\) and principal formula is in \(\Delta\).
      Then \(\pi\) is of shape
      \[
        \AxiomC{\(\pi_0\)}
        \noLine
        \UnaryInfC{\(\nec_0 \Sigma_0, \nec_1 \Sigma_1, \Gamma, \Xi \Rightarrow \nec_0 \Theta_0, \nec_1 \Theta_1, \Delta', \Lambda\)}
        \RightLabel{\(\botR\)}
        \UnaryInfC{\(\nec_0 \Sigma_0, \nec_1 \Sigma_1, \Gamma, \Xi \Rightarrow \nec_0 \Theta_0, \nec_1 \Theta_1, \bot, \Delta', \Lambda\)}
        \DisplayProof
      \]
      where \(\Delta = \bot, \Delta'\).
      Then \(\beta(w,\pi)\) is defined as
      \[
        \AxiomC{\(\beta(w, \wk(\pi_0))\)}
        \noLine
        \UnaryInfC{\(\kappa^*_w, \Xi \Rightarrow \Lambda\)}
        \RightLabel{\(\wk\)}
        \UnaryInfC{\(\kappa^*_w, \Xi \Rightarrow \Lambda\)}
        \DisplayProof
      \]
    \item Last rule of \(\pi\) is \((\botR)\) and principal formula is in \(\Lambda\).
      Then \(\pi\) is of shape
      \[
        \AxiomC{\(\pi_0\)}
        \noLine
        \UnaryInfC{\(\nec_0 \Sigma_0, \nec_1 \Sigma_1, \Gamma, \Xi \Rightarrow \nec_0 \Theta_0, \nec_1 \Theta_1, \Delta, \Lambda'\)}
        \RightLabel{\(\botR\)}
        \UnaryInfC{\(\nec_0 \Sigma_0, \nec_1 \Sigma_1, \Gamma, \Xi \Rightarrow \nec_0 \Theta_0, \nec_1 \Theta_1, \Delta, \bot, \Lambda'\)}
        \DisplayProof
      \]
      where \(\Lambda = \bot, \Lambda'\).
      Then \(\beta(w,\pi)\) is defined as
      \[
        \AxiomC{\(\beta(w, \pi_0)\)}
        \noLine
        \UnaryInfC{\(\kappa^*_w, \Xi \Rightarrow \Lambda'\)}
        \RightLabel{\(\botR\)}
        \UnaryInfC{\(\kappa^*_w, \Xi \Rightarrow  \bot, \Lambda'\)}
        \DisplayProof
      \]
    \item Last rule of \(\pi\) is \((\toL)\) and principal formula is in \(\Gamma\).
      Then we have that \(\pi\) has the following shape
      \[
        \AxiomC{\(\pi_0\)}
        \noLine
        \UnaryInfC{\(\nec_0 \Sigma_0, \nec_1 \Sigma_1, \Gamma', \Xi \gg \nec_0 \Theta_0, \nec_1 \Theta_1,\phi, \Delta,  \Lambda\)}
        \AxiomC{\(\pi_1\)}
        \noLine
        \UnaryInfC{\(\nec_0 \Sigma_0, \nec_1 \Sigma_1, \psi, \Gamma', \Xi \gg \nec_0 \Theta_0, \nec_1 \Theta_1, \Delta, \Lambda\)}
        \RightLabel{\(\toL\)}
        \BinaryInfC{\(\nec_0 \Sigma_0, \nec_1 \Sigma_1,  \phi \to \psi,\Gamma',  \Xi \gg \nec_0 \Theta_0, \nec_1 \Theta_1, \Delta, \Lambda\)}
        \DisplayProof
      \]
      where \(\Gamma = \phi \to \psi, \Gamma'\).
      By saturation (the conclusion of \(\modal[+]{\KTER}\) is saturated) we have that either \(\phi \in \nec_0 \Theta_0, \nec_1 \Theta_1, \Delta\) or \(\psi \in \nec_0 \Sigma_0, \nec_1 \Sigma_1, \Gamma'\).
      In the first case, \(\beta(w,\pi)\) is defined as
      \[
        \AxiomC{\(\beta(w,\wk(\ctr(\pi_0)))\)}
        \noLine
        \UnaryInfC{\(\kappa^*_w, \Xi \gg \Lambda\)}
        \RightLabel{\(\wk\)}
        \UnaryInfC{\(\kappa^*_w, \Xi \gg \Lambda\)}
        \DisplayProof
      \]
      where in \(\pi_0\) first we contract \(\phi\) on the right and then \(\phi \to \psi\) is reintroduced via weakening.
      In the second case \(\beta(w,\pi)\) is defined analogously using \(\pi_1\) instead of \(\pi_0\).
    \item Last rule of \(\pi\) is \((\toL)\) and principal formula is in \(\Xi\).
      Then we have that \(\pi\) has the following shape
      \[
        \AxiomC{\(\pi_0\)}
        \noLine
        \UnaryInfC{\(\nec_0 \Sigma_0, \nec_1 \Sigma_1, \Gamma, \Xi' \gg \nec_0 \Theta_0, \nec_1 \Theta_1, \Delta, \phi, \Lambda\)}
        \AxiomC{\(\pi_1\)}
        \noLine
        \UnaryInfC{\(\nec_0 \Sigma_0, \nec_1 \Sigma_1, \Gamma,  \psi, \Xi' \gg \nec_0 \Theta_0, \nec_1 \Theta_1, \Delta, \Lambda\)}
        \RightLabel{\(\toL\)}
        \BinaryInfC{\(\nec_0 \Sigma_0, \nec_1 \Sigma_1, \Gamma,  \phi \to \psi, \Xi' \gg \nec_0 \Theta_0, \nec_1 \Theta_1, \Delta, \Lambda\)}
        \DisplayProof
      \]
      where \(\Xi = \phi \to \psi, \Xi'\).
      Then \(\beta(w,\pi)\) is defined as
      \[
        \AxiomC{\(\beta(w,\pi_0)\)}
        \noLine
        \UnaryInfC{\(\kappa^*_w, \Xi' \gg \phi, \Lambda\)}
        \AxiomC{\(\beta(w,\pi_1)\)}
        \noLine
        \UnaryInfC{\(\kappa^*_w, \psi, \Xi' \gg \Lambda\)}
        \RightLabel{\(\toL\)}
        \BinaryInfC{\(\kappa^*_w, \phi \to \psi, \Xi' \gg \Lambda\)}
        \DisplayProof
      \]
    \item Last rule of \(\pi\) is \((\toR)\) and principal formula is in \(\Delta\).
      Then we have that \(\pi\) has the following shape
      \[
        \AxiomC{\(\pi_0\)}
        \noLine
        \UnaryInfC{\(\nec_0 \Sigma_0, \nec_1 \Sigma_1,  \phi, \Gamma, \Xi \gg \nec_0 \Theta_0, \nec_1 \Theta_1, \psi, \Delta', \Lambda\)}
        \RightLabel{\(\toR\)}
        \UnaryInfC{\(\nec_0 \Sigma_0, \nec_1 \Sigma_1, \Gamma, \Xi \gg \nec_0 \Theta_0, \nec_1 \Theta_1, \phi \to \psi, \Delta', \Lambda\)}
        \DisplayProof
      \]
      where \(\Delta = \phi \to \psi, \Delta'\).
      By saturation (the conclusion of \(\modal[+]{\KTER}\) is saturated) we have that \(\phi \in \nec_0 \Sigma_0, \nec_1 \Sigma_1, \Gamma\) and \(\psi \in \nec_0 \Theta_0, \nec_1 \Theta_1, \Delta'\).
      Then \(\beta(w,\pi)\) is defined as
      \[
        \AxiomC{\(\beta(w,\wk(\ctr(\pi_0)))\)}
        \noLine
        \UnaryInfC{\(\kappa^*_w, \Xi \gg \Lambda\)}
        \RightLabel{\(\wk\)}
        \UnaryInfC{\(\kappa^*_w, \Xi \gg \Lambda\)}
        \DisplayProof
      \]
      where in \(\pi_0\) first we contract \(\phi\) on the left and \(\psi\) on the right and then \(\phi \to \psi\) is reintroduced via weakening.
    \item Last rule of \(\pi\) is \((\toR)\) and principal formula is in \(\Lambda\).
      Then we have that \(\pi\) has the following shape
      \[
        \AxiomC{\(\pi_0\)}
        \noLine
        \UnaryInfC{\(\nec_0 \Sigma_0, \nec_1 \Sigma_1, \Gamma, \phi, \Xi \gg \nec_0 \Theta_0, \nec_1 \Theta_1, \Delta, \psi, \Lambda'\)}
        \RightLabel{\(\toR\)}
        \UnaryInfC{\(\nec_0 \Sigma_0, \nec_1 \Sigma_1, \Gamma, \Xi \gg \nec_0 \Theta_0, \nec_1 \Theta_1, \Delta, \phi \to \psi, \Lambda'\)}
        \DisplayProof
      \]
      where \(\Lambda = \phi \to \psi, \Lambda'\).
      Then \(\beta(w,\pi)\) is defined as
      \[
        \AxiomC{\(\beta(w,\pi_0)\)}
        \noLine
        \UnaryInfC{\(\kappa^*_w, \phi, \Xi \gg \psi, \Lambda'\)}
        \RightLabel{\(\toL\)}
        \UnaryInfC{\(\kappa^*_w, \Xi \gg \phi \to \psi, \Lambda\)}
        \DisplayProof
      \]
    \item Last rule of \(\pi\) is \((\ERax[1,0])\) and the principal formula \(\nec_0 \phi\) is in \(\nec_0 \Sigma_0\).
      Note that in this case \({\gg} = { \Rightarrow_ 1}\).
      Then \(\pi\) has the following shape
      \[
        \AxiomC{\(\pi_0\)}
        \noLine
        \UnaryInfC{\(\necd_0 \phi, \nec_0 \Sigma'_0, \nec_1 \Sigma_1, \Gamma, \Xi \Rightarrow_1 \nec_0 \Theta_0, \nec_1 \Theta_1, \Delta, \Lambda\)}
        \RightLabel{\(\ERax[1,0]\)}
        \UnaryInfC{\(\nec_0 \phi, \nec_0 \Sigma'_0, \nec_1 \Sigma_1, \Gamma, \Xi \Rightarrow_1 \nec_0 \Theta_0, \nec_1 \Theta_1, \Delta, \Lambda\)}
        \DisplayProof
      \]
      where \(\Sigma_0 = \phi, \Sigma'_0\).
      By saturation (the conclusion of \((\modal[+]{\KTER})\) is saturated) we have that \(\phi \in \nec_0 \Sigma'_0, \nec_1 \Sigma_1, \Gamma\).
      Then \(\beta(w,\pi)\) is defined as
      \[
        \AxiomC{\(\beta(w,\ctr(\pi_0))\)}
        \UnaryInfC{\(\kappa^*_w, \Xi \Rightarrow \Lambda\)}
        \RightLabel{\(\wk\)}
        \UnaryInfC{\(\kappa^*_w, \Xi \Rightarrow \Lambda\)}
        \DisplayProof
      \]
      where in \(\pi_0\) first we contract \(\phi\) on the left.
    \item Last rule of \(\pi\) is \((\ERax[1,0])\) and the principal formula \(\nec_0 \phi\) is in \(\Xi\).
      Note that in this case \({\gg} = { \Rightarrow_ 1}\).
      Then \(\pi\) has the following shape
      \[
        \AxiomC{\(\pi_0\)}
        \noLine
        \UnaryInfC{\( \nec_0 \Sigma_0, \nec_1 \Sigma_1, \Gamma, \necd_0 \phi,\Xi' \Rightarrow_1 \nec_0 \Theta_0, \nec_1 \Theta_1, \Delta, \Lambda\)}
        \RightLabel{\(\ERax[1,0]\)}
        \UnaryInfC{\( \nec_0 \Sigma_0, \nec_1 \Sigma_1, \Gamma, \nec_0 \phi,\Xi' \Rightarrow_1 \nec_0 \Theta_0, \nec_1 \Theta_1, \Delta, \Lambda\)}
        \DisplayProof
      \]
      where \(\Xi = \nec_0 \phi, \Xi'\).
      Then \(\beta(w,\pi)\) is defined as
      \[
        \AxiomC{\(\beta(w,\pi_0)\)}
        \noLine
        \UnaryInfC{\(\kappa^*_w, \necd_0 \phi, \Xi' \Rightarrow_1 \Lambda\)}
        \RightLabel{\(\ERax[1,0]\)}
        \UnaryInfC{\(\kappa^*_w, \nec_0 \phi, \Xi' \Rightarrow_1 \Lambda\)}
        \DisplayProof
      \]
    \item Last rule of \(\pi\) is \((\modal[i]{\KTER})\) and the principal formula \(\nec_i \phi\) is in \(\nec_i \Theta_i\).
      Then \(\pi\) has the following shape
      \[
        \AxiomC{\(\pi_0\)}
        \noLine
        \UnaryInfC{\(\necd_i \Sigma'_i, \nec_{\overline{i}} \Sigma'_{\overline{i}}, \necd_i \Xi_{\nec_{i}}, \nec_i \Xi_{\nec_{\overline{i}}} \Rightarrow_i \phi\)}
        \RightLabel{\(\modal[i]{\KTER}\)}
        \UnaryInfC{\(\nec_0 \Sigma_0, \nec_1 \Sigma_1, \Gamma, \Xi \gg \nec_0 \Theta_0, \nec_1 \Theta_1, \Delta, \Lambda\)}
        \DisplayProof
      \]
      where \(\Sigma'_i \subseteq \Sigma_i\), \(\Sigma'_{\overline{i}} \subseteq \Sigma_{\overline{i}}\) and \(\nec_i \Xi_{\nec_i}, \nec_{\overline{i}} \Xi_{\nec_{\overline{i}}} \subseteq \Xi\).
      Then the desired preproof is
      \[
        \AxiomC{\(\beta(w^{\nec_i}_\phi, \pi_0)\)}
        \noLine
        \UnaryInfC{\((\kappa^{\nec_i}_\phi)^*, \necd_i \Xi_{\nec_{i}}, \nec_i \Xi_{\nec_{\overline{i}}} \Rightarrow_i\)}
        \RightLabel{\(\negR\)}
        \UnaryInfC{\(\necd_i \Xi_{\nec_{i}}, \nec_i \Xi_{\nec_{\overline{i}}} \Rightarrow_i \neg (\kappa^{\nec_i}_\phi)^* \)}
        \RightLabel{\(\modal[i]{\KTER}\)}
        \UnaryInfC{\(\Xi \gg \nec_i \neg (\kappa^{\nec_i}_\phi)^*, \Lambda\)}
        \RightLabel{\(\negL\)}
        \UnaryInfC{\(\pos_i (\kappa^{\nec_i}_\phi)^*, \Xi \gg  \Lambda\)}
        \doubleLine
        \RightLabel{\(\wk + \wedgeL\)}
        \UnaryInfC{\(\kappa^*_w, \Xi \gg  \Lambda\)}
        \DisplayProof
      \]
    \item Last rule of \(\pi\) is \((\modal[i]{\KTER})\) and the principal formula is in \(\Lambda\).
      Then \(\pi\) has the following shape
      \[
        \AxiomC{\(\pi_0\)}
        \noLine
        \UnaryInfC{\(\necd_i \Sigma'_i, \nec_{\overline{i}} \Sigma'_{\overline{i}}, \necd_i \Xi_{\nec_{i}}, \nec_i \Xi_{\nec_{\overline{i}}} \Rightarrow_i \phi\)}
        \RightLabel{\(\modal[i]{\KTER}\)}
        \UnaryInfC{\(\nec_0 \Sigma_0, \nec_1 \Sigma_1, \Gamma, \Xi \gg \nec_0 \Theta_0, \nec_1 \Theta_1, \Delta, \nec_i\phi, \Lambda'\)}
        \DisplayProof
      \]
      where \(\Lambda = \nec_i \phi, \Lambda'\), \(\Sigma'_i \subseteq \Sigma_i\), \(\Sigma'_{\overline{i}} \subseteq \Sigma_{\overline{i}}\) and \(\nec_i \Xi_{\nec_i}, \nec_{\overline{i}} \Xi_{\nec_{\overline{i}}} \subseteq \Xi\).
      Then the desired preproof is
      \[
        \AxiomC{\(\beta(w^{\nec_i}, \pi_0)\)}
        \noLine
        \UnaryInfC{\((\kappa^{\nec_i})^*, \necd_i \Xi_{\nec_{i}}, \nec_i \Xi_{\nec_{\overline{i}}} \Rightarrow_i \phi\)}
        \RightLabel{\(\wk\)}
        \UnaryInfC{\(\necd_i (\kappa^{\nec_i})^*, \necd_i \Xi_{\nec_{i}}, \nec_i \Xi_{\nec_{\overline{i}}} \Rightarrow_i \phi\)}
        \RightLabel{\(\modal[i]{\KTER}\)}
        \UnaryInfC{\(\nec_i (\kappa^{\nec_i})^*, \Xi \gg \nec_i \phi, \Lambda'\)}
        \doubleLine
        \RightLabel{\(\wk + \wedgeL\)}
        \UnaryInfC{\(\kappa^*_w, \Xi \gg \nec_i \phi, \Lambda'\)}
        \DisplayProof
      \]
  \end{itemize}

  Finally, let us argue that \(\beta(w, \pi)\) is always a proof and not only a preproof.
  Given a node \(w\) assign to it the measure \(\omega^2 \satc{\Gamma_w \Rightarrow \Delta_w} + \omega\lgth(w) + \lhg(\pi)\) where \(\lgth(w)\) is the length of \(w\) as a sequence of natural numbers.
  We notice that this measure always decreases from \(\beta(w,\pi)\) to its corecursive calls, except when \(w\) is annotated with the rule \((\modal[+]{\KTER})\) and the last rule of \(\pi\) is \(\modal[i]{\KTER}\).
  However, in this case we will find that progress is made (i.e., there is an application of \((\modal[i]{\KTER})\)) from the root of the preproof fragment given by \(\beta(w,\pi)\) to the corecursive calls.
  This implies that any infinite branch will have infinitely many applications of \((\modal[i]{\KTER})\), as otherwise we will infinitely decrease the measure on corecursive calls.
\end{proof}

Then following theorem follows the same proof as Theorem~\ref{th:ulip-CS} using Theorems~\ref{th:first-verification-ER} and \ref{th:second-verification-ER} instead of Theorems~\ref{th:first-verification-CS} and \ref{th:second-verification-CS}.

\begin{theorem}\label{th:ulip-ER}
  \(\ER\) has uniform Lyndon interpolation.
\end{theorem}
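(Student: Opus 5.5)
The plan is to mirror the proof of Theorem~\ref{th:ulip-CS}, with the $\ER$-specific lemmas substituted for the $\CS$ ones. Fix a formula $\phi$ and vocabularies $V_+, V_-$. Using the lemma that every $\ER$-sequent has an $\ER$-interpolation template, I take an $\ER$-interpolation template $T$ for the sequent $\phi \Rightarrow_0$; the arrow must be $\Rightarrow_0$ so that the formula interpretation is the plain implication. The candidate interpolant is $\iota_T$: the preinterpolant at the root of $T$ with a solution of $\mathcal{E}_T$ substituted in. Such a solution exists because $\mathcal{E}_T$ is a positive modalized Lyndon equational system, and these have solutions in $\ER$ by the theorem on positive modalized Lyndon fixpoints. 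The lemma for $\ER$ states solvability in $\CS$, which transfers to $\ER$ since $\CS \subseteq \ER$.

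For the vocabulary condition I argue as on page~\pageref{vocabularies-interpolant-CS}. The root preinterpolant has positive vocabulary inside $V_+ \union \set{x_w \mid w \in \rep(T)}$ and negative vocabulary inside $V_-$. Each $x^*_w$ satisfies $\voc_+(x^*_w) \subseteq V_+$ and $\voc_-(x^*_w) \subseteq V_-$. Lemma~\ref{substitution-and-polarity} then gives $\voc_b(\iota_T) \subseteq V_b$ for $b \in \set{+,-}$.

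For $\ER \vdash \phi \to \iota_T$, Theorem~\ref{th:first-verification-ER} gives $\n{\ER} \vdash \phi \Rightarrow_0 \iota_T$. By the final corollary of Section~\ref{sec:cut-elim} (item 3), this yields $\ER \vdash \phi \to \iota_T$, because the formula interpretation of a $\Rightarrow_0$-sequent is just the implication.

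For the uniformity property, let $\psi$ satisfy $\voc_b(\psi) \subseteq V_b$ and suppose $\ER \vdash \phi \to \psi$. The same corollary gives $\n{\ER} \vdash \phi \Rightarrow_0 \psi$. I apply Theorem~\ref{th:second-verification-ER} with $\Gamma = \{\phi\}$, $\Delta = \varnothing$, $\Xi = \varnothing$ and $\Lambda = \{\psi\}$; the side condition $\voc_b(\Rightarrow_0 \psi) \subseteq V_b$ is exactly the hypothesis on $\psi$. This yields $\n{\ER} \vdash \iota_T \Rightarrow_0 \psi$, and translating back through the corollary gives $\ER \vdash \iota_T \to \psi$.

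There is no real obstacle here; the one point to watch is index bookkeeping. Theorem~\ref{th:second-verification-ER} requires the context sequent $\Xi \Rightarrow_i \Lambda$ to carry the same arrow index as the root of $T$. Choosing $\Rightarrow_0$ at the root makes this consistent and keeps us in the case where Theorem~\ref{th:correspondence-to-hilbert-ER} reads as plain implication.
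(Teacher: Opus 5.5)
Your proposal is correct and follows the paper's proof. The paper obtains Theorem~\ref{th:ulip-ER} by rerunning the proof of Theorem~\ref{th:ulip-CS} on an $\ER$-interpolation template for $\phi \Rightarrow_0$, with Theorems~\ref{th:first-verification-ER} and~\ref{th:second-verification-ER} in place of their $\CS$ counterparts; your extra bookkeeping (carrying the solution of $\mathcal{E}_T$ over to $\ER$ via $\CS \subseteq \ER$, and using the index $\Rightarrow_0$ for the context sequent so the formula interpretation is the plain implication) spells out what the paper's one-line proof leaves implicit.
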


\section{Conclusion and Future Work}

We provided simple cut-free sequent calculi for the bimodal provability logics \(\CS\), \(\CSM\) and \(\ER\).
An interesting technical contribution is that we only had to consider multiple kinds of sequents to deal with  \(\ER\), but no other mechanism was needed. 
Moreover,  we introduced non-wellfounded calcluli for these logics to establish cut-elimination and uniform Lyndon interpolation for all three logics. 
These are the first results of this kind for bimodal provability logics with 'usual' provability predicates.

For future work, we would like to continue exploring the realm of bimodal provabiltiy logics by proof-theoretic means.
Designing nice sequent calculi for these logics could also provide alternative proofs for facts that are usually established via Kripke models.
This can be of particular interest for logics like \(\ER\) that do not have a (non-general) Kripke semantics.
Additionally, we leave open the question of finding an interpretation of \(\ER\) in \(\CSM\) that preserves polarity of variables.
We note that the interpretation
\[
  \phi \mapsto \left(\bigwedge \necd_0\set{\nec_1(\nec_0 \psi \to \psi) \mid \nec_0 \psi \in \sub(\phi)} \to \phi\right),
\]
which can be proven to be correct using the subformula property of our sequent calculi (or through the semantics), does not preserve the polarity of variables.

\section*{Acknowledgements}

We would like to thank Justus Becker for our discussions on the proof theory of provability logic.
In addition, the questions he raised in \cite{justus} served as a great inspiration and motivation for this paper.
Also, we would like to thank Anupam Das for calling our attention to Beki\'c Theorem (see footnote at page~\pageref{bekic-footnote}).

Borja Sierra Miranda and Thomas Studer are supported by the Swiss National Science Foundation project 200021\_214820 \emph{Non-wellfounded and cyclic proof theory}.

\appendix
\section{Solving Equational Systems}
\label{sec:solving-equational-systems}

In Subsection~\ref{subsec:interpolation} we claimed that it was enough to show that a logic \(L\) has basic Lyndon fixpoints to obtain that \(L\) has positive modalized Lyndon fixpoints and every positive modalized Lyndon equational system has a solution in \(L\).
Here we give the details of these results.

We start with a lemma.

\begin{lemma}\label{simple-equation-systems-solvable}
  Let \(L\) be a bimodal normal logic with basic Lyndon fixpoints and closed under substitutions.
  Every basic Lyndon equational system has a solution in \(L\).
\end{lemma}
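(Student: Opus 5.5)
The plan is to prove the statement by induction on the number \(n\) of equations, eliminating one unknown at a time in the spirit of Beki\'c's theorem. I read ``basic'' as meaning that every right-hand side \(\phi_i\) is a \(\nec\)-formula, as in the definition of a simple system. For \(n=0\) there is nothing to solve.

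For the inductive step, let \(\mathcal{E}=\set{(p_i,b_i,\phi_i)\mid i\le n}\) be a basic Lyndon system over \((\bar p,V_+,V_-)\). Since \(\phi_n\) is a \(\nec\)-formula and \(L\) has basic Lyndon fixpoints, I take a Lyndon fixpoint \(\psi_n\) of \(\phi_n\) with respect to \(p_n\). Thus \(\voc_b(\psi_n)\subseteq\voc_b(\phi_n)\setminus\set{p_n}\) for \(b\in\set{+,-}\), and \(L\vdash\psi_n\leftrightarrow\phi_n[\psi_n/p_n]\). Note that \(\psi_n\) may still contain \(p_0,\ldots,p_{n-1}\).

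Next I form the reduced system \(\mathcal{E}'=\set{(p_i,b_i,\phi_i[\psi_n/p_n])\mid i<n}\) over \((p_0,\ldots,p_{n-1},V_+,V_-)\). Two properties must be checked.
\begin{enumerate}
\item \(\mathcal{E}'\) is again basic. This holds because substituting into a \(\nec\)-formula yields a \(\nec\)-formula.
\item \(\mathcal{E}'\) is again Lyndon. I would split the occurrences of \(p_n\) in \(\phi_i\) by polarity and apply Lemma~\ref{substitution-and-polarity}. A \(b\)-polarity occurrence of \(p_n\) contributes \(\voc_b(\psi_n)\) when \(b_n=+\), and the flipped vocabulary \(\voc_{\overline b}(\psi_n)\) when \(b_n=-\).

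The bounds on \(\voc(\phi_n)\) are given relative to \(b_n\), so the contributions land exactly in \(V_+\union B'_+\) or \(V_-\union B'_-\) as required. Here \(B'_\pm\) are the sets \(B_\pm\) with \(p_n\) removed; removing \(p_n\) is harmless because \(p_n\notin\voc(\psi_n)\).
\end{enumerate}

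By the induction hypothesis \(\mathcal{E}'\) has a solution \((\psi^*_0,\ldots,\psi^*_{n-1})\). I then set \(\psi^*_n:=\psi_n[\psi^*_0/p_0,\ldots,\psi^*_{n-1}/p_{n-1}]\). To verify the equations I use closure of \(L\) under substitution.
\begin{itemize}
\item For \(i=n\): applying the substitution \(p_j\mapsto\psi^*_j\) (\(j<n\)) to \(L\vdash\psi_n\leftrightarrow\phi_n[\psi_n/p_n]\) gives \(L\vdash\psi^*_n\leftrightarrow\phi_n[\psi^*_0/p_0,\ldots,\psi^*_n/p_n]\). This uses that \(\psi_n\) does not contain \(p_n\).
\item For \(i<n\): \(\phi_i[\psi_n/p_n][\psi^*_j/p_j]_{j<n}\) coincides syntactically with \(\phi_i[\psi^*_j/p_j]_{j\le n}\). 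Hence the equation from \(\mathcal{E}'\) is exactly the required one.
\item Vocabulary conditions: \(\voc_{b_n}(\psi^*_n)\subseteq V_+\) and \(\voc_{\overline{b_n}}(\psi^*_n)\subseteq V_-\). These follow once more from Lemma~\ref{substitution-and-polarity}, using the bounds on \(\voc(\psi_n)\) and the bounds already known for the \(\psi^*_j\).
\end{itemize}

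The main obstacle is the polarity bookkeeping in the Lyndon check for \(\mathcal{E}'\) and for \(\psi^*_n\). Lemma~\ref{substitution-and-polarity} requires separating the variables that occur only positively from those occurring only negatively. In general an unknown \(p_j\) could occur in \(\phi_i\) with both polarities, but only if it lies in both \(B_+\) and \(B_-\), which is impossible. So I would first argue that, under the Lyndon conditions, each unknown occurs in each \(\phi_i\) with a single polarity relative to \(b_i\). That lets the lemma be applied with the \(p\)'s and \(q\)'s chosen according to the signs \(b_j\) versus \(b_i\). The case \(b_n=-\), where the roles of \(V_+\) and \(V_-\) swap, is where I expect sign errors, so I would treat it explicitly.
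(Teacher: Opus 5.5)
Your proposal is correct and follows essentially the same route as the paper. Both argue by induction on the number of equations: take a basic Lyndon fixpoint for one unknown, substitute it into the remaining equations, solve the reduced basic system by the induction hypothesis, and back-substitute using closure under substitution. Both also handle the polarity bookkeeping via Lemma~\ref{substitution-and-polarity}, using that each unknown occurs in each right-hand side with a single polarity relative to \(b_i\). The only difference is cosmetic: you eliminate \(p_n\) where the paper eliminates \(p_0\), which is immaterial for basic systems since no modalization order is involved.
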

\begin{proof}
  Let \(\mathcal{E} = \set{(p_i,b_i,\nec_{j_i} \phi_i) \mid i < n}\) be a basic Lyndon equational system over \((\bar{p}, V_+, V_-)\), remember that \(B_+ = \set{p_i \mid i < n, b_i = {+}}\), \(B_- = \set{p_i \mid i < n, b_i = {-}}\).
  We proceed by induction on \(n\), the number of unknowns.

  Take \(\nec_{j_0}\phi_0(p_0,\ldots,p_{n-1})\), we know that it has a Lyndon fixpoint \(\psi_0\) with respect to \(p_0\) in \(L\).
  Note that if \(b_0 = b_i\) then \(p_0 \not \in \voc_-(\phi_i)\) and if \(b_0 \neq b_i\) then \(p_0 \not \in \voc_+(\phi_i)\).
  Then, using Lemma~\ref{substitution-and-polarity}, we have that 
  \begin{align*}
    &\voc_{b_i}(\nec_{j_i} \phi_i[\psi_0/p_0]) \subseteq \voc_{b_i}(\phi_i) \setminus \set{p_0} \union \voc_{b_i}(\psi_0) = \voc_{b_i}(\phi_i) \setminus \set{p_0} \union \voc_{b_0}(\psi_0) \subseteq V_+ \union B_+ \setminus \set{p_0},\\
    &\hspace{14cm}\text{if }b_0 = b_i, \\
    &\voc_{b_i}(\nec_{j_i} \phi_i[\psi_0/p_0]) \subseteq \voc_{b_i}(\phi_i) \setminus \set{p_0} \union \voc_{\overline{b_i}}(\psi_0) = \voc_{b_i}(\phi_i) \setminus \set{p_0} \union \voc_{b_0}(\psi_0) \subseteq V_+ \union B_+ \setminus \set{p_0},\\
    &\hspace{14cm}\text{if }b_0 \neq b_i.
  \end{align*}
  We have a similar argument to show that \(\voc_{\overline{b_i}}(\nec_{j_i} \phi_i[\psi_0/p_0]) \subseteq V_- \union B_- \setminus \set{p_0}\).
  So we have that \(\mathcal{E}' = \set{(p_i, b_i, \nec_{j_i} \phi_i[\psi_0/p_0]) \mid 1 \leq i < n}\) is a basic Lyndon equational system over \((p_1 \cdots p_{n-1}, V_+,V_-)\) and it is solvable in \(L\) by the induction hypothesis, let \((\chi_1,\ldots,\chi_n)\) be a solution in \(L\) of it.
  Let us define \(\chi_0 = \psi_0[\chi_1/p_1,\ldots,\chi_n/p_n]\), then we claim that \((\chi_0,\ldots,\chi_n)\) is a solution of \(\mathcal{E}\).

  First, note that we already have that \(\voc_{b_i}(\chi_i) \subseteq V_+\) and \(\voc_{\overline{b_i}}(\chi_i) \subseteq V_-\) for \(1 \leq i < n\).
  We notice that for \(1 \leq i < n\) we have that if \(b_i = b_0\) then \(p_i \not\in \voc_-(\psi_0)\) and if \(b_i \neq b_0\) then \(p_i \not \in \voc_+(\psi_0)\) (it suffices to do cases on \(b_i\) and \(b_0\)).
  Using Lemma~\ref{substitution-and-polarity} we have that
  \begin{align*}
    \voc_{b_0}(\chi_0) &\subseteq \voc_{b_0}(\psi_0) \setminus \set{p_1,\ldots,p_n} \union \Union_{\footnotesize \begin{matrix} 1 \leq i < n \\ b_i = b_0 \end{matrix}} \voc_{b_0}(\chi_i) \union \Union_{\footnotesize \begin{matrix} 1 \leq i < n \\ b_i \neq b_0 \end{matrix}} \voc_{\overline{b_0}}(\chi_i) \\
                       &\subseteq \voc_{b_0}(\nec_{j_0} \phi_0) \setminus \set{p_0,\ldots,p_n} \union \Union_{\footnotesize \begin{matrix} 1 \leq i < n \\ b_i = b_0 \end{matrix}} \voc_{b_i}(\chi_i) \union \Union_{\footnotesize \begin{matrix} 1 \leq i < n \\ b_i \neq b_0 \end{matrix}} \voc_{b_i}(\chi_i) \subseteq V_+.
  \end{align*}
  where we used that \(\voc_{b_0}(\nec_{j_0} \phi_0) \subseteq V_+ \union B_+\).
  We have an anologous reasoning for showing that \(\voc_{\overline{b_0}}(\chi_0) \subseteq V_-\), so the desired polarity conditions hold.
  
  Finally, we show the desired equivalences.
  We have \(L \vdash \chi_i \leftrightarrow (\nec_{j_i}\phi_i[\psi_0/p_0])[\chi_1/p_1, \ldots, \chi_{n}/p_n]\) for \(1 \leq i < n\).
  Using that \(p_0 \neq p_i\) for \(1 \leq i < n\) we obtain that 
  \begin{align*}
    (\phi_i[\psi_0/p_0])[\chi_1/p_1, \ldots, \chi_{n}/p_n] 
    &= \phi_i[\psi_0[\chi_1/p_1, \ldots, \chi_{n}/p_n]/p_0, \chi_1/p_1, \ldots, \chi_{n}/p_n]\\
    &= \phi_i[\chi_0/p_0,\chi_1/p_1, \ldots, \chi_{n}/p_n],
  \end{align*}
  as desired.
  All left to show that \(L \vdash \chi_0 \leftrightarrow \nec_{j_0}\phi_0[\chi_0/p_0, \ldots, \chi_{n}/p_n]\).
  We have that, as \(\psi_0\) is a fixpoint, \(L \vdash \psi_0 \leftrightarrow \nec_{j_0}\phi_0[\psi_0/p_0]\).
  As \(L\) is closed under substitutions we obtain that 
  \[L \vdash \chi_0 \leftrightarrow (\nec_{j_0}\phi_0[\psi_0/p_0])[\chi_1/p_1,\ldots,\chi_n/p_n],\]
  and we can use the same equalities as before since \(p_0 \neq p_i\) for \(1 \leq i < n\).
\end{proof}

\begin{theorem}
  Let \(L\) be a bimodal normal logic with basic Lyndon fixpoints and closed under substitutions.
  We have that
  \begin{enumerate}
    \item \(L\) has positive modalized Lyndon fixpoins.
    \item Positive modalized Lyndon equational systems have solution in \(L\).
  \end{enumerate}
\end{theorem}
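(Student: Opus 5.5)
For part~1, I would start from a formula \(\phi(p)\) that is modalized in \(p\) and has \(p \notin \voc_-(\phi)\). I write it as \(\phi(p) = \chi(\nec_{j_0}\phi_0(p),\ldots,\nec_{j_{n-1}}\phi_{n-1}(p))\). Here \(\chi(q_0,\ldots,q_{n-1})\) is a \(p\)-free propositional context, the \(q_k\) are fresh variables, and the \(\nec_{j_k}\phi_k\) are the maximal \(\nec\)-subformula occurrences of \(\phi\) containing \(p\). Every \(p\) lies under some \(\nec\), so this decomposition exists.

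I would then consider the simple system
\[
\mathcal{E} = \set{(q_k, b_k, \nec_{j_k}\phi_k(\chi(q_0,\ldots,q_{n-1}))) \mid k < n}.
\]
The polarity \(b_k\) is the polarity of the occurrence of \(q_k\) in \(\chi\). Since \(p\) occurs only positively in \(\phi\), every \(p\) inside \(\nec_{j_k}\phi_k\) has polarity \(b_k\) relative to \(\phi_k\). From this I check that \(\mathcal{E}\) is a basic Lyndon system over \((\bar q, V_+, V_-)\), with \(V_\pm = \voc_\pm(\phi)\setminus\{p\}\); the check uses Lemma~\ref{substitution-and-polarity}. Lemma~\ref{simple-equation-systems-solvable} then gives a solution \((\theta_k)_k\).

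Put \(\psi = \chi(\theta_0,\ldots,\theta_{n-1})\). Then \(\phi(\psi) = \chi(\ldots,\nec_{j_k}\phi_k(\psi),\ldots)\), and each \(\nec_{j_k}\phi_k(\psi)\) is \(L\)-equivalent to \(\theta_k\) by the solution property. Replacing equivalents (normality suffices) gives \(L \vdash \psi \leftrightarrow \phi(\psi)\). Lemma~\ref{substitution-and-polarity} yields \(\voc_b(\psi)\subseteq \voc_b(\phi)\setminus\{p\}\). The main bookkeeping obstacle is tracking polarities when different occurrences of \(q_k\)-type subformulas have different signs. To handle this I keep occurrences separate, with one unknown per occurrence, so that each unknown has a single sign.

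For part~2, I would proceed by induction on the number \(n\) of unknowns of a positive modalized system \(\set{(p_i,+,\phi_i)}\), following the Bekić-style elimination of the previous lemma. The equation \(\phi_0\) is modalized in \(p_0\), and positivity forces \(p_0\notin\voc_-(\phi_0)\). By part~1, \(\phi_0\) has a Lyndon fixpoint \(\psi_0(p_1,\ldots,p_{n-1})\) with respect to \(p_0\).

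I then substitute \(\psi_0\) for \(p_0\) in \(\phi_1,\ldots,\phi_{n-1}\). The result is again positive, since all the substituted formulas are positive. It is also modalized: \(\phi_i\) is modalized in \(p_0,\ldots,p_i\), \(p_0\) occurs in \(\phi_i\) only under boxes, and \(\psi_0\) itself only needs to be modalized in those \(p_j\), \(j\le i\), that it contributes. The delicate point is that \(\psi_0\) need not be modalized in \(p_1\). This is harmless for \(p_1\) inside \(\phi_1[\psi_0/p_0]\), because every occurrence of \(p_0\) in \(\phi_1\) is already boxed, so anything substituted for it is boxed too.

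The induction hypothesis then solves the reduced system by \((\chi_1,\ldots,\chi_{n-1})\). I set \(\chi_0=\psi_0[\bar\chi/\bar p]\) and verify the equations exactly as in the proof of Lemma~\ref{simple-equation-systems-solvable}, using closure under substitution. The polarity conditions also follow as there, via Lemma~\ref{substitution-and-polarity}.
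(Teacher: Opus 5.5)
Your proposal is correct and follows essentially the same route as the paper: in part~1 you replace each boxed occurrence of \(p\) by its own fresh unknown carrying a single sign, solve the resulting simple Lyndon system via Lemma~\ref{simple-equation-systems-solvable}, and substitute the solution into the \(p\)-free context, and in part~2 you run the same induction on the number of unknowns, eliminating \(p_0\) with the part-1 fixpoint. One remark is muddled, namely that \(\psi_0\) ``only needs to be modalized in the \(p_j\) it contributes'': no modalization of \(\psi_0\) is needed at all, because \(p_0\) occurs only under boxes in every \(\phi_i\) with \(i \geq 1\), which is exactly the observation you make next.
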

\begin{proof}
  Proof of 1.
  \footnote{This proof follows the proof in \cite{lindstrom} for \(\GL\), with the addition of the condition on the polarity of variables.}
  Let \(\phi(p)\) be a formula that is modalized in \(p\) and positive in \(p\).
  Then \(\phi(p)\) can be written as \( \phi'(\nec_{j_0}\psi_0(p),\ldots,\nec_{j_{n-1}}\psi_{n-1}(p),\nec_{k_0}\chi_0(p),\ldots,\nec_{k_{m-1}}\chi_{m-1}(p))\),
  where the variables in \(\bar{q}\bar{r}\) are pairwise distinct, \(\phi'\) does not contain any modality, and \(\voc_+(\phi') \subseteq \voc_+(\phi) \setminus \set{p} \union \bar{q}\), \(\voc_-(\phi') \subseteq \voc_-(\phi) \setminus \set{p} \union \bar{r}\) (so \(p\) does not occur in \(\phi'\)).
  In particular this means that \(\bar{q} \cap \voc_-(\phi') = \varnothing\) and \(\bar{r} \cap \voc_+(\phi') = \varnothing\).
  By Lemma~\ref{substitution-and-polarity} we have that for \(b \in \set{+,-}\)
    \[
      \voc_b(\phi) = \voc_{b}(\phi') \setminus \bar{q} \bar{r} \union \Union_{i < n} \voc_b(\psi_i) \union \Union_{i < m} \voc_{\overline{b}}(\chi_i)
      \quad \text{so}
      \quad \voc_b(\psi_i) \subseteq \voc_b(\phi) \text{ and } \voc_b(\chi_i) \subseteq \voc_{\overline{b}}(\phi).
    \]
    As \(p \not \in \voc_-(\phi)\) we obtain straightforwardly that \(p \not \in \voc_-(\psi_i)\quad \text{and}\quad p \not \in \voc_+(\chi_i)\).

  Consider the following equational system
  \( \set{(q_i, +,\nec_{j_i} \psi_i(\phi')) \mid i < n} \union \set{(r_i,-, \nec\chi_{k_i}(\phi')) \mid i < m}.
  \)
  Using Lemma~\ref{substitution-and-polarity} with the previous observations, it is straightforward to check that is a simple Lyndon \((\bar{q}\bar{r}, \voc_+(\phi)\setminus \set{p}, \voc_-(\phi)\setminus \set{p})\)-equational system.
  Thus it has a solution \((\cdot)^*\) in \(L\).
  Let us define \(\eta = \phi'(q^*_0, \ldots, q^*_{n-1}, r^*_0, \ldots,r^*_{m-1})\), by Lemma~\ref{substitution-and-polarity} we know that \(\voc_+(\eta) \subseteq \voc_+(\phi) \setminus \set{p}\) and \(\voc_-(\eta) \subseteq \voc_-(\phi) \setminus \set{p}\).
  Also, since \((\cdot)^*\) is a solution of the equational system, we have that \(L \vdash q^*_i \leftrightarrow \nec_{j_i} \psi_i(\eta)\) for \(i < n\) and \(L \vdash r^*_i \leftrightarrow \nec_{k_i} \chi_i(\eta)\) for \(i < m\).  So we obtain (using propositional reasoning) that
  \[L \vdash \eta \leftrightarrow \phi'(\nec_{j_0}\psi_0(\eta), \ldots,\nec_{j_{n-1}}\psi_{n-1}(\eta), \nec_{k_0}\chi_0(\eta), \ldots,\nec_{k_{m-1}}\chi_{m-1}(\eta)). \]
  In other words, \(L \vdash \eta \leftrightarrow \phi(\eta)\), so \(\eta\) is a Lyndon fixpoint of \(\phi\) with respect to \(p\).

  Proof of 2. The proof is similar to the second point of Lemma~\ref{simple-equation-systems-solvable} using that if \(\phi_i\) is modalised in \(p_0,\ldots,p_i\) then \(\phi_i[\psi/p_0]\) is also modalised in \(p_0,\ldots,p_i\).
\end{proof}

\section{Cut reductions}
\label{sec:cut-reductions}

We display some cut reduction needed for the Theorems~\ref{th:cut-elim-CS}, \ref{th:cut-elim-CSM} and \ref{th:cut-elim-ER}.
Remember that \(\gg\) can be any of \( \Rightarrow \), \( \Rightarrow_0 \) and \( \Rightarrow_1 \).

\textbf{Weakening formulas.}
If \(\chi\) belongs to the weakening formulas of the last rule instance of \(\pi\) or \(\tau\) we can delete directly, as weakening formulas can be modified arbitrarily.
From now on we assume that \(\chi\) is not a weakening formula of the rule instances.

\textbf{Axiomatic}.
Assume \(\pi\) ends in \((\ax)\), the case for \(\tau\) is analogous.
Then \(\chi = p\) for some variable \(p\) and the desired reduction is
\[
  \AxiomC{\(\)}
  \RightLabel{\(\ax\)}
  \UnaryInfC{\(p, \Gamma' \gg \Delta, p\)}
  \DisplayProof
  \quad
  \AxiomC{\(\tau\)}
  \noLine
  \UnaryInfC{\(p, p, \Gamma' \gg \Delta\)}
  \DisplayProof
  \longmapsto
  \AxiomC{\(\ctr(\tau)\)}
  \noLine
  \UnaryInfC{\(p, \Gamma' \gg \Delta\)}
  \DisplayProof
\]
where \(\Gamma = p, \Gamma'\).

If \(\pi\) ends in \(\botL\) then \(\chi\) would belong to the weakening formulas, which is already covered.
Assume \(\tau\) ends in \(\botL\), so \(\chi = \bot\).
\[
  \AxiomC{\(\pi\)}
  \noLine
  \UnaryInfC{\(\Gamma \gg \Delta, \bot\)}
  \DisplayProof
  \quad
  \AxiomC{\(\tau\)}
  \noLine
  \UnaryInfC{\(\bot, \Gamma \gg \Delta\)}
  \DisplayProof
  \longmapsto
  \AxiomC{\(\inv{\botR}(\pi)\)}
  \noLine
  \UnaryInfC{\(\Gamma \gg \Delta\)}
  \DisplayProof
\]

From now own we assume that neither \(\pi\) nor \(\tau\) end in \((\ax)\) or \((\botL)\).

\textbf{\(\botR\) case}.
Asssume \(\pi\) ends in an application of \((\botR)\), the case for \(\tau\) is analogous.
If \(\bot\) is the cut formula, the desired cut reduction is obtained taking the immediate subproof of \(\pi\).
%\[
%  \AxiomC{\(\pi_0\)}
%  \noLine
%  \UnaryInfC{\(\Gamma \gg \Delta\)}
%  \RightLabel{\(\botR\)}
%  \UnaryInfC{\(\Gamma \gg \Delta, \bot\)}
%  \DisplayProof \quad
%  \AxiomC{\(\tau\)}
%  \noLine
%  \UnaryInfC{\(\bot, \Gamma \gg \Delta\)}
%  \DisplayProof
%  \longmapsto
%  \AxiomC{\(\pi_0\)}
%  \noLine
%  \UnaryInfC{\(\Gamma \gg \Delta\)}
%  \DisplayProof
%\]
If \(\bot\) is not the cut formula, the desired cut reduction is
\[
  \AxiomC{\(\pi_0\)}
  \noLine
  \UnaryInfC{\(\Gamma \gg \Delta', \chi\)}
  \RightLabel{\(\botR\)}
  \UnaryInfC{\(\Gamma \gg \bot, \Delta', \chi\)}
  \DisplayProof \quad
  \AxiomC{\(\tau\)}
  \noLine
  \UnaryInfC{\(\chi, \Gamma \gg \bot, \Delta'\)}
  \DisplayProof
  \longmapsto
  \AxiomC{\(\pi_0\)}
  \noLine
  \UnaryInfC{\(\Gamma \gg \Delta', \chi\)}
  \AxiomC{\(\inv{\botR}(\tau)\)}
  \noLine
  \UnaryInfC{\(\chi, \Gamma \gg \Delta'\)}
  \RightLabel{\(\cut \text{(I.H.)}\)}
  \BinaryInfC{\(\Gamma \gg \Delta'\)}
  \DisplayProof
\]
where \(\Delta = \bot, \Delta'\).

From now own we assume that neither \(\pi\) nor \(\tau\) end in \((\botR)\).

\textbf{Principal cut reduction.}
Assume \(\chi\) is principal in \(\pi\) and \(\tau\).
Then either \(\chi\) is an implication or a \(\nec\)-formula.
The second case cannot occur in \(\n{\CS}\) or \(\n{\CSM}\) and is covered for \(\n{\ER}\) at the proof of Theorem~\ref{th:cut-elim-ER}, so we just show the cut reduction for the first.
\begin{multline*}
  \AxiomC{\(\pi_0\)}
  \noLine
  \UnaryInfC{\(\chi_0, \Gamma \gg \Delta, \chi_1\)}
  \RightLabel{\(\toR\)}
  \UnaryInfC{\(\Gamma \gg \Delta, \chi_0 \to \chi_1\)}
  \DisplayProof
  \quad
  \AxiomC{\(\tau_0\)}
  \noLine
  \UnaryInfC{\(\Gamma \gg \Delta, \chi_0\)}
  \AxiomC{\(\tau_1\)}
  \noLine
  \UnaryInfC{\(\chi_1, \Gamma \gg \Delta\)}
  \RightLabel{\(\toL\)}
  \BinaryInfC{\(\chi_0 \to \chi_1, \Gamma \gg \Delta\)}
  \DisplayProof
  \longmapsto  \\ 
  \\
  \AxiomC{\(\wk(\tau_0)\)}
  \noLine
  \UnaryInfC{\(\Gamma \gg \Delta, \chi_1, \chi_0\)}
  \AxiomC{\(\pi_0\)}
  \noLine
  \UnaryInfC{\(\chi_0, \Gamma \gg \Delta, \chi_1\)}
  \RightLabel{\(\cut \text{(I.H.)}\)}
  \BinaryInfC{\(\Gamma \gg \Delta, \chi_1\)}
  \AxiomC{\(\tau_1\)}
  \noLine
  \UnaryInfC{\(\chi_1, \Gamma \gg \Delta\)}
  \RightLabel{\(\cut \text{(I.H.)}\)}
  \BinaryInfC{\(\Gamma \gg \Delta\)}
  \DisplayProof
\end{multline*}

\textbf{Commutative cut reduction.}
Finally, assume that the cut formula is not principal in either \(\pi\) or \(\tau\).
Assume that the cut formula is not principal in \(\pi\) and the last rule of \(\pi\) is \((\toR)\), the cases where the rule is instead \((\toL)\) or \((\ERax[1,0])\) or where any of these occurs at \(\tau\) are analogous.
The desired cut reduction is

\[
  \AxiomC{\(\pi_0\)}
  \noLine
  \UnaryInfC{\(\phi, \Gamma \Rightarrow \psi, \Delta', \chi\)}
  \RightLabel{\(\toR\)}
  \UnaryInfC{\(\Gamma \Rightarrow \phi \to \psi, \Delta', \chi\)}
  \DisplayProof
  \quad
  \AxiomC{\(\tau\)}
  \noLine
  \UnaryInfC{\(\chi, \Gamma \Rightarrow \phi \to \psi, \Delta'\)}
  \DisplayProof
  \longmapsto
  \AxiomC{\(\pi_0\)}
  \noLine
  \UnaryInfC{\(\phi, \Gamma \Rightarrow \psi, \Delta', \chi\)}
  \AxiomC{\(\inv{\toR}(\tau)\)}
  \noLine
  \UnaryInfC{\(\chi, \phi, \Gamma \Rightarrow \psi, \Delta'\)}
  \RightLabel{\(\cut\text{ (I.H.)}\)}
  \BinaryInfC{\(\phi, \Gamma \Rightarrow \psi, \Delta'\)}
  \RightLabel{\(\toR\)}
  \UnaryInfC{\( \Gamma \Rightarrow \phi \to \psi, \Delta'\)}
  \DisplayProof
\]
where \(\Delta = \phi \to \psi, \Delta'\)

Finally, we have that the cut formula is not principal in either \(\pi\) or \(\tau\) with last rule \((\modal[i]{\KTCS})\), \((\modal[i]{\KTCSM})\) or \((\modal[i]{\KTER})\).
We notice that this cannot occur in \(\pi\), as then the cut formula would belong to the weakening formulas, so it must occur in \(\tau\).
To make the cut formula not belong to the weakening formulas it must be the case that \(\chi = \nec_i \chi_0\).
If \(\chi\) where not principal in \(\pi\), then we would be in one of the cases which we already covered, so we can assume it is prinicipal in \(\pi\), so \(\pi\) must end in \((\modal[i]{\KTCS})\), \((\modal[i]{\KTCSM})\) or \((\modal[i]{\KTER})\).
Each of these cases are covered at the proof of Theorems~\ref{th:cut-elim-CS}, \ref{th:cut-elim-CSM} and \ref{th:cut-elim-ER}.

\bibliography{bib}
\bibliographystyle{plain}
\end{document}